\numberwithin{equation}{section}
\tikzset{ Ap/.style={decoration={markings, mark=between positions 0 and 1 step 3mm with{#1}}, postaction={decorate}} }
\pgfplotsset{compat=1.18}
\newcommand*{\eeqref}[2][Equation~]{%
  \hyperref[{#2}]{#1(\ref*{#2})}%
}
\newtheoremstyle{note}
{\topsep}% above space
{\topsep}% below space
{}% body font
{0pt}% indent amount
{\bfseries}% head font
{.}% post head punctuation
{0.5em }% post head punctuation
{}% head spec
\theoremstyle{plain}
\newtheorem{Thm}{Theorem}[section]
\newtheorem*{Thm*}{Theorem}
\newaliascnt{prop}{Thm}
\newtheorem{Prop}[prop]{Proposition}
\newaliascnt{lemma}{Thm}
\newtheorem{Lemma}[lemma]{Lemma}
\newaliascnt{coro}{Thm}
\newtheorem{Coro}[coro]{Corollary}
\newaliascnt{conjecture}{Thm}
\newtheorem{Conjecture}[conjecture]{Conjecture}
\newtheorem{Lemma*}{Lemma}
\newtheorem*{Prop*}{Proposition}
\newtheorem*{Coro*}{Corollary}
\theoremstyle{definition}
\newaliascnt{def}{Thm}
\newtheorem{Def}[def]{Definition}
\newtheorem*{Def*}{Definition}
\newaliascnt{Eg}{Thm}
\newtheorem{eg}[Eg]{Example}
\newtheorem*{eg*}{Example}
\theoremstyle{remark}
\newaliascnt{rmk}{Thm}
\newtheorem{RMK}[rmk]{Remark}
\newtheorem*{RMK*}{Remark}
\theoremstyle{plain}
\newtheorem{iThm}{Theorem}[section]
\newtheorem*{iThm*}{Theorem}
\newaliascnt{iprop}{iThm}
\newaliascnt{iCoro}{iThm}
\newaliascnt{iconjecture}{iThm}
\theoremstyle{definition}
\newaliascnt{idef}{iThm}
\newtheorem*{iDef*}{Definition}
\newaliascnt{iEg}{iThm}
\newtheorem*{ieg*}{Example}
\theoremstyle{remark}
\newaliascnt{irmk}{iThm}
\newtheorem*{iRMK*}{Remark}
\newcommand{\arbR} { \overrightarrow{\mathbb{R}}}
\newcommand{\arbRpt} {{ \overrightarrow{\mathbb{R}}_*}}
\newcommand{\bA} { {\mathbb{A}}}
\newcommand{\bC} { {\mathbb{C}}}
\newcommand{\bD} { {\mathbb{D}}}
\newcommand{\bP} { {\mathbb{P}}}
\newcommand{\bQ} { {\mathbb{Q}}}
\newcommand{\bT} { {\mathbb{T}}}
\newcommand{\bZ} { {\mathbb{Z}}}
\newcommand{\bR} { {\mathbb{R}}}
\newcommand{\bS} { {\mathbb{S}}}
\newcommand{\bN} { {\mathbb{N}}}
\newcommand{\bfk} { {\mathbf{k}}}
\newcommand{\cA} { {\mathcal{A}}}
\newcommand{\cC} { {\mathcal{C}}}
\newcommand{\cD} { {\mathcal{D}}}
\newcommand{\cE} { {\mathcal{E}}}
\newcommand{\cY} { {\mathcal{Y}}}
\newcommand{\sT}{ \mathscr{T} }
\newcommand{\supp}{{\textnormal{supp}}}
\newcommand{\HOM}{{ \textnormal{Hom}  }}
\newcommand{\pt}{{\textnormal{pt}}}
\newcommand{\nT}{{\textnormal{T}}}
\newcommand{\id}{{\operatorname{id}}}
\newcommand{\Mod}{{\operatorname{Mod}}}
\newcommand{\Moda}{{\operatorname{aMod}}}
\newcommand{\Perf}{\operatorname{Perf}}
\newcommand{\QCoh}{{\operatorname{QCoh}}}
\newcommand{\QCoha}{{\operatorname{aQCoh}}}
\newcommand{\Spec}{{\operatorname{Spec}}}
\newcommand{\Sp}{\operatorname{Sp}}
\newcommand{\Sh}{{\operatorname{Sh}}}
\newcommand{\coSh}{{\operatorname{coSh}}}
\newcommand{\PSh}{{\operatorname{PSh}}}
\newcommand{\Loc}{{\operatorname{Loc}}}
\newcommand{\Fun}{{\operatorname{Fun}}}
\newcommand{\PrSt}{{\operatorname{Pr_{st}^L}}}
\newcommand{\PrStR}{{\operatorname{Pr_{st}^R}}}
\newcommand{\Cat}{ {{\operatorname{Cat}}}}
\newcommand{\CatPerf}{{\operatorname{Cat^{\operatorname{Perf}}}}}
\newcommand{\Catdual}{{\operatorname{Cat^{\operatorname{dual}}}}}
\newcommand{\PrStCG}{{\operatorname{Pr_{st,\omega}^{L}}}}
\newcommand{\Compass}{\operatorname{CompAss}}
\newcommand{\Stab}{{\rm Stab}}
\newcommand{\cont}{{\operatorname{cont}}}
\newcommand{\dual}{{\operatorname{dual}}}
\newcommand{\CAlg}{\operatorname{CAlg}}
\newcommand{\op}{{\operatorname{op}}}
\renewcommand{\SS}{{\operatorname{SS}}}
\newcommand{\coSS}{{\operatorname{coSS}}}
\newcommand{\fib}{{\operatorname{fib}}}
\newcommand{\cofib}[1]{{\operatorname{cofib}\left( #1 \right)}}
\newcommand{\Ind}{{\operatorname{Ind}}}
\newcommand{\Fuk}{{\operatorname{Fuk}}}
\newcommand{\Nov}{{\operatorname{Nov}}}
\newcommand{\Rep}{{\operatorname{Rep}}}
\newcommand{\Pers}{{\operatorname{Pers}}}
\newcommand{\Rees}{{\operatorname{Rees}}}
\renewcommand{\mathring}{{\operatorname{Int}\,}}
\newcommand{\dint}{{\operatorname{int}}}
\title[Almost mathematics, Persistence modules, and Tamarkin category]{Almost mathematics, Persistence modules, \\and Tamarkin category}
\author{Tatsuki Kuwagaki}
\author{Bingyu Zhang}
\begin{document}
\begin{abstract}
We give a precise unification of three theories that are widely used by symplectic geometers: (Almost) modules over the Novikov ring, Persistence modules, and the Tamarkin category. Our method provides new input in this direction, especially in relation to Vaintrob's Novikov/log-perfectoid mirror symmetry for Novikov toric schemes. The results of this paper can also be treated as a study of persistent homology from a higher algebra point of view.

As applications, we establish a version of homological mirror symmetry over the Novikov ring for toric varieties and propose a conjecture for homological mirror symmetry over the Novikov ring for log Calabi-Yau varieties.
\end{abstract}

\maketitle

\epigraph{Advanced Package Tool, or APT, is a free-software user interface that works with core libraries to handle the installation and removal of software on Debian and Debian-based Linux distributions.}{Wikipedia}

\tableofcontents

\section{Introduction}

There are three parallel algebraic theories widely used in symplectic geometry: modules over the Novikov ring, persistence modules, and the Tamarkin category. Each of them packages $\mathbb R$-filtered complexes in a different language. 

\begin{enumerate}[fullwidth]
    \item \textbf{Modules over the Novikov ring}, which is an indispensable language in Morse--Floer theory to encode symplectic energy. Moreover, a general Floer theory is only defined over the (completed) Novikov ring.
    \item \textbf{Persistence modules}, which were first invented in the study of mathematical data science, now many important ideas are imported to symplectic geometry.
    \item \textbf{The Tamarkin category}, the key object used to study non-conic, quantitative symplectic geometry by using microlocal sheaf theory. The Tamarkin category is also essential in the formulation of irregular Riemann-Hilbert correspondence of D'Agnolo--Kashiwara. 
\end{enumerate}
A precise understanding of the relationships between them in neat/sophisticated language is required for recent developments of symplectic geometry: As indicated in the above, many important ideas (such as interleaving distances, barcodes, etc.) are imported from the study of persistence modules to symplectic geometry by using the translations between 1 and 2. The Fukaya--Sheaf comparison, or the sheaf-theoretic study of symplectic topology, is largely motivated by the translation between (1) and (3), and then the ideas of (2) are also imported to (3) via the translations between them. 

However, the unified, precise, and clear understanding of the relationships between them is currently lacking in the literature. \textbf{The primary goal of this paper is to give a precise and natural comparison among these theories, including their higher-dimensional analogues, and an application to mirror symmetry.}

The key ingredient for these relations is  almost mathematics, originally invented by Faltings \cite{p_adic_Hodge_Faltings} and developed further by Gabber and Ramero \cite{Gabber_Ramero}. The role of almost mathematics for this correspondence was first observed by Vaintrob \cite{Vaintrob-logCCC} and independently by the first-named author \cite{Kuwagaki_almost_equivalence}, and its role for symplectic geometry was first observed by Fukaya \cite{fukaya_distance}. Therefore, we call the natural identification the (1-dimensional) APT (short for Almost-Persistence-Tamarkin) correspondence. 

In fact, Vaintrob claims a more general version of the APT correspondence, which he calls Novikov/log-perfectoid mirror symmetry in \cite{Vaintrob-logCCC}, and we call it the global APT correspondence. We will explain in Subsection \ref{subsection: intro 1d APT} why the correspondence gives the expected comparison between the objects mentioned above.

\medskip

Usual homological mirror symmetry results are stated as equivalences between two categories: one coherent-sheaves-type category and one Fukaya-type category. For toric varieties, a version of homological mirror symmetry, also known as the Coherent-Constructible correspondence (CCC), which has been investigated by many authors including (but not limited to) \cite{FLTZ11,FLTZ12,FLTZ14,KuwagakiCCC,Zhou_2019,ToricMSrevisited-Shende} after the pioneering work of Bondal \cite{bondal2006derived}, states that for a nice enough toric variety $X$, there exists a conic Lagrangian $\Lambda\subset T^*T^n$ and a categorical equivalence 
\[\operatorname{Coh}(X)\simeq \Sh_{\Lambda}(T^n),\]
where the latter is a category of constructible sheaves whose singularities are controlled by the conic Lagrangian set $\Lambda$ in terms of microlocal theory of sheaves. The latter is a Fukaya-type category in light of \cite{GPS3}: $\Sh_{\Lambda}(T^n)\simeq \mathscr{W}(T^*T^n,\Lambda)$, where the latter is the wrapped Fukaya category of $T^*T^n$ with the stop $\Lambda$. 

An interesting question is what the mirror of $\Sh(T^n)$, the category of sheaves on $T^n$ without any constraint, is. As $\Sh(T^n)$ is equivalent to $\Sh^{\bZ^n}(\bR^n)$, the category of $\mathbb Z^n$-equivariant sheaves on $\bR^n$, we may also ask what the mirror of $\Sh(\bR^n)$ is, and then the mirror for $\Sh(T^n)$ follows by taking the homotopy fixed point. 

Notice that since $\Sh(\bR^n)$ has many non-constructible objects, one should not expect that there is a scheme/stack $Y$ of ``finite type'' such that $\QCoh(Y)$ is equivalent to $\Sh(\bR^n)$. Nevertheless, there is a categorical obstruction: normally $\Sh(\bR^n)$ does not have enough compact objects, while $\QCoh(Y)$ has plenty. To address the issue, we shall consider an infinite type stack $Y$ and a certain quotient of $\QCoh(Y)$, say, the category of almost quasi-coherent sheaves $\QCoha(Y)$ (strictly speaking, more data are needed to be specified in a moment), which has been systematically studied in almost mathematics.

The global APT correspondence then states that the category of all sheaves on $\bR^n$ is equivalent to almost quasi-coherent sheaves on $Y$
\[ \QCoha(Y)\simeq \Sh(\bR^n).\]

\subsection{The global APT correspondence: statement}
In this subsection, we explain the precise statement for the global APT correspondence.

Let $\Sigma$ be a fan, which is a set of proper closed (possibly non-rational) polyhedral cones in a real vector space $N_\bR$ that is closed under taking faces of cones and taking intersections of cones (see \autoref{def: fan}). In the introduction, we take $\bfk$ to be a commutative (discrete) ring\footnote{In the main part of the article, we will consider commutative ring spectra in most of the results}.

\begin{enumerate}[fullwidth]
  \item[\textbf{Symplectic geometry}:] Let $M_\bR$ be the dual vector space of $N_\bR$. We identify $T^*M_\bR=M_\bR\times N_\bR$. Notice that we should regard the fan $\Sigma$ as a subset of cotangent fibers $N_\bR=T_v^*M_\bR$ for $v\in M_\bR$, and then the support of the fan $|\Sigma|=\cup_{\sigma \in \Sigma} \sigma \subset N_\bR$ is a subset of cotangent fibers.
    
Consider the category $\Sh(M_\bR)$ of sheaves on the real vector space $M_\bR$. In \cite{KS90}, Kashiwara and Schapira associated a conic closed subset $\SS(F)\subset T^*M_\bR$ to a sheaf $F\in \Sh(M_\bR)$, which is called the microsupport of $F$. It measures how sections of $F$ propagate along certain (co)directions. 

\medskip

The symplectic side category we are considering is \textit{the category of sheaves microsupported in the fan support $|\Sigma|$}, i.e., the following full subcategory of $\Sh(M_\bR)$,
    \[\Sh_{M_\bR\times |\Sigma|}(M_\bR)\coloneqq\{F\in \Sh(M_\bR) \colon \SS(F)\subset M_\bR\times |\Sigma|\subset T^*M_\bR\}.\]

    These categories are deeply related to Fukaya categories, which we will explain later (for example, \autoref{Stupid sheaf-fukaya correspondence} and \autoref{Fukaya-SheafcomparsionLG}).

    \item[\textbf{Algebraic geometry}:] 
    
    On the algebraic side, the geometric object is a so-called Novikov toric scheme $X_\Sigma^\Nov$ associated to the fan $\Sigma$, which is a non-Noetherian analogue of the usual definition of a toric variety associated with $\Sigma$. 
    
    The category we are considering is the category of almost quasi-coherent sheaves on the Novikov toric scheme, which relies on a closed subscheme $\partial_\Sigma \rightarrow X_\Sigma^\Nov$ that is defined by an idempotent ideal sheaf. 
    
    We start from the definition of Novikov toric schemes and the closed subscheme. To begin with, we recall the definition of the usual toric varieties: we glue the spectra of the monoid algebra of lattice points of $\sigma^\vee \subset M_\bR$ (in a fixed lattice in $M_\bR$) for each cone $\sigma\in \Sigma$. 
    
    To define  $X_\Sigma^\Nov$, the difference here is that, rather than considering integer points, we consider the cone $\sigma^\vee \subset M_\bR$ itself as a monoid, and consider its monoid algebra ${\bfk}[\sigma^\vee]$. Moreover, we consider the idempotent ideal ${\bfk}[\mathring{\sigma^\vee}]$ of ${\bfk}[\sigma^\vee]$ given by  interior points of the cone. We set
    \[\Lambda_{\sigma^\vee}\coloneqq{\bfk}[\sigma^\vee],\quad I_{\sigma^\vee}\coloneqq{\bfk}[\mathring{\sigma^\vee}].\]
    
     Passing to spectra, we set
    \[X_{\sigma}^\Nov\coloneqq \Spec(\Lambda_{\sigma^\vee}), \quad \partial_\sigma\coloneqq V(I_{\sigma^\vee}) \rightarrow X_{\sigma}^\Nov.\]

    Then we glue them along the fan $\Sigma$ to obtain a scheme $X_\Sigma^\Nov$ and a closed subscheme $\partial_\Sigma \rightarrow X_\Sigma^\Nov$. Notice that the ``Novikov torus'' $\bT^\Nov=\Spec(\bfk[M_\bR])$ acts on $X_\Sigma^\Nov$ since  all the rings $\Lambda_{\sigma^\vee}$ are $M_\bR$($\cong \bR^n$)-graded. We refer to \autoref{subsection: def Novikov toric scheme} for more details on the definition.

\medskip

Now, \emph{the category of $\bT^\Nov$-equivariant almost quasi-coherent sheaves} 
\[\QCoha_{\bT^\Nov}(X_\Sigma^\Nov,\partial_\Sigma):=\QCoh_{\bT^\Nov}(X_\Sigma^\Nov)/\QCoh_{\bT^\Nov}^{\partial_\Sigma}(X_\Sigma^\Nov)\] could be described as the Verdier quotient of the category of $\bT^\Nov$-equivariant quasi-coherent sheaves $\QCoh_{\bT^\Nov}(X_\Sigma^\Nov)$ by the full subcategory $\QCoh_{\bT^\Nov}^{\partial_\Sigma}(X_\Sigma^\Nov)$ consisting of $\bT^\Nov$-equivariant quasi-coherent sheaves that are schematically supported on ${\partial_\Sigma}$. 

On the affine piece $X_{\sigma}^\Nov$, the category $\QCoha_{\bT^\Nov}(X_\sigma^\Nov,\partial_\sigma)$ is tautologically identified with a category $\Moda _{\bR^n-gr} (\Lambda_{\sigma^\vee},I_{{\sigma^\vee}})$ defined in the same way, i.e., the category of $\bR^n$-graded modules over $\Lambda_{\sigma^\vee}$ quotiented by the subcategory of modules annihilated by $I_{\sigma^\vee}$. 

We refer to \autoref{subsection: def almost qcoh} for more discussion on the definition; notice that the definition there is slightly different from the one here, but equivalent.

\end{enumerate}

\medskip

In \cite{Vaintrob-logCCC}, Vaintrob states the following categorical equivalences and some of their variants regarding different equivariance conditions on sheaves, which we refer to \autoref{theorem: Novikov mirror symmetry} and \autoref{theorem: root stack-Novikov mirror symmetry-rational} for their precise versions in this article:
\begin{iThm}[Global APT correspondence]\label{itheorem: Global apt 1}For a fan $\Sigma$, we have an equivalence
\[\QCoha_{\bT^\Nov}(X_\Sigma^\Nov,\partial_\Sigma) \simeq \Sh_{M_\bR\times |\Sigma|}(M_\bR).\]   
\end{iThm}

To prove Theorem \ref{itheorem: Global apt 1}, a form of Theorem \ref{iThm: sheaves with fan support} in a different formulation is implicit in Vaintrob's approach, but its proof requires the microlocal cut-off argument developed here.

\begin{iThm}\label{iThm: sheaves with fan support}For a fan $\Sigma$, we have an equivalence
\[ \Sh_{M_\bR\times |\Sigma|}(M_\bR) \simeq \varinjlim_{\sigma\in \Sigma } \Sh_{M_\bR\times \sigma}(M_\bR),\] 
where the colimit is taken in $\PrSt$, the category of presentable stable categories and left adjoint functors.
\end{iThm}

We can deduce Theorem \ref{itheorem: Global apt 1} from Theorem \ref{iThm: sheaves with fan support} plus the following two independent observations\footnote{In fact, by the two observations, we also see that Theorem \ref{iThm: sheaves with fan support} can be deduced from Theorem \ref{itheorem: Global apt 1} as well, i.e., they are equivalent.}, (a) and (b):
\begin{enumerate}[fullwidth,label=(\alph*)]
    \item First, in \autoref{subsection: def Novikov toric scheme}, we can describe $\QCoha$ by Zariski descent: we have a limit formula in $\Cat_\infty$, say
\[\QCoha_{\bT^\Nov}(X_\Sigma^\Nov,\partial_\Sigma)\simeq  \varprojlim_{\sigma\in \Sigma}\QCoha_{\bT^\Nov}(X_\sigma^\Nov,\partial_\sigma)=\varprojlim_{\sigma\in \Sigma}\Moda_{M_\bR-gr} (\Lambda_{\sigma^\vee},I_{{\sigma^\vee}}).\] 

Notice that all functors in the limit diagram are right adjoints and all categories are presentable. Therefore, after passing to their left adjoints, this is a colimit in $\PrSt$.

\item 
Second, using the microlocal cut-off lemma for cones, which was originally proven by Kashiwara and Schapira, we can describe the affine piece $\QCoha_{\bT^\Nov}(X_\sigma^\Nov,\partial_\sigma)$: For a cone $\sigma\in \Sigma$, we have an equivalence
\[\Sh_{M_\bR\times \sigma} (M_\bR) \simeq \Moda_{{M_\bR-gr}} (\Lambda_{\sigma^\vee},I_{{\sigma^\vee}})=\QCoha_{\bT^\Nov}(X_\sigma^\Nov,\partial_\sigma);\]
and for $\sigma \subset \gamma$ in $\Sigma$, under the identification, the inclusion functors on the sheaf side (induced by inclusions of microsupport bounds) are left adjoints to the corresponding restriction functors on $\QCoha$-categories. We refer to \autoref{prop: APT HD} for more details of the local result.
\end{enumerate}

However, in \cite{Vaintrob-logCCC}, the proof of the corresponding version of Theorem \ref{iThm: sheaves with fan support} for non-complete fans $\Sigma$ (we say a fan is complete if $|\Sigma|= \bR^n$) is missing; moreover, for complete fans, the argument does not successfully establish the required fully faithfulness in Theorem \ref{iThm: sheaves with fan support}.

Building on Vaintrob's pioneering work, the main technical contribution of this article is a complete proof of Theorem \ref{iThm: sheaves with fan support}. Combined with the algebraic descent statement above, this gives a proof of the global APT correspondence, Theorem \ref{itheorem: Global apt 1}. We also refine the correspondences: the results are formulated in the language of
$\infty$-categories, and the monoidality of the equivalences is established.

Our proof of Theorem \ref{iThm: sheaves with fan support} contains some ingredients that could be of independent interest: for example, we prove a cut-off result \autoref{coro: cut-off mult cones} for non-polyhedral cones, which is similar to \cite[Proposition 3.1.10]{guillermou2019sheaves}.

\begin{RMK}The main input of Theorem \ref{iThm: sheaves with fan support} for complete fans is a limit formula for the $\star$-convolution unit, \autoref{lemma: star convolution limit}, which also appears in \cite[Theorem 3.7]{FLTZ11}, \cite[Lemma 10.1]{KuwagakiCCC} or \cite[Proposition 4.5.4]{bai2025toricmirrorsymmetryhomotopy} (they require the smoothness or the projectivity for other purposes, which is irrelevant for us). For incomplete fans, we implement the effect of microsupport for the limit formula using a non-characteristic deformation argument.
\end{RMK}

\begin{RMK}
Here, we explain how a version of Theorem \ref{iThm: sheaves with fan support} has implicitly appeared in \cite{Vaintrob-logCCC}, and where our proof supplies the missing input.

In \cite[Section 5.2]{Vaintrob-logCCC}, Vaintrob introduced a site, which he called the Dolan topology, for each fan $\Sigma$. On the other hand, for each closed cone $\gamma\subset \bR^n$, Kashiwara and Schapira introduced a topology on $\bR^n$ which is denoted by $\bR^n_\gamma$ and will be reviewed in \autoref{Section: microsupport}.

By definition of the Dolan topology and by our proof of \autoref{prop: APT HD}, we can see that the Dolan topology coincides with the topology generated by certain bases of $\bR^n_{-\sigma^\vee}$ for all $\sigma\in \Sigma$. Therefore, by standard topos theory results, the category of sheaves on the Dolan topology is nothing but $\varprojlim_{\sigma}\Sh(\bR^n_{-\sigma^\vee})$, which can be identified with $\varprojlim_{\sigma\in \Sigma } \Sh_{\bR^n\times \sigma}(\bR^n)$ using the microlocal cut-off lemma. Passing to adjoints, we obtain a colimit in $\PrSt$.

Therefore, \cite[Section 6]{Vaintrob-logCCC} can be viewed as an attempt to prove a version of Theorem \ref{iThm: sheaves with fan support} using the Dolan topology. The suggested argument for the essential surjectivity has the correct form, but a crucial computation is missing compared with our argument in \autoref{subsection: cut-off for fans}.  On the other
hand, the suggested compact-generation argument for fully faithfulness cannot be applied to the categories involved, and therefore does not establish the fully faithfulness needed for Theorem \ref{iThm: sheaves with fan support}. In this paper, we establish this point using the microlocal cut-off lemma for cones.

\end{RMK}

\subsection{Sample results and applications}

\subsubsection{1-dimensional APT correspondence}\label{subsection: intro 1d APT}

We specialize to the fan $\Sigma=\{\{0\},[0,\infty)\}$ in $\bR$. Here, we explain why Theorem \ref{itheorem: Global apt 1} recovers the APT correspondence as suggested by the title of the article. We will recap the results and state some variants that are useful in quantitative symplectic geometry.

\medskip

We start by unwrapping the data for categories.

\begin{itemize}[fullwidth]
    \item \textit{Novikov ring and its almost modules.}

On the algebraic side, we only have two rings
\[\Lambda_{\geq }\coloneqq \Lambda_{[0,\infty)}={\bfk}[\bR_{\geq 0}],\quad \Lambda_{\bR }={\bfk}[\bR].\]

In the symplectic literature, the ring $\Lambda_{\geq }$ is called the \textit{polynomial Novikov ring}. The idempotent ideal of $\Lambda_{\geq }$ is $I_\geq\coloneqq I_{[0,\infty)}\coloneqq {\bfk}[\bR_{> 0}]$. Here, we set $(\mathbb{A}^1)^\Nov=X_{[0,\infty)}^\Nov=\Spec(\Lambda_\geq)$ for the cone $[0,\infty)$.

We mention that even when $\bfk$ is a field, the ring $\Lambda_{\geq }$ is not a principal ideal domain since $I_\geq$ is not a finitely generated ideal of $\Lambda_{\geq }$. This issue prevents us from applying the theory of Noetherian commutative algebra in the study of $\Lambda_{\geq }$. However, it can be resolved by considering the category of $\bR$-graded almost modules
\[\Moda_{\bR-gr}(\Lambda_\geq,I_\geq )= \Mod_{\bR-gr}(\Lambda_{\geq })/\Mod_{\bR-gr}^\partial(\Lambda_{\geq }),\]
where\footnote{In the literature, modules satisfying $I_\geq  M=0$ are called almost zero. The terminology can be defined in a more general situation, which we will explain later.} $\Mod_{\bR-gr}^\partial(\Lambda_{\geq })=\{M\colon I_\geq M=0\}$. In the quotient category, all ideals of $\Lambda_\geq$ are isomorphic to principal ideals (when $\bfk$ is a field). 

Notice that in this case, the idempotent ideal $I_\geq$ is quite canonical. Thus, we often write $\Moda_{\bR-gr}(\Lambda_\geq)=\Moda_{\bR-gr}(\Lambda_\geq,I_\geq )$ for simplicity.

In this case, one particular extension concerns the adic completion of $\Lambda_{\geq }$: For the degree $1$ generator $t\in \Lambda_\geq $, we denote by $\widehat{\Lambda}_\geq $ the $t$-adic completion\footnote{A priori we should take the derived completion, but one can prove this derived completion is isomorphic to the classical completion, which makes no ambiguity.} of ${\Lambda_\geq }$, i.e., the universal Novikov ring appearing in Lagrangian intersection Floer theory~\cite{FOOO}.  Let $\widehat{I}_\geq$ be the $t$-adic completion of ${I}_\geq$.Then we may similarly consider $\Moda_{\bR-gr}(\widehat{\Lambda}_\geq)=\Moda_{\bR-gr}(\widehat{\Lambda}_\geq,\widehat{I}_\geq)$.

\item \textit{Persistence modules and Tamarkin category.}

On the sheaf side, the corresponding category is $ \Sh_{\geq }(\bR)\coloneqq \Sh_{\bR\times [0,\infty)}(\bR)$. However, in applications, several equivalent formalisms appear, which we explain now.

By $\bR$-filtered modules, we mean functors $\arbR \rightarrow \Mod({\bfk})$, where $\arbR$ denotes the symmetric monoidal category defined by the partially ordered group $\arbR$; and the category of $\bR$-filtered modules is denoted by $\Rep(\arbR)=\Fun(\arbR,\Mod({\bfk})).$ 

There exists a full subcategory $\Pers(\arbR)$ of $\Rep(\arbR)$ that consists of persistence modules: i.e., functors that send colimits in $\arbR$ to limits in $\Mod({\bfk})$. It is proven in \cite[Lemma 4.8]{Hochschild-Kuo-Shende-Zhang} or \cite[Proposition 4.22]{Efimov-K-theory} that $\Sh_{\geq }(\bR) \simeq \Pers(\arbR)$. The identification follows from the microlocal cut-off lemma, which shows that $\Sh_{\geq }(\bR)\simeq \Sh(\bR_{(-\infty,0]})$, where $\bR_{(-\infty,0]}$ is the topology on $\bR$ whose open sets are intervals $(-\infty,a)$ for $a\in [-\infty,+\infty]$.

To compare with the adic completion $\widehat{\Lambda}_\geq$, we shall consider the Verdier quotient $\Sh_{>}(\bR)=\Sh_{\geq }(\bR)/\Loc(\bR)$, where the category of (locally) constant sheaves $\Loc(\bR)$ corresponds to the constant functors in $\Pers(\arbR)$. In the literature, this category is called the Tamarkin category $\sT$, which was introduced by Tamarkin \cite{tamarkin2013}.
\end{itemize}

Now, we can state the following summary of Theorem \ref{itheorem: Global apt 1} in the specific case and some extensions of it. We refer to \autoref{section: proof of APT} for their precise statements. We refer to \autoref{Appendix: diagram of modules} for a diagrammatic view of the precise relations between all related categories.
 
\begin{Thm*}[1d APT correspondence]\label{theorem: intro non-equi}For a commutative ring ${\bfk}$:
\begin{enumerate}[fullwidth]
    \item We have an equivalence $ \Sh_{\geq }(\bR) \simeq \Moda_{\bR-gr}(\Lambda_{\geq })$. Moreover, under the equivalence and the identification $ \Sh_{\geq }(\bR)\simeq \Sh(\bR_{(-\infty,0]})$, we have that the $\bR$-graded almostification functor for $\Lambda_{\geq }$ factors through the sheafification functor for $\bR_{(-\infty,0]}$;
    \[\begin{tikzcd}
{\PSh(\bR_{(-\infty,0]}) } \arrow[d]         & \Rep(\arbR)=\Mod_{\bR-gr} (\Lambda_{\geq }) \arrow[d] \arrow[l] \\
{\Sh_{\geq }(\bR)=\Sh(\bR_{(-\infty,0]})} \arrow[r, "\simeq"] & \Moda_{\bR-gr} (\Lambda_{\geq })   .                
\end{tikzcd}\]

\item Let $\Mod_{\bR-gr,comp}(\widehat{\Lambda}_{\geq })$ be the category of $\widehat{\Lambda}_{\geq }$-modules that are $t$-adically (derived) complete and $\Moda_{\bR-gr,comp}(\widehat{\Lambda}_{\geq })$ its image under the almostification functor, then we have
\[\Sh_{>}(\bR)  \simeq \Moda_{\bR-gr,comp}(\widehat{\Lambda}_{\geq })\hookrightarrow \Mod_{\bR-gr}(\widehat{\Lambda}_{\geq }).\]

\item Let $\bR_\delta$ be the additive group $\bR$ as a discrete group, which acts on both sides of the equivalences $\Sh_{\geq }(\bR)\simeq \Moda_{\bR-gr}(\Lambda_{\geq })$ and $\Sh_{>}(\bR)  \simeq \Moda_{\bR-gr,comp}(\widehat{\Lambda}_{\geq })$. We set $\Sh_{\geq }^{\bR_\delta}(\bR)=(\Sh_{\geq }(\bR))^{\bR_\delta}$ and $\Sh_{> }^{\bR_\delta}(\bR)=(\Sh_{> }(\bR))^{\bR_\delta}$. Then by taking the homotopy $\bR_\delta$-fixed points, we have  
\[\Sh_{\geq }^{\bR_\delta}(\bR)\simeq \Moda(\Lambda_{\geq }),\quad \Sh_{>}^{\bR_\delta}(\bR)  \simeq \Moda_{t-comp}(\widehat{\Lambda}_{\geq })\hookrightarrow \Mod(\widehat{\Lambda}_{\geq }).\]
 
\end{enumerate}
\end{Thm*}

\begin{RMK}
\begin{enumerate}[fullwidth]
    \item We will see later that all the sheaf categories are equipped with a convolution monoidal structure and the module categories are equipped with the (completed) tensor product over the corresponding (complete) Novikov rings, and all equivalences are monoidal equivalences. 
    \item The category of $\bR_\delta$-equivariant sheaves $\Sh_{>}^{\bR_\delta}(\bR)$ and its higher-dimensional generalization were first considered by the first-named author in \cite{KuwagakiWKB}. A variant of statement (3) was first proven in \cite{Kuwagaki_almost_equivalence} by the first-named author. Similar results are discussed in \cite[Section 2]{Fukayacategories_prequantizationbundles_KPS} for general partially ordered groups. 
\end{enumerate}
\end{RMK}

\subsubsection{\texorpdfstring{Applications of the 1d APT correspondence: $\arbR$-action on stable categories}{}}
\begin{Def*}For a $\bfk$-linear ($\infty$-)category $\cC$, a (left) $\arbR$-action is a monoidal functor $\arbR\rightarrow \Fun_{\bfk}(\cC,\cC)$. 
\end{Def*}
Roughly speaking, it consists of a family of functors $\nT_a:\cC\rightarrow \cC$ together with (coherent) isomorphisms $\nT_a \circ \nT_b \simeq \nT_{a+b}$ for $a,b\in \bR$ and natural transformations $\tau_c:\id\Rightarrow \nT_c$ for $c\geq 0$. In fact, by the group property, all $\nT_a$ are adjoint equivalences.

In this paper, we are mainly concerned with $\arbR$-action on a ${\bfk}$-linear \textit{\textbf{stable}} category $\cC$. The $1$-categorical version of this notion was considered by \cite{de2018theory}. The stable condition was addressed by Biran--Cornea--Zhang under the name of triangulated persistence category (TPC, for short), see \cite{TPC,TPF}.

Examples of stable categories admitting an $\arbR$-action include: the ($dg$-nerve of) $dg$ category of filtered modules over a filtered $A_\infty$-category (for example, filtered Fukaya category \cite{fukaya_distance,TPF}), category of sheaves equipped with a bi-thickening kernel action \cite{Ks2018,Thickeningkernel} (for example, the Tamarkin category \cite{tamarkin2013,mc-Tamarkin,Hochschild-Kuo-Shende-Zhang}). 

In terms of the notion of $\arbR$-action on stable categories, we can extend many known constructions in the $\infty$-categorical setting, for example, Tamarkin torsion and interleaving distance. 

Moreover, in \autoref{section: TPC}, we will explain that stable categories equipped with an $\arbR$-action are good enhancements of TPCs, and yield certain simplifications of their theory.

Precisely, we have the following result, which is a combination of \autoref{proposition: R action iff Novikov action}, \autoref{prop: stable R action implies Tamarkin module} and \autoref{prop: R action implies persistence}. Here, we refer to \cite{gepner2015enriched} for the notion of enriched $\infty$-categories.

\begin{iThm}An $\arbR$-action on a ${\bfk}$-linear \textit{\textbf{stable}} category $\cC$ is equivalent to an action of (the subcategory of finitely generated objects of) $\Mod_{\bR-gr}(\Lambda_{\geq })$ on $\cC$. In this case, there exists a TPC structure whose underlying triangulated category is the homotopy category ${h\cC}$.

If $\cC$ is presentable, then $\cC$ can be upgraded to be a category enriched over $\Mod_{\bR-gr}(\Lambda_{\geq })$.

If $\cC$ is presentable and the $\arbR$-action on $\cC$ is continuous in the sense that we have $\varinjlim_{n}\nT_{a_n} \simeq \nT_{a}$ for $a_n \nearrow a$, then the $\arbR$-action on $\cC$ lifts to an $\Sh_{\geq }(\bR)$-action and $\cC$ can be upgraded to be a category enriched over $\Sh_{\geq }(\bR)$. 
\end{iThm}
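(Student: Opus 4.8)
The plan is to prove the three assertions in sequence, each building on the previous one and on the 1d APT correspondence (\autoref{theorem: intro non-equi}).

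\textbf{Step 1: An $\arbR$-action is the same as a $\Mod_{\bR-gr}^{fg}(\Lambda_\geq)$-module structure.} The starting point is the observation, packaged in \autoref{theorem: intro non-equi}(1), that $\Rep(\arbR)\simeq \Mod_{\bR-gr}(\Lambda_\geq)$ as symmetric monoidal categories under Day convolution $\star$. Unwinding the definition of a $\bfk$-linear monoidal functor $\arbR \to \Fun_\bfk(\cC,\cC)$: the unit condition and associativity say exactly that we are giving a module action in $\Fun_\bfk$ of the Day-convolution monoidal category $\Fun(\arbR,\Mod(\bfk))=\Rep(\arbR)$ on $\cC$ — here one uses that $\arbR$ is the free presentable (or free small, after Ind-completion) symmetric monoidal category on the monoid object coming from the poset $\arbR$, so monoidal functors out of $\arbR$ into a presentable target factor through Day convolution; restricting to the subcategory generated under finite colimits and retracts by the representables $\nT_a$ gives the finitely generated objects $\Mod_{\bR-gr}^{fg}(\Lambda_\geq)$, which are exactly the compact objects, and a $\arbR$-action — being a functor out of a small category — only sees this compact part. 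The natural transformations $\tau_c: \id \Rightarrow \nT_c$ correspond to the structure maps $\Lambda_\geq(0)\to \Lambda_\geq(c)$ (multiplication by $t^c$), and the group property forcing each $\nT_a$ to be an equivalence is automatic once one notes $\nT_a \circ \nT_{-a}\simeq \nT_0 \simeq \id$. So this step is largely a bookkeeping translation using the universal property of Day convolution; I would spell it out as a lemma.

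\textbf{Step 2: From the action, produce a TPC structure on $h\cC$.} Given the $\arbR$-action, for $a\geq 0$ and $x\in\cC$ the map $\tau_a(x): x\to \nT_a x$ lets one define the shifted objects and the ``$\delta$-interleaving'' relations; the cofibers $\cofib{\tau_a(x)}$ play the role of the distinguished triangles in the TPC axioms of \cite{TPC}. I would check each TPC axiom (existence of the persistence shift functors, the exactness/triangulation compatibility, the $\delta$-triangles) by transporting it to a statement about the stable structure of $\cC$ together with the coherent isomorphisms $\nT_a\nT_b\simeq\nT_{a+b}$; stability of $\cC$ is what guarantees that $h\cC$ is triangulated and that the cofiber sequences descend to distinguished triangles. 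This is the content attributed to \autoref{proposition: R action iff Novikov action}, so I would cite the forward reference and only indicate the dictionary.

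\textbf{Step 3: Enrichment when $\cC$ is presentable.} For presentable $\cC$, each $\nT_a$ is a left adjoint (being an equivalence), and more importantly the whole action assembles into a $\Mod_{\bR-gr}(\Lambda_\geq)$-module structure in $\PrSt$: the action $\Mod_{\bR-gr}^{fg}(\Lambda_\geq)\times\cC\to\cC$ is exact in the $\cC$-variable and, because the source is compactly generated by $\Mod_{\bR-gr}^{fg}(\Lambda_\geq)$ and $\cC$ is presentable, it extends (via Ind) to a colimit-preserving action $\Mod_{\bR-gr}(\Lambda_\geq)\otimes\cC\to\cC$. A presentable module over a presentably symmetric monoidal category is automatically enriched and tensored over it — this is the standard enrichment-from-module-structure principle (Gepner–Haugseng / Hinich), where the hom-object $\underline{\HOM}(x,y)\in\Mod_{\bR-gr}(\Lambda_\geq)$ is characterized by the adjunction $\HOM_\cC(M\star x, y)\simeq \HOM_{\Mod_{\bR-gr}(\Lambda_\geq)}(M,\underline{\HOM}(x,y))$. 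So I would invoke that principle; the only thing to verify is presentability of the monoidal unit category and that the action is by colimit-preserving functors in each variable, which follows from Step 1 plus presentability of $\cC$.

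\textbf{Step 4: The continuous case and the $\Sh_{\geq0}(\bR)$-action.} Continuity of the action, $\varinjlim_n \nT_{a_n}\simeq\nT_a$ for $a_n\nearrow a$, is precisely the condition that the corresponding module functor $\Mod_{\bR-gr}(\Lambda_\geq)\otimes\cC\to\cC$ kills the kernel of the $\bR$-graded almostlization / sheafification functor $\Mod_{\bR-gr}(\Lambda_\geq)=\Rep(\arbR)\to\Sh_{\geq0}(\bR)\simeq\Moda_{\bR-gr}(\Lambda_\geq)$ from \autoref{theorem: intro non-equi}(2): that kernel is generated by the cofibers of maps $\varinjlim_n \Lambda_\geq(a_n)\to\Lambda_\geq(a)$, exactly the objects the continuous action is required to send to an equivalence. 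Hence the action factors through the localization $\Sh_{\geq0}(\bR)$, giving a presentable $\Sh_{\geq0}(\bR)$-module structure on $\cC$, and then Step 3's enrichment principle applied with base $\Sh_{\geq0}(\bR)$ gives the enrichment over $\Sh_{\geq0}(\bR)$.

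\textbf{Main obstacle.} I expect the genuine work to be in Step 1 — making precise the claim that a $\bfk$-linear monoidal functor $\arbR\to\Fun_\bfk(\cC,\cC)$ is \emph{equivalent} (not merely gives rise) to a $\Mod_{\bR-gr}^{fg}(\Lambda_\geq)$-action, i.e. that no data is lost when passing to the compactly-generated Day-convolution category and back. One must argue that the monoidal category $\arbR$ (a poset, viewed $\bfk$-linearly) and the full subcategory of $\Rep(\arbR)$ on finitely generated objects carry the same ``monoidal representation theory'' into any stable $\cC$; this is where one needs the universal property of Day convolution together with the identification of finitely generated $=$ compact objects in $\Mod_{\bR-gr}(\Lambda_\geq)$ and some care about whether $\cC$ is merely stable or presentable. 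Steps 2–4 are then comparatively formal, relying on cited forward references and on standard $\infty$-categorical enrichment machinery.
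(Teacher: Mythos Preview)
Your proposal is correct in outline, and Steps 1 and 3 match the paper's argument (which cites \cite[Proposition 2.2.7, Corollary 2.4.4]{Rotation-inv-Lurie} for the Day-convolution universal property you sketch and \cite{Heine_enriched_category2023} for the module-to-enrichment passage). Two points deserve comment.

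In Step 2 you underweight the one substantive verification. As the paper's remark after the theorem emphasizes, the TPC axioms beyond ``triangulated category with $\arbR$-action'' reduce to a single extra condition: that the cofiber of each $\tau_c(F)\colon F\to\nT_c F$ is Tamarkin torsion. This is not immediate from stability; the paper's proof (\autoref{prop: R action implies persistence}) deduces it from the module action established in Step 1, observing that this cofiber is $\Lambda_{[-c,0)}\cdot F$ and that $\Lambda_{[-c,0)}$ is itself $c$-torsion in $\Mod^{fin}_{\bR-gr}(\Lambda_\geq)$ (\autoref{example: torsion objects}). Your plan to ``check each axiom'' would eventually reach this, but it is the actual content of the step, and your identification of Step 1 as the main obstacle is inverted relative to the paper, which dispatches Step 1 by citation.

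In Step 4 you take a genuinely different, more elementary route. You factor the $\Mod_{\bR-gr}(\Lambda_\geq)$-action through the monoidal quotient by almost-zero modules, using that continuity forces the action to kill the generating cofibers of $\Lambda_{>}\to\Lambda_{\geq}$ (and their $\bR$-shifts). The paper instead extends the action to $\arbRpt=\arbR\cup\{\infty\}$ by Kan extension and invokes Efimov's continuous stabilization $\Stab^{\cont}(\arbRpt)\simeq\Sh_{\geq 0}(\bR)$ together with its universal property $\Fun^L(\Sh_{\geq 0}(\bR),\cD)\simeq\Fun^{\cont}(\arbRpt,\cD)$ applied to $\cD=\Fun^L(\cC,\cC)$. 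Your approach avoids importing the $\Stab^{\cont}$ framework but must separately check that the factored functor remains monoidal (which holds since almostlization is a monoidal localization); the paper's approach packages monoidality into the universal property at the cost of that extra machinery.
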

\begin{RMK}
As explained in \cite{TPC,TPF}, TPC is basically a triangulated category equipped with an $\arbR$-action with an {\it extra condition}. One point of the result is that the extra condition is superfluous (such a phenomenon already appears  in \textit{loc. cit.} for pre-triangulated filtered $dg$-categories.) 
\end{RMK}
\begin{RMK}
Here, we explain the role of stability and presentability. Generally speaking, $\arbR$-actions, $\Mod_{\bR-gr}(\Lambda_{\geq })$-actions, and enrichments over $\Mod_{\bR-gr}(\Lambda_{\geq })$ are three different notions. There are categories satisfying one of these conditions but not the others. Their relation follows from the following observation: $\Mod_{\bR-gr}(\Lambda_{\geq })$ is the initial presentable stable category that admits $\arbR$-action.

Therefore, the stability of $\cC$ allows us to adjoin enough finite limits and colimits in a compatible way. This enables us to extend the $\arbR$-action to an action of the subcategory of finitely generated objects of $\Mod_{\bR-gr}(\Lambda_{\geq })$ by Kan extension. Presentability then allows us to adjoin the further colimits needed to obtain an action of the whole category $\Mod_{\bR-gr}(\Lambda_{\geq })$.

On the other hand, an enrichment of $\cC$ over $\Mod_{\bR-gr}(\Lambda_{\geq })$ has a different flavor. A priori, without stability, it is less directly comparable to the other two notions. However, actions and enrichments are closely related through the notion of copower, which is a version of colimit adjoint to the enriched Hom. Presentability then ensures the existence of enough copowers and adjoints, allowing us to pass from copowers to an enrichment.
\end{RMK}

To the authors' knowledge, many known TPCs arise from stable categories admitting an $\arbR$-action. We should also not expect that all TPCs arise in this way (compare with stable $\infty$-categories and triangulated categories). Nevertheless, we believe it would be a meaningful categorical framework for studying TPCs for two reasons: 1) Most of the manual constructions in the theory of TPCs now naturally come from the corresponding properties of $\Mod_{\bR-gr}(\Lambda_{\geq })$ or $\Sh_{\geq }(\bR)$. 2) All localizing invariants (Hochschild homology, higher K-theory\footnote{In this article, we only discuss the nonconnective K-theory.} etc.) are well-defined for dualizable $\cC$ by \cite{Efimov-K-theory} without any manual construction. 

For example: 1) In \cite{Hochschild-Kuo-Shende-Zhang}, the authors define the $\infty$-version of the Tamarkin categories $\sT(U)$ which is naturally equipped with a continuous $\arbR$-action in our sense. The authors consider the categorical trace as a model of Hochschild homology, which is naturally equivalent to the one constructed from Efimov's recipe. Then the authors compute Hochschild cohomology (in the filtered sense!) of $\sT(U)$ in terms of filtered symplectic cohomology.  2) For a ${\bfk}$-{{linear}} dualizable stable category $\cC$, we can discuss the continuous $K$-theory $K^{\cont}(\cC)$, introduced by \cite{Efimov-K-theory}. The following result is a translation of \cite[Proposition 4.22]{Efimov-K-theory}.
\begin{Thm*}[Efimov, {\autoref{prop: continuous K 1d}}]We have the equivalence of spectra
\[K^{\cont}(\Rep(\arbR)) =K^{\cont}(\Mod_{\bR-gr}(\Lambda_{\geq })) = \bigoplus_{\bR} K({\bfk}).\]
In particular, for a principal ideal domain $\bfk$ (or the sphere spectrum $\bfk=\bS$), we have $K_0^{\cont}(\Rep(\arbR))=\bZ[\bR]$, i.e., $\Lambda_\bR$ over $\bZ$.

Consequently, for a dualizable ${\bfk}$-{{linear}} stable category $\cC$ equipped with an $\arbR$-action, the continuous $K$-theory $K^{\cont}(\cC)$ admits an action of $\bigoplus_{\bR}K({\bfk})$. The homotopy groups $\bigoplus_{i}K_i^{\cont}(\cC)$ form an $(\bN,\bR)$-bigraded module over $\bigoplus_{i,\bR}K_i({\bfk})$, and in particular an $\bR$-graded module over $\bigoplus_{\bR}K_0({\bfk})$ (which equals  $\bZ[\bR]$ if $\bfk$ is a principal ideal domain, or $\bS$.)
\end{Thm*}
Because $K^{\cont}_0(\cC)$ is isomorphic to the Grothendieck group of compact objects of $h\cC$, i.e., $K_0((h\cC)^c)$ when $\cC$ is compactly generated, the result could be considered as a generalization of \cite[Proposition 4.2.2]{TPK}.

\subsubsection{Toric mirror symmetry over the Novikov ring}
As another application, we discuss a homological mirror symmetry for B-model toric varieties.

Homological mirror symmetry is, in general, an equivalence between a Fukaya-type category and a coherent-sheaf-type category. A Fukaya-type category is generally defined over the Novikov ring due to the existence of infinitely many holomorphic disks and the Gromov compactness. So, we should consider the mirror B-model defined over the Novikov ring as well.

However, in the literature at present, homological mirror symmetry is not formulated and proved over the Novikov ring, but rather over the Novikov field (=the fraction field). One reason is that Fukaya-type categories over the Novikov ring have too many objects, and it is difficult to set up an appropriately huge comparable B-model. 

Despite this situation, homological mirror symmetry over the Novikov ring has been expected, see for example~\cite{fukaya_distance}.

In this paper, we give a first instance of homological mirror symmetry over the Novikov ring, as an application of our results. Let us consider a toric variety $X_\Sigma$. A mirror A-model $W_\Sigma$ is a Landau--Ginzburg model defined on $(\bC^*)^n$. Since $(\bC^*)^n$ is an exact symplectic manifold, the usual Fukaya--Seidel category can be defined over the base ring if one only deals with exact Lagrangians. So, it is customary to formulate homological mirror symmetry over the base. Here we consider the Fukaya--Seidel category over the Novikov ring $\Fuk(W_\Sigma)$ (more precisely, a conjectural sheaf model of it). Our result is roughly the following:
\begin{iThm}
    A sheaf model of the Fukaya--Seidel category of $W_\Sigma$ over the Novikov ring is embedded into the category of derived complete almost quasi-coherent sheaves over $\sqrt[\infty]{X_\Sigma}\times (\bA^1)^{\Nov}$, where $\sqrt[\infty]{X_\Sigma}$ is the infinite root stack of the toric variety $X_\Sigma$ and $(\mathbb{A}^1)^\Nov=\Spec(\Lambda_\geq)$.
\end{iThm}

We also discuss a conjectural extension of this result to the case of log Calabi--Yau B-models. We refer to \autoref{section: Novikov HMS} for both the precise statement of the result and our conjecture.

\subsection{Motivation, related works and discussion}
We start our investigation from the 1d APT and its applications in quantitative symplectic geometry and homological mirror symmetry over the Novikov ring (rather than the Novikov field) after \cite{Hochschild-Kuo-Shende-Zhang,Kuwagaki_almost_equivalence,Fukayacategories_prequantizationbundles_KPS}. Meanwhile, we are also interested in using APT to give a clean understanding of many results about persistence modules, for example \cite{Ks2018,HAforPERS, milicevic2020convolutionpersistencemodules}. In particular, the discussion in this article can be regarded as a higher algebra version of \cite{HAforPERS}. We notice that almost zero modules appear as ephemeral persistence modules in the literature, where similar results are also proved, see \cite{Berkouk_2021ephemeral,zhou2024ephemeral} and references therein. This led us to the global picture in \cite{Vaintrob-logCCC}, where Theorem \ref{itheorem: Global apt 1} was originally claimed.

One motivation for the global APT is that, as Vaintrob explained in \cite[Section 7]{Vaintrob-logCCC}, one can recover from the global APT the Constructible-Coherent correspondence. The details of Vaintrob's approach for CCC are carried out in \cite{bai2025toricmirrorsymmetryhomotopy}. Notice that the global APT only depends on the fan support $|\Sigma|$. However, the CCC is sensitive to the rational fan $\Sigma$. The difference is the following: Compared with Theorem \ref{itheorem: Global apt 1} which only cares about a polyhedral conic closed set on cotangent directions $T^*_xM_\bR$, the CCC needs an extra choice of stratification on the base $M_\bR$, which is very sensitive with respect to the rational structure of the fan. This is also why we do not need the smoothness in the proof of Theorem \ref{iThm: sheaves with fan support} compared to \cite[Section 4]{bai2025toricmirrorsymmetryhomotopy}. 

Another interesting observation from the APT correspondence is the following: It is proven by Lurie that any presentable stable category that is dualizable with respect to the Lurie tensor product is equivalent to an almost module category for some almost content, see \cite[Theorem 2.10.16]{Fake_sheaves_on_mfd} for a complete proof. However, its proof is existential. Most of the categories that we discuss in this paper are dualizable, especially categories of sheaves with microsupport conditions (a direct proof can be seen in \autoref{appendix: dualizability}), and the APT correspondence gives examples of explicit constructions of Lurie's result.

\subsection*{Structure of the paper}For the convenience of different groups of readers, the paper is divided into two main parts, apart from the preliminary \autoref{Section: microsupport} and \autoref{section: almost ring theory}.

The first part focuses only on 1-dimensional results: It consists of \autoref{section: proof of APT}, \autoref{section: Categories admits R-action}, and \autoref{subsection: Novikov P1}, where \autoref{subsection: Novikov P1} gives an overview of the global theory in dimension 1.

The remaining part discusses higher-dimensional results and global results. In particular, certain mirror symmetry results are discussed in this part. 

For ambitious readers, it is fine to skip the first part and start directly from \autoref{section: HD APT}. Only a few proofs
refer back to the first part.

\subsection*{Convention}All categories in this article are $\infty$-categories. 

For a commutative ring spectrum ${\bfk}$, $\Mod({\bfk})$ denotes the category of module spectra over ${\bfk}$; if ${\bfk}$ is a discrete commutative ring, we write $\Mod({\bfk})=\Mod(H{\bfk})$ for the Eilenberg-MacLane spectrum $H{\bfk}$ of ${\bfk}$. This category can also be constructed as the ($dg$ nerve of) $dg$ derived category of ${\bfk}$. All tensor products are derived and over the base ring $\bfk$. 

Presheaves, sheaves, and representations take values in $\Mod({\bfk})$ unless otherwise specified. The Novikov rings $\Lambda_{\sigma}$ are defined over ${\bfk}$ unless specifically mentioned.

\subsection*{Acknowledgments}BZ thanks Alexander I. Efimov for explaining his work on continuous K-theory, Ning Guo for discussions on almost ring theory, Pierre Schapira for a historical remark on the lens definition of microsupport, and Jun Zhang for explaining his work on triangulated persistence categories. BZ also thanks Laurent C\^ot\'e, St\'ephane Guillermou, Peter Haine, Vivek Shende, Marco Volpe and Qixiang Wang for helpful discussions. We both thank Yuze Sun for pointing out a mistake in a previous version of the draft.
TK was supported by JSPS KAKENHI Grant Numbers 22K13912, 23K25765, and 20H01794. BZ was supported by the Novo Nordisk Foundation grant NNF20OC0066298 and VILLUM FONDEN, VILLUM Investigator grant 37814.

\section{Sheaves and microsupport}\label{Section: microsupport}
For a smooth manifold $M$, we set $\Sh(M)$ to be the category of $\Mod({\bfk})$-valued sheaves. We refer to \cite{6functor-infinity} for $6$-functor formalism in this setting. We note that, for a manifold $M$, $\Sh(M)$ is hypercomplete, i.e., all sheaves $F\in \Sh(M)$ are hypersheaves by \cite[7.2.3.6, 7.2.1.12]{HTT} and \cite[1.6]{Haine-porta-Teyssier-homotopy-inv-constr}. We refer to \cite[Section 6.5]{HTT} for general discussions about the hypercompleteness. An important feature of hypersheaves is: A hypersheaf $F$ in $\Sh(M)$ is determined by its value on a topological basis, see \cite[Proposition 1.13]{Haine-porta-Teyssier-homotopy-inv-constr} and references therein.

Regarding the microlocal theory of sheaves in the $\infty$-categorical setup, we remark that all arguments of \cite{KS90} work well provided we have the non-characteristic deformation lemma \cite[Proposition 2.7.2]{KS90}, which is proven for all hypersheaves valued in a compactly generated stable category by \cite{Amicrolocallemma_infinitycat}. Here, since we work over a manifold, all sheaves are hypersheaves; and our coefficient category $\Mod({\bfk})$ is indeed compactly generated and stable.

To any object $F \in \Sh(M)$, one can associate a conic closed set $\SS(F) \subset T^*M$ ({\cite[Definition 5.1.2]{KS90}}), called \emph{microsupport}. We recall the following definition and result regarding an equivalent definition of microsupport.
\begin{Def}{\cite[Definition 3.1]{guillermouviterbo_gammasupport}} \label{Def: Omega-lens}Let $\Omega \subset T^*M \setminus 0_M$ be an
open conic subset.  We call {\em a locally closed subset $C$ of $M$
with the following properties an $\Omega$-lens} : $\overline{C}$ is compact and there exists an open
neighborhood $U$ of $\overline{C}$ and a function $g\colon U \times [0,1] \to \bR$
\begin{enumerate}
\item $dg_t(x) \in \Omega$ for all $(x,t) \in U \times [0,1]$, where
  $g_t = g|_{U\times\{t\}}$,
\item $\{g_t<0\} \subset \{g_{t'}<0\}$ if $t\leq t'$,
\item the hypersurfaces $\{g_t=0\}$ coincide on $U\setminus \overline{C}$,
\item $C = \{g_1<0\} \setminus \{g_0<0\}$.
\end{enumerate}
    
\end{Def}

\begin{Lemma}{\cite[Lemma 3.2, 3.3]{guillermouviterbo_gammasupport}}\label{lemma: Omega-lens} Let $F \in \Sh(M)$ and let $\Omega \subset T^*M \setminus 0_M$ be an open conic subset.  Then the following are equivalent: 1) $\SS(F) \cap \Omega = \emptyset$, 2) $\HOM(1_C, F) \simeq 0$ for any $\Omega$-lens $C$, 3) $\Gamma_c(M; 1_C \otimes F) \simeq 0$ for any $(-\Omega)$-lens $C$.
\end{Lemma}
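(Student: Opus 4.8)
The plan is to prove the implications $(1)\Rightarrow(2)$ and $(1)\Rightarrow(3)$ by the non-characteristic deformation lemma, and the converses $(2)\Rightarrow(1)$, $(3)\Rightarrow(1)$ by contraposition, reading off an $\Omega$-lens from the Morse-theoretic definition of $\SS(F)$. Poincar\'e--Verdier duality collapses the two halves into one: since $1_C$ is cohomologically constructible one has $\HOM(1_C,\mathrm{D}_M F)\simeq \Gamma_c(M;1_C\otimes F)^\vee$ and $\SS(\mathrm{D}_M F)=\SS(F)^a$, so $(1)\Leftrightarrow(3)$ for $F$ and $\Omega$ follows from $(1)\Leftrightarrow(2)$ applied to $\mathrm{D}_M F$ and $-\Omega$; I describe both directions below only for transparency. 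Throughout, the microlocal machinery of \cite{KS90} — in particular the non-characteristic deformation lemma — is available in the present $\infty$-categorical, $\Mod(\bfk)$-valued setting by \cite{Amicrolocallemma_infinitycat} together with hypercompleteness of $\Sh(M)$.

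For $(1)\Rightarrow(2)$: fix an $\Omega$-lens $C$ with data $(U,g)$ as in \autoref{Def: Omega-lens}. Since $\{g_0<0\}\subset\{g_1<0\}$ are open and $C=\{g_1<0\}\setminus\{g_0<0\}$ is closed in $\{g_1<0\}$, the open/closed excision triangle, extended by zero to $M$, reads $1_{\{g_0<0\}}\to 1_{\{g_1<0\}}\to 1_C\xrightarrow{+1}$. Applying $\HOM(-,F)$ and using $\HOM(1_{\{g_t<0\}},F)\simeq \Gamma(\{g_t<0\};F)$, it suffices to show that the restriction $\Gamma(\{g_1<0\};F)\to \Gamma(\{g_0<0\};F)$ is an equivalence. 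This is exactly the output of \cite[Prop.~2.7.2]{KS90} applied to the increasing family of opens $\{g_t<0\}_{t\in[0,1]}$: clause (2) of \autoref{Def: Omega-lens} gives monotonicity, clause (3) together with compactness of $\overline{C}$ gives the properness hypothesis (the family is stationary off the compact set $\overline{C}$), and clause (1), $dg_t\in\Omega$, combined with $\SS(F)\cap\Omega=\emptyset$, is precisely the \emph{non-characteristicity} of the moving boundary $\{g_t=0\}$ with respect to $F$. The implication $(1)\Rightarrow(3)$ is obtained in the same way: tensoring the excision triangle with $F$ and applying $\Gamma_c(M;-)$ reduces it to $\Gamma_c(\{g_1<0\};F)\simeq\Gamma_c(\{g_0<0\};F)$ for a $(-\Omega)$-lens, which is the compactly-supported form of the same deformation (equivalently, the previous paragraph applied to $\mathrm{D}_M F$, using $dg_t\in-\Omega$ and $\SS(\mathrm{D}_M F)\cap(-\Omega)=\emptyset$).

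For the converse, suppose $\SS(F)\cap\Omega\neq\emptyset$; since $\SS(F)$ is closed and $\Omega$ open, some point $(x_0,\xi_0)$ of the defining set of $\SS(F)$ lies in $\Omega$. By \cite[Def.~5.1.2]{KS90}, for a small neighborhood $W\subset\Omega$ of $(x_0,\xi_0)$ there are $x_1\in M$ and a $C^1$ function $\varphi$ near $x_1$ with $\varphi(x_1)=0$, $d\varphi(x_1)\in W$, and $(\Gamma_{\{\varphi\geq 0\}}F)_{x_1}\not\simeq 0$. On a small ball $B\ni x_1$ one has $d\varphi\in\Omega$ throughout, and I would build, for parameters $0<\delta_0<h_1$ small, the family $g_t=\varphi-\big((1-t)\delta_0+t\,h\big)$ on (a neighborhood of) $B$, glued to $\varphi-\delta_0$ and then arbitrarily outside a slightly larger ball, where $h$ is smooth, equal to $h_1$ near $x_1$ and to $\delta_0$ near $\partial B$, with $\|dh\|$ so small (here $h_1-\delta_0\ll\operatorname{diam}B$ is used) that $dg_t=d\varphi-t\,dh$ stays in the open cone $\Omega$ on a neighborhood of $\overline{C}$, where $C=\{g_1<0\}\setminus\{g_0<0\}=\{\delta_0\leq\varphi<h\}$ is a relatively compact lens. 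Then $\HOM(1_C,F)\simeq\Gamma_C(\{g_1<0\};F)$ is, along the directed system obtained by shrinking the slab onto $\{\varphi=0\}$ at $x_1$, cofinal in the colimit computing $(\Gamma_{\{\varphi\geq 0\}}F)_{x_1}\neq 0$ (modulo the standard $\{\varphi\geq 0\}$-versus-$\{\varphi>0\}$ bookkeeping in the definition of $\SS$, harmless by the equivalent formulations in \cite[Ch.~5]{KS90}); hence $\HOM(1_C,F)\neq 0$ for some $\Omega$-lens $C$, contradicting (2). The implication $\lnot(1)\Rightarrow\lnot(3)$ is the identical argument with $\Gamma_c$ and the costalk (or $\mathrm{D}_M F$) form of the definition of $\SS$.

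The main obstacle is this last construction: manufacturing a genuine $\Omega$-lens — a globally defined $(U,g)$ satisfying all four clauses of \autoref{Def: Omega-lens}, in particular $dg_t\in\Omega$ uniformly in $t$ and $\overline{C}$ compact with the level hypersurfaces stationary off $\overline{C}$ — that nevertheless detects the purely local non-vanishing $(\Gamma_{\{\varphi\geq 0\}}F)_{x_1}\neq 0$ coming out of the definition of $\SS(F)$. Everything else (the excision triangle, the identifications $\HOM(1_{\mathrm{open}},F)\simeq\Gamma$, and the reduction of (3) to (2) by duality) is formal, and the deformation lemma does the rest; so the technical heart is the microlocal Morse bookkeeping that matches a single test function against a thin lens.
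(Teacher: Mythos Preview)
The paper does not supply its own proof of this lemma: it is stated with a citation to \cite[Lemma 3.2, 3.3]{guillermouviterbo_gammasupport} and used as a black box. Your outline---non-characteristic deformation for $(1)\Rightarrow(2),(3)$ and a Morse-theoretic lens construction for the converse---is precisely the strategy of Guillermou--Viterbo, so in that sense you are reproducing the cited argument.

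One genuine caution: your Poincar\'e--Verdier reduction of $(1)\Leftrightarrow(3)$ to $(1)\Leftrightarrow(2)$ does not work in the generality of the paper. Over an arbitrary commutative ring spectrum $\bfk$ the functor $(-)^\vee=\RHOM(-,\bfk)$ is not conservative, so $\HOM(1_C,\mathrm{D}_M F)\simeq\Gamma_c(M;1_C\otimes F)^\vee\simeq 0$ does not force $\Gamma_c(M;1_C\otimes F)\simeq 0$; and for non-constructible $F$ one only has the inclusion $\SS(\mathrm{D}_M F)\subset -\SS(F)$, not equality, so $(1)$ for $\mathrm{D}_M F$ does not recover $(1)$ for $F$. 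Both directions of the duality shortcut therefore leak. Fortunately you also sketch direct arguments, and those are the correct route: Guillermou--Viterbo prove Lemma~3.3 (the $\Gamma_c$ half) by a direct $\Gamma_c$-version of the deformation argument, not via $\mathrm{D}_M$. Your identification of the main difficulty---manufacturing a bona fide $\Omega$-lens from a single Morse test function witnessing $(x_0,\xi_0)\in\SS(F)$---is accurate, and the bump-function construction you describe is exactly how it is done in \cite{guillermouviterbo_gammasupport}.
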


For a closed conic set $Z\subset T^*M$, we set $\Sh_Z(M)$ to be the full subcategory of $\Sh(M)$ consisting of $F$ with $\SS(F)\subset Z$. Then \autoref{lemma: Omega-lens} shows that the inclusion $\Sh_Z(M)\subset \Sh(M)$ commutes with both limits and colimits.

\autoref{lemma: Omega-lens} also leads to a generalization of microsupport, which we will explain in \autoref{appendix: cosheaf microsupport}. Importantly, for a cosheaf $G\in \coSh(M)\coloneqq \Sh(M;\Mod(\bfk)^\op)^\op$, we can define a cosheaf microsupport $\coSS(G)$. We set $\coSh_{Z}(M)$ to be the full subcategory spanned by cosheaves $G$ with $\coSS(G)\subset Z$. Combining \autoref{lemma: Omega-lens} and the covariant Verdier duality \autoref{Thm:Covariant Verdier duality}, we have
\begin{Thm}[{\autoref{covariant Verdier duality with microsupport}}]For a closed conic set $Z\subset T^*M$ containing the zero section, we have $\Sh_Z(M)\simeq \coSh_{-Z}(M)$.    
\end{Thm}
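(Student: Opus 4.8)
The plan is to check the covariant Verdier duality equivalence $\mathrm{P}\colon\Sh(M)\xrightarrow{\sim}\coSh(M)$ of \autoref{Thm:Covariant Verdier duality} at the level of objects, showing that it carries the microsupport condition to the cosupport condition with a sign flip: $\coSS(\mathrm{P}F)=-\SS(F)$ for every $F\in\Sh(M)$ (it suffices to prove this away from $0_M$, since both $Z$ and $-Z$ contain the zero section). Granting this object-level identity, $\mathrm{P}$ sends $\Sh_Z(M)$ into $\coSh_{-Z}(M)$ and $\mathrm{P}^{-1}$ sends $\coSh_{-Z}(M)$ back into $\Sh_Z(M)$, so $\mathrm{P}$ restricts to an equivalence of full subcategories $\Sh_Z(M)\simeq\coSh_{-Z}(M)$.

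The computational input is that $\mathrm{P}F$ is the cosheaf $U\mapsto\Gamma_c(U;F)$ on open sets; hence for a locally closed $C$ written as $U\setminus V$ with $V\subset U$ open, the value of $\mathrm{P}F$ on $C$ is
\[
\cofib{\mathrm{P}(F)(V)\to\mathrm{P}(F)(U)}\;\simeq\;\cofib{\Gamma_c(V;F)\to\Gamma_c(U;F)}\;\simeq\;\Gamma_c(M;1_C\otimes F),
\]
where the last step uses the projection formula $1_C\otimes F\simeq i_{C!}i_C^*F$ for the inclusion $i_C\colon C\hookrightarrow M$ together with $\Gamma_c(M;i_{C!}(-))\simeq\Gamma_c(C;-)$; in particular the answer is independent of the presentation of $C$. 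This identifies $\mathrm{P}(F)(C)$ with precisely the object appearing in condition 3) of \autoref{lemma: Omega-lens}. Feeding this into the definition of $\coSS$ from \autoref{appendix: cosheaf microsupport}, which is set up so that for an open conic $\Omega\subset T^*M\setminus 0_M$ one has $\coSS(G)\cap\Omega=\emptyset$ exactly when $G(C)\simeq 0$ for every $\Omega$-lens $C$, and combining with the equivalence 1)$\Leftrightarrow$3) of \autoref{lemma: Omega-lens} applied to the open cone $-\Omega$, we obtain
\begin{align*}
\coSS(\mathrm{P}F)\cap\Omega=\emptyset
&\iff \mathrm{P}(F)(C)\simeq 0 \ \text{ for every $\Omega$-lens } C\\
&\iff \Gamma_c(M;1_C\otimes F)\simeq 0 \ \text{ for every $\Omega$-lens } C\\
&\iff \SS(F)\cap(-\Omega)=\emptyset .
\end{align*}
Since $\Omega\mapsto-\Omega$ is a bijection of the open conic subsets of $T^*M\setminus 0_M$, the closed conic sets $\coSS(\mathrm{P}F)$ and $-\SS(F)$ have the same complement outside $0_M$, hence agree; adding back the zero section gives $\coSS(\mathrm{P}F)=-\SS(F)$, and therefore $\SS(F)\subset Z\iff\coSS(\mathrm{P}F)\subset-Z$ because $Z$ (hence $-Z$) is closed, conic, and contains $0_M$. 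This is exactly what is needed to conclude that $\mathrm{P}$ restricts to the asserted equivalence.

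The main obstacle is not the formal bookkeeping above but the bridge in the middle step: making the identification $\mathrm{P}(F)(C)\simeq\Gamma_c(M;1_C\otimes F)$ rigorous in the hypercomplete $\infty$-categorical framework of the paper — i.e. pinning down the precise shape of the covariant Verdier duality functor (that it is literally $\Gamma_c(-;F)$, with no orientation or shift twist) and confirming that the notion of ``value of a cosheaf on a locally closed set'' entering the appendix's definition of $\coSS$ coincides with this formula. Once these are secured, the theorem follows from the displayed chain of equivalences and the sign flip $\Omega\leftrightarrow-\Omega$.
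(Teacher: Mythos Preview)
Your proof is correct and is essentially the paper's argument: both rest on the $\Omega$-lens characterization (\autoref{lemma: Omega-lens}, specifically 1)$\Leftrightarrow$3)) together with the explicit form $\bD_\cC F = F_c = \Gamma_c(-;F)$ of the covariant Verdier duality functor, which is stated verbatim just before \autoref{Thm:Covariant Verdier duality} with no shift or twist. The obstacle you flag dissolves once you unwind the appendix definition $\coSS_\cC(G) \coloneqq \SS_{\cC^\op}(G)$: it asks precisely that the corestriction $G(\{g_0<0\}) \to G(\{g_1<0\})$ (the $\cC^\op$-restriction read back in $\cC$) be an equivalence, which in the stable case is exactly your $\cofib{G(V)\to G(U)} \simeq 0$; the paper merely packages the same computation as two separate inclusions, recovering the reverse one by applying the argument to the inverse functor $\bD_{\cC^\op}^\op$.
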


Therefore, all of our results about sheaves have their cosheaf versions.

\medskip

For our applications, the most important result is the following microlocal cut-off lemma, which is proven by Kashiwara and Schapira as a combination of \cite[Propositions 3.5.3, 3.5.4, 5.2.3]{KS90}. To state the lemma, we review the notion of $\gamma$-topology \cite[Definition 3.5.1]{KS90}.

For a closed convex cone $\sigma$ whose dual is $\gamma=\sigma^\vee$, an open set $U$ (with respect to the usual topology of $\bR^n$) is called $\gamma$-open if $U+\gamma =U$. Then $\gamma$-open sets form a topology on the set $\bR^n$, which is denoted by $\bR^n_\gamma$, and the identity map induces a  continuous map $\varphi_\gamma: \bR^n \rightarrow \bR^n_\gamma$.

\begin{Lemma}\label{lemma: microlocal cut-off}For a closed convex cone $\sigma$ whose dual is $\gamma=\sigma^\vee$, the adjoint functors $(\varphi_\gamma^*,\varphi_{\gamma*})$ induce an adjoint equivalence 
\[\varphi_{\gamma*}:\Sh_{\bR^n\times (-\sigma)} (\bR^n) \xrightarrow{\cong}  \Sh(\bR^n_\gamma), F\mapsto [U\mapsto F(U+\gamma)].\]
\end{Lemma}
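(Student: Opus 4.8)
The plan is to realize $\varphi_\gamma^*$ and $\varphi_{\gamma*}$ as mutually inverse equivalences between $\Sh(\bR^n_\gamma)$ and $\Sh_{\bR^n\times(-\sigma)}(\bR^n)$, by establishing in turn: (i) $\varphi_\gamma^*$ is fully faithful; (ii) the essential image of $\varphi_\gamma^*$ lies in $\Sh_{\bR^n\times(-\sigma)}(\bR^n)$; and (iii) for every $F$ with $\SS(F)\subseteq\bR^n\times(-\sigma)$ the counit $\varphi_\gamma^*\varphi_{\gamma*}F\to F$ is an equivalence, so that $F$ is in that image. These are the $\infty$-categorical forms of \cite[Propositions 3.5.3, 3.5.4, 5.2.3]{KS90}; as recalled in \autoref{Section: microsupport}, those arguments transport to our setting since the one nonformal ingredient, the non-characteristic deformation lemma, holds for hypersheaves valued in a compactly generated stable category and every sheaf on $\bR^n$ is a hypersheaf.

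First I would record the formulas. Every $\gamma$-open set is open in the usual topology, so the identity of $\bR^n$ is a continuous map $\varphi_\gamma\colon\bR^n\to\bR^n_\gamma$, giving the adjunction $\varphi_\gamma^*\dashv\varphi_{\gamma*}$. For $F\in\Sh(\bR^n)$ and a $\gamma$-open $U$, $(\varphi_{\gamma*}F)(U)=F(\varphi_\gamma^{-1}(U))=F(U)$; since $U+\gamma$ is the smallest $\gamma$-open set containing a given open set $U$, this is exactly the asserted formula $F\mapsto[U\mapsto F(U+\gamma)]$. Dually, $\varphi_\gamma^*G$ is the usual-topology sheafification of the presheaf $V\mapsto G(V+\gamma)$; in particular $\varphi_\gamma^*$ preserves colimits, and base change along the open immersion $U\hookrightarrow\bR^n$ identifies $\varphi_\gamma^*1_U\simeq 1_U$ for every $\gamma$-open $U$ ($1_U$ formed in $\bR^n_\gamma$ on the left, in $\bR^n$ on the right).

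Next, (ii) and (i). Since $\Sh(\bR^n_\gamma)$ is generated under colimits by the $1_U$ with $U$ $\gamma$-open, each $\varphi_\gamma^*G$ is a colimit of sheaves $1_U$ with $U$ $\gamma$-open; for such $U$ one has $\SS(1_U)\subseteq\bR^n\times(-\gamma^\vee)=\bR^n\times(-\sigma)$ by \cite[Proposition 3.5.4]{KS90} (which rests on the microsupport of translates of $\gamma$ and the sum estimate \cite[Proposition 5.2.3]{KS90}), and since $\Sh_{\bR^n\times(-\sigma)}(\bR^n)\subseteq\Sh(\bR^n)$ is closed under colimits (a consequence of \autoref{lemma: Omega-lens} noted right after it), we obtain $\SS(\varphi_\gamma^*G)\subseteq\bR^n\times(-\sigma)$. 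For (i), the unit $G\to\varphi_{\gamma*}\varphi_\gamma^*G$ can be tested on $\gamma$-open $U$, where it says that sheafifying $V\mapsto G(V+\gamma)$ does not alter its value $G(U)$ at $U$; this is the direct verification of \cite[Proposition 3.5.3]{KS90}.

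The crux is (iii), and the non-characteristic deformation argument there is the only step of substance. Fix $F$ with $\SS(F)\subseteq\bR^n\times(-\sigma)$. Stalks are conservative on $\Sh(\bR^n)$, so it suffices that the counit be an equivalence on stalks; at $x$ the source has stalk $\varinjlim_{B\ni x}F(B+\gamma)$ (colimit over small open balls $B\ni x$), the target has stalk $\varinjlim_{B\ni x}F(B)=F_x$, and the map is induced by restriction along $B\subseteq B+\gamma$, so it is enough to prove the cut-off identity $F(B+\gamma)\xrightarrow{\ \sim\ }F(B)$ for every small open ball $B$. I would write $B+\gamma$ as the increasing union, $s\to\infty$, of the bounded convex open sets $B+\gamma_s$ with $\gamma_s=\{v\in\gamma:|v|\leq s\}$, so that $F(B+\gamma)=\varprojlim_s F(B+\gamma_s)$, and apply the non-characteristic deformation lemma to each bounded step: the covectors conormal to the moving part of the boundary lie in $\overline{\gamma}\setminus 0$, and the elementary identity $\gamma\cap(-\gamma^\vee)=\{0\}$ shows $\overline{\gamma}\setminus 0$ is disjoint from $-\gamma^\vee=-\sigma$, hence misses $\SS(F)\subseteq\bR^n\times(-\sigma)$ off the zero section. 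Thus each transition map in the pro-system is an equivalence and the limit is $F(B)$, giving the identity; this is \cite[Proposition 3.5.4]{KS90}. Combining (i)--(iii), $\varphi_\gamma^*$ is a fully faithful colimit-preserving functor $\Sh(\bR^n_\gamma)\to\Sh_{\bR^n\times(-\sigma)}(\bR^n)$ that is essentially surjective, hence an equivalence whose inverse is the restriction of $\varphi_{\gamma*}$, with the stated formula; the main obstacle was step (iii).
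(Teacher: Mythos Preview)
Your proposal is correct and takes essentially the same approach as the paper: the paper does not give an independent proof but simply attributes the lemma to Kashiwara--Schapira as a combination of \cite[Propositions 3.5.3, 3.5.4, 5.2.3]{KS90}, and your three steps (i)--(iii) are precisely a faithful unpacking of how those propositions assemble, with the non-characteristic deformation in step (iii) correctly singled out as the only substantive point. One small remark: in step (iii) you write that the relevant conormals lie in $\overline{\gamma}\setminus 0$, which tacitly identifies $M_\bR$ with $N_\bR$ via the standard inner product on $\bR^n$; this is harmless here but worth making explicit, since $\gamma\subset M_\bR$ while microsupport covectors live in $N_\bR$.
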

\begin{Coro}The category of sheaves $\Sh(\bR^n_\gamma)$ is hypercomplete.
\end{Coro}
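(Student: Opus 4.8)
The plan is to transport hypercompleteness of $\Sh(\bR^n)$ across the equivalence of \autoref{lemma: microlocal cut-off}. The slickest route is: by that lemma $\Sh(\bR^n_\gamma)\simeq\Sh_{\bR^n\times(-\sigma)}(\bR^n)$, and by \autoref{lemma: Omega-lens} the latter is a full subcategory of $\Sh(\bR^n)$ closed under all small limits and colimits; hence the standard $t$-structure of $\Sh(\bR^n)$ restricts to it and detects $\infty$-connectivity there, so hypercompleteness of $\Sh(\bR^n)$ (valid since $\bR^n$ is a manifold) forces every $\infty$-connective morphism of $\Sh(\bR^n_\gamma)$ to be an equivalence. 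Below I unfold this into a more elementary, site-level argument, which avoids any discussion of the $t$-structure.

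First I would record that, by \autoref{lemma: microlocal cut-off}, every object $G\in\Sh(\bR^n_\gamma)$ is of the form $\varphi_{\gamma*}F$ for some $F\in\Sh_{\bR^n\times(-\sigma)}(\bR^n)\subset\Sh(\bR^n)$, and that for a $\gamma$-open $U$ one has $\varphi_{\gamma*}F(U)=F(U+\gamma)=F(U)$, since $U+\gamma=U$ and $\varphi_\gamma$ is the identity on underlying sets. As $\bR^n$ is a manifold, $F$ is a hypersheaf: $F(W)\simeq\lim_{\Delta}F(W_\bullet)$ for every hypercover $W_\bullet\to W$ of an open $W\subset\bR^n$ by open subsets.

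Second I would compare the two sites. Since $\varphi_\gamma\colon\bR^n\to\bR^n_\gamma$ is continuous, every $\gamma$-open set is open, every $\gamma$-covering family is a covering family for the usual topology, and finite intersections of $\gamma$-opens are computed the same way in both topologies (and remain $\gamma$-open). Hence the coskeletal matching objects entering the definition of a hypercover are given by the same intersections, and a hypercover $U_\bullet\to U$ for the $\gamma$-topology, viewed through $\varphi_\gamma$, is a hypercover of $U$ by open subsets of $\bR^n$. Combining the two steps, for $G=\varphi_{\gamma*}F$ and any $\gamma$-hypercover $U_\bullet\to U$ we get
\[G(U)=F(U)\simeq\lim_{\Delta}F(U_\bullet)=\lim_{\Delta}G(U_\bullet),\]
so $G$ satisfies $\gamma$-hyperdescent. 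Thus every object of $\Sh(\bR^n_\gamma)$ is a hypersheaf, i.e. $\Sh(\bR^n_\gamma)$ is hypercomplete.

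I do not anticipate a real obstacle: the only step that warrants a sentence of justification is the site comparison, and it is routine once one recalls that a hypercover is detected by covering conditions on coskeleta assembled from finite limits of the given opens, together with the fact that it suffices to test hyperdescent against hypercovers by disjoint unions of opens. If anything, the subtle point in the slick version is the claim that the standard $t$-structure restricts (equivalently, that $\SS$ is compatible with truncation), which the elementary argument sidesteps entirely.
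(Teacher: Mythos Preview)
Your proposal is correct; both routes work. The paper's own proof is a third, yet slicker variant of the same transport idea: for an $\infty$-connective morphism $h$ in $\Sh(\bR^n_\gamma)$, the pullback $\varphi_\gamma^*h$ is $\infty$-connective in $\Sh(\bR^n)$ by the general fact that the inverse image part of any geometric morphism preserves $\infty$-connectivity (\cite[Proposition 6.5.1.16-(4)]{HTT}), hence an equivalence by hypercompleteness of $\Sh(\bR^n)$; since $\varphi_\gamma^*$ is fully faithful by \autoref{lemma: microlocal cut-off}, $h\simeq\varphi_{\gamma*}\varphi_\gamma^*h$ is an equivalence as well. This sidesteps entirely the ``subtle point'' you flag in your slick version: one never needs to check that the $t$-structure restricts to $\Sh_{\bR^n\times(-\sigma)}(\bR^n)$ (i.e.\ that $\SS$ is compatible with truncation), because preservation of $\infty$-connectivity by $\varphi_\gamma^*$ is automatic for geometric morphisms. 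Your elementary site-level argument is a genuine and self-contained alternative; the crucial observation that a $\gamma$-hypercover is an ordinary hypercover (since $\gamma$-opens are open, finite intersections agree, and $\gamma$-covers are ordinary covers) is correct, and the rest is as you say. The paper's route buys brevity and avoids any site-level bookkeeping; yours buys independence from the cited HTT proposition.
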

\begin{proof}For an $\infty$-connected morphism $h:F\rightarrow G$ in $\Sh(\bR^n_\gamma)$, we have that $\varphi_\gamma^*h$ is an $\infty$-connected morphism by \cite[Proposition 6.5.1.16-(4)]{HTT}. Since $\Sh (\bR^n)$ is hypercomplete, $\varphi_\gamma^*h$ is an equivalence. So $h=\varphi_{\gamma*}\varphi_\gamma^*h$ is an equivalence, which implies that $\Sh(\bR^n_\gamma)$ is hypercomplete.
\end{proof}

\section{Homological epimorphism and almost ring theory}\label{section: almost ring theory}
Here, let us recall some basic facts about homological epimorphism and almost ring theory. We emphasize that, since we need to work over a ring spectrum, all tensor products are derived even when we work over discrete rings. We follow the discussion in \cite[Section 2.9]{Fake_sheaves_on_mfd}.

\begin{Def}For a morphism of ring spectra $R\rightarrow S$ whose fiber is the $R$-module spectrum ideal $I$, we say $R\rightarrow S$ is a homological epimorphism if the following equivalent conditions are true:
\begin{enumerate}[fullwidth]
    \item The multiplication $S\otimes_R S \rightarrow S$ is an equivalence.
    \item We have $I\otimes_R S \simeq 0$.
    \item The morphism $I\otimes_R I \rightarrow I$ induced by the multiplication is an equivalence.
\end{enumerate}

In this case, we call $I$ an idempotent ideal of $R$. 
\end{Def}
\begin{RMK}A standard example of idempotent ideals is the following: Consider a non-discrete valuation ring $V$; then its maximal ideal $m$ is idempotent. Idempotent ideals often show up in non-Noetherian algebraic geometry.
\end{RMK}
We recall the following lemma \cite[Lemma 2.9.9]{Fake_sheaves_on_mfd}, where the equivalences in the above definition are also discussed.
\begin{Lemma}[{\cite[Lemma 2.9.9]{Fake_sheaves_on_mfd}}]For a morphism of ring spectra $R\rightarrow S$, it is a homological epimorphism if and only if the base change functor
\[\Mod(R)\rightarrow \Mod(S), \quad M\mapsto M\otimes_R S\]
is a left Bousfield localization, i.e., the right adjoint of the base change functor (i.e., the functor of restriction of scalars) is fully faithful.
\end{Lemma}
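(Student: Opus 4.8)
The statement to prove is the equivalence between a morphism of ring spectra $R \to S$ being a homological epimorphism and the base change functor $\Mod(R) \to \Mod(S)$, $M \mapsto M \otimes_R S$, being a left Bousfield localization (equivalently, its right adjoint—restriction of scalars—being fully faithful). Since the paper cites \cite[Lemma 2.9.9]{Fake_sheaves_on_mfd} for this, the plan is to give the standard short argument.

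\emph{First}, I would recall that for an adjunction $L \dashv G$, $G$ is fully faithful if and only if the counit $L G \Rightarrow \id$ is an equivalence; this is a general categorical fact (a functor is a left Bousfield localization precisely when its right adjoint is fully faithful). Here $L = (-) \otimes_R S$ and $G$ is restriction of scalars along $R \to S$. So the task reduces to: the counit $(GN) \otimes_R S \to N$ is an equivalence for every $S$-module $N$, if and only if $R \to S$ is a homological epimorphism.

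\emph{Second}, for the direction (homological epimorphism $\Rightarrow$ localization), I would use condition (1) of the definition, namely that the multiplication $S \otimes_R S \to S$ is an equivalence. The counit at $N$ is the map $N \otimes_R S \to N$ induced by the $S$-action; writing $N \simeq N \otimes_S S \otimes_S S$ and using that $N \otimes_R S \simeq N \otimes_S (S \otimes_R S)$, the counit is identified with $N \otimes_S (S \otimes_R S \to S)$, which is an equivalence since $S \otimes_R S \to S$ is. Hence $G$ is fully faithful. \emph{Conversely}, if $G$ is fully faithful, apply the counit-is-equivalence criterion to the particular $S$-module $N = S$: the counit $S \otimes_R S \to S$ is then an equivalence, which is exactly condition (1), so $R \to S$ is a homological epimorphism. (One should note the three conditions in the definition are already declared equivalent in the excerpt, so it suffices to hit any one of them; condition (1) is the convenient one.)

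The argument is essentially formal, so there is no serious obstacle; the only point requiring a little care is bookkeeping with the derived tensor products and the various associativity/base-change equivalences $N \otimes_R S \simeq N \otimes_S (S \otimes_R S)$ in the $\infty$-categorical setting, which follow from the monoidal structure on $\Mod(R)$ and the fact that $(-)\otimes_R S$ is symmetric monoidal. I would simply cite the general Bousfield-localization criterion and the projection-formula-type identities rather than reprove them, and the proof is complete.
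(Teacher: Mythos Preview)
Your proof is correct and follows the standard argument: reduce to the counit being an equivalence, identify the counit at $N$ with $N \otimes_S (S \otimes_R S \to S)$, and specialize to $N = S$ for the converse. The paper itself does not give a proof of this lemma at all---it simply cites \cite[Lemma 2.9.9]{Fake_sheaves_on_mfd} and moves on---so there is nothing to compare against; what you have written is essentially the argument one finds in the cited reference.
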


\begin{Def}If $R\rightarrow S$ is a homological epimorphism with fiber $I$ (i.e., $I$ is an idempotent ideal of $R$), we define the kernel of $\Mod(R)\rightarrow \Mod(S)$ to be $\Moda(R,I)$. 
\end{Def}
Passing to right adjoints, we have a Verdier sequence in $\PrStR$: 
\[\Mod(S) \xrightarrow{res} \Mod(R) \rightarrow \Moda(R,I).\]

In particular, we have the identification $\Mod(S) \simeq \{M:  M\otimes_R I=0\}$. In terms of this formalism, we may suppress the role of $S$, and only consider the pair $(R,I)$ when $I$ is an idempotent ideal. 
\begin{Def}An almost content $(R,I)$ consists of a ring spectrum and an idempotent ideal $I$ (i.e., $R\rightarrow \cofib{I\rightarrow R}$ is a homological epimorphism). 

For an almost content $(R,I)$, we set 
\[\Mod^\partial{(R,I)}=\{M:  M\otimes_R I=0\},\quad \Moda(R,I)=\Mod(R)/\Mod^\partial{(R,I)},\]
where the Verdier quotient is taken in $\PrStR$. 

We call $\Mod^\partial{(R,I)}$ the category of almost zero modules, the quotient $\Moda(R,I)$ the category of almost modules and the quotient functor $(-)^a:\Mod(R)\rightarrow \Moda(R,I)$ the almostification functor. By definition, we have $\Mod^\partial{(R,I)}\simeq \Mod{(\cofib{I\rightarrow R})}$. When $(R,I)$ is clear from the context, we will also write $\Moda(R)$.
\end{Def}

In the commutative setting, we have:
\begin{Prop}For a commutative ring spectrum $R$ and an idempotent ideal $I$, the fully faithful inclusion functor $\Mod^\partial{(R,I)}\subset \Mod(R)$ has a right adjoint. 
\end{Prop}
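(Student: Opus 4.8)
The plan is to construct the right adjoint explicitly rather than invoke an abstract adjoint functor theorem, since presentability alone would give existence but not the useful formula. Write $S = \cofib{I \to R}$, so that by the preceding discussion $\Mod^\partial(R,I) \simeq \Mod(S)$ sits inside $\Mod(R)$ as the essential image of the restriction-of-scalars functor $\mathrm{res}\colon \Mod(S) \to \Mod(R)$, which is fully faithful because $R \to S$ is a homological epimorphism. The content to prove is therefore that the inclusion $\mathrm{res}$, which we already know to be a \emph{left} Bousfield localization inclusion (it is right adjoint to base change $-\otimes_R S$), is simultaneously a \emph{co}localization inclusion, i.e.\ also admits a right adjoint.

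First I would identify the candidate right adjoint. For $M \in \Mod(R)$, base change along $R \to S$ gives $M \otimes_R S \in \Mod(S)$, whose underlying $R$-module is $M \otimes_R S$; but this is the \emph{left} adjoint composed with the inclusion, which sends $M$ to its localization, not its colocalization. The right adjoint should instead be the ``sections supported on $I$'' functor: concretely $\Gamma_I(M) := \fib\bigl(M \to M \otimes_R S\bigr) \simeq I \otimes_R M$ — wait, that is not obviously an $S$-module; rather the correct candidate is $\RHOM_R(S, M)$, the $R$-module of maps out of $S$, which is canonically an $S$-module and lands in $\Mod^\partial(R,I) \simeq \Mod(S)$. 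So I would define the putative right adjoint by $M \mapsto \mathrm{res}\,\RHOM_R(S, M)$ and verify the adjunction
\[
\mathrm{Map}_{\Mod(R)}\bigl(\mathrm{res}(N), M\bigr) \simeq \mathrm{Map}_{\Mod(S)}\bigl(N, \RHOM_R(S,M)\bigr)
\]
for $N \in \Mod(S)$, $M \in \Mod(R)$. The right-hand side unwinds via $\otimes$-$\RHOM$ adjunction over $R$ to $\mathrm{Map}_{\Mod(R)}(N \otimes_S S, M) \simeq \mathrm{Map}_{\Mod(R)}(N, M)$ once one checks $N \otimes_S^L S \simeq N$ and that the $R$-module underlying $\RHOM_R(S,M)$ is genuinely $\RHOM_S(S, \RHOM_R(S,M))$'s restriction — this is where the homological epimorphism hypothesis $S \otimes_R S \xrightarrow{\sim} S$ does the real work, guaranteeing that restriction of scalars along $R \to S$ is fully faithful so the forgetful map is an equivalence on the relevant mapping spaces.

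The cleanest packaging, which I would actually carry out, is the abstract one: any Bousfield (co)localization of a stable presentable category fits into a recollement/Verdier picture, and the statement that $\Mod^\partial(R,I) \hookrightarrow \Mod(R)$ admits a right adjoint is \emph{equivalent} to the statement that the localization $\Mod(R) \to \Moda(R,I)$ admits a \emph{left} adjoint, i.e.\ that $(-)^a$ preserves limits, equivalently that $\Moda(R,I)$ is also a colocalization of $\Mod(R)$. Since $\Mod(R)$, $\Mod(S)$ and the quotient $\Moda(R,I)$ are all stable presentable, and the functors $\mathrm{res}$ and $(-)^a$ are all colimit-preserving between presentable stable categories, each admits a right adjoint by the adjoint functor theorem; the only thing to check is that the right adjoint of $\mathrm{res}$ has image landing appropriately, which again reduces to the homological-epimorphism identity. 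So the real steps are: (1) reduce to $\Mod^\partial(R,I) \simeq \Mod(S)$ via the earlier lemma; (2) observe $\mathrm{res}\colon \Mod(S)\to\Mod(R)$ preserves all small colimits (it preserves colimits because $\Mod(S)$ has colimits computed underlying, $S$ being a localization) — actually the subtle point is that $\mathrm{res}$ preserves colimits iff $S$ is a \emph{smashing} localization of $R$, which holds precisely because $I$ is an idempotent ideal ($S = S\otimes_R S$ means $-\otimes_R S$ is smashing); (3) conclude by the adjoint functor theorem that $\mathrm{res}$ has a right adjoint, and identify it with $\RHOM_R(S,-)$.

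\textbf{Main obstacle.} The crux is step (2): showing that the inclusion $\Mod^\partial(R,I)\hookrightarrow \Mod(R)$ preserves colimits, equivalently that the Bousfield localization $-\otimes_R S$ is smashing. This is exactly where idempotence of $I$ is indispensable — for a general ideal the torsion subcategory need not be colocalizing — and it is the one place a genuine computation (with the cofiber sequence $I \to R \to S$ and the equivalence $I \otimes_R I \simeq I$) is unavoidable. Everything downstream (existence of the right adjoint, its identification, the recollement) is then formal.
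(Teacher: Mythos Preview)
Your proposal is correct and lands on exactly the same formula as the paper: the right adjoint is $M \mapsto \HOM_R(S,M)$ with $S=\cofib{I\to R}$, and the paper's proof is literally that one line plus ``it is direct to check''. So at the level of content you have it.

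That said, your ``main obstacle'' is a phantom. Restriction of scalars $\mathrm{res}\colon \Mod(S)\to\Mod(R)$ \emph{always} preserves colimits for any map of ring spectra --- colimits in both module categories are computed on underlying spectra --- so it has a right adjoint (namely coinduction $\HOM_R(S,-)$) by the adjoint functor theorem with no further hypothesis. Your claim that ``$\mathrm{res}$ preserves colimits iff $S$ is a smashing localization'' is not right: smashing is the statement that the localization \emph{endofunctor} $-\otimes_R S$ on $\Mod(R)$ preserves limits (equivalently that its right adjoint, restriction, preserves colimits, which as just noted is automatic here), and in any case the homological-epimorphism hypothesis already says $R\to S$ is a smashing localization. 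So there is nothing to verify in step (2). The only place the hypotheses actually enter is that (i) the homological epimorphism makes $\mathrm{res}$ fully faithful, identifying $\Mod(S)$ with $\Mod^\partial(R,I)$ as a full subcategory, and (ii) commutativity of $R$ gives $\HOM_R(S,M)$ its $S$-module structure --- which is precisely why the paper flags ``in the commutative setting'' before stating the proposition.
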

\begin{proof}Define  $M\mapsto \HOM_{R}(\cofib{I\rightarrow R},M)$, it is straightforward to check that this is the right adjoint we need.
\end{proof}
In fact, this can be proven by a categorical discussion without assuming commutativity; we use the commutative setup here since it is more concrete for modules and sufficient for our applications.
\begin{Coro}\label{coro: almost local objects}For a commutative ring spectrum $R$ and an idempotent ideal $I$, the almostification functor has a fully faithful right adjoint $[M]\rightarrow \HOM_{R}(I,M)$ such that the essential image, i.e., the right orthogonal complement of almost zero modules, is given by $\{M\in \Mod(R): M\simeq \HOM_R(I,M)\}$.   
\end{Coro}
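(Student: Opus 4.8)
The plan is to deduce the corollary directly from the preceding proposition together with the general formalism of Bousfield localizations and Verdier quotients already set up in the excerpt. First I would observe that by construction $\Moda(R,I)$ is the Verdier quotient $\Mod(R)/\Mod^\partial(R,I)$ in $\PrStR$, so the almostlization functor $(-)^a$ is a left adjoint in that presentable setting and automatically admits a right adjoint, call it $j_*$; the right orthogonal complement $\Mod^\partial(R,I)^\perp$ is precisely the essential image of $j_*$, and $j_*$ is fully faithful since $(-)^a$ is a Bousfield localization. So the only real content is to identify $j_*$ with the functor $[M]\mapsto \HOM_R(I,M)$ and to identify its essential image with $\{M : M\simeq \HOM_R(I,M)\}$.

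Next I would compute the composite $M \mapsto \HOM_R(\cofib{I\rightarrow R}, M)$ of the previous proposition, which is the right adjoint to the inclusion $\Mod^\partial(R,I)\subset \Mod(R)$; its fiber sequence $\HOM_R(\cofib{I\rightarrow R},M)\to M\to \HOM_R(I,M)$ (obtained by applying $\HOM_R(-,M)$ to $I\to R\to \cofib{I\to R}$) exhibits $\HOM_R(I,M)$ as the \emph{colocalization away from} $\Mod^\partial(R,I)$, i.e. the cofiber of the colocalization counit. In a recollement/Verdier-quotient situation the reflection onto the right orthogonal complement is exactly this cofiber, so the right adjoint $j_*$ of $(-)^a$, composed with $(-)^a$, is the endofunctor $M\mapsto \HOM_R(I,M)$; hence for an almost module $[M]$ we have $j_*[M]\simeq \HOM_R(I,M)$. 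This uses that $I$ is idempotent: the equivalence $I\otimes_R I\simeq I$ (equivalently $I\otimes_R\cofib{I\to R}\simeq 0$) guarantees that $\HOM_R(I,M)$ already lies in the right orthogonal complement, so that applying the construction once more is idempotent and the unit $M\to \HOM_R(I,M)$ becomes an equivalence on objects of the complement.

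Finally, the identification of the essential image as $\{M\in\Mod(R): M\simeq \HOM_R(I,M)\}$ follows formally: $M$ lies in the right orthogonal complement of $\Mod^\partial(R,I)$ iff the colocalization counit $\HOM_R(\cofib{I\to R},M)\to M$ is zero, iff in the fiber sequence above the map $M\to \HOM_R(I,M)$ is an equivalence; conversely any $M$ of this form is a retract-free value of $\HOM_R(I,-)$ and hence lies in the complement by idempotence of $I$. Fully-faithfulness of $j_*$ is then the statement that $(-)^a$ restricted to the complement is inverse to $j_*$, which is the standard fact about Bousfield localizations.

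\textbf{Main obstacle.} The genuinely delicate point is the clean identification of the right adjoint of $(-)^a$ with $\HOM_R(I,-)$ — i.e. checking that the cofiber of the colocalization-onto-$\Mod^\partial$ counit really computes the reflection onto the right orthogonal complement, and that idempotence of $I$ makes $\HOM_R(I,-)$ land in that complement. Everything else is formal manipulation of adjoints and Verdier sequences in $\PrStR$; I would expect this to be dispatched in a few lines, perhaps by citing \cite[Section 2.9]{Fake_sheaves_on_mfd} for the precise recollement statement.
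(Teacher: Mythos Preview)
Your proposal is correct and follows exactly the approach the paper has in mind: the paper's own proof is a one-line citation to \cite[Lemma 2.9.11]{Fake_sheaves_on_mfd}, and you have unpacked precisely the recollement argument that lies behind that reference (and you even anticipate the citation). The key checks you identify---that $\HOM_R(I,-)$ lands in $(\Mod^\partial)^\perp$ because $I\otimes_R\cofib{I\to R}\simeq 0$, and that the fiber of $M\to\HOM_R(I,M)$ is almost zero---are exactly the content of the cited lemma, so there is nothing to add.
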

\begin{proof}This is a standard categorical result, see \cite[Lemma 2.9.11]{Fake_sheaves_on_mfd} for example. 
\end{proof}
\begin{Def}\label{def: almost local}For a commutative ring spectrum $R$ and an idempotent ideal $I$, we say $M \in \Mod(R)$ is almost local if it is in the essential image of the right adjoint of the almostification functor, i.e., $ M\simeq \HOM_R(I,M)$ by \autoref{coro: almost local objects}. 
\end{Def}

\begin{RMK}If $R$ is a discrete commutative ring and $I$ is a flat idempotent ideal of $R$, then we have $\cofib{I\rightarrow R}=R/I$, $\Mod(R)\simeq \textnormal{D}(R)$ and $\Mod(R/I)\simeq \textnormal{D}(R/I)$ where $\textnormal{D}$ stands for the (enhanced) derived category of the corresponding rings. In this case, $\Moda{(R,I)}$ is equivalent to the (enhanced) derived category of the abelian category of almost modules discussed in \cite{Gabber_Ramero}.
    
\end{RMK}

\begin{RMK}In this article, we restrict ourselves to the idempotent ideal situation. There is a general setting for higher almost ring theory; we refer to \cite{hebestreit2024notehigherringtheory}.    
\end{RMK}

In this article, we also consider the graded situation: We consider a $\Gamma$-graded ring $R$ for a grading group $\Gamma$, and we consider a homogeneous idempotent ideal $I$. Then we say a graded module $M$ is graded almost zero if $I\otimes_{gr}M=0$, and we define $\Moda_{\Gamma-gr}(R,I)$ as the Verdier quotient by graded almost zero modules. Then we can also define graded almost local modules as objects in the right semi-orthogonal complement of graded almost zero modules. The same argument as in \autoref{coro: almost local objects} can also show that $M$ is graded almost local if and only if $M_{-d}=\HOM_{gr}(R(d),M)\simeq \HOM_{gr}(I(d),M)$ for all $d\in \Gamma$ (here, degree shifts appear since $\HOM_{gr}$ only means degree $0$ morphisms).

\section{1-dimensional APT correspondence}\label{section: proof of APT}

\subsection{\texorpdfstring{{$\bR$-filtration and the polynomial Novikov ring}}{}}\label{section: module discussion 1D}

Let $\arbR$ be the partially ordered group of the real numbers, and we regard it as a symmetric monoidal category in the following way: The object set is $\bR$; one arrow $a\rightarrow b$ is in $\arbR$ if and only if $a\leq b$; the monoidal structure is given by the additive group structure of $\bR$, which is symmetric monoidal because $\bR$ is an abelian group. We also define $\bR^{ds}$ to be the set of reals as a discrete monoidal category.

\begin{Def}\label{def: rep 1d}We set $\Rep(\arbR)\coloneqq \Fun(\arbR,\Mod({\bfk}))$; its objects are called $\bR$-filtered ($\bfk$-)modules. We set the category of $\bR$-graded ($\bfk$-)modules to be $\Mod_{\bR-gr} ({\bfk})\coloneqq\Fun(\bR^{ds},\Mod(\bfk))=\prod_{\bR}\Mod({\bfk})$.
\end{Def}

Therefore, an object $M\in \Mod_{\bR-gr}({\bfk})$ consists of a family of objects $(M(a))_{a\in \bR}$ with $M(a)\in \Mod(\bfk)$; an object $M\in \Rep(\arbR)$ consists of a family of objects $(M(a))_{a\in \bR}$ with $M(a)\in \Mod(\bfk)$, and moreover for every pair $a\leq b$, an associated morphism $M(a\leq b) : M(a)\to M(b)$. 

Since $\arbR$, $\bR^{ds}$, and $\Mod({\bfk})$ are all symmetric monoidal, we can define monoidal structures on $\Mod_{\bR-gr}(\bfk)$ and $\Rep(\arbR)$ respectively: On $\Mod_{\bR-gr}(\bfk)$, we define the usual graded tensor product by $M\otimes_{gr} N (a)=\oplus_{a=s+t} M(s)\otimes N(t)$; on $\Rep(\arbR)$, we define the Day convolution monoidal structure on $\Rep(\arbR)$ via the formula $F\star G (a) \coloneqq \varinjlim_{a\geq s+t } F(s)\otimes G(t)$.

The tensor unit $\Lambda_{\geq}$ with respect to the Day convolution $\star$ is given by the formula $\Lambda_{\geq}(a)={\bfk}$ for $a\geq 0$ and $\Lambda_{\geq}(a)=0$ for $a<0$. The Day convolution tensor unit $\Lambda_{\geq}$ can be upgraded to a commutative algebra object in $\Mod_{\bR-gr}(\bfk)$ (i.e. a graded commutative ring) via the commutative ring spectrum structure of ${\bfk}$: precisely,  for $a,b\geq 0$ , we define $\Lambda_{\geq}(a) \otimes  \Lambda_{\geq}(b)={\bfk}\otimes  {\bfk}\xrightarrow{m_{\bfk}} \Lambda_{\geq}(a+b) ={\bfk}$ where $m_{\bfk}$ is the multiplication of ${\bfk}$; otherwise the multiplication is zero.  

\begin{RMK}Here, we use the notation $\Lambda_{\geq}$ to denote the corresponding graded algebra. We will explain later how it recovers the classical notion of the Novikov ring.   
\end{RMK}

The two categories $\Rep(\arbR)$ and $\Mod_{\bR-gr}(\bfk)$ are closely related. In fact, the functor $\bR^{ds}\to \arbR  , a \mapsto a$ induces the following restriction functor, which is also called the $\bR$-filtered Rees construction:
\[\Rees_\bR: \Rep(\arbR)=\Fun(\arbR,\Mod(\bfk)) \rightarrow \Mod_{\bR-gr}({\bfk})=\Fun(\bR^{ds},\Mod(\bfk)),\quad M\mapsto (M(a))_{a\in \bR} .\]

As a commutative algebra object $\Lambda_{\geq}$ in $\Mod_{\bR-gr}(\bfk)$, we can consider the category of modules in $\Mod_{\bR-gr}(\bfk)$ over $\Lambda_{\geq}$, namely 
\[\Mod_{\bR-gr}(\Lambda_{\geq}) \coloneqq \Mod_{\Lambda_{\geq}}( \Mod_{\bR-gr}({\bfk})),\]
whose objects are pairs $(M,\varphi)$ for $M\in \Mod_{\bR-gr}({\bfk})$ and an $\Lambda_\geq$-action morphism $\varphi: \Lambda_\geq \otimes_{gr}M\to M$ satisfying compatibility for modules.

We have the following result.
\begin{Prop}\label{prop: Rees construction}
    The $\bR$-filtered Rees construction induces a monoidal equivalence
\[\Rees_\bR: \Rep(\arbR)\simeq \Mod_{\bR-gr}(\Lambda_{\geq}).\]
Both categories are compactly generated and hence presentable.
\end{Prop}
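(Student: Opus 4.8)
The plan is to realize the claimed equivalence as an instance of the general Rees-construction/Day-convolution yoga. First I would observe that $\arbR$, viewed as a symmetric monoidal category via the poset structure and addition, admits a symmetric monoidal functor to $\bZ$-graded (indeed $\bR$-graded) objects: the assignment $a \mapsto$ "degree $a$" records the grading, while the order relations $a \leq b$ record the $\Lambda_{\geq}$-module structure, i.e.\ multiplication by the degree-$(b-a)$ generator $t^{b-a}$. Concretely, the monoidal unit $\Lambda_{\geq} \in \Rep(\arbR)$ computed by Day convolution is exactly $\bfk$ concentrated in nonnegative degrees with the multiplication induced by $m_\bfk$, and a general $F \in \Rep(\arbR) = \Fun(\arbR, \Mod(\bfk))$ is, by the universal property of Day convolution, the same data as a module over this unit in the category of $\bR$-graded $\bfk$-modules. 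The functor $\Rees_\bR$ sends $F$ to the $\bR$-graded module whose degree-$a$ piece is $F(a)$, with the $\Lambda_{\geq}$-action in degree $d \geq 0$ given by the structure maps $F(a) \to F(a+d)$ of the functor $F$.

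The key steps, in order, would be: (1) Identify $\Mod_{\bR-gr}(\Lambda_{\geq})$ with modules over the unit object inside the presentable symmetric monoidal category $\bR$-graded $\Mod(\bfk)$-objects, i.e.\ $\Fun(\bR_{\mathrm{disc}}, \Mod(\bfk))$ with its pointwise (Day) monoidal structure — this is essentially a definitional unwinding once one checks $\Lambda_{\geq}$ as an algebra in graded modules matches the classical $\bfk[\bR_{\geq 0}]$. (2) Invoke the standard fact (e.g.\ from the theory of Day convolution, cf.\ Lurie's \emph{Higher Algebra} on $\Mod$ over an algebra object) that for a small symmetric monoidal category $\cC$ and the free cocompletion $\PSh(\cC) = \Fun(\cC^{\op}, \Mod(\bfk))$ with Day convolution, presheaves on $\cC$ are modules over the unit — but here the cleaner route is: $\Rep(\arbR)$ with Day convolution has unit $\Lambda_{\geq}$, and there is a monoidal functor $\Rep(\arbR) \to \Mod_{\bR-gr}(\Lambda_{\geq})$; show it is an equivalence by exhibiting an explicit inverse, sending a graded module $M = \bigoplus_a M_a$ to the functor $a \mapsto M_a$ with transition maps given by multiplication by $t^{d}$. (3) Check the two composites are naturally equivalent to the identity — in one direction this is immediate, in the other it amounts to saying that a $\bR$-graded $\Lambda_{\geq}$-module is determined by its graded pieces together with the action of the generators $\{t^d\}_{d \geq 0}$, which is the content of $\Lambda_{\geq} = \bfk[\bR_{\geq 0}]$ being generated in this way. (4) Promote the equivalence to a \emph{monoidal} one: since both sides are presentably symmetric monoidal and the functor is built to carry $\star$-convolution to $\otimes_{\Lambda_{\geq}}$ (both being the relative tensor product / left Kan extension along addition $\bR \times \bR \to \bR$), the symmetric monoidal structure is transported along the equivalence; one verifies compatibility of the unit and of the tensor on a set of compact generators. (5) For the last sentence: $\arbR$ is small, so $\Fun(\arbR, \Mod(\bfk))$ is presentable and compactly generated (generated by the corepresentables $\{h^a = \arbR(a, -) \otimes \bfk\}_{a \in \bR}$, which correspond under $\Rees_\bR$ to the free rank-one graded modules $\Lambda_{\geq}(-a)$); hence so is $\Mod_{\bR-gr}(\Lambda_{\geq})$.

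The main obstacle I anticipate is not the underlying equivalence of categories — that is a routine Rees/Day-convolution statement — but rather making the \textbf{monoidal} comparison fully precise at the $\infty$-categorical level: one must match the lax-monoidal-Kan-extension description of Day convolution $\star$ with the bar-construction description of the relative tensor product $\otimes_{\Lambda_{\geq}}$, and check the coherences. The clean way to handle this is to note that both categories arise as $\Mod_{\Lambda_{\geq}}$ of the same presentably symmetric monoidal category $\Fun(\bR_{\mathrm{disc}}, \Mod(\bfk))$ — on the $\Rep(\arbR)$ side via the monoidal localization/restriction comparing $\arbR$ and $\bR_{\mathrm{disc}}$, on the other side by definition — so that the monoidal equivalence is forced by the universal property of $\Mod$ over an algebra object, avoiding any hands-on cocycle manipulation. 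A secondary, minor point is to confirm that the algebra object $\Lambda_{\geq} \in \Rep(\arbR)$ really does recover the classical polynomial Novikov ring $\bfk[\bR_{\geq 0}]$ as a discrete graded ring (when $\bfk$ is discrete), which is a direct computation of the Day-convolution unit and its multiplication, already indicated in the paragraph preceding the proposition.
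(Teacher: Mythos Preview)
Your proposal is correct and takes essentially the same approach as the paper: the paper simply cites \cite[Proposition 3.1.6]{Rotation-inv-Lurie} (the $\bZ$-filtered version) and observes that the argument goes through verbatim with $\bR$ in place of $\bZ$, then spells out the construction exactly as you do---the forgetful functor $\arbR \to \bR^{ds}$ induces $\Rees_\bR$ landing in $\Mod_{\bR-gr}(\bfk)$, the filtration upgrades this to a $\Lambda_\geq$-module structure, and the resulting functor into $\Mod_{\Lambda_\geq}(\Fun(\bR^{ds},\Mod(\bfk)))$ is a monoidal equivalence. Your ``clean way'' at the end (both sides are $\Mod_{\Lambda_\geq}$ of the same ambient monoidal category) is precisely the content of Lurie's cited argument, and compact generation is likewise deferred to \cite[2.2.3, 2.2.4]{Rotation-inv-Lurie}; you have simply unpacked more of what those references contain.
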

\begin{proof}In \cite[Proposition 3.1.6]{Rotation-inv-Lurie}, Lurie proves a $\bZ$-filtered version of the result. We notice that the proof still works after replacing $\bZ$ with $\bR$. Here, we describe the construction precisely.

For $M\in \Rep(\arbR)$, we can equip $\Rees_\bR(M)$ with an action of $\Lambda_{\geq}$ since $M$ is an $\bR$-filtered object: for $b\geq a$, we define a map
\[
\varphi_b: \Lambda(b-a) \otimes M(a) \simeq M(a)\xrightarrow{M(a\leq b)} M(b),\]
then its colimit over $b=(b-a)+a$ gives a morphism
\[\varphi:\Lambda\otimes_{gr} \Rees_\bR(M) \to \Rees_\bR(M).\]

Thus, the restriction functor factors through a functor, which is still denoted by $\Rees_\bR$:
\[\Rep(\arbR)\rightarrow \Mod_{\bR-gr}(\Lambda_{\geq}) \coloneqq \Mod_{\Lambda_{\geq}}( \Mod_{\bR-gr}({\bfk})).\]

The claim of \cite[Proposition 3.1.6]{Rotation-inv-Lurie} is that this lax monoidal functor is a monoidal equivalence.

For the compact generation of $\Mod_{\bR-gr}(\Lambda_{\geq})$, we refer to \cite[2.2.3, 2.2.4]{Rotation-inv-Lurie}.
\end{proof}
\begin{RMK}\label{remark: filtration vs grading}
Here, an $\bR$-graded object $M$ simply means that there is an $\bR$-family of objects $M(a)\in\Mod(\bfk)$ parameterized by $a\in \bR$. When $M$ admits a $\Lambda_\geq$-action, it simply means that we have morphisms $M(a\leq b):M(a)\to M(b)$ for $a\leq b$.

From this point of view, contrary to the abelian world, objects in $\Mod_{\bR-gr}(\Lambda_{\geq})$ have mixed features of both gradings and filtrations, and it makes less sense to distinguish them. Here, we simply use the terminology $\bR$-grading if we want to emphasize the $\Lambda_\geq$-action; we may also call them $\bR$-filtered objects if we want to emphasize it arises from $\Rep(\arbR)$.
\end{RMK}

\begin{RMK}Informally, the functor can be written as $\Rees_{\bR}M=\bigoplus_a M(a) t^a$, which motivates us to call it the Rees construction. 

In the discrete ring case, we have that $\Rees_{\bR}\Lambda_\geq = \bigoplus_{a\geq 0} \Lambda_\geq(a)\, t^a = \bigoplus_{a\geq 0} {\bfk}\, t^a$ is exactly the  polynomial Novikov ring. Then we will tautologically define the spectral polynomial Novikov ring $\Lambda_\geq$ as $\Rees_{\bR}\Lambda_\geq$, which is equivalent, after forgetting the grading, to $(\Sigma^\infty_+ \bR_{\geq 0})\wedge \bfk$ as a commutative ring spectrum by viewing $\bR_{\geq 0}$ as a discrete topological space.     
\end{RMK}

Now, we set $\bR_\delta$ to be the discrete additive group $\bR$. We have $\bR_\delta$-actions on both sides of the equivalence \autoref{prop: Rees construction}. On the representation side, the action is induced from the action on $\arbR$. On the graded module side, the action is given as follows: for $b\in \bR_\delta$, we have
\begin{equation}\label{equation: R-action formula on graded modules}
\nT_b ( M ) (a)\coloneqq M(a+b).
\end{equation}

It is clear that the equivalence in \autoref{prop: Rees construction} is $\bR_\delta$-equivariant.

Moreover, we have
\begin{Coro}\label{coro: non-graded equivalence}The grading-forgetting functor $\Mod_{\bR-gr}(\Lambda_{\geq})\rightarrow \Mod({\bfk})$ induces a monoidal equivalence 
\[\Rep(\arbR)^{\bR_\delta}\simeq\Mod_{\bR-gr}(\Lambda_{\geq})^{{\bR_\delta}}\xrightarrow{\simeq} \Mod(\Lambda_{\geq}).\]    
\end{Coro}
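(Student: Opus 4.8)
The plan is to deduce this from the $\bR_\delta$-equivariant monoidal equivalence $\Rep(\arbR)^{\bR_\delta}\simeq \Mod_{\bR-gr}(\Lambda_\geq)^{\bR_\delta}$ already furnished by \autoref{prop: Rees construction}, together with an analysis of homotopy fixed points for the $\bR_\delta$-action on $\Mod_{\bR-gr}(\Lambda_\geq)$. Since the Rees equivalence is $\bR_\delta$-equivariant and monoidal, it induces a monoidal equivalence on homotopy fixed-point categories, so the only content is the second equivalence $\Mod_{\bR-gr}(\Lambda_\geq)^{\bR_\delta}\xrightarrow{\simeq}\Mod(\Lambda_\geq)$, realized by the forgetful functor that discards the grading.

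First I would make precise what the $\bR_\delta$-action does at the level of graded modules: by \eeqref{equation: R-action formula on graded modules}, $\nT_b$ sends $(M(a))_{a\in\bR}$ to $(M(a+b))_{a\in\bR}$, i.e. it is translation of the grading. A homotopy fixed point is then a graded $\Lambda_\geq$-module $M$ together with coherent identifications $\nT_b M\simeq M$ for all $b\in\bR_\delta$, compatible with composition. Heuristically, such a coherent system of grading-translation isomorphisms should be exactly the data of descent along the quotient $\bR^{ds}\to \bR^{ds}/\bR_\delta=\pt$, collapsing the $\bR$-grading to no grading; the forgetful functor $\Mod_{\bR-gr}(\Lambda_\geq)\to\Mod(\Lambda_\geq)$ is the induced comparison. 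The cleanest way to organize this is to observe that the $\bR$-grading on $\Mod_{\bR-gr}(\bfk)=\Fun(\bR^{ds},\Mod(\bfk))$ together with the $\bR_\delta$-translation action exhibits $\Mod_{\bR-gr}(\bfk)$ with its $\bR_\delta$-action as (co)induced from $\Mod(\bfk)$ along $\pt\to B\bR_\delta$ — concretely $\Fun(\bR^{ds},\Mod(\bfk))\simeq \Fun(\bR_\delta,\Mod(\bfk))$ with $\bR_\delta$ acting by translation on itself — so that taking $\bR_\delta$-homotopy fixed points returns $\Mod(\bfk)$ by the standard adjunction $(\mathrm{coInd}\ X)^{\bR_\delta}\simeq \Fun(B\bR_\delta, X)^{\ast}$... more simply, $\Fun(\bR_\delta,\cD)^{\bR_\delta}\simeq \cD$ for any $\cD$ with trivial action, the fixed points picking out the "constant" (i.e. equivariant) functions, which are the value at any single point. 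One then upgrades this along the forgetful functor $\Mod(\Lambda_\geq)\to\Mod(\bfk)$, checking the $\Lambda_\geq$-module structure is carried along, and finally checks the monoidal structure (convolution/tensor over $\Lambda_\geq$) is respected, which is automatic since the forgetful functor $\Mod_{\bR-gr}(\Lambda_\geq)\to\Mod(\Lambda_\geq)$ is symmetric monoidal and we are just restricting it to fixed points.

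The main obstacle I anticipate is the homotopy-coherence bookkeeping: verifying that "homotopy fixed points of the grading-translation action recover the ungraded category" is genuinely an equivalence of $\infty$-categories and not merely of homotopy categories, and that the forgetful functor is the equivalence (rather than some twist of it). The slickest route avoiding an explicit cofinality or bar-resolution computation is to present $\bR^{ds}$ as the $\bR_\delta$-orbit of a point, i.e. to use the equivalence of $\infty$-categories $\Fun(\bR_\delta,\cD)^{h\bR_\delta}\simeq \cD$ valid for any $\cD$, which itself follows from the fact that $\bR_\delta\to\pt$ is an $\bR_\delta$-equivariant equivalence after the free action is ``co-induced'', hence the limit over $B\bR_\delta$ of the co-induced diagram is $\cD$. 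Applied with $\cD=\Mod(\Lambda_\geq)$, this gives $\Mod_{\bR-gr}(\Lambda_\geq)^{\bR_\delta}\simeq\Mod(\Lambda_\geq)$ with the forgetful functor (evaluation at $0\in\bR$) as the equivalence, and combining with the $\bR_\delta$-equivariance and monoidality of $\Rees_\bR$ from \autoref{prop: Rees construction} completes the proof.
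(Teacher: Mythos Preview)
Your overall strategy—identifying the $\bR_\delta$-action on $\Mod_{\bR-gr}(\Lambda_\geq)$ as coinduced and then invoking Shapiro's lemma $\Fun(G,\cD)^{hG}\simeq\cD$—is a reasonable alternative to the paper's approach, but the execution has a genuine gap. In your final paragraph you write ``Applied with $\cD=\Mod(\Lambda_\geq)$, this gives $\Mod_{\bR-gr}(\Lambda_\geq)^{\bR_\delta}\simeq\Mod(\Lambda_\geq)$''. This presupposes an identification $\Mod_{\bR-gr}(\Lambda_\geq)\simeq\Fun(\bR_\delta,\Mod(\Lambda_\geq))$ as $\bR_\delta$-categories, which is false: an $\bR$-graded $\Lambda_\geq$-module is \emph{not} an $\bR$-indexed family of ungraded $\Lambda_\geq$-modules, because the action of $t^c$ shifts the grading rather than acting within each graded piece. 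Your earlier ``upgrading along $\Mod(\Lambda_\geq)\to\Mod(\bfk)$'' gesture does not rescue this, since $\Lambda_\geq$ is not an $\bR_\delta$-fixed object of $\Mod_{\bR-gr}(\bfk)$ (indeed $\nT_b\Lambda_\geq\simeq\Lambda_{\geq -b}$), so there is no obvious way to pass the module structure through the fixed-point functor. The correct salvage of your idea is to work one level down: use $\Rep(\arbR)=\Fun(\arbR,\Mod(\bfk))$ directly, observe that the free $\bR_\delta$-translation action on $\arbR$ has homotopy quotient $(\arbR)_{h\bR_\delta}\simeq B\bR_{\geq 0}$ (one object, a morphism for each $r\geq 0$), and then $\Fun(\arbR,\Mod(\bfk))^{h\bR_\delta}\simeq\Fun(B\bR_{\geq 0},\Mod(\bfk))\simeq\Mod(\Lambda_\geq)$.

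By contrast, the paper's proof proceeds dually through homotopy \emph{orbits}: it promotes the $\bR_\delta$-action to an action of the monoidal category $\cC=\Fun(\bR^{ds},\Mod(\bfk))$, invokes the base-change formula $\Mod_{\Lambda_\geq}(\cC)\otimes_\cC\Mod(\bfk)\simeq\Mod(\Lambda_\geq)$ from \cite[4.8.4.6, 4.8.5.21]{HA}, identifies this relative tensor product with the bar construction and hence with $\Rep(\arbR)_{h\bR_\delta}$, and finally passes from orbits to fixed points via ambidexterity for discrete group actions in $\PrSt$. The payoff of the paper's route is that the monoidal structure comes for free (the base-change formula lives in $\CAlg(\PrSt)$), whereas in the fixed-point approach you would need to separately justify that the monoidal structure on $\Mod(\Lambda_\geq)$ arising from $\Fun(B\bR_{\geq 0},\Mod(\bfk))$ agrees with the usual one—note in particular that the $\bR_\delta$-action on $\Rep(\arbR)$ is \emph{not} by monoidal functors (one computes $\nT_b M\star\nT_b N\simeq\nT_{2b}(M\star N)$), so the existence of a symmetric monoidal structure on the fixed points is less immediate than you suggest.
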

\begin{proof}Let $\cC=\Fun(\bR^{ds},\Mod(\bfk))$. The trivial $\bR_\delta$-action on $\Mod(\bfk)$ gives a trivial action of $\cC$ on $\Mod(\bfk)$ by the left Kan extension: Precisely, we have a monoidal functor $\cC\rightarrow \Fun^L_{\bfk}(\Mod({\bfk}),\Mod({\bfk}))\simeq \Mod({\bfk})$ given by the grading-forgetting functor $\cC\rightarrow \Mod({\bfk})$, $(M_a)\mapsto \bigoplus_a M_a$. 

Similarly, the $\bR_\delta$-action on $\Rep(\arbR)=\Mod_{\Lambda_{\geq}}(\cC)$ induces a $\cC$-action on $\Rep(\arbR)$ by \cite[Corollary 2.4.4]{Rotation-inv-Lurie} (a proof for $\bZ^{ds}$ is given, but the proof for $\bR^{ds}$ is similar) since $\Rep(\arbR)$ is presentable and stable.

Therefore, by viewing $\Lambda_\geq$ as the commutative $\bfk$-algebra after forgetting the grading, we have the following monoidal equivalence, i.e., an equivalence in $\CAlg(\PrSt)$:
\[\Mod_{\Lambda_{\geq}}(\cC) \otimes_{\cC}\Mod(\bfk)\simeq \Mod(\Lambda_{\geq})\]
by \cite[Theorem 4.8.4.6, Corollary 4.8.5.21]{HA}.

On the other hand, since $\bR_\delta$ is a discrete group acting on $\Rep(\arbR)\simeq \Mod_{\Lambda_{\geq}}(\cC)$, the relative tensor product is computed in the following way
\begin{equation*}
    \begin{split}
\Rep(\arbR) \otimes_{\cC}\Mod(\bfk)
=\varinjlim_n \Rep(\arbR) \otimes \cC^{\otimes n} \otimes \Mod(\bfk)
= \varinjlim_n \Rep(\arbR) \otimes \cC^{\otimes n} 
= \Rep(\arbR)_{\bR_\delta} 
=\Rep(\arbR)^{\bR_\delta}.
    \end{split}
\end{equation*}

Here, the first equality is the definition of relative tensor product, the second equality follows since $\Mod(\bfk)$ is the unit of $\otimes=\otimes_\bfk$. In particular, we obtain the Borel construction of the homotopy orbit. Then the third equality follows because, for a discrete group, we can compute the homotopy orbit either using the Borel construction or using a colimit over $B\bR_\delta$. The last equality follows since $\bR_\delta$ is a group, we can switch a colimit over $B\bR_\delta$ to a limit over $B\bR_\delta$ using $\PrSt\simeq (\PrStR)^\op$.
\end{proof}

In commutative algebra, we have the famous Greenlees--May duality \cite[Section 47.12]{StacksProject}. In particular, it applies to our setting: For $\Lambda_\bR=\Lambda_\geq[t^{-1}]$, we identify $\Mod(\Lambda_\bR)$ with a reflective subcategory of $\Mod(\Lambda_\geq)$, then we have the following equivalence
\begin{equation}\label{equation: Greenlees--May duality}
 \Mod(\Lambda_\bR)^\perp\eqqcolon \Mod_{t-comp}(\Lambda_\geq) \simeq \Mod_{t-tor}(\Lambda_\geq) \coloneqq ^{\perp}\Mod(\Lambda_\bR).
\end{equation}

More precisely, since $\Lambda_\bR$ is a compact generator of $ \Mod(\Lambda_\bR)$, $M\in  \Mod_{t-comp}(\Lambda_\geq)$ is equivalent to $\HOM_{\Lambda_\geq}(\Lambda_\bR,M)=0$, and $M\in  \Mod_{t-tor}(\Lambda_\geq)$ is equivalent to $\HOM_{\Lambda_\geq}(M,\Lambda_\bR)=0$. 

Notice that $(t)$ is a principal ideal of $\Lambda_\geq$. It is explained in \cite[Lemma 15.93.18]{StacksProject} that $M\in  \Mod_{t-comp}(\Lambda_\geq)$ is equivalent to $M\cong \varprojlim_n M\otimes_{\Lambda_\geq} (\Lambda_\geq/t^n\Lambda_\geq)$ (where the right hand side is the derived $t$-adic completion of $M$). It is explained in \cite[Lemma 47.8.1, 47.8.2]{StacksProject} that $M\in  \Mod_{t-tor}(\Lambda_\geq)$ is equivalent to $M\cong \varinjlim_n \HOM_{\Lambda_\geq}(\Lambda_\geq/t^n\Lambda_\geq,M)$. Be careful that $\otimes_{\Lambda_\geq}$ and $\HOM_{\Lambda_\geq} $ are both derived.

Under the identification in \autoref{coro: non-graded equivalence}, $\Mod(\Lambda_\bR)$ is identified with the full subcategory of constant functors in $\Rep(\arbR)^{\bR^\delta}$, and we obtain a localization (i.e., the completion functor)
    \begin{equation}
        \Rep(\arbR)^{\bR^\delta}\to \Mod_{t-comp}({\Lambda}_\geq)
    \end{equation}
whose kernel consists of constant functors.

\textbf{Warning}: Objects in $\Mod_{t-tor}$ are called $t$(-power)-torsion modules, whose classical counterpart means that: for all $m\in M$ there exists $n$ such that $t^n m=0$. This corresponds to torsion of quasi-coherent sheaves in algebraic geometry. We will also use another notion of torsion (Tamarkin torsion), so we should be careful to distinguish them.

\subsection{\texorpdfstring{Almost modules over $\Lambda_{\geq }$ and sheaves over $\bR$}{}}

Here, we will take $R=\Lambda_{\geq }$ and $I_\geq$ defined by $I_\geq(a)=\bfk$ for $a>0$ and $I_\geq( a)=0$ otherwise. Then it is clear that they define an almost content in the $\bR$-graded sense. We will fix the content in this section without further notice.

Recall that $\bR_{(-\infty,0]}$ is the topology on $\bR$ whose open sets are of the form $(-\infty,x)$ for $x\in [-\infty,\infty]$. It is clear that $\arbR \rightarrow \operatorname{Open}(\bR_{(-\infty,0]})^{\op}, a \mapsto (-\infty,-a)$ defines a functor. In fact, we may identify $\bR_{(-\infty,0]}$ with $\arbR\cup \{\pm\infty\}$, and then the functor induces a restriction 
\begin{equation}\label{equation: PSH=Rep 1d}
 \PSh(\bR_{(-\infty,0]})\rightarrow  \Rep(\arbR),\quad F\mapsto \widetilde{F}=[a\mapsto F((-\infty,-a))[-1]].   
\end{equation}

For the closed conic set $\bR\times [0,\infty) \subset T^*\bR$, we set $\Sh_{\geq }(\bR)=\Sh_{\bR\times [0,\infty)}(\bR)$. By the microlocal cut-off lemma, i.e., \autoref{lemma: microlocal cut-off}, we naturally identify $\Sh_{\geq }(\bR)$ with $\Sh(\bR_{(-\infty,0]})$. 

\begin{Prop}\label{prop: APT}The functor \eqref{equation: PSH=Rep 1d} induces the following equivalence 
\[\Sh_{\geq }(\bR) \simeq \Sh(\bR_{(-\infty,0]}) \simeq  \Moda_{\bR-gr} (\Lambda_{\geq }).\]

Equipping $\Sh_{\geq }(\bR)$ with the $\star$-convolution, i.e., $F\star G\coloneqq s_!(F\boxtimes G)$ where $s(x,y)=x+y$, we have that the equivalence is monoidal.
\end{Prop}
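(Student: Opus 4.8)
The plan is to identify the two known descriptions of $\Sh_{\geq 0}(\bR)$ — as a localization of presheaves on $\bR_{(-\infty,0]}$, and as a Verdier quotient of a graded module category — and check that the localization and the quotient agree under the Rees equivalence of \autoref{prop: Rees construction}. Concretely, I would first record that $\Sh_{\geq 0}(\bR)\simeq\Sh(\bR_{(-\infty,0]})$ is precisely \autoref{lemma: microlocal cut-off} (with $\gamma=[0,\infty)$, so $\sigma=\bR\times[0,\infty)$ in the notation there), and that $\Sh(\bR_{(-\infty,0]})$ is the left Bousfield localization of $\PSh(\bR_{(-\infty,0]})\simeq\Rep(\arbR)$ at the sheafification functor. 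On the other side, by the discussion of \autoref{section: almost ring theory} applied to the $\bR$-graded almost content $(\Lambda_{\geq},\Lambda_{>})$, the category $\Moda_{\bR-gr}(\Lambda_{\geq})$ is by definition the Verdier quotient of $\Mod_{\bR-gr}(\Lambda_{\geq})$ by the graded almost zero modules, equivalently the localization killing $\{M:\Lambda_{>}\otimes_{\Lambda_\geq}M=0\}$.

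The heart of the argument is thus a comparison of two reflective (equivalently, localizing) subcategories of $\Rep(\arbR)\simeq\Mod_{\bR-gr}(\Lambda_{\geq})$: on the presheaf side, the kernel of sheafification on $\bR_{(-\infty,0]}$; on the module side, the graded almost zero modules. I would show both coincide with the subcategory of \emph{ephemeral} filtered modules, namely those $F\in\Rep(\arbR)$ for which the structure map $F(a)\to F(b)$ is zero (an equivalence to $0$) whenever $a<b$. For the module side this is immediate from unwinding $\Lambda_{>}\otimes_{\Lambda_\geq}M$ via the Rees dictionary $\Rees_\bR M=\bigoplus_a M(a)t^a$: tensoring with the ideal generated in strictly positive degrees forces all positive-degree multiplication maps, i.e. all nontrivial transition maps, to vanish. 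For the presheaf side, a presheaf on $\bR_{(-\infty,0]}$ has trivial sheafification iff all its stalks vanish, and the stalk at a point governed by the nested intervals $(-\infty,-a)$, $a$ increasing, is $\varinjlim_{b>a} F((-\infty,-b))$; in filtered-module terms (via \eqref{equation: PSH=Rep 1d}) this is $\varinjlim_{b>a} \widetilde F(b)$, and its vanishing for all $a$ is exactly the ephemeral condition — here one uses that $\bR_{(-\infty,0]}$ is the $\gamma$-topology and stalks are computed along the cofinal family of basic opens, together with hypercompleteness of $\Sh(\bR^n_\gamma)$ from the Corollary after \autoref{lemma: microlocal cut-off}. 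Once the kernels agree, the induced equivalence on quotients follows from the universal property of Verdier localization, and the identification of sheafification with $\bR$-graded almostlization is automatic since both are the reflector onto the common local objects.

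It remains to upgrade the equivalence to a monoidal one. The Rees equivalence of \autoref{prop: Rees construction} is already monoidal for the Day convolution $\star$ on $\Rep(\arbR)$ and $\otimes_{\Lambda_\geq}$ on $\Mod_{\bR-gr}(\Lambda_\geq)$. Both localizations are smashing: the almostlization $(-)^a$ is a left Bousfield localization whose kernel is a $\otimes$-ideal (if $\Lambda_{>}\otimes M=0$ then $\Lambda_{>}\otimes(M\otimes N)=0$), and symmetrically the sheafification on $\bR_{(-\infty,0]}$ is monoidal for $\star$ since the kernel (ephemeral objects) is a $\star$-ideal — or, more invariantly, because $\Sh_{\geq 0}(\bR)$ is a localization of $\Sh(\bR)$ compatible with convolution (see \cite{GS2014}), and $s_!(F\boxtimes G)$ of the quoted formula is exactly the Day convolution transported along \eqref{equation: PSH=Rep 1d}. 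Hence both quotient categories inherit symmetric monoidal structures from the respective inputs and the induced equivalence intertwines them; one just checks the reflectors are monoidal, which is the standard fact that a Bousfield localization at a $\otimes$-ideal is monoidal.

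\textbf{Main obstacle.} The technical crux is the first comparison — showing that the kernel of sheafification for the $\gamma$-topology $\bR_{(-\infty,0]}$ matches the graded almost zero modules. The subtlety is that $\bR_{(-\infty,0]}$ is a non-Hausdorff, highly non-Noetherian topology, so one must be careful that ``trivial sheafification'' is correctly detected by the stalks along the cofinal chain of basic opens $(-\infty,-b)$, and that the relevant colimit $\varinjlim_{b>a}$ (a non-principal, filtered colimit over a dense order) really is computed by the naive formula rather than some derived correction; hypercompleteness of $\Sh(\bR^n_\gamma)$ and the identification $\PSh(\bR_{(-\infty,0]})\simeq\Rep(\arbR)$ of \eqref{equation: PSH=Rep 1d} are what make this go through cleanly, and the shift $[-1]$ there must be tracked. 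Everything else — the Rees dictionary, the universal property of Verdier quotients, the monoidality of smashing localizations — is routine once this identification is in hand.
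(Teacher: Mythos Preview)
Your approach is correct and is a genuine dual to the paper's. The paper works on the side of \emph{local objects}: it shows directly that the sheaf condition on $\bR_{(-\infty,0]}$, namely $F((-\infty,-a))\simeq\varprojlim_{b>a}F((-\infty,-b))$, translates under $\Rees_\bR$ into the graded almost-local condition $\HOM_{gr}(\nT_{-a}\Lambda_\geq,M)\simeq\HOM_{gr}(\nT_{-a}\Lambda_>,M)$, using the explicit identification $\Rees_\bR\widetilde{U_a}=\Lambda_{\geq a}$ and $\Rees_\bR(\varinjlim_{b>a}\widetilde{U_b})=\nT_{-a}\Lambda_>$ (colimit taken in presheaves). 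You instead match the \emph{acyclic objects}: a presheaf has zero sheafification iff its stalks $\varinjlim_{b<c}\widetilde F(b)$ vanish, which is exactly $(\Lambda_>\otimes M)(c)=0$ via the Day-convolution formula, and you observe this is equivalent to all transition maps being zero. Your ephemeral characterization is fine; the factoring $M(a)\to\varinjlim_{t<b}M(t)\to M(b)$ gives the nontrivial direction. For monoidality the paper checks on generators ($U_0\star U_0\simeq U_0$, since both sides are colimit-generated by the $U_a\leftrightarrow\Lambda_{\geq a}$), while you argue structurally that the kernel is a $\otimes$-ideal so the localization is monoidal; both are valid, though your passing remark that ``$s_!(F\boxtimes G)$ is exactly the Day convolution transported along \eqref{equation: PSH=Rep 1d}'' deserves one more line of justification (left Kan extension along $s$ at the presheaf level, then compatibility with sheafification). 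The paper's route has the practical advantage of producing the generator correspondence $U_a\leftrightarrow\Lambda_{\geq a}$ explicitly, which is reused later; yours makes the corollary ``sheafification $=$ almostlization'' fall out for free.
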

\begin{proof}By \autoref{lemma: microlocal cut-off}, we only consider $\Sh(\bR_{(-\infty,0]})$ here. The sheaf category is fully faithfully embedded into the presheaf category. Then by \eqref{equation: PSH=Rep 1d}, we have a functor,
\[ \Sh(\bR_{(-\infty,0]}) \rightarrow \Rep(\arbR) , \quad F \mapsto \widetilde{F}(a)\coloneqq F((-\infty,-a))[-1]=\HOM(1_{(-\infty,-a)}[1],F),\]
where $1_{(-\infty,-a)}$ denotes the rank 1 constant sheaf on $(-\infty, -a)$.
Warning: the functor only preserves limits but not colimits, which is crucial to the proof!

Now, we claim that under the functor of \autoref{prop: Rees construction}, the sheaves here are identified with $\bR$-graded almost local modules over $\Lambda_{\geq }$: i.e., we need to show that for all $a\in \bR$ and $F\in \Sh(\bR_{(-\infty,0]})$, we have
\[\HOM_{gr}(\nT_{-a}\Lambda_{\geq }, \Rees_{\bR}\widetilde{F})=\HOM_{gr}(\nT_{-a}I_\geq, \Rees_{\bR}\widetilde{F}) .\]

In fact, a direct computation shows that, for $U_a=1_{(-\infty,-a)}[1]$, we have 
\[\Rees_{\bR}\widetilde{U_a}=\Lambda_{\geq a}=\nT_{-a}\Lambda_{\geq }. \]

Then, as $F\mapsto \widetilde{F}$ and $M\mapsto \Rees_{\bR}M$ are fully faithful functors, we have 
\[\HOM_{gr}(\nT_{-a}\Lambda_{\geq }, \Rees_{\bR}\widetilde{F})=\HOM_{gr}(\Rees_{\bR}\widetilde{U_a}, \Rees_{\bR}\widetilde{F})=\HOM(\widetilde{U_a}, \widetilde{F}) = \HOM(U_a, F).\]

Now, if $F$ is a sheaf, we have 
\[\HOM(U_a, F)=F((-\infty,-a))[-1]= \varprojlim_{b>a}F((-\infty,-b))[-1] = \varprojlim_{b>a}\HOM(U_a, F)=\HOM(\varinjlim_{b>a}U_b, F).\]

Importantly, the colimit here is taken in $\PSh$ rather than $\Sh$! Since the equivalence of \autoref{prop: Rees construction} and the functor \eqref{equation: PSH=Rep 1d} preserve colimits, we have
\[\Rees_{\bR}\widetilde{\varinjlim_{b>a}U_b} =\nT_{-a}I_\geq.\]

Therefore, we have
\[ \HOM_{gr}(\nT_{-a}\Lambda_{\geq }, \Rees_{\bR}\widetilde{F})=\HOM(U_a, F)= \HOM(\varinjlim_{b>a}U_b, F)=\HOM_{gr}(\nT_{-a}I_\geq, \Rees_{\bR}\widetilde{F}) .\]

In summary, if $F$ is a sheaf, the $\bR$-graded $\Lambda_{\geq }$-module $\Rees_{\bR}\widetilde{F}$ is graded almost local. To show the converse, we just reverse our computation.

To show the equivalence is monoidal, we notice that the sheaf side is generated by $\{U_a\}_{a\in \bR}$ by colimits and the module side is generated by $\{\Lambda_{\geq a}\}_{a\in \bR}$ by colimits. Moreover, the $\star$-convolution and Day convolution both commute with colimits and shifts in each variable. Therefore, we only need to show that $U_0 \star U_0 \simeq U_0$ since $\Lambda_\geq $ is the Day convolution unit. However, $U_0 \star U_0 \simeq U_0$ is true by a direct computation.
\end{proof}

\begin{Coro}Under the equivalence of \autoref{prop: APT}, the almostification factors through the sheafification functor via the diagram where the upper horizontal arrow is the left Kan extension of the functor \eqref{equation: PSH=Rep 1d};
\[\begin{tikzcd}
{\PSh(\bR_{(-\infty,0]}) } \arrow[d]         & \Mod_{\bR-gr} (\Lambda_{\geq }) \arrow[d] \arrow[l] \\
{\Sh(\bR_{(-\infty,0]})} \arrow[r, "\simeq"] & \Moda_{\bR-gr} (\Lambda_{\geq }) .                
\end{tikzcd}\]

\end{Coro}
\begin{proof}The sheafification functor $\PSh(\bR_{(-\infty,0]})\rightarrow \Sh(\bR_{(-\infty,0]})$ is the left adjoint of the inclusion  $\Sh(\bR_{(-\infty,0]})\hookrightarrow\PSh(\bR_{(-\infty,0]})$. Moreover, the left Kan extension $\Mod_{\bR-gr} (\Lambda_{\geq }) \rightarrow \PSh(\bR_{(-\infty,0]})=\Fun(\arbR \cup \{\pm \infty\},\Mod(\bfk))$ is also the left adjoint of the restriction \eqref{equation: PSH=Rep 1d}.

On the other hand, in the above proposition, we identify $\Moda_{\bR-gr} (\Lambda_{\geq })$ with the right semi-orthogonal complement of the graded almost zero modules in $\Mod_{\bR-gr} (\Lambda_{\geq })$. Under the identification, the natural inclusion $\Moda_{\bR-gr} (\Lambda_{\geq }) \hookrightarrow \Mod_{\bR-gr} (\Lambda_{\geq })$ is the right adjoint of the almostification functor. Thus, we conclude by the uniqueness of the adjoint.    
\end{proof}

\begin{RMK}
    Another way to understand the presence of almost math is that the sheafification makes $1_{(-\infty,0)}$ fail to be compact; but $\Lambda_\geq$ is compact as a $\Lambda_\geq$-module and non-compact as an almost module (in both graded or non-graded sense). We verify here that what makes $1_{(-\infty,0)}$ non-compact is always almost zero.
\end{RMK}
 
Considering the $\bR_\delta$-action, we have:
\begin{Prop}\label{prop: equivariant APT}The equivalence \autoref{coro: non-graded equivalence} induces the following monoidal equivalence 
\[\Sh^{\bR_\delta}_{\geq }(\bR) \simeq  \Moda (\Lambda_{\geq }).\]
\end{Prop}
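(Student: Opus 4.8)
The plan is to take the $\bR_\delta$-homotopy fixed points of the monoidal equivalence $\Sh_{\geq 0}(\bR) \simeq \Moda_{\bR-gr}(\Lambda_{\geq})$ established in \autoref{prop: APT}, and identify each side with the stated non-graded category. First I would recall that by \autoref{coro: non-graded equivalence} the forgetting-grading functor gives a monoidal equivalence $\Rep(\arbR)^{\bR_\delta} \simeq \Mod(\Lambda_\geq)$, obtained as a relative tensor product $\Rep(\arbR) \otimes_{\cC} \Mod(\bfk)$ where $\cC = \Fun(\bR^{ds},\Mod(\bfk))$. The key observation is that the proof of \autoref{coro: non-graded equivalence} only used presentability, stability, and the $\cC$-module (equivalently, $\bR$-graded) structure; all of these features are shared by $\Sh_{\geq 0}(\bR) \simeq \Moda_{\bR-gr}(\Lambda_\geq)$, which is presentable stable (being a reflective/Bousfield localization of $\Rep(\arbR)$, or directly from \autoref{prop: Rees construction} plus the fact that $\Moda_{\bR-gr}(\Lambda_\geq)$ is a Verdier quotient of presentable stable categories).

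The key steps, in order, are: (1) observe that the $\bR_\delta$-action on $\Moda_{\bR-gr}(\Lambda_\geq)$ is the one descending from the graded-module action formula \eqref{equation: R-action formula on graded modules}, $\nT_b(M)(a) = M(a+b)$, and that it is compatible with the $\bR_\delta$-action on $\Sh_{\geq 0}(\bR)$ induced from translation on $\arbR$ (this is immediate since the equivalence of \autoref{prop: APT} is built from the $\bR_\delta$-equivariant equivalence $\Rees_\bR$ and $\bR_\delta$-equivariant identifications); (2) run the same relative-tensor-product computation as in \autoref{coro: non-graded equivalence}: the $\cC$-module structure on $\Moda_{\bR-gr}(\Lambda_\geq)$ lets us form $\Moda_{\bR-gr}(\Lambda_\geq) \otimes_{\cC} \Mod(\bfk)$, which on one hand computes the homotopy orbits $\Moda_{\bR-gr}(\Lambda_\geq)_{\bR_\delta} \simeq \Moda_{\bR-gr}(\Lambda_\geq)^{\bR_\delta} = \Sh_{\geq 0}^{\bR_\delta}(\bR)$ (using that $\bR_\delta$ is a discrete group so orbits and fixed points agree in $\PrSt \simeq (\PrStR)^{\op}$), and on the other hand is the base change $\Moda_{\bR-gr}(\Lambda_\geq) \otimes_{\Mod_{\bR-gr}(\Lambda_\geq)} \Mod_{\bR-gr}(\Lambda_\geq) \otimes_{\cC} \Mod(\bfk)$; (3) identify this base change with $\Moda(\Lambda_\geq)$ by noting that the localization $\Mod_{\bR-gr}(\Lambda_\geq) \to \Moda_{\bR-gr}(\Lambda_\geq)$ is killing tensoring with $\Lambda_>$, and that forgetting the grading is a symmetric monoidal functor sending $\Lambda_>$ (the graded ideal) to the non-graded idempotent ideal $\Lambda_>$ of $\Lambda_\geq$; hence the non-graded almostlization $\Mod(\Lambda_\geq) \to \Moda(\Lambda_\geq)$ is the base change of the graded one along $\Mod_{\bR-gr}(\Lambda_\geq) \to \Mod(\Lambda_\geq)$, and \cite[Theorem 4.8.4.6]{HA} (compatibility of base change with Verdier quotients by $\otimes$-ideals) identifies $\Moda_{\bR-gr}(\Lambda_\geq) \otimes_{\Mod_{\bR-gr}(\Lambda_\geq)} \Mod(\Lambda_\geq) \simeq \Moda(\Lambda_\geq)$; (4) conclude monoidality, since every functor in the chain is symmetric monoidal (the equivalences of \autoref{prop: Rees construction}, \autoref{prop: APT}, \autoref{coro: non-graded equivalence} are monoidal, relative tensor products of monoidal categories are monoidal, and the identifications above respect $\star$/Day convolution).

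The step I expect to be the main obstacle is (3): carefully justifying that the non-graded almostlization is the base change of the graded one, i.e., that killing $\Lambda_>^{gr} \otimes_{gr} -$ and then forgetting grading agrees with forgetting grading and then killing $\Lambda_> \otimes -$. The subtlety is that the forgetful functor $\Mod_{\bR-gr}(\Lambda_\geq) \to \Mod(\Lambda_\geq)$, while monoidal, is not conservative, so one must check directly that a graded module $M$ is graded almost zero ($M \otimes_{gr} \Lambda_> = 0$) precisely when its underlying module satisfies the relevant vanishing after the localization is applied — or more cleanly, argue at the level of localizing subcategories: $\Mod^\partial_{\bR-gr}(\Lambda_\geq) \otimes_{\cC} \Mod(\bfk)$ should land in $\Mod^\partial(\Lambda_\geq)$, and this follows because $\Lambda_>^{gr}$ maps to $\Lambda_>$ under forgetting grading and tensor products commute with the relative tensor product. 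An alternative route that sidesteps this is to instead take fixed points of the \emph{right-adjoint} (almost-local) description from \autoref{coro: almost local objects}: $\Moda_{\bR-gr}(\Lambda_\geq)$ is the full subcategory of graded-almost-local modules, $\Moda(\Lambda_\geq)$ the full subcategory of almost-local modules, and one checks $M \in \Mod(\Lambda_\geq)^{\bR_\delta}$ (equivalently a graded module) is graded almost local iff its underlying non-graded module is almost local — here the graded condition $M_{-d} \simeq \HOM_{gr}(\Lambda_>(d), M)$ for all $d$ assembles, after summing over $d \in \bR$, into the non-graded condition $M \simeq \HOM(\Lambda_>, M)$, since $\Lambda_> = \bigoplus_d \Lambda_>(d)$ as a module over itself. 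I would present whichever of these two is shorter, probably the second, as the core of the argument, relegating monoidality to a remark that all functors involved are symmetric monoidal.
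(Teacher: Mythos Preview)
Your proposal is correct and follows essentially the same strategy as the paper: pass from fixed points to orbits (using that $\bR_\delta$ is discrete), observe that the Verdier quotient defining $\Moda_{\bR-gr}$ is a cofiber in $\PrSt$ and hence commutes with the orbit colimit, then apply \autoref{coro: non-graded equivalence} to each term. The paper resolves your ``main obstacle'' (3) more cleanly than either of your suggested routes: since $\Mod^\partial_{\bR-gr}(\Lambda_\geq,\Lambda_>) \simeq \Mod_{\bR-gr}(\cofib{\Lambda_> \to \Lambda_\geq})$ is itself a graded module category over an $\bR$-graded algebra, \autoref{coro: non-graded equivalence} applies to it directly, giving $\Mod^\partial_{\bR-gr}(\Lambda_\geq,\Lambda_>)_{\bR_\delta} \simeq \Mod^\partial(\Lambda_\geq,\Lambda_>)$ without any separate verification of almost-zero or almost-local conditions.
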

\begin{proof}
    Since $\bR_\delta$ is a discrete group, limits and colimits in $\PrSt$ over $\bR_\delta$ are equivalent by \cite[Example 4.3.11]{Hopkins-Lurie-Ambidexterity}. So, we have
    \[\Sh^{\bR_\delta}_{\geq }(\bR)=\varinjlim_{B\bR_\delta}\Sh_{\geq }(\bR)=\Sh_{\geq }(\bR)_{\bR_\delta}.\]

    On the other hand, \autoref{prop: APT} shows that $\Sh_{\geq }(\bR)$ is equivalent to a Verdier quotient in $\PrSt$:
    \[\Sh_{\geq }(\bR)=\Mod_{\bR-gr} (\Lambda_{\geq })/\Mod^\partial_{\bR-gr} (\Lambda_{\geq },I_\geq).\]
    
   A Verdier quotient in $\PrSt$ is a cofiber, hence a colimit.
   
    Since colimits commute, this allows us to use \autoref{coro: non-graded equivalence}. In this situation, we notice that $\Mod^\partial_{\bR-gr} (\Lambda_{\geq },I_\geq)_{\bR_\delta}$ is equivalent to $\Mod^\partial(\Lambda_{\geq },I_\geq) $ under \autoref{coro: non-graded equivalence}, which implies that
    \[\Sh_{\geq }(\bR)_{\bR_\delta}=\Mod_{\bR-gr} (\Lambda_{\geq })_{\bR_\delta}/\Mod^\partial_{\bR-gr} (\Lambda_{\geq },I_\geq)_{\bR_\delta}=\Mod(\Lambda_{\geq })/\Mod^\partial (\Lambda_{\geq },I_\geq).\]

    For the monoidality of the equivalence, we should be aware that the tensor product is identified with the Day convolution on the non-equivariant level by \autoref{prop: APT}, which is also identified with the usual tensor product on $\Mod(\Lambda_\geq)$ that naturally descends to $\Moda(\Lambda_\geq)$.
\end{proof}

Since the almostification functor commutes with both limits and colimits, the Greenlees--May duality \eqref{equation: Greenlees--May duality} is still true for almost modules on both sides. 
\begin{Coro}\label{corollary: adic-completion 1d}Denote by $\Moda(\Lambda_\bR)$ the essential image of $\Mod(\Lambda_\bR)$ under the almostification of $\Mod(\Lambda_\geq) \rightarrow \Moda(\Lambda_\geq)$ in $\Moda(\Lambda_\geq)$, then under the equivalence of \autoref{prop: equivariant APT}, we have
\[\Loc^{\bR_\delta}(\bR) \simeq \Moda(\Lambda_\bR).\]
Consequently, for
\[\Moda_{t-comp}(\Lambda_\geq)\eqqcolon (\Moda(\Lambda_\bR))^\perp\simeq ^\perp(\Moda(\Lambda_\bR))\coloneqq   \Moda_{t-tor}(\Lambda_\geq),\]
we have
\[\Moda_{t-comp}(\Lambda_\geq)\simeq (\Loc^{\bR_\delta}(\bR))^\perp \simeq \Sh^{\bR_\delta}_{>}(\bR) \simeq ^\perp(\Loc^{\bR_\delta}(\bR)) \simeq \Moda_{t-tor}(\Lambda_\geq).\]
\end{Coro}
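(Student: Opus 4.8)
The plan is to deduce \autoref{corollary: adic-completion 1d} from the equivariant APT equivalence \autoref{prop: equivariant APT} by transporting the Greenlees--May picture \eqref{equation: Greenlees-May duality} across that equivalence and then identifying the geometric pieces. First I would establish the equivalence $\Loc^{\bR_\delta}(\bR)\simeq \Moda(\Lambda_\bR)$. On the module side, $\Mod(\Lambda_\bR)\hookrightarrow \Mod(\Lambda_\geq)$ is a reflective subcategory (inverting $t$), and since almostlization commutes with limits and colimits, its essential image $\Moda(\Lambda_\bR)$ inside $\Moda(\Lambda_\geq)$ is again reflective; concretely $\Lambda_\bR$ is already almost local (it is $t$-divisible, hence $\HOM(\Lambda_>,-)$-local after inverting $t$), so $\Moda(\Lambda_\bR)\simeq \Mod(\Lambda_\bR)$ and this is just $\Loc(\bR)\simeq \Mod(\bfk)$ with the regrading. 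On the sheaf side, under \autoref{prop: equivariant APT} the subcategory $\Mod(\Lambda_\bR)$ corresponds to sheaves on which the $t$-action (i.e. all the $\tau_c$ for $c\geq 0$) is invertible; by the microlocal cut-off dictionary these are exactly the locally constant sheaves, i.e. $\Loc^{\bR_\delta}(\bR)$. I would make this precise by noting that $\Lambda_\bR = \varinjlim(\Lambda_\geq \xrightarrow{t}\Lambda_\geq \xrightarrow{t}\cdots)$ so that $\Mod(\Lambda_\bR)$ is the localization of $\Mod(\Lambda_\geq)$ at $t$, and that this localization corresponds sheaf-theoretically to killing the microsupport over $\bR\times(0,\infty)$, i.e. forcing the sheaf to be locally constant.

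Next I would run the Greenlees--May argument in the almost setting. By \eqref{equation: Greenlees-May duality} we have a recollement-type situation in $\Mod(\Lambda_\geq)$: the reflective subcategory $\Mod(\Lambda_\bR)$ with right orthogonal the $t$-complete modules and left orthogonal the $t$-torsion modules, with a duality identifying $\Mod_{t\text{-}comp}(\Lambda_\geq)\simeq \Mod_{t\text{-}tor}(\Lambda_\geq)$. Because the almostlization functor $(-)^a\colon \Mod(\Lambda_\geq)\to\Moda(\Lambda_\geq)$ is exact and preserves both limits and colimits (it is a Verdier quotient by a localizing subcategory that is also colocalizing), the orthogonality relations and the $t$-adic completion/torsion functors descend: $\Moda_{t\text{-}comp}(\Lambda_\geq) = {}^\perp(-)$ applied to $\Moda(\Lambda_\bR)$-right-orthogonal, and likewise for torsion, yielding $\Moda_{t\text{-}comp}(\Lambda_\geq)\simeq (\Moda(\Lambda_\bR))^\perp \simeq {}^\perp(\Moda(\Lambda_\bR)) \simeq \Moda_{t\text{-}tor}(\Lambda_\geq)$. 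Combining with the first step and \autoref{prop: equivariant APT}, the middle term $(\Loc^{\bR_\delta}(\bR))^\perp$ is identified; and by definition $\Sh^{\bR_\delta}_{>0}(\bR)$ is (the equivariant version of) the Verdier quotient $\Sh_{\geq 0}(\bR)/\Loc(\bR)$, which under the reflective/coreflective picture is equivalent to the orthogonal complement $(\Loc^{\bR_\delta}(\bR))^\perp$ inside $\Sh^{\bR_\delta}_{\geq 0}(\bR)$. This gives the displayed chain of equivalences.

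The step I expect to be the main obstacle is the bookkeeping around \emph{derived} $t$-adic completion in the spectral, non-Noetherian, $\bR$-graded setting, and making sure the Greenlees--May duality genuinely applies: $t$ is a single element, so the classical statement (e.g.\ \cite[Section 47.12]{StacksProject}) is available, but one must check that inverting $t$ on $\Lambda_\geq$ really gives $\Lambda_\bR$ as a ring spectrum (not just a discrete ring) and that the completion/torsion subcategories are the ones appearing in the microlocal cut-off dictionary. A secondary subtlety is verifying that $\Sh^{\bR_\delta}_{>0}(\bR)$, defined as a Verdier quotient by $\Loc$, is equivalent to the orthogonal complement $(\Loc^{\bR_\delta}(\bR))^\perp$; this follows from the fact that $\Loc(\bR)\hookrightarrow\Sh_{\geq 0}(\bR)$ is both reflective and coreflective (being generated by a compact object with both adjoints), but one should state this cleanly and check it is preserved under taking $\bR_\delta$-homotopy fixed points — which again reduces, as in the proof of \autoref{prop: equivariant APT}, to commuting the relevant (co)limits over $B\bR_\delta$ with the Verdier quotient and with the orthogonal-complement construction in $\PrSt$.
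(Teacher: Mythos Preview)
Your proposal is correct and follows essentially the same approach as the paper: both transport the Greenlees--May duality \eqref{equation: Greenlees-May duality} across the equivariant APT equivalence \autoref{prop: equivariant APT}, after first identifying $\Loc^{\bR_\delta}(\bR)$ with $\Moda(\Lambda_\bR)$. The only stylistic difference is that the paper establishes the first equivalence by explicitly matching generators (the equivariant constant sheaf $\bigoplus_\bR 1_\bR[1]$ versus $\Lambda_\bR=\varinjlim_n\Lambda_{\geq -n}$), whereas you argue more categorically via the characterization of $\Mod(\Lambda_\bR)$ as the $t$-localization; these are two phrasings of the same computation, and both proofs note that $\Moda(\Lambda_\bR)\simeq\Mod(\Lambda_\bR)$ since there are no nonzero almost-zero $\Lambda_\bR$-modules.
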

\begin{proof}To prove the first equivalence, we only need to match generators on both sides. One generator of $\Loc^{\bR_\delta}(\bR)$ is $\bigoplus_\bR 1_\bR[1]$ equipped with the canonical $\bR$-equivariant structure. It is clear, on the sheaf side, that 
\[\bigoplus_\bR 1_\bR[1] \simeq \bigoplus_\bR \varinjlim_n (1_{(-\infty,n)})[1]=  \varinjlim_n\bigoplus_\bR (1_{(-\infty,n)})[1].\]

However, the sheaf $\bigoplus_\bR (1_{(-\infty,n)})[1]$ corresponds to $\varinjlim_n \Lambda_{\geq -n}=\Lambda_\bR$ as almost local modules, which is the generator of $\Moda(\Lambda_\bR)$. Notice that, in $\Mod(\Lambda_\bR)$, there are no almost-zero $\Lambda_\geq$-modules; hence we actually have $\Moda(\Lambda_\bR)\simeq \Mod(\Lambda_\bR)$.    

Under the equivalence \autoref{prop: equivariant APT}, the second statement is the almostification of the Greenlees--May duality \eqref{equation: Greenlees--May duality}.
\end{proof}

Now, we define $\widehat{\Lambda}_\geq$ to be the $t$-adic derived completion of ${\Lambda}_\geq$, i.e. $\widehat{\Lambda}_\geq=\varprojlim_n{\Lambda}_\geq\otimes_{\Lambda_\geq} (\Lambda_\geq/t^n\Lambda_\geq)=\varprojlim_n(\Lambda_\geq/t^n\Lambda_\geq)$ (derived limit). In the case where $\bfk$ is a discrete ring, notice that the inverse system $\Lambda_\geq/t^{n+1}\Lambda_\geq \to \Lambda_\geq/t^n\Lambda_\geq$ satisfies the Mittag-Leffler condition. Hence the derived limit is concentrated in degree $0$, and
$
\widehat{\Lambda}_\geq\cong \varprojlim_n^0(\Lambda_\geq/t^n\Lambda_\geq),
$
the non-derived limit, which gives the classical completion. Thus, the notation $\widehat{\Lambda}_\geq$ is compatible with the standard notion of the Novikov series ring (i.e. the classical $(t)$-adic completion), and it makes sense to simply refer to it as the completion without further indication.

In $\widehat{\Lambda}_\geq$, we consider the ($\bR$-graded) ideal $\widehat{I}_\geq$, which is defined as the derived completion of $I_\geq$ and is still idempotent. In the case where $\bfk$ is a discrete ring, one can show similarly that $\widehat{I}_\geq$ is the ideal of Novikov series with zero constant term. Thus, we may consider the almost content $(\widehat{\Lambda}_\geq,\widehat{I}_\geq)$, and then study the module category $\Mod(\widehat{\Lambda}_\geq)$ and the almost module category $\Moda(\widehat{\Lambda}_\geq)$. In $\Mod(\widehat{\Lambda}_\geq)$, we also consider the full subcategory $\Mod(\widehat{\Lambda}_\bR)$ of modules over $\widehat{\Lambda}_\bR=\widehat{\Lambda}_\geq[t^{-1}]$ (which is called the Novikov field if $\bfk$ is a field), and its orthogonal complement. In particular,
\[
\Mod_{t-comp}(\widehat{\Lambda}_\geq)
\coloneqq
(\Mod(\widehat{\Lambda}_\bR))^\perp
\subset
\Mod(\widehat{\Lambda}_\geq)
\]
is the category of derived complete modules over $\widehat{\Lambda}_\geq$. We denote its almostification by $\Moda_{t-comp}(\widehat{\Lambda}_\geq)$.

We have the following result concerning $\widehat{\Lambda}_\geq$, which was proven by the first-named author \cite{Kuwagaki_almost_equivalence} via a different method.

\begin{Coro}Let $\widehat{\Lambda}_\geq$ be the $t$-adic completion of ${\Lambda}_\geq$, we have
\[\Sh^{\bR_\delta}_{>}(\bR) \simeq \Moda_{t-comp}(\widehat{\Lambda}_\geq),\quad \Sh^{\bR_\delta}_{>}(\bR) \simeq_a \Mod_{t-comp}(\widehat{\Lambda}_\geq),\]
where $\simeq_a$ is the almost equivalence defined in \cite{Kuwagaki_almost_equivalence} (for $\mathbb{G}=\bR$ therein).
\end{Coro}
\begin{proof}By \cite[Lemma 15.93.25]{StacksProject}, we have
\[\Mod_{t-comp}({\Lambda}_\geq) \simeq \Mod_{t-comp}(\widehat{\Lambda}_\geq).\]

Its almostification gives (we use $(\widehat{\Lambda}_\geq,\widehat{I}_\geq)$ as the almost content for the right-hand side)
\[\Moda_{t-comp}({\Lambda}_\geq) \simeq \Moda_{t-comp}(\widehat{\Lambda}_\geq).\]

Then, by \autoref{corollary: adic-completion 1d}, we have
\[\Sh^{\bR_\delta}_{>}(\bR) \simeq \Moda_{t-comp}(\widehat{\Lambda}_\geq).\]

It remains to verify that $\Moda_{t-comp}(\widehat{\Lambda}_\geq)\simeq_a \Mod_{t-comp}(\widehat{\Lambda}_\geq)$, which follows from the definition of almost equivalence $\simeq_a$ in \cite{Kuwagaki_almost_equivalence}.
\end{proof}

Lastly, we remark that the graded version is also true (we refer again to \autoref{remark: filtration vs grading} for the notion of grading):
\begin{Prop}\label{corollary: adic-completion 1d graded version}Denote by $\Moda_{\bR-gr}(\Lambda_\bR)$ the essential image of $\Mod_{\bR-gr}(\Lambda_\bR)$ under the almostification of $\Mod_{\bR-gr}(\Lambda_\geq) \rightarrow \Moda_{\bR-gr}(\Lambda_\geq)$ in $\Moda_{\bR-gr}(\Lambda_\geq)$, then under the equivalence \autoref{prop: APT}, we have $\Loc(\bR) \simeq \Moda_{\bR-gr}(\Lambda_\bR)$. 

Then for
\[\Moda_{\bR-gr,t-comp}(\Lambda_\geq)\eqqcolon (\Moda_{\bR-gr}(\Lambda_\bR))^\perp\simeq ^\perp(\Moda_{\bR-gr}(\Lambda_\bR))\coloneqq   \Moda_{\bR-gr,t-tor}(\Lambda_\geq),\]
we have
\[\Moda_{\bR-gr,t-comp}(\Lambda_\geq)\simeq (\Loc(\bR))^\perp \simeq \Sh_{>}(\bR) \simeq ^\perp(\Loc(\bR)) \simeq \Moda_{\bR-gr,t-tor}(\Lambda_\geq),\]
and 
\[\Sh_{>}(\bR) \simeq \Moda_{\bR-gr,t-comp}(\widehat{\Lambda}_\geq),\quad \Sh_{>}(\bR) \simeq_a \Mod_{\bR-gr,t-comp}(\widehat{\Lambda}_\geq),\]
where $\Mod_{\bR-gr,t-comp}(\widehat{\Lambda}_\geq)\coloneqq (\Mod_{\bR-gr}(\widehat{\Lambda}_\bR))^\perp
\subset
\Mod_{\bR-gr,t-comp}(\widehat{\Lambda}_\geq)$ and $\simeq_a$ is the notion of almost equivalence of \cite{Kuwagaki_almost_equivalence} (for $\mathbb{G}=\{0\}$ therein).

\end{Prop}
\begin{RMK}We can describe the complete condition, in terms of $\Sh_{>}(\bR)$ by requiring that for $F\in \Sh_{\geq}(\bR)$ with $\Gamma(\bR,F)=0$, and in terms of grading by requiring that for $M\in \Mod_{\bR-gr}(\widehat{\Lambda}_\geq)$ with $\varprojlim_{a\rightarrow -\infty}M(a)=0$.  
\end{RMK}
By a standard categorical discussion, we have
\begin{Coro}There exist embeddings
  \[\Sh_{>}(\bR) \hookrightarrow \Mod_{\bR-gr}(\widehat{\Lambda}_\geq),\quad \Sh^{\bR_\delta}_{>}(\bR) \hookrightarrow \Mod(\widehat{\Lambda}_\geq),\]  
  whose essential images consist of almost local and complete $\widehat{\Lambda}_\geq$-modules.
\end{Coro}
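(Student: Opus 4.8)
The plan is to deduce everything from \autoref{corollary: adic-completion 1d graded version} (and its non-graded twin \autoref{corollary: adic-completion 1d}) by a purely formal manipulation of the adjunctions already in hand. The key observation is that \autoref{corollary: adic-completion 1d graded version} produces, for the graded case, an identification of $\Sh_{>0}(\bR)$ with $\Moda_{\bR-gr,t-comp}(\widehat{\Lambda}_\geq)$, and this last category was defined as the essential image of $\Mod_{\bR-gr,t-comp}(\widehat\Lambda_\geq)$ under the almostlization functor; the content of the corollary being extracted here is just that this essential image is equivalently described as the intersection, inside $\Mod_{\bR-gr}(\widehat\Lambda_\geq)$, of the almost-local objects (\autoref{coro: almost local objects}, graded version) with the $t$-adically complete objects.

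Concretely, first I would recall that the almostlization functor $(-)^a\colon\Mod_{\bR-gr}(\widehat\Lambda_\geq)\to\Moda_{\bR-gr}(\widehat\Lambda_\geq)$ has, by the graded analogue of \autoref{coro: almost local objects}, a fully faithful right adjoint whose essential image is the full subcategory of graded almost-local modules, i.e. those $M$ with $M_{-d}\simeq \HOM_{gr}(\widehat\Lambda_>(d),M)$ for all $d$. Restricting this adjoint equivalence (between $\Moda_{\bR-gr}(\widehat\Lambda_\geq)$ and graded almost-local modules) to the reflective subcategory $\Moda_{\bR-gr,t-comp}(\widehat\Lambda_\geq)$ of $t$-complete objects gives a fully faithful embedding of that subcategory into $\Mod_{\bR-gr}(\widehat\Lambda_\geq)$; here one uses that the right adjoint of almostlization commutes with the (derived) $t$-adic completion — which holds because completion is a Bousfield localization and the right adjoint, being $\HOM_{\widehat\Lambda_\geq}(\widehat\Lambda_>,-)$, preserves the $t$-complete objects and the equivalence $\Mod_{t-comp}(\Lambda_\geq)\simeq\Mod_{t-comp}(\widehat\Lambda_\geq)$ of \cite[Lemma 15.91.23]{StacksProject}. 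Composing with $\Sh_{>0}(\bR)\simeq\Moda_{\bR-gr,t-comp}(\widehat\Lambda_\geq)$ of \autoref{corollary: adic-completion 1d graded version} yields the first embedding, and chasing through the descriptions identifies its essential image with exactly the $M$ that are both almost-local and $t$-adically complete.

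For the second, equivariant, statement I would run the same argument but start from \autoref{corollary: adic-completion 1d}, where $\Sh^{\bR_\delta}_{>0}(\bR)\simeq\Moda_{t-comp}(\widehat\Lambda_\geq)$; here the improvement from an embedding to an equivalence $\Sh^{\bR_\delta}_{>0}(\bR)\simeq\Mod(\widehat\Lambda_\geq)$ uses the fact, noted in the proof of \autoref{corollary: adic-completion 1d}, that over $\Lambda_\bR$ there are no non-trivial almost-zero modules, so that in the non-graded picture the full subcategory cut out by $t$-completeness already consists entirely of almost-local modules — making the almostlization functor restricted to $\Mod_{t-comp}(\widehat\Lambda_\geq)$ an equivalence onto $\Moda_{t-comp}(\widehat\Lambda_\geq)$, hence the composite $\Mod(\widehat\Lambda_\geq)\simeq\Mod_{t-comp}(\widehat\Lambda_\geq)\simeq\Moda_{t-comp}(\widehat\Lambda_\geq)\simeq\Sh^{\bR_\delta}_{>0}(\bR)$ is an equivalence.

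I expect the main obstacle to be bookkeeping rather than mathematics: one has to be careful that ``essential image of the right adjoint of almostlization'', ``right orthogonal complement of almost-zero modules'', and ``$\{M\simeq\HOM(\widehat\Lambda_>,M)\}$'' really are the same full subcategory in the derived-$t$-complete setting, and that the completion functor and the almost-localization functor genuinely commute — this last point is where \cite[Lemma 15.91.23]{StacksProject}, the Greenlees–May duality \eqref{equation: Greenlees-May duality}, and the exactness of $(-)^a$ (which, as remarked, commutes with all limits and colimits) have to be combined. Everything else is a formal consequence of \autoref{corollary: adic-completion 1d} and \autoref{corollary: adic-completion 1d graded version} together with the general nonsense about reflective subcategories and uniqueness of adjoints.
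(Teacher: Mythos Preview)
Your treatment of the graded embedding is fine and is exactly the ``standard categorical discussion'' the paper invokes: compose the equivalence of \autoref{corollary: adic-completion 1d graded version} with the fully faithful right adjoint of almostlization, and check that this right adjoint preserves $t$-completeness.

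The equivariant part, however, goes wrong. The displayed $\simeq$ in the statement is a typo for $\hookrightarrow$ (note the plural ``embeddings'' and the shared essential-image clause), and the equivalence you attempt to prove is false. Both steps in your chain
\[
\Mod(\widehat\Lambda_\geq)\;\simeq\;\Mod_{t\text{-}comp}(\widehat\Lambda_\geq)\;\simeq\;\Moda_{t\text{-}comp}(\widehat\Lambda_\geq)
\]
fail. For the first: $\widehat\Lambda_\geq[t^{-1}]$ is a $\widehat\Lambda_\geq$-module whose derived $t$-completion is zero, so not every module is $t$-complete. For the second: take $M=\bfk=\widehat\Lambda_\geq/\widehat\Lambda_>$. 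Since $t$ acts by zero on $\bfk$, one computes $\RHOM_{\widehat\Lambda_\geq}(\widehat\Lambda_\geq[t^{-1}],\bfk)\simeq\varprojlim(\bfk\xleftarrow{0}\bfk\xleftarrow{0}\cdots)=0$, so $\bfk$ is derived $t$-complete; but $\widehat\Lambda_>\otimes_{\widehat\Lambda_\geq}\bfk=0$, so $\bfk$ is almost zero and hence not almost local. Thus $t$-complete does not imply almost local, and almostlization restricted to $\Mod_{t\text{-}comp}(\widehat\Lambda_\geq)$ is not an equivalence.

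The observation you cite from the proof of \autoref{corollary: adic-completion 1d} --- that there are no nonzero almost-zero $\Lambda_\bR$-modules --- is a statement about $\Lambda_\bR$-modules and says nothing about $t$-complete $\widehat\Lambda_\geq$-modules (which are precisely the \emph{right orthogonal} to $\Lambda_\bR$-modules, a different condition entirely). The correct equivariant statement is obtained exactly as in the graded case: an embedding $\Sh^{\bR_\delta}_{>0}(\bR)\hookrightarrow\Mod(\widehat\Lambda_\geq)$ with essential image the almost-local $t$-complete modules.
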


\section{\texorpdfstring{$\arbR$-action on categories}{}
}\label{section: Categories admits R-action}
\subsection{\texorpdfstring{Categories equipped with an $\arbR$-action and $\bR$-filtered categories}{}}
For a category $\cC$, we say that an $\arbR$-action on $\cC$ is a monoidal functor $\nT:\arbR \rightarrow \Fun(\cC,\cC)$. We usually denote $\nT(r): \cC\to\cC$ by $\nT_r$, and $\nT(0 \leq c)$ by $\tau_c$. The $\arbR$-action automatically factors through $\nT:\arbR \rightarrow \Fun^{L}(\cC,\cC)$ since the underlying poset of $\arbR$ is a group.

In the following discussion, we often assume $\cC$ is presentable. The presentability allows us to switch between the two notions: 1) A category that admits a left action of a monoidal category. In the $\infty$-categorical setting, we refer to \cite[Section 4.8.3]{HA}. 2) A category that is enriched over a monoidal category. In the $\infty$-categorical setting, we refer to \cite{gepner2015enriched}. 

We refer to \cite{Heine_enriched_category2023} for the precise statement of the equivalence of these two notions (where a necessary condition is also given). In practice, presentability can be easily achieved as we usually take $\cC$ as the ($dg$-nerve of the $dg$) category of $A_\infty$-modules over a small $A_\infty$-category, or a category of sheaves over a topological space.

By \cite[Proposition 2.2.3]{Rotation-inv-Lurie}, there exists a small ${\bfk}$-linear stable category $\operatorname{Rep}^{fin}(\arbR)$ of finitely generated $\bR$-filtered modules such that ${\Rep}(\arbR)\simeq \Ind(\operatorname{Rep}^{fin}(\arbR))$. We define the corresponding polynomial Novikov module categories via $ \Mod^{fin}_{{\bR-gr}}(\Lambda_{\geq}) \simeq {\Rep}^{fin}(\arbR)$. We have that $\Lambda_{\geq a}=\nT_{-a}\Lambda_{\geq}=t^a\Lambda_\geq$ for the degree $a$ generator $t^a$ of $\Lambda_\geq$ and $\Lambda_{[a, b)}=t^a\Lambda_{\geq}/t^b\Lambda_{\geq}$ are in $ \Mod^{fin}_{{\bR-gr}}(\Lambda_{\geq})$, and $ \Mod^{fin}_{{\bR-gr}}(\Lambda_{\geq})$ is the smallest ${\bfk}$-linear that stable category that contains all $\Lambda_{\geq a}$.

Applying {\cite[Proposition 2.2.7]{Rotation-inv-Lurie}} to $\cC=\arbR$, we have the following 
\begin{Prop}For a ${\bfk}$-linear stable category $\cD$, we have the equivalence
\[\Fun^{ex}(\operatorname{Rep}^{fin}(\arbR),\cD)\simeq \Fun(\arbR,\cD).\]    
\end{Prop}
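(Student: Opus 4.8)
The statement is an instance of the universal property of $\mathrm{Ind}$-completion combined with the fact that $\operatorname{Rep}^{fin}(\arbR)$ is the stable envelope of $\arbR$. I would deduce it formally from \cite[Proposition 2.2.7]{Rotation-inv-Lurie} applied to $\cC = \arbR$, exactly as the excerpt instructs. Concretely, that proposition should say: for a small category $\cC$ there is a small stable $\bfk$-linear category $\operatorname{Rep}^{fin}(\cC)$ with $\Rep(\cC) \simeq \Ind(\operatorname{Rep}^{fin}(\cC))$, and for any $\bfk$-linear stable $\cD$ one has $\Fun^{ex}(\operatorname{Rep}^{fin}(\cC),\cD) \simeq \Fun(\cC,\cD)$. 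Taking $\cC = \arbR$ gives the claim verbatim.

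\textbf{Key steps.} First I would recall the chain of universal properties. The free presentable $\bfk$-linear stable category on a small category $\cC$ is $\Rep(\cC) = \Fun(\cC,\Mod(\bfk))$, characterized by $\Fun^{L}_{\bfk}(\Rep(\cC),\cE) \simeq \Fun(\cC,\cE)$ for $\cE \in \PrSt$; this is the composite of the Yoneda/presheaf universal property with $\bfk$-linearization. Second, restricting a colimit-preserving $\bfk$-linear functor $\Rep(\cC) \to \cE$ to the full subcategory of compact objects $\Rep(\cC)^\omega =: \operatorname{Rep}^{fin}(\cC)$ gives an exact functor, and since $\Rep(\cC) = \Ind(\operatorname{Rep}^{fin}(\cC))$ this restriction is an equivalence $\Fun^L_\bfk(\Rep(\cC),\cE) \xrightarrow{\sim} \Fun^{ex}(\operatorname{Rep}^{fin}(\cC),\cE)$ whenever $\cE$ is compactly generated — or, more to the point, for an arbitrary small stable $\cD$ one passes through $\cE = \Ind(\cD)$, using $\Fun^{ex}(\operatorname{Rep}^{fin}(\cC),\cD) \simeq \Fun^{ex}(\operatorname{Rep}^{fin}(\cC),\Ind(\cD))^{\text{factoring through }\cD}$ and the identity $\Fun^{ex}(\cA,\cB) \simeq \Fun^L(\Ind \cA, \Ind \cB)^{\text{compact-preserving}}$. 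Chaining these for $\cC = \arbR$: $\Fun^{ex}(\operatorname{Rep}^{fin}(\arbR),\cD) \simeq \Fun^L_\bfk(\Rep(\arbR),\Ind(\cD))^{\text{c.p.}} \simeq \Fun(\arbR, \Ind(\cD))^{\text{lands in }\cD} \simeq \Fun(\arbR,\cD)$, since every functor out of the small $\arbR$ into $\Ind(\cD)$ that we care about has image generating under colimits... but cleanest is just to invoke the cited proposition directly and note $\arbR$ is small.

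\textbf{Main obstacle.} There is essentially no obstacle once one grants \cite[Proposition 2.2.7]{Rotation-inv-Lurie}; the one point requiring a word of care is that the proposition in \textit{loc.\ cit.} is stated for $\cC = \bZ^{\le}$ (or for filtered posets in the sense used there), so one must observe, as the authors do repeatedly in this section, that nothing in the argument uses more than that $\arbR$ is a small category (indeed a small $\bfk$-linear-enrichable poset), so the proof transfers mutatis mutandis by replacing $\bZ$ with $\bR$. Thus the proof is a one-line citation plus the standard remark about $\bZ \rightsquigarrow \bR$. I would write it as: ``This is \cite[Proposition 2.2.7]{Rotation-inv-Lurie} applied to $\cC = \arbR$; although \textit{loc.\ cit.}\ is phrased for $\bZ$, the argument only uses smallness of the indexing category and applies verbatim,'' and leave it at that.
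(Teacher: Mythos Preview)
Your proposal is correct and matches the paper's approach exactly: the paper simply states this proposition as the result of applying \cite[Proposition 2.2.7]{Rotation-inv-Lurie} to $\cC=\arbR$, with no further argument given. Your additional unpacking of the universal properties and the remark about replacing $\bZ$ by $\bR$ are accurate elaborations, but the paper itself treats it as a one-line citation.
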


\begin{Coro}For a presentable ${\bfk}$-linear stable category $\cD$, we have the equivalence
\[\Fun^{L}({\Rep}(\arbR),\cD)\simeq \Fun(\arbR,\cD).\]    
\end{Coro}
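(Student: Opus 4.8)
The plan is to deduce the corollary from the preceding proposition by a standard $\operatorname{Ind}$-completion argument, using that $\Rep(\arbR)\simeq \Ind(\Rep^{fin}(\arbR))$ and that $\Rep^{fin}(\arbR)$ is a small $\bfk$-linear stable category. The key input is the universal property of $\operatorname{Ind}$: for a presentable $\bfk$-linear stable category $\cD$, restriction along the Yoneda embedding $\Rep^{fin}(\arbR)\hookrightarrow \Ind(\Rep^{fin}(\arbR))=\Rep(\arbR)$ induces an equivalence
\[\Fun^{L}(\Rep(\arbR),\cD)\xrightarrow{\ \simeq\ }\Fun^{ex}(\Rep^{fin}(\arbR),\cD),\]
where on the right we take exact (equivalently, finite-colimit-preserving, since the source is stable) $\bfk$-linear functors. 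This is \cite[Proposition 5.3.5.10]{HTT} together with its stable/linear refinement; concretely, a colimit-preserving functor out of $\Ind(\cE)$ is determined by its restriction to $\cE$, and conversely any exact functor out of $\cE$ extends uniquely by left Kan extension to a colimit-preserving functor out of $\Ind(\cE)$.

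First I would record that all the categories in sight are $\bfk$-linear and that the equivalence $\Rep(\arbR)\simeq\Ind(\Rep^{fin}(\arbR))$ of \cite[Proposition 2.2.3]{Rotation-inv-Lurie} is an equivalence of $\bfk$-linear stable categories, so that the $\operatorname{Ind}$-completion universal property applies in the $\bfk$-linear setting. Then I would chain the two equivalences: the $\operatorname{Ind}$ universal property gives $\Fun^{L}(\Rep(\arbR),\cD)\simeq\Fun^{ex}(\Rep^{fin}(\arbR),\cD)$, and the preceding Proposition (the case $\cC=\arbR$ of \cite[Proposition 2.2.7]{Rotation-inv-Lurie}) gives $\Fun^{ex}(\Rep^{fin}(\arbR),\cD)\simeq\Fun(\arbR,\cD)$. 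Composing yields the claimed equivalence $\Fun^{L}(\Rep(\arbR),\cD)\simeq\Fun(\arbR,\cD)$. I would also note, for bookkeeping, that under this composite a colimit-preserving functor $\Rep(\arbR)\to\cD$ corresponds to the functor $\arbR\to\cD$ obtained by precomposition with the composite $\arbR\to\Rep^{fin}(\arbR)\hookrightarrow\Rep(\arbR)$, where the first arrow is the tautological one (sending $a$ to the representable $\Lambda_{\geq a}$, up to the identifications already fixed in the text) — this makes the equivalence natural and compatible with the $\bR_\delta$-actions, which is what is actually used later.

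The only genuine subtlety — and the step I would be most careful about — is the interplay of $\bfk$-linearity with the $\operatorname{Ind}$ universal property: one must make sure that "colimit-preserving $\bfk$-linear functor" on the left matches "exact $\bfk$-linear functor" on the right, i.e. that the $\bfk$-linear structure is preserved by the extension/restriction. This is not an obstacle in substance — it is exactly the statement that $\operatorname{Ind}$ and $\Fun^{L}(-,-)$ are compatible with modules over a fixed presentable symmetric monoidal category (here $\Mod(\bfk)$), cf. \cite[Section 4.8]{HA} and \cite[Remark 2.2.x]{Rotation-inv-Lurie} — but it is the point that deserves an explicit sentence rather than being swept under "standard". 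Everything else (exact $=$ finite-colimit-preserving for functors out of a small stable category, and the fact that $\arbR$ has the relevant colimits so that no compatibility is lost) is routine. Hence the proof is essentially one line: "combine the $\operatorname{Ind}$-extension equivalence with the previous Proposition", with the linearity compatibility flagged.

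\begin{proof}
Since $\Rep^{fin}(\arbR)$ is a small $\bfk$-linear stable category and $\Rep(\arbR)\simeq\Ind(\Rep^{fin}(\arbR))$ as $\bfk$-linear stable categories by \cite[Proposition 2.2.3]{Rotation-inv-Lurie}, the universal property of $\operatorname{Ind}$-completion (\cite[Proposition 5.3.5.10]{HTT}, in its $\bfk$-linear refinement via \cite[Section 4.8]{HA}) identifies restriction along the Yoneda embedding with an equivalence
\[
\Fun^{L}\bigl(\Rep(\arbR),\cD\bigr)\ \simeq\ \Fun^{ex}\bigl(\Rep^{fin}(\arbR),\cD\bigr)
\]
for every presentable $\bfk$-linear stable category $\cD$; here $\Fun^{ex}$ denotes $\bfk$-linear functors preserving finite colimits, which agrees with exact $\bfk$-linear functors as the source is stable. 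Composing with the equivalence $\Fun^{ex}(\Rep^{fin}(\arbR),\cD)\simeq\Fun(\arbR,\cD)$ of the preceding Proposition gives the asserted equivalence. Under this composite, a colimit-preserving functor $\Rep(\arbR)\to\cD$ restricts along $\arbR\to\Rep^{fin}(\arbR)\hookrightarrow\Rep(\arbR)$ to the corresponding object of $\Fun(\arbR,\cD)$, so the equivalence is natural and, in particular, $\bR_\delta$-equivariantly compatible with the identifications of \autoref{prop: Rees construction}.
\end{proof}
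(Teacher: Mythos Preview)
Your proof is correct and follows the same approach as the paper: combine the universal property of $\Ind$ (giving $\Fun^{L}(\Rep(\arbR),\cD)\simeq\Fun^{ex}(\Rep^{fin}(\arbR),\cD)$) with the preceding proposition. The paper's version is a one-line display and does not spell out the $\bfk$-linear compatibility or the $\bR_\delta$-equivariance remark you added, but the substance is identical.
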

\begin{proof}When $\cD$ is presentable, we have
 \[\Fun^{L}({\Rep}(\arbR),\cD)\simeq \Fun^{L}(\Ind(\operatorname{Rep}^{fin}(\arbR)),\cD)\simeq \Fun^{ex}(\operatorname{Rep}^{fin}(\arbR),\cD).\qedhere\]       
\end{proof}

As discussed in Section~\ref{section: module discussion 1D}, we can equip ${\Rep}(\arbR)$ with a monoidal structure given by the Day convolution. If $\cD$ is monoidal, then the above equivalence preserves monoidal functors by a similar discussion as \cite[Corollary 2.4.4]{Rotation-inv-Lurie} (replace $\bZ$ therein by $\arbR$). Then we apply the above result to $\cD=\Fun^{ex}(\cC,\cC)$ or $\cD=\Fun^L(\cC,\cC)$ to conclude that
\begin{Prop}\label{proposition: R action iff Novikov action}An $\arbR$-action on the ${\bfk}$-linear stable category $\cC$ is equivalent to a ${\Rep}^{fin}(\arbR)\simeq \Mod^{fin}_{{\bR-gr}}(\Lambda_{\geq})$-action on $\cC$. When $\cC$ is presentable, such an action is also equivalent to a ${\Rep}(\arbR)\simeq \Mod_{{\bR-gr}}(\Lambda_{\geq})$-action on $\cC$; in the presentable case, $\cC$ can be upgraded to be a category enriched over ${\Rep}(\arbR)\simeq \Mod_{{\bR-gr}}(\Lambda_{\geq})$ in the sense of \cite{gepner2015enriched}, which we call $\bR$-filtered categories.

Under the correspondence, the functor $\nT_c$ is given by $t^{-c}\Lambda_{\geq}$-action. 
\end{Prop}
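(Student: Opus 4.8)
The statement is essentially a formal consequence of the preceding two results, so the plan is to assemble them. First I would recall that the Day-convolution monoidal structure on $\Rep(\arbR)$ corresponds, under the Rees equivalence \autoref{prop: Rees construction}, to the tensor product over $\Lambda_{\geq}$ on $\Mod_{\bR-gr}(\Lambda_{\geq})$; hence any statement phrased for one side transfers verbatim to the other. Next I would invoke the monoidal refinement of the universal-property corollary: since $\arbR$ is a symmetric monoidal category and $\Rep(\arbR)\simeq\Ind(\Rep^{fin}(\arbR))$ is the Day-convolution monoidal structure, the equivalences
\[\Fun^{ex}(\Rep^{fin}(\arbR),\cD)\simeq \Fun(\arbR,\cD),\qquad \Fun^{L}(\Rep(\arbR),\cD)\simeq \Fun(\arbR,\cD)\]
upgrade to equivalences between monoidal functors (for $\cD$ monoidal, presentable when needed), exactly as in \cite[Corollary 2.4.4]{Rotation-inv-Lurie}. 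Applying this with $\cD=\Fun^{ex}(\cC,\cC)$ (respectively $\cD=\Fun^{L}(\cC,\cC)$ when $\cC$ is presentable), a monoidal functor $\arbR\to\cD$ — i.e.\ an $\arbR$-action on $\cC$ — is the same datum as a monoidal functor $\Rep^{fin}(\arbR)\to\Fun^{ex}(\cC,\cC)$, i.e.\ a $\Rep^{fin}(\arbR)\simeq\Mod^{fin}_{\bR-gr}(\Lambda_{\geq})$-module structure on $\cC$ (resp.\ a $\Rep(\arbR)\simeq\Mod_{\bR-gr}(\Lambda_{\geq})$-module structure in the presentable case).

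For the enrichment claim, when $\cC$ is presentable a left module structure over the presentably symmetric monoidal category $\Rep(\arbR)$ automatically yields an enrichment of $\cC$ over $\Rep(\arbR)$; this is the standard module-versus-enrichment comparison, for which I would cite \cite{Heine_enriched_category2023} as already flagged in the text. Finally, to identify $\nT_c$ with the $t^{-c}\Lambda_{\geq}$-action: under the universal-property equivalence the $\arbR$-action functor $\nT$ corresponds to the functor $\Rep^{fin}(\arbR)\to\Fun(\cC,\cC)$ sending a generator to the action of that object, and by construction the object of $\Rep(\arbR)$ representing $\nT_c$ is the image under $\arbR\to\Rep(\arbR)$ of the object $c\in\arbR$, which is $\Lambda_{\geq -c}=\nT_{-c}\Lambda_{\geq}=t^{-c}\Lambda_{\geq}$ in the notation fixed just above the proposition (matching the identification $\Rees_{\bR}\widetilde{U_a}=\nT_{-a}\Lambda_{\geq}$ from the proof of \autoref{prop: APT}). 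Tracing this through the monoidal equivalence gives the stated formula.

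The only point requiring genuine care — and the main obstacle — is the monoidal upgrade of the universal property: \cite[Corollary 2.4.4]{Rotation-inv-Lurie} is stated for $\bZ$, and one must check that Lurie's argument (which realizes the Day-convolution monoidal structure on $\Rep(\arbR)$ as the free presentably monoidal category on the monoidal $\arbR$, so that monoidal left-exact / colimit-preserving functors out of it are exactly monoidal functors out of $\arbR$) goes through with $\bZ$ replaced by $\arbR$. This is the same replacement already used implicitly in \autoref{prop: Rees construction} and in the corollary preceding the proposition, and it causes no new difficulty since nothing in the argument uses discreteness or finite generation of the grading group; I would simply remark that the proof of \textit{loc.\ cit.}\ applies mutatis mutandis. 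Everything else is a formal concatenation of the equivalences already in place.
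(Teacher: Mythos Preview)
Your proposal is correct and follows essentially the same approach as the paper: the proposition is stated there as an immediate consequence of the preceding universal-property equivalences $\Fun^{ex}(\Rep^{fin}(\arbR),\cD)\simeq\Fun(\arbR,\cD)$ and $\Fun^{L}(\Rep(\arbR),\cD)\simeq\Fun(\arbR,\cD)$, upgraded monoidally via the $\arbR$-analogue of \cite[Corollary 2.4.4]{Rotation-inv-Lurie} and then applied with $\cD=\Fun^{ex}(\cC,\cC)$ or $\cD=\Fun^{L}(\cC,\cC)$. Your additional remarks on the enrichment (via \cite{Heine_enriched_category2023}) and the identification of $\nT_c$ with $t^{-c}\Lambda_{\geq}$ simply spell out what the paper leaves implicit.
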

\begin{RMK}By the graded version of Greenlees--May duality \eqref{equation: Greenlees--May duality}, if $\Lambda_\bR$ acts trivially on $\cC$, then the presentable stable category $\cC$ can be upgraded to be a $\widehat{\Lambda}_\geq$-enriched category.
\end{RMK}

\begin{eg}For a locally compact Hausdorff space $X$, we consider the category of sheaves $\Sh(X;\cA)$ with values in $\cA={\Rep}(\arbR)\simeq \Mod_{{\bR-gr}}(\Lambda_{\geq})$ or $\cA={\Pers}(\arbR)\simeq \Moda_{{\bR-gr}}(\Lambda_{\geq})\simeq \Sh_{\geq }(\bR)$. Then $\Sh(X;\cA)$ has an $\bR$-action inherited from $\cA$. It is clear that it is a module over $\cA$ via $\Sh(X;\cA)\simeq \Sh(X)\otimes \cA$. When $X=M$ is a smooth manifold, we proved that $\Sh(M;\Sh_{\geq }(\bR))\simeq \Sh_{\geq }(M\times \bR)$ (cf. the proof of \cite[Proposition 5.5]{Hochschild-Kuo-Shende-Zhang} or \cite[Theorem 7.4]{Cutoff-Zhang}).
\end{eg}

\subsection{Continuous stabilization and persistence categories}
We first recall results from \cite{Efimov-K-theory}.

\begin{Def}\label{definition: compactly assembled/dualizble category}Let $\cA$ be an accessible $\infty$-category with filtered colimits. Then $\cA$ is called compactly assembled if the colimit functor $\varinjlim:\Ind(\cA)\to\cA$ has a left adjoint $\hat{\cY}: \cA\to \Ind(\cA)$\footnote{The usual Yoneda embedding is the right adjoint of the colimit functor.}. 

We say a presentable ${\bfk}$-linear stable category $\cC$ is dualizable if it is dualizable with respect to the ${\bfk}$-linear Lurie tensor product. 
\end{Def}
\begin{Prop}[{\cite[Proposition D.7.3.1]{SAG}}]\label{prop: dual iff stable+assemb}A presentable ${\bfk}$-linear stable category $\cC$ is dualizable if and only if it is compactly assembled. 
\end{Prop}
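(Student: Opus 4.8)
\textbf{Proof proposal for \autoref{prop: dual iff stable+assemb}.}

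The plan is to reduce the statement to the abstract theory of compactly assembled categories and deduce the ``dualizable $\Leftrightarrow$ compactly assembled'' equivalence from the general fact that an object of a nice symmetric monoidal $\infty$-category is dualizable precisely when it is a retract of a compact object, applied to the category $\PrSt$ of presentable stable $\bfk$-linear categories with the Lurie tensor product. First I would recall that $\PrSt$ (equivalently $\Mod_{\Mod(\bfk)}(\PrSt)$) is a symmetric monoidal $\infty$-category whose unit is $\Mod(\bfk)$, whose compact objects are exactly the compactly generated categories $\Ind(\cA_0)$ for $\cA_0$ a small stable idempotent-complete $\bfk$-linear category, and whose colimits are computed as one expects. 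The key abstract input is that $\cC \in \PrSt$ is dualizable with respect to the Lurie tensor product if and only if $\cC$ is a retract (in $\PrSt$) of a compactly generated category; this is a consequence of the fact that retracts of dualizable objects are dualizable, that compactly generated categories are dualizable (their dual is the $\Ind$-completion of the opposite of the compacts), and conversely that any dualizable object in a presentably symmetric monoidal $\infty$-category where the unit is compactly generated is a retract of a compact object.

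Second, I would identify ``$\cC$ is a retract of a compactly generated category in $\PrSt$'' with ``$\cC$ is compactly assembled''. In one direction, if $\cC$ is a retract of $\Ind(\cA_0)$ via $i\colon \cC \to \Ind(\cA_0)$ and $r\colon \Ind(\cA_0)\to\cC$ with $r i \simeq \id_\cC$, all colimit-preserving, then $i$ has a left adjoint $i^{L}$ (it preserves colimits and $\cC$, $\Ind(\cA_0)$ are presentable), and one builds the desired left adjoint $\hat{\cY}\colon \cC \to \Ind(\cC)$ to the colimit functor by transporting the canonical left adjoint of $\varinjlim\colon \Ind(\Ind(\cA_0))\to\Ind(\cA_0)$ — which exists because $\Ind(\cA_0)$ is compactly generated, hence compactly assembled — along the retraction, using that $\Ind(-)$ is functorial on colimit-preserving functors and preserves retracts. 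Conversely, if $\cC$ is compactly assembled with $\hat{\cY}\colon\cC\to\Ind(\cC)$ left adjoint to $\varinjlim$, then $\hat{\cY}$ exhibits $\cC$ as a retract of $\Ind(\cC)$: indeed $\varinjlim\circ\hat{\cY}$ is a colimit-preserving endofunctor of $\cC$ equipped with a counit to $\id_\cC$, and (this is the content one must check) stability forces this counit to be an equivalence, so $\varinjlim\circ\hat{\cY}\simeq\id_\cC$ while $\Ind(\cC)$ is compactly generated. This last point — that compact assembly of a \emph{stable} presentable category forces $\varinjlim\circ\hat{\cY}\simeq\id$, equivalently that $\cC$ sits as a retract rather than merely receiving a nonunital idempotent — is where stability is genuinely used: in the non-stable world compactly assembled categories need not be retracts of compactly generated ones, and the stable hypothesis is exactly what rigidifies the situation (one argues via the dual basis / idempotent-splitting that $\PrSt$, being itself nice enough, splits the relevant idempotent, or invokes that a nonunital algebra idempotent in a stable setting is automatically unital).

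I expect the main obstacle to be precisely this identification of compact assembly with being a retract of a compactly generated category in the stable case, i.e.\ verifying that the candidate left adjoint $\hat{\cY}$ really does split $\varinjlim$ up to equivalence and that all the adjunctions assemble coherently at the $\infty$-categorical level (naturality of $\Ind$, compatibility of the $\bfk$-linear structure, interaction of $\hat{\cY}$ with the tensor product). Since \autoref{prop: dual iff stable+assemb} is cited from \cite[Proposition D.7.3.1]{SAG}, the honest course of action is to attribute it there and merely sketch the above reduction; in the write-up I would state the two directions as a short lemma-style argument, flagging that stability enters only through the splitting of the idempotent, and refer the reader to \cite{SAG} and to \cite{Efimov-K-theory} for the full coherence details.
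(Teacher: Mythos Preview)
The paper gives no proof of this proposition at all: it is stated purely as a citation of \cite[Proposition D.7.3.1]{SAG}, with no argument supplied. Your own closing remark (``the honest course of action is to attribute it there'') is exactly what the paper does.

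Your sketch is in the right spirit, but there is a genuine mislocation of the role of stability. You write that for the adjunction $\hat{\cY}\dashv\varinjlim$ one must use stability to show that $\varinjlim\circ\hat{\cY}\simeq\id_\cC$. This is not so: the colimit functor $\varinjlim\colon\Ind(\cC)\to\cC$ already has a fully faithful \emph{right} adjoint (the Yoneda embedding), i.e.\ it is a Bousfield localization; any left adjoint of a Bousfield localization is automatically fully faithful, so $\varinjlim\circ\hat{\cY}\simeq\id_\cC$ holds without any stability hypothesis. (Also, the map you call a ``counit'' is the unit of $\hat{\cY}\dashv\varinjlim$.) Thus ``compactly assembled $\Rightarrow$ retract of a compactly generated category'' is unconditional. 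Stability (or more generally working in $\Pr^L$ where presheaf categories are dualizable) enters in the other half of the argument: one needs that compactly generated presentable categories are themselves dualizable with respect to the Lurie tensor, and that dualizability passes to retracts. Your first paragraph also overstates things slightly: $\PrSt$ is not itself presentable, so speaking of its ``compact objects'' in the usual sense requires care; the precise statement is that the dualizable objects are exactly the retracts of compactly generated ones, not that compactly generated categories are the compact objects of $\PrSt$.
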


\begin{eg}The poset $\arbRpt\coloneqq \arbR\cup\{+\infty\}$ is compactly assembled, and we have $\hat{\cY}(x)\coloneqq ``\varinjlim" y$ taken over $y<x$. Moreover, the symmetric monoidal structure on $\arbR$ extends to a symmetric monoidal structure on $\arbRpt$. Then $\arbRpt$ is a symmetric monoidal compactly assembled category. But $\arbRpt$ is not stable: Suppose $\arbRpt$ is stable, then $\arbRpt$ should have a zero object, in particular, an initial object, which does not exist since the smallest real number does not exist. 
\end{eg}
\begin{eg}For an almost content $(R,I)$, it is explained in \cite[Proposition 2.9.12]{Fake_sheaves_on_mfd} that $\Moda(R,I)$ is dualizable. In general, all dualizable categories are of this form \cite[Theorem 2.10.16]{Fake_sheaves_on_mfd}.    
\end{eg}

In \cite{Efimov-K-theory}, Efimov describes the left adjoint to the (non-full) inclusion $\Cat^{\dual}\to\Compass$ from the dualizable categories to the compactly assembled categories:

For an accessible $\infty$-category $\cA$ with filtered colimits, we denote by $\Stab^{\cont}(\cA)$ the (presentable stable) category of cofiltered limit-preserving functors $\cA^{\op}\rightarrow\Sp$. 

The inclusion $\Stab^{\cont}(\cA)\rightarrow\Fun(\cA^{\op},\Sp)$ commutes with limits and filtered colimits, hence it has a left adjoint $F\mapsto F^{\sharp}$ by the adjoint functor theorem.

For $x\in\cA,$ we denote by $h_x:\cA^{\op}\to\Sp$ the ``representable'' presheaf, given by $h_x(y)=\bS[\cA(y,x)],$ $y\in\cA.$ For any presentable stable category $\cD$ we have an equivalence
\begin{equation}\label{eq:cont_stab_general} {\Fun^{L}(\Stab^{\cont}(\cA),\cD)}\xrightarrow{\simeq} { \Fun^{\cont}(\cA,\cD)} ,\quad {F} \mapsto[x\mapsto {F} (h_x^{\sharp})].\end{equation}

\begin{Prop}[{\cite[Proposition 1.35]{Efimov-K-theory}}]\label{prop: continuous_stab}For a compactly assembled category $\cA$, $\Stab^{\cont}(\cA)$ is dualizable. The left adjoint to the inclusion $\Cat^{\dual}\to \Compass$ is given by $\cA\mapsto\Stab^{\cont}(\cA)$. Moreover, when $\cA$ is monoidal, $\Stab^{\cont}(\cA)$ is also monoidal.
\end{Prop}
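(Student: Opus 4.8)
The plan is to deduce Proposition~\ref{prop: continuous_stab} from Efimov's foundational work \cite{Efimov-K-theory} together with the universal property \eqref{eq:cont_stab_general}, and then to upgrade the construction to the monoidal setting. First I would recall why $\Stab^{\cont}(\cA)$ is dualizable for compactly assembled $\cA$: combining \autoref{prop: dual iff stable+assemb}, it suffices to check that $\Stab^{\cont}(\cA)$ is a presentable stable category that is compactly assembled. Presentability and stability follow from the fact that $\Stab^{\cont}(\cA)$ is an accessible localization of $\Fun(\cA^{\op},\Sp)$ closed under limits and filtered colimits, hence presentable, and is stable since $\Sp$ is. For compact assembly one uses that $\cA$ compactly assembled means $\id_{\cA}$ is a filtered colimit of maps factoring through corepresentables (the counit of $\varinjlim \dashv \hat{\cY}$ being an idempotent-complete "nucleus"), which transports to an analogous decomposition of $\id_{\Stab^{\cont}(\cA)}$ via the functor $x\mapsto h_x^{\sharp}$; this is exactly the content of \cite[Proposition 1.35]{Efimov-K-theory}, which I would cite.

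Next I would establish the universal property identifying $\cA\mapsto\Stab^{\cont}(\cA)$ as the left adjoint to the inclusion $\Cat^{\dual}\to\Compass$. The key input is the equivalence \eqref{eq:cont_stab_general}: for any presentable stable $\cD$,
\[\Fun^{L}(\Stab^{\cont}(\cA),\cD)\xrightarrow{\ \simeq\ }\Fun^{\cont}(\cA,\cD).\]
When $\cD$ is moreover dualizable, every continuous functor $\cA\to\cD$ factors through $\Stab^{\cont}(\cA)$, and the resulting functor $\Stab^{\cont}(\cA)\to\cD$ is automatically strongly continuous (preserves all colimits and is a morphism in $\Compass$) because its right adjoint preserves filtered colimits — this is where stability of $\cA$ is \emph{not} needed but compact assembly of both sides is. Unwinding, this says precisely that the unit map $\cA\to\Stab^{\cont}(\cA)$ exhibits $\Stab^{\cont}(\cA)$ as the universal dualizable recipient, i.e. gives the left adjoint. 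I would present this as a direct consequence of \cite[Proposition 1.35]{Efimov-K-theory} rather than reproving it.

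For the monoidal refinement I would argue as follows. If $\cA$ is a symmetric monoidal compactly assembled category (monoidal structure compatible with filtered colimits in each variable), then $\Fun(\cA^{\op},\Sp)$ carries the Day convolution symmetric monoidal structure, which is compatible with colimits in each variable, hence presentable symmetric monoidal. One checks that the localization $\Fun(\cA^{\op},\Sp)\to\Stab^{\cont}(\cA)$, $F\mapsto F^{\sharp}$, is compatible with Day convolution in the sense of \cite[Proposition 4.1.7.4]{HA}: the class of morphisms inverted by $(-)^{\sharp}$ is stable under Day-tensoring with arbitrary objects, because $\sharp$-equivalences are exactly the maps that become equivalences after applying $\Stab^{\cont}$, and Day convolution with a fixed presheaf is a colimit-preserving functor, hence commutes with the reflective localization. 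Therefore $\Stab^{\cont}(\cA)$ inherits a presentable symmetric monoidal structure, the unit $\cA\to\Stab^{\cont}(\cA)$ is symmetric monoidal, and the universal property promotes to a symmetric monoidal universal property. (For the specific application $\cA=\arbRpt$ only the $E_\infty$-monoidal case is needed, but the argument is uniform.)

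\textbf{Main obstacle.} The genuinely delicate point is the interaction of the localization $(-)^{\sharp}$ with Day convolution — i.e. verifying that the $\sharp$-equivalences form a monoidal localizing ideal so that the Day structure descends. This requires knowing that $\Stab^{\cont}(\cA)\hookrightarrow\Fun(\cA^{\op},\Sp)$ is not merely reflective but that its right adjoint's kernel behaves well under tensoring; concretely one must show that if $F\to G$ becomes an equivalence in $\Stab^{\cont}(\cA)$ then so does $F\star H\to G\star H$ for every $H$. Everything else is a citation of \cite[Proposition 1.35]{Efimov-K-theory} or a formal consequence of \eqref{eq:cont_stab_general} and \autoref{prop: dual iff stable+assemb}; the monoidal compatibility is the only step where I would expect to need to actually write an argument, and even there the cleanest route is to invoke the general machinery of \cite[Section 4.1.7, 2.2.1]{HA} on localizations of symmetric monoidal categories.
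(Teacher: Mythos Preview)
The paper does not prove this proposition at all: it is stated purely as a citation of \cite[Proposition 1.35]{Efimov-K-theory}, with no argument given for any of the three claims (dualizability, the left-adjoint description, or the monoidal upgrade). Your proposal therefore goes considerably beyond what the paper does.

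That said, your sketch is reasonable and correctly identifies the structure: the first two claims are direct citations, and the monoidal part is the only place where one might want to supply an argument. Your proposed route via Day convolution on $\Fun(\cA^{\op},\Sp)$ and checking that the reflective localization $(-)^{\sharp}$ is compatible with it in the sense of \cite[Proposition 2.2.1.9]{HA} is the standard one. Note that your ``main obstacle'' paragraph is slightly off in its framing: to apply Lurie's criterion you do not need to show that $\sharp$-equivalences are stable under tensoring with \emph{arbitrary} $H$, only that tensoring a $\sharp$-local object with a $\sharp$-equivalence yields a $\sharp$-equivalence (equivalently, that the Day tensor of two $\sharp$-local objects is $\sharp$-local). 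For $\Stab^{\cont}(\cA)$ this reduces to the fact that the monoidal product on $\cA$ preserves filtered colimits in each variable, so that the Day convolution of two cofiltered-limit-preserving presheaves again preserves cofiltered limits. Since the paper treats the whole proposition as a black box, none of this appears there.
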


The construction clearly works ${\bfk}$-linearly by replacing $\Sp$ with $\Mod({\bfk})$; we will use the ${\bfk}$-linear version below without further mention.

For $\cA=\arbRpt$, \autoref{prop: APT} can be interpreted as follows:
\begin{Prop}We have $\Stab^{\cont}(\arbRpt) \simeq \Sh_{\geq }(\bR)$. Under the identification, the monoidal structure on $\arbRpt$ induces the convolution on $\Sh_{\geq }(\bR)$.

In particular, by \eqref{eq:cont_stab_general}, for a presentable ${\bfk}$-linear category $\cD$, we have 
\begin{equation}\label{eq: stablization for R}
    \Fun^{L}(\Sh_{\geq }(\bR),\cD)\xrightarrow{\simeq} \Fun^{\cont}(\arbRpt,\cD),\quad F\mapsto(x\mapsto F(h_x^{\sharp})).
\end{equation}
\end{Prop}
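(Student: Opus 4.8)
The plan is to identify $\Stab^{\cont}(\arbRpt)$ with $\Sh_{\geq 0}(\bR)$ by recognizing both as the dualizable (equivalently, compactly assembled; see \autoref{prop: dual iff stable+assemb}) reflection of the poset $\arbRpt$, and then transport the monoidal structure through this identification. Concretely, I would proceed in two steps. First, since $\arbRpt$ is a symmetric monoidal compactly assembled category (as noted in the example following \autoref{prop: dual iff stable+assemb}), \autoref{prop: continuous_stab} applies and gives that $\Stab^{\cont}(\arbRpt)$ is dualizable and monoidal, with the monoidal structure induced by Day convolution from the symmetric monoidal structure on $\arbRpt$ (extending that of $\arbR$). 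Second, I would show $\Stab^{\cont}(\arbRpt)\simeq \Sh_{\geq 0}(\bR)$ compatibly with monoidal structures.

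For the equivalence itself, I would compute $\Stab^{\cont}(\arbRpt)$ directly from the definition: it is the full subcategory of $\Fun((\arbRpt)^{\op},\Mod(\bfk))$ spanned by functors preserving cofiltered limits. Since $(\arbRpt)^{\op}$ is the poset $[-\infty,+\infty)$ under reversed order, a functor out of it is just an $\bR$-indexed-and-$\{-\infty\}$-augmented diagram, and the cofiltered-limit condition says precisely that $G(-\infty)\simeq \varprojlim_{a} G(a)$ and, more relevantly, that the value at each point is the limit of the values at smaller points in $(\arbRpt)^{\op}$ (i.e. larger reals) — matching the presheaf-on-$\bR_{(-\infty,0]}$ that underlies $\Sh(\bR_{(-\infty,0]})$ via \eqref{equation: PSH=Rep 1d}. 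I would make this precise by noting $(\arbRpt)^{\op}\simeq \operatorname{Open}(\bR_{(-\infty,0]})$ as posets (the open set $(-\infty,-a)$ corresponds to $a\in\arbR$, and $\bR = (-\infty,+\infty)$ corresponds to $+\infty$), under which ``preserves cofiltered limits'' becomes exactly the sheaf condition for the topology $\bR_{(-\infty,0]}$ (covers of $(-\infty,-a)$ being nested unions $\bigcup_{b>a}(-\infty,-b)$). Hence $\Stab^{\cont}(\arbRpt)\simeq \Sh(\bR_{(-\infty,0]})\simeq \Sh_{\geq 0}(\bR)$ by \autoref{lemma: microlocal cut-off}; alternatively, one invokes \autoref{prop: APT}, observing that both $\Stab^{\cont}(\arbRpt)$ and $\Sh_{\geq 0}(\bR)\simeq \Moda_{\bR-gr}(\Lambda_\geq)$ are the dualizable reflection of $\arbRpt$ and appealing to the universal property of $\Stab^{\cont}$ (the left adjoint $\Cat^{\dual}\to\Compass$, \autoref{prop: continuous_stab}): any compactly-assembled-to-dualizable localization is the unit of this adjunction.

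Then \eqref{eq: stablization for R} is obtained by specializing \eqref{eq:cont_stab_general} to $\cA=\arbRpt$ and $\Sp$ replaced by $\Mod(\bfk)$ throughout, using that the representable $h_x^{\sharp}\in\Stab^{\cont}(\arbRpt)$ corresponds under the above identification to the sheaf $U_a$ (or its sheafification/limit $\varinjlim_{b>a}U_b$) — this is exactly the computation in the proof of \autoref{prop: APT} showing $\Rees_\bR\widetilde{U_a}$ is the representable module and its ``sheafification'' corresponds to $\nT_{-a}\Lambda_>$. The statement about the monoidal structure then follows because the monoidal structure on $\Stab^{\cont}(\arbRpt)$ is Day convolution of the one on $\arbRpt$, and Day convolution of the additive monoidal structure on $\arbR\cup\{+\infty\}$, transported across $(\arbRpt)^{\op}\simeq\operatorname{Open}(\bR_{(-\infty,0]})$, is precisely the $\star$-convolution $s_!(-\boxtimes-)$ — this last matching is already contained in the monoidality part of the proof of \autoref{prop: APT} (it suffices to check $U_0\star U_0\simeq U_0$, which holds since $0+0=0$ in $\arbRpt$).

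The main obstacle I expect is the careful bookkeeping in identifying ``cofiltered-limit-preserving functor on $(\arbRpt)^{\op}$'' with ``sheaf on $\bR_{(-\infty,0]}$'': one must check that the only nontrivial covers in the topology $\bR_{(-\infty,0]}$ are the nested-union covers $(-\infty,-a) = \bigcup_{b>a}(-\infty,-b)$, so that the sheaf condition reduces to commuting with exactly the cofiltered limits appearing in the definition of $\Stab^{\cont}$, with the extra point $+\infty$ (i.e. the open set $\bR$ itself) handled by $\bR = \bigcup_{n}(-\infty,n)$. Everything else — the monoidal compatibility, the formula \eqref{eq: stablization for R} — is then formal, inherited from \autoref{prop: continuous_stab}, \eqref{eq:cont_stab_general}, and the already-established \autoref{prop: APT}.
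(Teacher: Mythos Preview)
Your approach is correct in outline and matches what the paper intends --- note that the paper gives no proof at all, introducing the proposition only with the phrase ``\autoref{prop: APT} can be interpreted as follows''. Your direct identification of cofiltered-limit-preserving functors on $(\arbRpt)^{\op}$ with sheaves on $\bR_{(-\infty,0]}$ is precisely the content behind that sentence, and the monoidal statement and \eqref{eq: stablization for R} then follow formally as you say.

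There is, however, a bookkeeping slip. The map $a\mapsto(-\infty,-a)$ is order-\emph{reversing} from $\arbRpt$ to $\operatorname{Open}(\bR_{(-\infty,0]})$, so $+\infty$ (the maximum of $\arbRpt$) cannot correspond to $\bR$ (the maximum open) under it. The clean statement is that $\arbRpt\simeq\operatorname{Open}(\bR_{(-\infty,0]})\setminus\{\emptyset\}$ via $a\mapsto(-\infty,a)$ and $+\infty\mapsto\bR$; then functors $(\arbRpt)^{\op}\to\Mod(\bfk)$ are exactly presheaves (the value on $\emptyset$ being forced to $0$), and cofiltered-limit preservation becomes the sheaf condition $F((-\infty,a))=\varprojlim_{c<a}F((-\infty,c))$. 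Note also that filtered colimits in $\arbRpt$ are suprema approached from \emph{below}, so the condition reads $G(a)=\varprojlim_{b\nearrow a}G(b)$ --- your description ``smaller points in $(\arbRpt)^{\op}$, i.e.\ larger reals'' has this reversed. If you prefer to keep the paper's sign convention \eqref{equation: PSH=Rep 1d}, insert an overall negation $G(a)=\widetilde F(-a)$. This is exactly the ``careful bookkeeping'' you yourself flagged as the main obstacle; once it is corrected, the argument is complete. (Your alternative route via the universal property of $\Stab^{\cont}$ is not self-contained: knowing that $\Sh_{\geq 0}(\bR)$ is dualizable does not by itself exhibit it as the dualizable reflection of $\arbRpt$ without the explicit identification above.)
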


Now, let us apply the construction to our discussion for categories with $\arbR$-action.
\begin{Def}We say an $\arbR$-action on a stable category $\cC$ is continuous if the monoidal functor $\arbR \rightarrow \Fun^L(\cC,\cC)$ maps filtered colimits to filtered colimits.     
\end{Def}

In this case, there exists a canonical way to extend a continuous $\arbR$-action to a continuous $\arbRpt$-action (defined in the same way) by Kan extension. Then we have
\begin{Prop}\label{prop: stable R action implies Tamarkin module}For a presentable ${\bfk}$-linear stable 
 category $\cC$, a continuous $\arbR$-action on $\cC$ is equivalent to a $\Sh_{\geq }(\bR)$-action on $\cC$. In this case, $\cC$ can be upgraded to be a $\Sh_{\geq }(\bR)$-enriched category.
\end{Prop}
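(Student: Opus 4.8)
The plan is to leverage the universal property of continuous stabilization, \autoref{prop: continuous_stab}, together with the identification $\Stab^{\cont}(\arbRpt)\simeq \Sh_{\geq 0}(\bR)$ already established. The main claim has two halves: first, that a continuous $\arbR$-action on $\cC$ is the same datum as a $\Sh_{\geq 0}(\bR)$-module structure on $\cC$; second, that such a module structure promotes $\cC$ to a $\Sh_{\geq 0}(\bR)$-enriched category. For the first half, the key observation is that $\arbR$ is a group, so the monoidal functor $\nT\colon \arbR\to \Fun^L(\cC,\cC)$ automatically lands in adjoint equivalences, and a continuous such functor extends canonically (by left Kan extension along $\arbR\hookrightarrow \arbRpt$) to a \emph{continuous} monoidal functor $\arbRpt\to \Fun^L(\cC,\cC)$; conversely restriction along $\arbR\hookrightarrow\arbRpt$ recovers the original action, and these two operations are mutually inverse because $\arbRpt$ is obtained from $\arbR$ by freely adjoining the colimit $+\infty = \varinjlim_{a} a$ which the continuity condition forces to be sent to the filtered colimit $\varinjlim_a \nT_a$.

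First I would make precise the extension/restriction equivalence. Since $\cC$ is presentable and stable, $\Fun^L(\cC,\cC)$ is a presentable monoidal category, and by \autoref{prop: continuous_stab} (applied to the symmetric monoidal compactly assembled category $\cA = \arbRpt$, with the monoidal enhancement of the universal property noted there) continuous monoidal functors $\arbRpt\to \Fun^L(\cC,\cC)$ correspond to colimit-preserving monoidal functors $\Stab^{\cont}(\arbRpt)\to \Fun^L(\cC,\cC)$. Combined with $\Stab^{\cont}(\arbRpt)\simeq \Sh_{\geq 0}(\bR)$ as symmetric monoidal categories, this identifies $\Sh_{\geq 0}(\bR)$-module structures on $\cC$ (i.e. colimit-preserving monoidal functors $\Sh_{\geq 0}(\bR)\to \Fun^L(\cC,\cC)$, using the equivalence between modules over a presentable monoidal category and such monoidal functors, which needs presentability of $\cC$) with continuous monoidal functors $\arbRpt\to \Fun^L(\cC,\cC)$. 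It then remains to match the latter with continuous $\arbR$-actions: restriction along $\arbR\hookrightarrow\arbRpt$ gives one direction; for the other, I would check that left Kan extension of a continuous monoidal $\nT\colon\arbR\to\Fun^L(\cC,\cC)$ along the (monoidal, colimit-dense) inclusion $\arbR\hookrightarrow\arbRpt$ is again monoidal and continuous — this is where one uses that $\arbRpt$ is the "free cocompletion adding the top element", so the Kan extension is computed by the single new colimit $\nT_{+\infty}\simeq \varinjlim_{a\in\bR}\nT_a$, and monoidality is inherited because $\star$-convolution commutes with colimits in each variable.

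For the enrichment statement, I would invoke the general principle (cited in the excerpt via \cite{Heine_enriched_category2023}, as used already in \autoref{proposition: R action iff Novikov action}): a presentable category that is a module over a presentable monoidal category $\cV$ is canonically $\cV$-enriched, with hom-objects computed by the right adjoints to the action functors $v\otimes(-)\colon\cC\to\cC$. Since $\Sh_{\geq 0}(\bR)$ is presentable and $\cC$ is a presentable $\Sh_{\geq 0}(\bR)$-module by the first half, $\cC$ is $\Sh_{\geq 0}(\bR)$-enriched; concretely $\underline{\HOM}_{\Sh_{\geq 0}(\bR)}(X,Y)$ is characterized by $\HOM_{\Sh_{\geq 0}(\bR)}(G,\underline{\HOM}(X,Y))\simeq \HOM_{\cC}(G\otimes X, Y)$ for $G\in\Sh_{\geq 0}(\bR)$.

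The main obstacle I anticipate is the careful verification that left Kan extension along $\arbR\hookrightarrow\arbRpt$ preserves the monoidal structure and the continuity condition simultaneously — in other words, that the correspondence "continuous $\arbR$-action $\leftrightarrow$ continuous $\arbRpt$-action" is an equivalence and not merely an adjunction. This requires knowing that $\arbRpt$ is freely generated under the relevant colimit from $\arbR$ as a monoidal category, which is essentially the content of the compactly-assembled structure $\hat{\cY}(x) = \varinjlim_{y<x} y$ together with the fact that every element of $\arbRpt$ other than $+\infty$ already lies in $\arbR$; the only genuinely new colimit is at $+\infty$. Once this bookkeeping is set up, the rest is a formal consequence of \autoref{prop: continuous_stab}, the monoidal equivalence $\Stab^{\cont}(\arbRpt)\simeq\Sh_{\geq 0}(\bR)$, and the module-versus-enrichment dictionary, so I would allocate the bulk of the write-up to that single point and treat the remaining identifications as routine.
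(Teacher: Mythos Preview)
Your proposal is correct and follows essentially the same route as the paper: extend the continuous $\arbR$-action to a continuous $\arbRpt$-action by Kan extension, then apply the universal property \eqref{eq: stablization for R} (equivalently, \autoref{prop: continuous_stab} together with $\Stab^{\cont}(\arbRpt)\simeq\Sh_{\geq 0}(\bR)$) with $\cD=\Fun^L(\cC,\cC)$, and finally invoke the module/enrichment dictionary. The paper's proof is a two-line sketch of exactly this; the monoidality and continuity checks you flag are the only nontrivial bookkeeping, and the paper simply takes them for granted.
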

\begin{proof}We extend the $\arbR$-action to an $\arbRpt$-action. Now, we apply \eqref{eq: stablization for R} to $\cD=\Fun^L(\cC,\cC)$.
\end{proof}

\begin{eg}\label{example: Tamarkin categories for opens}For an open set $U\subset T^*M$, we defined the $\infty$-version of the Tamarkin category $\sT(U)$, and proved that it has a $\Sh_{>}(\bR)$-action in \cite{Hochschild-Kuo-Shende-Zhang}. Thus, $\sT(U)$  automatically has a $\Sh_{\geq }(\bR)$-action (recall that $\Sh_{>}(\bR)$ is a monoidal quotient of $\Sh_{\geq }(\bR)$). The $\Sh_{\geq }(\bR)$-action is indeed the convolution action. In this example, the $\Sh_{\geq }(\bR)$-action descends to a $\Sh_{>}(\bR)$-action, which implies that $\sT(U)$ is (filtered-)$\widehat{\Lambda}_\geq$-enriched.   
\end{eg}

\subsection{Tamarkin torsion objects}\label{section:Tamarkin torsion}
For an $\arbR$-action on a stable category $\cC$, equivalently, a $\Mod^{fin}_{{\bR-gr}}(\Lambda_{\geq})$-action on $\cC$, we can study Tamarkin torsion objects \cite{tamarkin2013}.

The natural transformation $\tau_c:\id \rightarrow \nT_c$ given by $0\leq c$ gives a morphism $\tau_c(F): F\rightarrow \nT_c(F)$ for $F\in \cC$. 

\begin{Def}\label{Def:Tamarkin torsion}For $c>0$, we say $F\in \cC$ is Tamarkin $c$-torsion (short for $c$-torsion, or just torsion) if $\tau_c(F)=0$.   
\end{Def}

\begin{eg}\label{example: torsion objects}In $\Mod^{fin}_{{\bR-gr}}(\Lambda_{\geq})$, we know that $\Lambda_{[0,c)} \coloneqq \Lambda_{\geq }/t^c\Lambda_{\geq }$ is $c$-torsion. Torsion objects are plentiful since for every $F\in \cC$ in a stable ${\bfk}$-linear category $\cC$ with an $\bR$-action, we have $\Lambda_{[0,c)}\cdot F$ is $c$-torsion.    
\end{eg}

\begin{Prop}The full subcategory $\operatorname{Tor}(\cC)$ of $\cC$ spanned by torsion objects is stable.    
\end{Prop}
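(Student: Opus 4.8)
The plan is to show that $\operatorname{Tor}(\cC)$ is closed under shifts, finite (co)limits, and retracts inside $\cC$, which (since $\cC$ is stable) suffices for stability of the full subcategory. Closure under shifts is immediate: the natural transformation $\tau_c$ is a morphism of exact functors $\id \Rightarrow \nT_c$, so $\tau_c(F[1]) = \tau_c(F)[1]$, and this vanishes iff $\tau_c(F)$ does.

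For the main point, I would work with cofiber sequences. Given a fiber sequence $F' \to F \to F''$ in $\cC$, applying the exact functor $\nT_c$ and naturality of $\tau_c$ produces a map of fiber sequences
\[
\begin{tikzcd}
F' \arrow[r] \arrow[d, "\tau_c(F')"] & F \arrow[r] \arrow[d, "\tau_c(F)"] & F'' \arrow[d, "\tau_c(F'')"] \\
\nT_c(F') \arrow[r] & \nT_c(F) \arrow[r] & \nT_c(F'').
\end{tikzcd}
\]
If $F'$ and $F''$ are $c$-torsion, then $\tau_c(F')$ and $\tau_c(F'')$ are null. A standard diagram chase in a stable category (or: the long exact sequence obtained by mapping into the fiber sequence $\nT_c(F') \to \nT_c(F) \to \nT_c(F'')$, together with the fact that $\tau_c(F)$ becomes null after composing with both $F \to F''$ and after restricting along $F' \to F$) shows $\tau_c(F)$ factors through a map $F'' \to \nT_c(F')[1]$; but since this composite must also be compatible with $\tau_c(F'')= 0$ pushing into $\nT_c(F'')$, one more application of the triangle identities forces $\tau_c(F) = 0$. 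Concretely: $\operatorname{Map}_\cC(F, \nT_c(F))$ sits in an exact sequence with $\operatorname{Map}_\cC(F'', \nT_c(F))$ on the left, and $\tau_c(F)$ lifts $\tau_c(F)\circ(\text{no})$; the cleanest phrasing is that the space of maps $F \to \nT_c(F)$ whose restriction to $F'$ and corestriction from $F''$ both vanish is contractible, because both $\operatorname{Map}(F'', \nT_c(F'))$-type obstruction groups vanish when $\tau_c(F')=\tau_c(F'')=0$. Hence $\operatorname{Tor}(\cC)$ is closed under extensions, and combined with shift-closure it is closed under all finite (co)limits. Closure under retracts is formal since $\tau_c$ is natural and $\operatorname{Map}$ sends retracts to retracts.

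The one genuine subtlety — and the step I would be most careful about — is the extension-closure diagram chase in the $\infty$-categorical setting, where "$\tau_c(F)=0$" means a point of a mapping space rather than an equation: one must argue at the level of mapping spectra that the relevant obstruction/lifting spaces are contractible (not merely $\pi_0$-trivial), which is where the exactness of $\nT_c$ and the naturality square are used in full strength. An alternative, slicker route avoiding this: identify $\operatorname{Tor}(\cC)$ as the full subcategory of objects $F$ with $\Lambda_{[0,c)} \cdot F \simeq F$ up to the appropriate torsion condition, or more directly, note that $F$ is $c$-torsion iff $F$ lies in the kernel of the exact functor $\cC \to \cC$, $F \mapsto \operatorname{cofib}(\tau_c(F))^{\vee\vee}$-type construction — but the cleanest is: $F$ is $c$-torsion iff the exact functor "$\operatorname{fib}(\tau_c(-))$" admits $F$ as a retract, equivalently $F$ is in the image of the idempotent-like functor $\operatorname{fib}(\id \to \nT_c)$ composed appropriately; kernels of exact functors between stable categories are stable, which gives the result with no chase at all. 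I would present the extension-closure argument as the primary proof and remark on this kernel description as the conceptual reason.
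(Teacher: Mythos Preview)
Your extension-closure argument has a genuine gap: it is \emph{not true} that $\tau_c(F) = 0$ whenever $\tau_c(F') = \tau_c(F'') = 0$ for a fiber sequence $F' \to F \to F''$. In $\cC = \Mod_{\bR-gr}(\Lambda_\geq)$ the short exact sequence $\Lambda_{[1,2)} \hookrightarrow \Lambda_{[0,2)} \twoheadrightarrow \Lambda_{[0,1)}$ has $1$-torsion outer terms, but $\tau_1$ on $\Lambda_{[0,2)} = \Lambda_\geq/t^2\Lambda_\geq$ is multiplication by $t$, which is nonzero. The step that fails is your claim that the obstruction spaces become contractible: the hypotheses $\tau_c(F') = \tau_c(F'') = 0$ say nothing about $\HOM(F'', \nT_c F')$ or its shifts, and in the example $\pi_{-1}\HOM(\Lambda_{[0,1)}, \Lambda_{[1,2)}) \cong \bfk$ is precisely the nontrivial gluing class. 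Your alternative ``kernel of an exact functor'' route has the same defect: the condition $\tau_c(F) = 0$ is a \emph{splitting} condition on the triangle $\fib(\tau_c(F)) \to F \to \nT_c F$, not the vanishing of an exact functor applied to $F$, so $c$-torsion objects for fixed $c$ are not closed under extensions (nor, by the rotated example $\Lambda_{[0,1)}[-1]\to\Lambda_{[1,2)}\to\Lambda_{[0,2)}$, under cofibers).

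What saves the proposition is that $\operatorname{Tor}(\cC)$ is the union over all $c$, so one may enlarge the parameter: if $F', F''$ are $c$-torsion then $F$ is $2c$-torsion. Indeed, since $\tau_c(F'') = 0$, naturality gives $(\nT_c F \to \nT_c F'')\circ\tau_c(F) = 0$, so $\tau_c(F) = \nT_c(F'\to F)\circ\phi$ for some $\phi\colon F \to \nT_c F'$; then
\[
\tau_{2c}(F) \;=\; \nT_c\tau_c(F)\circ\tau_c(F) \;=\; \bigl(\nT_c\tau_c(F)\circ\nT_c(F'\to F)\bigr)\circ\phi \;=\; \bigl(\nT_{2c}(F'\to F)\circ\nT_c\tau_c(F')\bigr)\circ\phi \;=\; 0,
\]
using naturality once more and $\tau_c(F') = 0$. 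The paper argues via closure under \emph{cofibers} rather than extensions, setting up a $3\times 3$ diagram with rows $A\to B\to C$, $\nT_cA\to\nT_cB\to\nT_cC$, and their cofibers; the two routes are formally equivalent for establishing stability of a full subcategory, but in either case the doubling of $c$ is the point you are missing.
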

\begin{proof}Consider the following diagram:
\[\begin{tikzcd}
A \arrow[d, "\tau_c(A)"] \arrow[r] & B \arrow[d, "\tau_c(B)"] \arrow[r] & C \arrow[d, "\tau_c(C)"] \\
T_{c}A \arrow[d] \arrow[r]         & T_{c}B \arrow[d] \arrow[r]         & T_{c}C \arrow[d]         \\
{A[1]\oplus T_{c}A} \arrow[r]      & {B[1]\oplus T_{c}B} \arrow[r]      & {C[1]\oplus T_{c}C}     .
\end{tikzcd}\]
Suppose the first row is a fiber sequence, then the second and third rows are fiber sequences. Choose $c$ such that $A,B$ are both $c$-torsion. Then the first two columns are fiber sequences. We thus conclude the third column is a fiber sequence and $C$ is $c$-torsion by the 9-lemma or because colimits commute. Thus, $\operatorname{Tor}$ is a stable subcategory.    
\end{proof}

\begin{Def}We define the torsion-free category $\cC_\infty$ as the Verdier quotient $ \cC/\operatorname{Tor}(\cC) $ in the category of ${\bfk}$-linear stable categories.    
\end{Def}
\begin{RMK}Caution! There is no reason for $\cC_\infty$ to be presentable even when $\cC$ is.    
\end{RMK}

The following is a chain-level generalization of \cite[Proposition 5.7]{GS2014} (for $\cC=\Sh_{\geq }(M\times \bR)$).
\begin{Prop}\label{proposition: torsion free hom computation}For $F,G\in \cC_\infty$, we have
\[\HOM_{\cC_\infty}(F,G)=\varinjlim_{c\rightarrow \infty}\HOM_{\cC}(F,\nT_{c}G).\]
\end{Prop}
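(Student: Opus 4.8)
The plan is to compute morphism spaces in the Verdier quotient $\cC_\infty = \cC/\operatorname{Tor}(\cC)$ using the standard description of Hom-spaces in a Verdier localization as a colimit over a suitable filtered category of roofs, and then to identify that filtered colimit with the explicit colimit $\varinjlim_{c\to\infty}\HOM_\cC(F,\nT_c G)$ by showing the maps $\tau_c(G)\colon G\to \nT_c G$ are cofinal among the relevant localization maps.

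\textbf{Key steps.} First I would recall that, in the Verdier quotient of ${\bfk}$-linear stable categories, one has
\[
\HOM_{\cC_\infty}(F,G) \;\simeq\; \varinjlim \HOM_\cC(F', G'),
\]
where the colimit runs over the filtered category whose objects are morphisms $F'\to F$ and $G\to G'$ with torsion fibre/cofibre respectively; since $\cC_\infty$ is stable, it suffices (by the usual calculus-of-fractions argument, localizing at the class of maps with torsion cofibre) to take the colimit over maps $s\colon G\to G'$ with $\operatorname{cofib}(s)\in\operatorname{Tor}(\cC)$, giving $\HOM_{\cC_\infty}(F,G) \simeq \varinjlim_{s\colon G\to G',\ \operatorname{cofib}(s)\text{ torsion}}\HOM_\cC(F,G')$. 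Second, I would observe that the maps $\tau_c(G)\colon G\to \nT_c G$ indeed have torsion cofibre: the cofibre $\operatorname{cofib}(\tau_c(G))$ is killed by $\tau_c$ up to shift, or more cleanly one uses that $\operatorname{cofib}(\tau_c(G)) \simeq \Lambda_{[0,c)}\cdot G$ in the notation of \autoref{example: torsion objects} (translated by $\nT$), which is $c$-torsion; alternatively $\tau_c(G)$ becomes an equivalence in $\cC_\infty$ because the two-out-of-three map $\nT_c$ is invertible and $\operatorname{cofib}$ is torsion. Third — the crux — I would prove that the subcategory of maps out of $G$ of the form $\{\tau_c(G)\colon G\to \nT_c G\}_{c\geq 0}$, with the transition maps $\tau_{c'-c}(\nT_c G)\colon \nT_c G\to \nT_{c'}G$ for $c\leq c'$, is \emph{cofinal} in the full filtered localizing category of maps $G\to G'$ with torsion cofibre. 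Concretely: given any $s\colon G\to G'$ with $C\coloneqq\operatorname{cofib}(s)$ $c$-torsion, I must produce some $c'$ and a factorization $G\xrightarrow{s}G'\to \nT_{c'}G$ compatible with $\tau_{c'}(G)$. This follows because $\tau_c(C)=0$ forces the composite $G'\to C \xrightarrow{\tau_c} \nT_c C$ to vanish, and then a diagram chase in the stable category (applying $\nT_c$ to the fibre sequence $G\xrightarrow{s}G'\to C$ and using $\tau_c(G)$, $\tau_c(G')$, $\tau_c(C)=0$) yields the desired dotted arrow $G'\to \nT_c G$; one then checks this is a morphism in the filtered category and that the two cofinal subcategories induce the same colimit. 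Finally I would note $\varinjlim_{c\to\infty}\HOM_\cC(F,\nT_c G)$ is exactly the colimit over this cofinal subcategory, and conclude.

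\textbf{Main obstacle.} The genuinely delicate point is the cofinality argument in the third step — verifying that every ``torsion-cofibre'' localizing map out of $G$ factors through some $\tau_{c'}(G)$, coherently at the $\infty$-categorical level rather than just on homotopy categories. The naive homotopy-category diagram chase (essentially the content of \cite[Proposition 5.7]{GS2014}, which this generalizes) must be upgraded: one needs that the assignment $s\mapsto (c, \text{factorization})$ assembles into an actual functor between filtered $\infty$-categories inducing an equivalence on colimits, which is most cleanly handled by exhibiting $\{\nT_c G\}_{c\geq 0}$ as a \emph{final} subcategory via Quillen's Theorem A ($\infty$-categorical version, \cite[4.1.3]{HTT}): for each $s\colon G\to G'$ in the localizing category, the comma category of factorizations through some $\tau_{c'}(G)$ is weakly contractible — nonempty by the chase above, and filtered because one can always increase $c'$. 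I expect the bookkeeping here (tracking the torsion bound $c$ through fibre sequences and shifts, and confirming $\Lambda_{[0,c)}\cdot G$ is the relevant cofibre) to be the bulk of the work, but no new ideas beyond stability and the structure of the $\arbR$-action are needed.
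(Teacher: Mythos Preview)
Your proposal is correct and follows essentially the same strategy as the paper's proof. The paper cites \cite[Theorem I.3.3]{Nikolaus-Scholze-Ontopologicalcyclichomology} to express $\HOM_{\cC_\infty}(F,G)$ as $\varinjlim_{(T\to G)\in\operatorname{Tor}(\cC)_{/G}}\HOM_\cC(F,\cofib{T\to G})$ --- which is exactly your calculus-of-fractions colimit, reindexed via the fibre/cofibre equivalence between ``torsion objects mapping into $G$'' and ``maps out of $G$ with torsion cofibre'' --- then exhibits the functor $[0,\infty)\to\operatorname{Tor}(\cC)_{/G}$, $c\mapsto[\Lambda_{[-c,0)}\cdot G[-1]\to G]$, and proves cofinality via Quillen's Theorem~A just as you propose. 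The only cosmetic difference is that the paper argues contractibility of each comma category by identifying it with an interval in $[0,\infty)$, whereas you argue it is filtered; both amount to the same torsion-based factorization you describe.
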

\begin{proof}By \cite[Theorem I.3.3]{Nikolaus-Scholze-Ontopologicalcyclichomology}, the left-hand side is computed by
\[\varinjlim_{(T\rightarrow G) \in \operatorname{Tor(\cC)}_{/G}}\HOM_{\cC}(F,\cofib{T\rightarrow G}).\]

Now, for $c\geq 0$, we have a morphism $c\mapsto [\Lambda_{[-c,0)} \cdot G[-1] \rightarrow G]$, which induces a functor $[0,\infty) \rightarrow \operatorname{Tor}(\cC)_{/G}$ (notice that $\Lambda_{[-c,0)} \cdot G[-1]$ is Tamarkin torsion). Moreover, the cofiber of $\Lambda_{[-c,0)} \cdot G[-1] \rightarrow G$ is $\nT_{c}G$, so we only need to show that the functor is cofinal.
 
To prove the cofinality, we use Quillen's theorem A. It remains to check that, for all $X=[T\rightarrow G]\in \operatorname{Tor}(\cC)_{/G}$, the fiber product $[0,\infty) \times_{\operatorname{Tor}(\cC)_{/G}} {X\backslash}(\operatorname{Tor}(\cC)_{/G})$ is contractible. For a given $X=[T\rightarrow G]$ where $T$ is a torsion object, objects of the fiber product are decompositions of $X$ of the form
\[
    T\rightarrow  \Lambda_{[-c,0)} \cdot G[-1]   \rightarrow G.
\]

We set $S$ to be the essential image of the projection functor  $[0,\infty) \times_{\operatorname{Tor}(\cC)_{/G}} {X\backslash}(\operatorname{Tor}(\cC)_{/G}) \rightarrow [0,\infty)$, and we set $c_X=\inf S$ (here as a subset of $\bR$).

Since $T$ is torsion, we have $\Lambda_{[-c,0)} \cdot T[-1]= T \oplus \nT_{c}T[-1]$ for some $c \geq 0$. Then $T\rightarrow G$ factors through
\begin{equation}\label{equation: torsion factor through}
    T\rightarrow T \oplus \nT_{c}T[-1] = \Lambda_{[-c,0)} \cdot T[-1] \rightarrow \Lambda_{[-c,0)} \cdot G[-1] \rightarrow G.
\end{equation}

It implies that $S$ is non-empty and $c_X <\infty$.

Moreover, if $c\in S$ and $c\leq d$, then we have a morphism in $[0,\infty) \times_{\operatorname{Tor}(\cC)_{/G}} {X\backslash}(\operatorname{Tor}(\cC)_{/G}) $
\[\begin{tikzcd}
T \arrow[d,equal] \arrow[r] &   {\Lambda_{[-c,0)} \cdot G[-1]} \arrow[r] \arrow[d] & G \arrow[d,equal] \\
T \arrow[r]                     & {\Lambda_{[-d,0)} \cdot G[-1]} \arrow[r]           & G     .     
\end{tikzcd}
\]

So, we have $d\in S$ and $S$ is an interval of the form $(c_X,\infty)$ or $[c_X,\infty)$. We do not need to know if $c_X\in S$.

The same construction also gives a functor from $S$ to $[0,\infty) \times_{\operatorname{Tor}(\cC)_{/G}} {X\backslash}(\operatorname{Tor}(\cC)_{/G}) $, which serves as the quasi-inverse of the projection functor since \eqref{equation: torsion factor through} is the only possible form of objects in $[0,\infty) \times_{\operatorname{Tor}(\cC)_{/G}} {X\backslash}(\operatorname{Tor}(\cC)_{/G})$ by the universal property of fiber/cofiber. Consequently, we have $S\simeq [0,\infty) \times_{\operatorname{Tor}(\cC)_{/G}} {X\backslash}(\operatorname{Tor}(\cC)_{/G}) $. So, since $S$ is an interval of the form $(c_X,\infty)$ or $[c_X,\infty)$ (in any case, a connected poset), we conclude that $[0,\infty) \times_{\operatorname{Tor}(\cC)_{/G}} {X\backslash}(\operatorname{Tor}(\cC)_{/G}) $ is contractible. 
\end{proof}

\subsection{Interleaving distance}\label{section: distance}
For a category $\cC$ equipped with an $\arbR$-action, we can study the interleaving distance on the set of objects. The idea was systematically studied by \cite{Ks2018,asano2017persistence,Thickeningkernel,TPC,guillermouviterbo_gammasupport} etc. It has important applications in quantitative symplectic geometry in \textit{loc. cit.}, \cite{zhang2020quantitative,2020topological_persistence}, and \cite{fukaya_distance}\footnote{Results therein are stated in terms of the (complete) Novikov ring language, which is proven to be equivalent to the present language using APT.} (and some references therein) where the role of almost mathematics in symplectic geometry was first observed.

Here, we collect ideas from them, and state some results well-known to experts. We refer to \textit{loc. cit.} for their proofs. 

\begin{Def}For $X,Y\in \cC$, we say $X,Y$ are $(a,b)$-isomorphic if there exist $\alpha: X\rightarrow \nT_a Y$ and $\beta: Y\rightarrow \nT_b X$ such that $\nT_a(\beta)\circ \alpha \simeq \tau_{a+b}(X)$ and $\nT_b(\alpha)\circ \beta \simeq \tau_{a+b}(Y)$. We define the interleaving distance as 
\[d_{\dint}(X,Y)=\inf\{a+b: X,Y\text{ are }(a,b)-\text{isomorphic}\}.\]
\end{Def}
Then we have the following result.
\begin{Prop}With the notation above, we have
\begin{enumerate}[fullwidth]
    \item $d_{\dint}$ is a pseudo-distance on the object set of $\cC$ (here $d_{\dint}$ may take the value $\infty$).
    \item For an $\bR$-equivariant functor $\alpha:\cC \rightarrow \cC'$ and $X,Y\in \cC$, we have
    \[d_{\dint}(\alpha(X),\alpha(Y))\leq d_{\dint}(X,Y).\] 
    
    \item If $\cC$ is stable, then for a fiber sequence $A\rightarrow B \rightarrow C$, we have
    \[d_{\dint}(A,0)\leq d_{\dint}(B,0)+d_{\dint}(C,0).\]
    \item If $\cC$ is stable, for composable morphisms $u,v$, we have
    \[d_{\dint}(\cofib{v\circ u},0)\leq d_{\dint}(\cofib{ u},0)+d_{\dint}(\cofib{v},0).\]

    \item If $\cC$ is presentable ${\bfk}$-linear stable, then for $X_0,X_1,Y_0,Y_1 \in \cC$, the $\Mod_{{\bR-gr}}(\Lambda_{\geq})$-enriched hom object $\underline{\HOM}$ satisfies
    \[d_{\dint}(\underline{\HOM}(X_0,Y_0) ,\underline{\HOM}(X_1,Y_1))  \leq  d_{\dint}(X_0,X_1)+d_{int}(Y_0,Y_1).\]
\end{enumerate}
\end{Prop}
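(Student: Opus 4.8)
The statement is a package of five inequalities about the interleaving distance; I would treat (1)--(4) quickly and spend the real effort on (5). For (1), pseudo-distance: symmetry is built into the definition of $(a,b)$-isomorphic (it is symmetric in the roles of $X,Y$ up to swapping $a,b$), reflexivity follows since $X$ is $(0,0)$-isomorphic to itself via $\id_X$ and $\tau_0(X)=\id_X$, and the triangle inequality is the standard composition-of-interleavings argument: given an $(a,b)$-interleaving $X\leftrightarrow Y$ and an $(a',b')$-interleaving $Y\leftrightarrow Z$, compose $\alpha$ with $\nT_a(\alpha')$ and $\beta'$ with $\nT_{a'}(\beta)$, using the monoidal isomorphisms $\nT_a\circ\nT_{a'}\simeq\nT_{a+a'}$ and the compatibility $\nT_c(\tau_d)\simeq\tau_d\circ$ (shifted) to check the two composites equal $\tau_{a+a'+b+b'}$ on each side; this yields an $(a+a',b+b')$-interleaving and hence $d_{\dint}(X,Z)\le d_{\dint}(X,Y)+d_{\dint}(Y,Z)$. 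For (2), an $\bR$-equivariant functor $\alpha$ carries an $(a,b)$-interleaving to an $(a,b)$-interleaving because it commutes with all the $\nT_c$ and with $\tau_c$ up to coherent isomorphism, so the infimum can only decrease.

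For (3) and (4), I would reduce to a single mechanism: if $A\to B\to C$ is a fiber sequence and both $B$ and $C$ are $c$-interleaved with $0$ (i.e. $\tau_c(B)=0$ and $\tau_c(C)=0$ — note $X$ being $(a,b)$-interleaved with $0$ forces $\tau_{a+b}(X)=0$, and conversely $\tau_c(X)=0$ gives a $(c,0)$-interleaving with $0$, so $d_{\dint}(X,0)=\inf\{c>0:\tau_c(X)=0\}$), then applying $\nT_{c_B+c_C}$ to the fiber sequence and chasing the octahedron shows $\tau_{c_B+c_C}(A)$ factors through $\nT$ of the connecting maps and vanishes; this is exactly the stable "torsion is a thick $\otimes$-ideal"-type argument already used in the proof that $\operatorname{Tor}(\cC)$ is stable. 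Statement (4) is the special case applied to the fiber sequence $\cofib(u)\to\cofib(v\circ u)\to\cofib(v)$ relating the three cones (or rather $\cofib(v\circ u)$ sits in a triangle with $\cofib(u)$ and $\cofib(v)$), so it follows from (3).

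The substantive point is (5). The plan is to produce, from an $(a_0,b_0)$-interleaving between $X_0$ and $X_1$ and an $(a_1,b_1)$-interleaving between $Y_0$ and $Y_1$, an $(a,b)$-interleaving between $\underline{\HOM}(X_0,Y_0)$ and $\underline{\HOM}(X_1,Y_1)$ with $a\le a_0+a_1$, $b\le b_0+b_1$, taking infima at the end. The enriched hom $\underline{\HOM}(-,-)$ is contravariant in the first slot and covariant in the second, and it is a $\Mod_{\bR-gr}(\Lambda_\ge)$-enriched bifunctor, hence in particular it intertwines the $\nT_c$-actions: there are canonical maps $\underline{\HOM}(X,\nT_c Y)\simeq\nT_c\underline{\HOM}(X,Y)$ and $\underline{\HOM}(\nT_c X, Y)\simeq\nT_c\underline{\HOM}(X,Y)$ coming from the $\Lambda_\ge$-linearity of $\underline{\HOM}$ (shifting either argument by $t^c$ shifts the hom by $t^c$), and the structure maps $\tau_c$ correspond under these identifications to $\underline{\HOM}$ applied to $\tau_c$ in either variable. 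So I would define $\alpha:\underline{\HOM}(X_0,Y_0)\to\nT_{a_0+a_1}\underline{\HOM}(X_1,Y_1)$ as the composite: first $\underline{\HOM}(X_0,Y_0)\xrightarrow{\underline{\HOM}(\id,\alpha_Y)}\underline{\HOM}(X_0,\nT_{a_1}Y_1)\simeq\nT_{a_1}\underline{\HOM}(X_0,Y_1)$, then $\nT_{a_1}$ applied to $\underline{\HOM}(\beta_X,\id):\underline{\HOM}(X_0,Y_1)\to\underline{\HOM}(\nT_{b_0}X_1,Y_1)$ — wait, the variances must match; since $\underline{\HOM}$ is contravariant in the first slot I should use the interleaving map going \emph{into} $X_0$, i.e. $\beta_X:X_1\to\nT_{b_0}X_0$ gives $\underline{\HOM}(\nT_{b_0}X_0,Y_1)\to\underline{\HOM}(X_1,Y_1)$, so I instead precompose appropriately — the cleanest route is to note $\underline{\HOM}(-,-)$ turns the pair of interleavings into an interleaving by functoriality, where the two structure-map conditions $\nT_a(\beta)\circ\alpha\simeq\tau_{a+b}$ become the corresponding identities after applying the bifunctor, using that $\underline{\HOM}$ preserves composition and identities and sends $\tau$'s to $\tau$'s. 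The main obstacle I anticipate is purely bookkeeping: getting the variances, the order of the $\nT$-shifts, and the coherence isomorphisms $\nT_{a_0}\underline{\HOM}(X_1,\nT_{a_1}Y)\simeq\nT_{a_0+a_1}\underline{\HOM}(X_1,Y)$ all lined up so that the two composites genuinely equal $\tau_{(a_0+a_1)+(b_0+b_1)}$ on each of $\underline{\HOM}(X_0,Y_0)$ and $\underline{\HOM}(X_1,Y_1)$ rather than merely being homotopic to it up to an uncontrolled shift; once the identification of $\underline{\HOM}$ with a $\Lambda_\ge$-linear bifunctor is invoked, this is forced, but writing it out carefully is where the work lies. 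Finally, take the infimum over all interleavings of $X_0,X_1$ and of $Y_0,Y_1$ to conclude $d_{\dint}(\underline{\HOM}(X_0,Y_0),\underline{\HOM}(X_1,Y_1))\le d_{\dint}(X_0,X_1)+d_{\dint}(Y_0,Y_1)$.
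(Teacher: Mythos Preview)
The paper does not give its own proof here: it simply cites \cite[Lemma~6.4]{guillermouviterbo_gammasupport} for (1)--(4) and \cite[Proposition~4.11]{asano2017persistence} for (5), adding only the remark that (5) is really a statement in $\Mod_{\bR\text{-}gr}(\Lambda_{\geq})$. Your direct arguments for (1)--(4) are the standard ones and match what one finds in those references; in particular your reduction $d_{\dint}(X,0)=\inf\{c\geq 0:\tau_c(X)=0\}$ and the factorization argument for (3) (if $\tau_{c_B}(B)=0$ then $\tau_{c_B}(A)$ lifts through $\nT_{c_B}C[-1]$, after which $\tau_{c_C}(C)=0$ kills the composite) is exactly the right mechanism, and (4) follows from (3) via the octahedral fiber sequence $\cofib{u}\to\cofib{v\circ u}\to\cofib{v}$ as you say.

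For (5) your strategy is correct but one of your stated identities is wrong: since $\nT_c$ is tensoring with an invertible object and the internal hom is contravariant in the first slot, one has $\underline{\HOM}(\nT_c X, Y)\simeq \nT_{-c}\,\underline{\HOM}(X,Y)$, not $\nT_c\,\underline{\HOM}(X,Y)$. You half-notice this (``wait, the variances must match'') and then correctly reach for $\beta_X$ instead of $\alpha_X$, but the mis-stated isomorphism should be fixed. Once the sign is right, the cleanest packaging---and what the paper's parenthetical remark is pointing toward---is: split via the triangle inequality (1) through $\underline{\HOM}(X_0,Y_1)$; apply (2) to the $\bR$-equivariant functor $\underline{\HOM}(X_0,-)\colon\cC\to\Mod_{\bR\text{-}gr}(\Lambda_{\geq})$ for the $Y$-term; and for the $X$-term verify directly that an $(a,b)$-interleaving of $X_0,X_1$ induces a $(b,a)$-interleaving of $\underline{\HOM}(X_0,Y_1),\underline{\HOM}(X_1,Y_1)$ using the corrected identity. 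This avoids the coherence tangle you anticipate.
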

We refer to \cite[Lemma 6.4]{guillermouviterbo_gammasupport} for the proof of (1)-(4), and \cite[Proposition 4.11]{asano2017persistence} for (5) (notice that (5) is in fact a result in $\Mod_{{\bR-gr}}(\Lambda_{\geq})$).

Using the distance, we can study the $d_{\dint}$-metric completeness on the set of objects $ \operatorname{Ob}(\cC)$. Then we have the following proposition, which comes from Guillermou--Viterbo~\cite{guillermouviterbo_gammasupport} and Asano--Ike~\cite{asanoike2022complete}: 

\begin{Prop}For a stable category $\cC$ equipped with an $\arbR$-action, we have
\begin{enumerate}[fullwidth]
    \item (\cite[Lemma 6.21]{guillermouviterbo_gammasupport}) Let $\bN\rightarrow \cC$ be an inductive system, with $f_n:X_n\rightarrow X_{n+1}$. If $\sum_k d_{\dint}(\cofib{f_k},0) < \infty$, then $X_n$ is $d_{\dint}$-convergent to $\varinjlim X_k$ (if it exists). Moreover, for the natural map $i_N:X_N\rightarrow \varinjlim X_k$, we have $d_{\dint}(\cofib{i_N},0)\leq 2\sum_{k \geq N}d_{\dint}(\cofib{f_k},0)$.
    \item If $\cC$ is closed under countable direct sums, then $d_{\dint}$ is Cauchy complete.
    \item (\cite[Lemma 6.24]{guillermouviterbo_gammasupport}) If the $\arbR$-action on $\cC$ is continuous, and $\{X_n\} \subset \operatorname{Ob}(\cC)$ is $d_{\dint}$-convergent to $X\in \operatorname{Ob}(\cC)$, then after passing to a subsequence, there exists a non-positive increasing sequence of reals $\varepsilon_n \rightarrow 0$ such that $X=\varinjlim_n \nT_{\varepsilon_n }(X_n)$. (We will construct the inductive system in the proof.)
\end{enumerate}
\end{Prop}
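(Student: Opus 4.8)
The plan is to prove the three parts in order, establishing the basic metric estimates first and then the convergence statements, with part (3) being the technical heart of the argument. For part (1), suppose $\sum_k d_{\dint}(\cofib{f_k},0) < \infty$ and write $\varepsilon_k = d_{\dint}(\cofib{f_k},0)$. For each $N$ and $M \geq N$, the composite $X_N \to X_M$ has cofiber that is an iterated extension of the $\cofib{f_k}$ for $N \leq k < M$, so by repeated application of part (4) of the previous proposition we get $d_{\dint}(\cofib{X_N \to X_M},0) \leq \sum_{k=N}^{M-1}\varepsilon_k$. To pass to the colimit $i_N : X_N \to \varinjlim X_k$, I would observe that $\cofib{i_N} = \varinjlim_M \cofib{X_N \to X_M}$ (cofibers commute with filtered colimits), and then I need a lemma: if $Z = \varinjlim Z_M$ with each $d_{\dint}(Z_M,0) \leq C$, then $d_{\dint}(Z,0) \leq 2C$. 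This is exactly where the factor of $2$ comes from; it should follow from the explicit form of the interleaving witnesses for $Z_M$ and a colimit argument assembling them (this is the argument of \cite[Lemma 6.21]{guillermouviterbo_gammasupport}). Combined, $d_{\dint}(\cofib{i_N},0) \leq 2\sum_{k\geq N}\varepsilon_k \to 0$, which gives convergence and the quantitative bound.

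For part (2), the point is that a Cauchy sequence $\{X_n\}$ admits, after passing to a subsequence, maps $f_n : X_n \to X_{n+1}$ with $d_{\dint}(\cofib{f_n},0)$ summable — but one must first produce these maps from the $(a_n,b_n)$-interleavings with $a_n+b_n \to 0$. Here I would use the telescoping/mapping-telescope construction: given interleaving data between consecutive terms, build an honest inductive system $\nT_{c_n}X_n \to \nT_{c_{n+1}}X_{n+1}$ for suitable shifts $c_n$, whose cofibers are controlled by the interleaving quantities (each $\cofib{}$ is $(a_n+b_n)$-small in the sense of $d_{\dint}$). Since $\cC$ is closed under countable direct sums it admits sequential colimits (built as cofibers of $\bigoplus \to \bigoplus$), so $\varinjlim$ of this system exists; by part (1) the system converges to it, and one checks this colimit is a $d_{\dint}$-limit of the original $\{X_n\}$. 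The subsequence issue is harmless since a Cauchy sequence converges iff a subsequence does.

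For part (3), assume the $\arbR$-action is continuous and $X_n \xrightarrow{d_{\dint}} X$. Pass to a subsequence so that $X_n$ and $X$ are $(a_n,b_n)$-isomorphic with $a_n + b_n < 2^{-n}$, say via $\alpha_n : X \to \nT_{a_n}X_n$ and $\beta_n : X_n \to \nT_{b_n}X$. Set $\varepsilon_n = -a_n \nearrow 0$ (non-positive, increasing to $0$ after possibly discarding terms and re-indexing). The candidate system is $\nT_{\varepsilon_n}(X_n) \to \nT_{\varepsilon_{n+1}}(X_{n+1})$, where the map is cooked up from $\beta_n$ followed by $\alpha_{n+1}$, appropriately shifted so that source and target land in the right degrees; continuity of the action is needed precisely to identify $\varinjlim_n \nT_{\varepsilon_n}(X_n)$ with the expected object and to commute $\nT$ past the colimit. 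One then shows the natural map $\nT_{\varepsilon_n}(X_n) \to X$ (coming from $\nT_{\varepsilon_n}\beta_n$ composed with the structure map, noting $\nT_{\varepsilon_n}\nT_{b_n} \simeq \nT_{\varepsilon_n + b_n}$ and $\varepsilon_n + b_n \leq 0$) is compatible with the transition maps, giving a map $\varinjlim_n \nT_{\varepsilon_n}(X_n) \to X$, and that it is an equivalence by checking the interleaving errors $\tau$-vanish in the colimit (the composites $\alpha \circ \beta$ and $\beta \circ \alpha$ equal $\tau_{a+b}$ by the interleaving axioms, and $\tau_{a_n+b_n}$ becomes the identity after the colimit since $a_n + b_n \to 0$ and the action is continuous).

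\textbf{Main obstacle.} The main difficulty is bookkeeping the degree shifts in part (3): turning the bidirectional interleaving data $(\alpha_n,\beta_n)$ into a genuine \emph{inductive system} (a coherent functor $\bN \to \cC$, not just a compatible family of maps) whose colimit is literally $X$, rather than merely $d_{\dint}$-close to it. This is where continuity of the $\arbR$-action is essential and where the argument of \cite[Lemma 6.24]{guillermouviterbo_gammasupport} must be reproduced with care; the metric estimates in parts (1) and (2) are comparatively routine given part (4) of the preceding proposition and the colimit lemma $d_{\dint}(\varinjlim Z_M, 0) \leq 2\sup_M d_{\dint}(Z_M,0)$.
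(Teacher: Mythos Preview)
Your treatment of parts (1) and (2) matches the paper's approach closely. For (1), the paper proves your ``colimit lemma'' $d_{\dint}(\varinjlim Z_M,0)\leq 2\sup_M d_{\dint}(Z_M,0)$ not by assembling interleaving witnesses but by applying part (3) of the preceding proposition (the triangle inequality for fiber sequences) to the telescope fiber sequence
\[
\bigoplus_{k\geq N}\cofib{f_N^k}\rightarrow \bigoplus_{k\geq N}\cofib{f_N^k}\rightarrow \varinjlim_{k\geq N}\cofib{f_N^k},
\]
together with the easy bound $d_{\dint}(\oplus_k Y_k,0)\leq \sup_k d_{\dint}(Y_k,0)$. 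This is the concrete mechanism behind the factor of $2$; your sketch is correct but the telescope is the specific tool.

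For part (3) there is a genuine gap. Your choice $\varepsilon_n=-a_n$ does not produce a well-defined inductive system: composing $\nT_{\varepsilon_n}\beta_n$ with a shift of $\alpha_{n+1}$ lands in $\nT_{\varepsilon_n+b_n+a_{n+1}}X_{n+1}$, and there is no reason for $\varepsilon_n+b_n+a_{n+1}$ to equal $-a_{n+1}$. More seriously, even after fixing the shifts, your plan to verify that the map $\varinjlim \nT_{\varepsilon_n}X_n\to X$ is an equivalence by ``$\tau_{a_n+b_n}\to\id$'' does not give you a two-sided inverse in the colimit.

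The paper's trick, which you are missing, is to \emph{interleave} the $X_n$-terms with shifts of $X$ itself. One first fixes $\alpha_n=2^{-n}$ so that $\alpha_n-\alpha_{n+1}=\alpha_{n+1}$, then from $d_{\dint}(X_n,X)\leq\alpha_n$ extracts only \emph{one} direction of the interleaving: maps $v_n\colon X\to\nT_{\beta_n}X_n$ and $w_n\colon \nT_{\beta_n}X_n\to\nT_{\alpha_{n+1}}X$ with $w_n\circ v_n=\tau_{\alpha_{n+1}}(X)$ and $0\leq\beta_n\leq\alpha_{n+1}$. Setting $\varepsilon_n=\beta_n-\alpha_n$, applying $\nT_{-\alpha_n}$ to $v_n,w_n$ gives a single inductive system
\[
\cdots\to \nT_{-\alpha_n}X\to \nT_{\varepsilon_n}X_n\to \nT_{-\alpha_{n+1}}X\to \nT_{\varepsilon_{n+1}}X_{n+1}\to\cdots,
\]
where the shift arithmetic works precisely because $\alpha_{n+1}-\alpha_n=-\alpha_{n+1}$. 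Now both $(\nT_{\varepsilon_n}X_n)_n$ and $(\nT_{-\alpha_n}X)_n$ are cofinal subsequences, hence share the same colimit; and the latter colimit is $X$ directly by continuity, since the composite transition map $\nT_{-\alpha_n}X\to\nT_{-\alpha_{n+1}}X$ is $\nT_{-\alpha_n}\tau_{\alpha_{n+1}}(X)$ and $-\alpha_n\nearrow 0$. This bypasses entirely the need to build and verify a cone to $X$ by hand.
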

The proofs therein can be translated to our setting directly with minor modifications. We present them here.
\begin{proof}
\begin{enumerate}[fullwidth]
\item We fix $N \geq 1$. Set $f_n^m:X_n\rightarrow X_m$ for $m\geq n$. Therefore, we have $f_n^{m+1}=f_m\circ f_n^m$. Because colimits commute, we have
\[\varinjlim_{k\geq N} \cofib{f_N^k} \simeq \cofib{\varinjlim_{k\geq N} (f_N^k)}\simeq \cofib{\varinjlim_{k\geq N} X_N \rightarrow \varinjlim_{k\geq N} X_k}=\cofib{i_N}.\]
On the other hand, we have the fiber sequence
\[\oplus_{k\geq N}\cofib{f_N^k} \rightarrow \oplus_{k\geq N}\cofib{f_N^k} \rightarrow \varinjlim_{k\geq N} \cofib{f_N^k} .\]
Therefore, we have
\[d_{\dint}(\cofib{i_N},0)=d_{\dint}(\varinjlim_{k\geq N} \cofib{f_N^k},0)\leq 2 d_{\dint}(\oplus_{k\geq N}\cofib{f_N^k},0).\]

Now we estimate $d_{\dint}(\oplus_{k\geq N}\cofib{f_N^k},0)$. Since $f_N^{m+1}=f_N\circ f_N^m$, we have
\[d_{\dint}(\cofib{f_N^{m+1}},0) \leq d_{\dint}(\cofib{f_N^{m}},0)+d_{\dint}(\cofib{f_N},0) \leq \cdots \leq \sum_{n\geq N}^m d_{\dint}(\cofib{f_n},0) .\]
Therefore, we have
\[d_{\dint}(\oplus_{k\geq N}\cofib{f_N^k},0) \leq d_{\dint}(\cofib{f_N^{k}},0) \leq  \sum_{n\geq N}^\infty d_{\dint}(\cofib{f_n},0)  .\]

\item The condition allows us to take sequential colimits in $\cC$. This is then a consequence of (1) with the help of a version of \cite[Lemma 6.8]{guillermouviterbo_gammasupport} or \cite[Lemma 4.1, 4.2]{asanoike2022complete}, where the proof therein uses only the stability of $\cC$ and the existence of $\bR$-action.

\item Let $\alpha_n=2^{-n}$, after passing to subsequences, we can assume that $d_{\dint}(X_n,X)\leq \alpha_n$. Then by definition of $d_{\dint}$, there exists $\beta_n$ with $0\leq \beta_n \leq \alpha_{n+1}$ and morphisms 
\[v_n: X\rightarrow \nT_{\beta_n }X_n, \quad w_n: \nT_{\beta_n}X_n \rightarrow \nT_{\alpha_{n+1} }X\]
such that $w_n\circ v_n=\tau_{\alpha_{n+1}}(X)$.

Let $\varepsilon_n=\beta_n-\alpha_n$. We first consider the inductive sequence
\begin{align*}
 \cdots \rightarrow   \nT_{(-\alpha_{n}) }X \rightarrow \nT_{\varepsilon_n }X_n \rightarrow \nT_{(-\alpha_{n+1}) }X \rightarrow\nT_{\varepsilon_{n+1} }X_{n+1}\rightarrow \nT_{(-\alpha_{n+2}) }X \rightarrow \cdots ,
\end{align*}
where morphisms are defined by translations of $v_n,w_n$ with respect to canonical positive numbers.

Now, for two subsequences of the inductive system $(\nT_{\varepsilon_n }X_n)_n$ and $(\nT_{(-\alpha_n) }X)_n$, 
we have
\[\varinjlim_n \nT_{\varepsilon_n }X_n \simeq \varinjlim_n \nT_{(-\alpha_n) }X.\]
So, we only need to show that $\varinjlim_n \nT_{(-\alpha_n) }X\simeq X$, where the consecutive morphisms are $\nT_{(-\alpha_{n})}\tau_{\alpha_{n+1}}(X)$ since $w_n\circ v_n=\tau_{\alpha_{n+1}}(X)$. This is true because $-\alpha_n \nearrow  0$ and the $\bR$-action is continuous.\qedhere
\end{enumerate}
    
\end{proof}

\subsection{Enhanced triangulated persistence category}\label{section: TPC}

We have the following result concerning an enhancement of triangulated persistence categories (TPC) in the sense of \cite{TPF}. We are not going to fully explain the definition therein, but instead use the following version as explained in \cite[Remark 2.23-(d)]{TPF}. A TPC can be regarded as a triangulated 1-category $\mathcal{T}$ equipped with a triangulated $\arbR$-action, and in addition: For every object $F$, there exists a distinguished triangle $F \rightarrow \nT_r (F) \rightarrow K \xrightarrow{+1}$ such that $K$ is Tamarkin torsion\footnote{In \textit{loc. cit.}, $r$-torsion objects are called $r$-acyclic objects.} (\autoref{Def:Tamarkin torsion} works for triangulated categories). However, in \autoref{example: torsion objects}, we observe that this extra condition is automatic for the enhanced version.
\begin{Prop}\label{prop: R action implies persistence}If $\cC$ is a stable ${\bfk}$-linear category equipped with an $\arbR$-action, then its homotopy category ${h\cC}$ is a TPC.
\end{Prop}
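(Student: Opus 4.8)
The plan is to show that the homotopy category $h\cC$, with the triangulated structure it inherits from the stable category $\cC$, together with the $\arbR$-action obtained by passing $\nT:\arbR\to\Fun(\cC,\cC)$ to homotopy categories, satisfies the axioms of a TPC in the form recalled above. The triangulated $\arbR$-action is essentially automatic: each $\nT_r$ is exact since it is (by \autoref{proposition: R action iff Novikov action}) given by tensoring with the $\bR$-graded $\Lambda_\geq$-module $t^{-r}\Lambda_\geq$, hence a colimit-preserving and in particular exact functor on $\cC$, so it descends to a triangulated endofunctor of $h\cC$; the coherent isomorphisms $\nT_r\circ\nT_s\simeq\nT_{r+s}$ and the natural transformations $\tau_c$ descend to the required structure on $h\cC$. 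So the real content is the ``extra condition'': for every object $F$ and every $r>0$, the cone of $\tau_r(F):F\to\nT_r F$ is Tamarkin $r$-torsion.

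The key step is to produce, for each $F\in\cC$ and each $r>0$, a cofiber sequence $F\xrightarrow{\tau_r(F)}\nT_r F\to K\xrightarrow{+1}$ in $h\cC$ with $K$ a $c$-torsion object for a suitable $c$ (it suffices to find \emph{some} $c>0$ with $\tau_c(K)=0$, by the remark that TPC only requires torsionness, not a specific torsion degree; in fact $c=r$ will work). Here I would invoke \autoref{example: torsion objects}: in a $\bfk$-linear stable category with $\arbR$-action, $\Lambda_{[0,r)}\cdot F$ is $r$-torsion for any $F$, where $\Lambda_{[0,r)}=\cofib{t^r\Lambda_\geq\to\Lambda_\geq}$ acts via the $\Mod^{fin}_{\bR-gr}(\Lambda_\geq)$-module structure. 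Now apply $(-)\cdot F$ to the cofiber sequence $t^r\Lambda_\geq\xrightarrow{\iota}\Lambda_\geq\to\Lambda_{[0,r)}\xrightarrow{+1}$ in $\Mod^{fin}_{\bR-gr}(\Lambda_\geq)$: since $(-)\cdot F$ is exact, this yields a cofiber sequence $(t^r\Lambda_\geq)\cdot F\to\Lambda_\geq\cdot F\to\Lambda_{[0,r)}\cdot F\xrightarrow{+1}$, i.e. $\nT_{r}^{-1}(F)\to F\to\Lambda_{[0,r)}\cdot F\xrightarrow{+1}$ after identifying $\Lambda_\geq\cdot F\simeq F$ (the unit acts as identity) and $(t^r\Lambda_\geq)\cdot F=\nT_{-r}F$ by the last sentence of \autoref{proposition: R action iff Novikov action}. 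Applying the equivalence $\nT_r$ to the whole sequence (and using $\nT_r\circ\nT_{-r}\simeq\id$) gives $F\to\nT_r F\to \nT_r(\Lambda_{[0,r)}\cdot F)\xrightarrow{+1}$; one checks the first map is $\tau_r(F)$ by tracing through the identification $\Lambda_\geq\cdot(-)\simeq\id$ and the fact that $\iota:t^r\Lambda_\geq\hookrightarrow\Lambda_\geq$ induces, after tensoring and applying $\nT_r$, precisely the canonical $\tau_r$. Finally $K\coloneqq\nT_r(\Lambda_{[0,r)}\cdot F)$ is $r$-torsion: $\nT_r$ is $\bR$-equivariant (indeed $\arbR$-linear), so it carries $r$-torsion objects to $r$-torsion objects, and $\Lambda_{[0,r)}\cdot F$ is $r$-torsion by \autoref{example: torsion objects}. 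Passing this cofiber sequence to $h\cC$ gives the distinguished triangle required by the TPC axiom.

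The remaining verifications are routine: that the $\arbR$-action on $h\cC$ is triangulated in the sense of \cite{TPF} (each $\nT_r$ exact, compatible with shift and cones — immediate since $\nT_r$ is exact on $\cC$), and the naturality/coherence bookkeeping for $\tau_c$, which all descends formally from $\cC$. I would close by remarking, as the paper does, that the only non-formal point — the existence of the torsion triangle — required no extra hypothesis and followed purely from the existence of the exact $\arbR$-action, which is the whole message of the proposition.

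The main obstacle I anticipate is \emph{identifying the first map of the constructed triangle with $\tau_r(F)$} on the nose: the construction naturally produces \emph{a} triangle with $\nT_r F$ in the middle and $r$-torsion cone, but matching the map to the structural natural transformation $\tau_r$ requires carefully unwinding how the monoidal functor $\arbR\to\Fun(\cC,\cC)$, the $\Mod^{fin}_{\bR-gr}(\Lambda_\geq)$-module structure, and the equivalence of \autoref{proposition: R action iff Novikov action} interact — in particular that the morphism $0\le r$ in $\arbR$ corresponds, under $\Rees_\bR$, to the inclusion $t^r\Lambda_\geq\hookrightarrow\Lambda_\geq$ up to the relevant identifications. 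Once that dictionary is set straight the argument is short.
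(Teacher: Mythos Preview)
Your proposal is correct and follows essentially the same approach as the paper. The paper works directly with the fiber sequence $\Lambda_\geq\cdot F\to t^{-c}\Lambda_\geq\cdot F\to\Lambda_{[-c,0)}\cdot F$ (i.e.\ $F\to\nT_cF\to\Lambda_{[-c,0)}\cdot F$), whereas you obtain the same triangle by first writing $t^r\Lambda_\geq\to\Lambda_\geq\to\Lambda_{[0,r)}$, acting on $F$, and then applying $\nT_r$; since $\nT_r(\Lambda_{[0,r)}\cdot F)\simeq\Lambda_{[-r,0)}\cdot F$, the two constructions coincide and your extra shift step is just a cosmetic detour.
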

\begin{proof}By \cite[Theorem 1.1.2.14]{HA}, we have that $\mathcal{T}={h\cC}$ is a triangulated category such that distinguished triangles descend from fiber sequences. The $\arbR$-action on $\cC$ naturally descends to $\mathcal{T}={h\cC}$. 

We observe in \autoref{example: torsion objects} that for any $F\in \cC$, we have a fiber sequence $\Lambda_{\geq}\cdot F \rightarrow t^{-c}\Lambda_{\geq}\cdot F \rightarrow \Lambda_{[-c,0)}\cdot F $ for $c\geq 0$, and $\Lambda_{[-c,0)}\cdot F $ is Tamarkin torsion. The fiber sequence descends to a distinguished triangle in ${h\cC}$ where $K=\Lambda_{[-c,0)}\cdot F $. So, $\mathcal{T}={h\cC}$ is a TPC.\end{proof}

\subsection{K-theory}
Here, we relate a result of Biran--Cornea--Zhang to continuous $K$-theory.

For a dualizable (recall \autoref{definition: compactly assembled/dualizble category}) presentable stable category $\cC$ equipped with an $\arbR$-action, we can define its algebraic $K$-theory using Efimov's continuous $K$-theory \cite{Efimov-K-theory}. 

\begin{Def}For a dualizable stable presentable category $\cC$ equipped with an $\arbR$-action, we define its ``persistence" $K$-theory as its continuous $K$-theory $K^{\cont}(\cC)$.
\end{Def}
As $K^{\cont}$ is a lax monoidal functor, for $\cC$ equipped with an $\arbR$-action, we have that $K^{\cont}(\cC)$ is a module over the ring spectrum $K^{\cont}(\Mod_{\bR-gr}(\Lambda_{\geq}))$ by \autoref{proposition: R action iff Novikov action}. Then, in particular, $\pi_*K^{\cont}(\cC)$ is a graded module over $\pi_*K^{\cont}(\Mod_{\bR-gr}(\Lambda_{\geq}))$, and then a module over $\pi_0K^{\cont}(\Mod_{\bR-gr}(\Lambda_{\geq}))$.

The following computation is a translation of \cite[Proposition 4.22-(2)]{Efimov-K-theory}. The form presented here is superficially different from that in \textit{loc. cit.}, but is essentially equivalent. For the convenience of readers, we sketch its proof here.
\begin{Prop}[Efimov]\label{prop: continuous K 1d}We have 
\[K^{\cont}(\Mod_{\bR-gr}(\Lambda_{\geq}))=\bigoplus_{\bR}K({\bfk}).\]   

In particular, for a principal ideal domain $\bfk$ or the sphere spectrum $\bS$, we have
\[K_0^{\cont}(\Mod_{\bR-gr}(\Lambda_{\geq}))=\bigoplus_{\bR}K_0({\bfk})=\bZ[\bR],\]
and for each $a$, the $a$-graded piece of the right-hand side is generated by $\Lambda_{\geq a}$.
\end{Prop}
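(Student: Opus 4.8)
The plan is to deduce the computation directly from the known structure of $\Moda_{\bR-gr}(\Lambda_\geq)$ together with Efimov's localization sequence for continuous $K$-theory. First I would recall from \autoref{prop: APT} (and \autoref{corollary: adic-completion 1d graded version}) the Verdier sequence in $\PrSt$
\[
\Mod_{\bR-gr}(\Lambda_\bR) \longrightarrow \Mod_{\bR-gr}(\Lambda_\geq) \longrightarrow \Sh_{>0}(\bR),
\]
coming from the reflective inclusion $\Mod_{\bR-gr}(\Lambda_\bR)\hookrightarrow \Mod_{\bR-gr}(\Lambda_\geq)$ together with the identification $\Sh_{>0}(\bR)$ as the quotient. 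Actually, to keep bookkeeping clean I would rather use the more symmetric presentation: $\Mod_{\bR-gr}(\Lambda_\geq)$ sits in a recollement between $\Sh_{\geq 0}(\bR)\simeq \Moda_{\bR-gr}(\Lambda_\geq)$ and the category of graded almost-zero modules $\Mod^\partial_{\bR-gr}(\Lambda_\geq,\Lambda_>)\simeq \Mod_{\bR-gr}(\mathbf{k})\simeq \bigoplus_{\bR}\Mod(\mathbf{k})$, via the Verdier sequence $\Mod^\partial_{\bR-gr}(\Lambda_\geq,\Lambda_>)\to \Mod_{\bR-gr}(\Lambda_\geq)\to \Moda_{\bR-gr}(\Lambda_\geq)$.

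The key steps, in order: (i) apply $K^{\cont}$ to the above Verdier sequence; since $K^{\cont}$ is a localizing invariant on dualizable categories (Efimov), this yields a fiber sequence of spectra
\[
K^{\cont}\bigl(\Mod^\partial_{\bR-gr}(\Lambda_\geq,\Lambda_>)\bigr) \to K^{\cont}\bigl(\Mod_{\bR-gr}(\Lambda_\geq)\bigr) \to K^{\cont}\bigl(\Moda_{\bR-gr}(\Lambda_\geq)\bigr).
\]
(ii) Compute the outer two terms. The left term is $K^{\cont}(\bigoplus_{\bR}\Mod(\mathbf{k})) = \bigoplus_{\bR} K(\mathbf{k})$, since continuous $K$-theory of an infinite product of copies of $\Mod(\mathbf{k})$ — equivalently the $\Ind$-category of $\bigoplus_{\bR}\Mod(\mathbf{k})$ in the compactly assembled sense — is the corresponding direct sum (this is exactly the "$\bigoplus$ versus $\prod$" phenomenon that makes continuous $K$-theory differ from ordinary $K$-theory, and is the prototypical Efimov computation). (iii) For the right term, show $K^{\cont}(\Moda_{\bR-gr}(\Lambda_\geq)) = K^{\cont}(\Sh_{\geq 0}(\bR))$ vanishes: here I would use that $\Sh_{\geq 0}(\bR)\simeq \Stab^{\cont}(\arbRpt)$ is the continuous stabilization of the poset $\arbRpt$ which has a terminal object $+\infty$, hence admits an "Eilenberg swindle"-type contraction — more precisely $\arbRpt$ is contractible as a poset / the category $\Sh_{\geq 0}(\bR)$ is generated by a single object ($U_0$) on which translation gives a flasque-type resolution forcing $K^{\cont}$ to be zero; alternatively observe $\Sh_{\geq 0}(\bR)$ is a "cone" in the sense that it admits an endofunctor $\bigoplus_{n\ge 0}\nT_{-n}$ realizing the swindle, so $K^{\cont}$ vanishes. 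Granting (iii), the fiber sequence collapses to an equivalence $K^{\cont}(\Mod_{\bR-gr}(\Lambda_\geq)) \simeq K^{\cont}(\Mod^\partial_{\bR-gr}(\Lambda_\geq,\Lambda_>)) = \bigoplus_{\bR}K(\mathbf{k})$. (iv) Finally, track generators: under this identification the summand indexed by $a\in\bR$ is generated by the graded simple module $\Lambda_{[a,a']}$ supported in a sliver near $a$, and passing to $\pi_0$ for $\mathbf{k}$ a field one gets $\bZ[\bR]$ with the class of $\Lambda_{\ge a}$ (whose "difference" $\Lambda_{[a,a']} \simeq \Lambda_{\ge a}/\Lambda_{\ge a'}$ lands in the $a$-piece) as the stated generator; the compatibility with the $\arbR$-module structure, i.e. that $t^{a}$ acts by the shift by $a$ on $\bigoplus_{\bR}$, is immediate from \autoref{equation: R-action formula on graded modules}.

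I expect the main obstacle to be step (iii): verifying rigorously that $K^{\cont}(\Sh_{\geq 0}(\bR))=0$. The naive swindle $\bigoplus_{n}\nT_{-n}$ does not literally land in the dualizable category in a way that commutes with $K^{\cont}$ for free — one must be careful that $K^{\cont}$ does \emph{not} send all infinite coproducts to zero (it doesn't), so the vanishing genuinely uses the special structure of $\arbRpt$, namely that it is filtered with a terminal object, making $\Stab^{\cont}$ of it a "trivial" dualizable category. The cleanest route is probably to invoke that $\arbRpt$ is a filtered poset, so $\Stab^{\cont}(\arbRpt)$ is a filtered colimit (in $\Cat^{\dual}$) of copies of $\Mod(\mathbf{k})$ along fully faithful, essentially-but-not-quite-identity transition functors whose cofibers are the torsion slivers — and then compare with the left term of the sequence to see the net $K^{\cont}$ contribution is forced. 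Alternatively, and perhaps most efficiently, I would simply run the localization sequence the \emph{other} way using $\Sh_{>0}(\bR)\simeq \Moda_{\bR-gr,t\text{-}comp}(\widehat{\Lambda}_\geq)$ from \autoref{corollary: adic-completion 1d graded version}: the fiber sequence $\Mod_{\bR-gr}(\Lambda_\bR)\to \Mod_{\bR-gr}(\Lambda_\geq)\to \Sh_{>0}(\bR)$ gives $K^{\cont}(\Mod_{\bR-gr}(\Lambda_\geq)) \simeq \mathrm{fib}\bigl(K(\Lambda_\bR) \to K^{\cont}(\Sh_{>0}(\bR))\bigr)$ up to a shift, and one matches this against Efimov's explicit statement in \cite[Proposition 4.22]{Efimov-K-theory}; since the paper only claims this is a \emph{translation} of \textit{loc. cit.}, citing Efimov's computation and merely re-indexing (and checking the generator claim by hand) is a legitimate and short proof.
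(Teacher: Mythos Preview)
Your strategy—use the recollement relating $\Mod_{\bR-gr}(\Lambda_\geq)$, the almost-zero modules, and $\Sh_{\geq 0}(\bR)$, then invoke $K^{\cont}(\Sh_{\geq 0}(\bR))=0$—is exactly the paper's. But the orientation in your step (i) is the wrong side of the recollement for $\Catdual$. The inclusion $i_*\colon\Mod^\partial_{\bR-gr}\hookrightarrow\Mod_{\bR-gr}(\Lambda_\geq)$ is \emph{not} strongly continuous: its right adjoint is $\HOM_{gr}(\Lambda_\geq/\Lambda_>,-)$, and $\Lambda_\geq/\Lambda_>\simeq\bfk$ in degree $0$ is not compact (it has no finite free resolution, since $\Lambda_>$ is not finitely generated). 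So your sequence is not a short exact sequence in $\Catdual$ and cannot be fed directly to $K^{\cont}$ as a localizing invariant. The paper (explicitly flagging ``in $\Catdual$ rather than $\PrSt$'') uses the opposite side of the same recollement,
\[
\Sh_{\geq 0}(\bR)\xrightarrow{\,j_!\,}\Mod_{\bR-gr}(\Lambda_\geq)\xrightarrow{\,i^*\,}\Mod_{\bR-gr}(\bfk),
\]
where $j_!$ is strongly continuous (its right adjoint $j^*$, almostlization, is itself a left adjoint) and $i^*$ is strongly continuous (its right adjoint $i_*$ preserves colimits). This is the fiber sequence you actually want; it involves the same three categories, so once you reorient, your steps (ii)--(iii) go through.

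Two minor points. Your generator tracking in (iv) is off: the sliver $\Lambda_{[a,a']}$ is not almost zero, so does not live in $\Mod^\partial_{\bR-gr}$. Under the paper's functor $i^*=(-)\otimes_{\Lambda_\geq}\bfk$ one has $\Lambda_{\geq a}\mapsto\bfk$ sitting in grading $a$, and the paper matches this against an $\bR$-indexed orthogonal decomposition of $\Mod_{\bR-gr}(\Lambda_\geq)^\omega$ whose $a$-piece is spanned by $\{M\otimes\Lambda_{\geq a}:M\in\Mod(\bfk)^\omega\}$. And your speculation in (iii) about swindles is more than needed: as your own final paragraph concedes, the vanishing $K^{\cont}(\Sh_{\geq 0}(\bR))=0$ is simply cited from \cite[Proposition 4.22-(2)]{Efimov-K-theory}, which is all the paper does.
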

\begin{proof}By \autoref{prop: APT} or \cite[Proposition 4.22-(1)]{Efimov-K-theory}, we have a fiber sequence in $\Catdual$ (rather than $\PrSt$):
\[\Sh_{\geq }(\bR) \rightarrow \Mod_{\bR-gr}(\Lambda_{\geq}) \rightarrow  \Mod_{\bR-gr}({\bfk}).\]
As $K^{\cont}$ is a localizing invariant on $\Cat ^{\dual}$, we have a fiber sequence
\[K^{\cont}(\Sh_{\geq }(\bR)) \rightarrow K^{\cont}(\Mod_{\bR-gr}(\Lambda_{\geq})) \rightarrow  K^{\cont}(\Mod_{\bR-gr}({\bfk})).\]

Therefore, by \cite[Proposition 4.22-(2)]{Efimov-K-theory}: $K^{\cont}(\Sh_{\geq }(\bR) )=0$, we have
\[K^{\cont}(\Mod_{\bR-gr}(\Lambda_{\geq}))=K^{\cont}(\Mod_{\bR-gr}({\bfk})).\]

As explained in the proof of \cite[Proposition 4.22-(2)]{Efimov-K-theory}, the category of compact objects of $\Mod_{\bR-gr}({\bfk})=\prod_{\bR} \Mod({\bfk})$ admits an $\bR$-indexed orthogonal decomposition 
\[\Mod_{\bR-gr}({\bfk})^\omega = \bigoplus_{\bR} \Mod({\bfk})^\omega.\]
Since for a compactly generated stable category $\cC$ we have $K^{\cont}(\cC)=K(\cC^\omega)$ and the algebraic $K$-theory functor is localizing, we conclude:
\begin{equation}\label{equation: a randow equation in K computation}
    K(\Mod_{\bR-gr}({\bfk})^\omega) = K(\bigoplus_{\bR} \Mod({\bfk})^\omega)= \bigoplus_{\bR} K( \Mod({\bfk})^\omega)= \bigoplus_{\bR} K( {\bfk}).
\end{equation}

In fact, it is also explicitly presented in the proof of \cite[Proposition 4.22-(2)]{Efimov-K-theory} that $\Mod_{\bR-gr}(\Lambda_{\geq})^\omega$ admits an $\bR$-indexed orthogonal decomposition, where the $a$-graded piece is the full subcategory spanned by $\{M\otimes  \Lambda_{\geq a}: M\in \Mod({\bfk})^\omega\}$. The proof there shows that the forgetful functor $\Mod_{\bR-gr}(\Lambda_{\geq}) \rightarrow  \Mod_{\bR-gr}({\bfk})$ induces a $K$-equivalence on each graded piece. Therefore, in the direct sum decomposition $K^{\cont}(\Mod_{\bR-gr}({\bfk}))=K(\Mod_{\bR-gr}({\bfk})^\omega)=\bigoplus_{\bR} K( {\bfk}) $, the $a$-graded piece is generated by $\Lambda_{\geq a}$.
\end{proof}

\begin{Coro}The result \cite[Proposition 4.2.2]{TPK} is true: The Grothendieck group of the category of finitely generated filtered chain complexes over a field is isomorphic to $\bZ[\bR]$.    
\end{Coro}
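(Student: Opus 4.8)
The plan is to deduce the Corollary directly from \autoref{prop: continuous K 1d} by identifying the Grothendieck group in \cite[Proposition 4.2.2]{TPK} with $K_0$ of an appropriate stable category carrying an $\arbR$-action. First I would recall that, for a field $\bfk$, the category of finitely generated $\bR$-filtered chain complexes considered in \textit{loc. cit.} is (the homotopy category of, or a pretriangulated model of) the small stable category $\Rep^{fin}(\arbR)\simeq \Mod^{fin}_{\bR-gr}(\Lambda_{\geq})$ introduced in \autoref{section: Categories admits R-action}; this is essentially the content of the Rees equivalence \autoref{prop: Rees construction} restricted to finitely generated (equivalently, compact) objects. In particular the Grothendieck group in question is $K_0(\Mod^{fin}_{\bR-gr}(\Lambda_{\geq}))=K_0(\Mod_{\bR-gr}(\Lambda_{\geq})^\omega)$.

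Next I would invoke the relation between algebraic $K$-theory of compact objects and continuous $K$-theory for a compactly generated stable category, namely $K^{\cont}(\cC)=K(\cC^\omega)$, which was already used inside the proof of \autoref{prop: continuous K 1d}. Applying this to $\cC=\Mod_{\bR-gr}(\Lambda_{\geq})$ (which is compactly generated by \autoref{prop: Rees construction}) gives
\[
K_0\bigl(\Mod_{\bR-gr}(\Lambda_{\geq})^\omega\bigr)=\pi_0 K^{\cont}\bigl(\Mod_{\bR-gr}(\Lambda_{\geq})\bigr)=K_0^{\cont}\bigl(\Mod_{\bR-gr}(\Lambda_{\geq})\bigr).
\]
Then \autoref{prop: continuous K 1d} computes the right-hand side: for a field $\bfk$ it equals $\bigoplus_{\bR}K_0(\bfk)=\bigoplus_{\bR}\bZ=\bZ[\bR]$, with the class of $\Lambda_{\geq a}$ generating the $a$-graded summand. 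Chaining the identifications yields that the Grothendieck group of finitely generated filtered chain complexes over a field is $\bZ[\bR]$, which is the assertion of \cite[Proposition 4.2.2]{TPK}.

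The only genuinely non-formal point — and hence the main obstacle — is making precise the identification of ``finitely generated $\bR$-filtered chain complexes'' in the sense of \cite{TPK} with the compact objects $\Mod_{\bR-gr}(\Lambda_{\geq})^\omega$, since the former is phrased in an explicit dg/filtered-module language while the latter is intrinsic. Concretely one must check that the finitely generated objects in \textit{loc. cit.} are built from the elementary bricks $\Lambda_{\geq a}$ and $\Lambda_{[a,b)}$ under shifts and cones (see \autoref{section: Categories admits R-action}, where these are observed to lie in $\Mod^{fin}_{\bR-gr}(\Lambda_{\geq})$), and conversely that these bricks exhaust a set of compact generators; this is a routine but slightly tedious comparison of conventions. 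Once that dictionary is in place, together with the general fact that passing to the homotopy category does not change $K_0$, the Corollary follows with no further input. I would present it as a one-paragraph argument citing \autoref{prop: Rees construction}, the identity $K^{\cont}(\cC)=K(\cC^\omega)$, and \autoref{prop: continuous K 1d}.
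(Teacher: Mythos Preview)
Your overall strategy coincides with the paper's, but there is one genuine gap. You write
\[
K_0\bigl(\Mod^{fin}_{\bR-gr}(\Lambda_{\geq})\bigr)=K_0\bigl(\Mod_{\bR-gr}(\Lambda_{\geq})^\omega\bigr)
\]
as if it were automatic, effectively identifying $\Mod^{fin}_{\bR-gr}(\Lambda_{\geq})$ with the compact objects. But by construction $\Rep(\arbR)\simeq \Ind(\Rep^{fin}(\arbR))$, so $\Mod_{\bR-gr}(\Lambda_{\geq})^\omega$ is the \emph{idempotent completion} of $\Mod^{fin}_{\bR-gr}(\Lambda_{\geq})$, not $\Mod^{fin}$ itself. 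Thomason's cofinality theorem only gives an injection $K_0(\cD)\hookrightarrow K_0(\cE)$ for a dense inclusion $\cD\subset\cE$; equality on $K_0$ is not automatic. So your displayed equality, and hence the deduction from \autoref{prop: continuous K 1d}, is unjustified as written.

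The paper's proof addresses exactly this point: it sets $\cD=\Mod^{fin}_{\bR-gr}(\Lambda_{\geq})$ and $\cE=\Mod_{\bR-gr}(\Lambda_{\geq})^\omega$, invokes Thomason to get $K_0(\cD)\hookrightarrow K_0(\cE)\simeq \bZ[\bR]$, and then observes that the generators $\Lambda_{\geq a}$ of each graded piece already lie in $\cD$, forcing surjectivity. Your ``routine but slightly tedious comparison of conventions'' paragraph flags the identification of TPK's category with $\Mod^{fin}$, but that is not where the difficulty lies; the missing step is the idempotent-completion issue, which is not a matter of conventions. Add the Thomason injection plus the observation that $\Lambda_{\geq a}\in \cD$ for all $a$, and your argument becomes complete and essentially identical to the paper's.
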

\begin{proof}We notice that $\cD=\Mod^{fin}_{{\bR-gr}}(\Lambda_{\geq})$ is the enhanced version of the category of finitely generated filtered complexes, and $\cE=(\Mod_{\bR-gr}({\Lambda_\geq }))^\omega$ is the idempotent completion of $\cD$. Then by a theorem of Thomason (see \cite[Theorem A.3.2]{Hermitian-K-theory} and references therein), we have $K_0(h(\cD))=K_0(\cD) \hookrightarrow$ $K_0(\cE)\simeq \oplus_\bR K_0 ({\bfk})=\bZ[\bR]$, where the $a$-graded piece is generated by $\Lambda_{\geq a}$ (see \eqref{equation: a randow equation in K computation}). On the other hand, $\{\Lambda_{\geq a}\}_{a\in\bR}$ are all in $\cD$, as are their $K$-classes; this shows that $K_0(h(\cD))=\bZ[\bR]$.
\end{proof}

\section{Higher-dimensional APT}\label{section: HD APT}
In this section, we discuss the higher-dimensional case. 

\subsection{Filtered modules}
This section is parallel to \autoref{section: module discussion 1D}. In the following, we will only state the results, since their proofs are identical to the corresponding $1$-dimensional version with suitable changes of the notation.

Let $\gamma\subset \bR^n$ be a convex cone, i.e., for $v,w  \in \gamma$ and $r>0$, we have $r v\in \gamma$ and $v+w\in \gamma$. We use $\gamma$ to define a partially ordered group structure on $\bR^n$. Namely, the group structure is the usual addition, and the partial order is defined by $a\leq b$ if and only if $b\in a+\gamma$. We denote the corresponding symmetric monoidal category by $\arbR^{n}_\gamma$. We also set $(\bR^{n})^{ds}$ to be the set $\bR^n$ as a discrete monoidal category.

\begin{Def}We set $\Rep(\arbR^{n}_\gamma)\coloneqq \Fun(\arbR^{n}_\gamma,\Mod({\bfk}))$ and $\Mod_{\bR^n-gr}(\bfk)\coloneqq \Fun((\bR^{n})^{ds},\Mod({\bfk}))$.    
\end{Def}

They are symmetric monoidal via the Day convolution and the graded tensor product respectively. The tensor unit $\Lambda_{\gamma}$, which is also a commutative algebra object in the category of $\bR^n$-graded ${\bfk}$-module spectra, is defined via the formula $\Lambda_{\gamma}(a)={\bfk}$ for $a\in \gamma$ and $\Lambda_{\gamma}(a)=0$ otherwise. Forgetting the grading, $\Lambda_{\gamma}\simeq \Sigma_+^\infty {\gamma} \wedge \bfk$ is a commutative ring spectrum by viewing $\gamma$ as a discrete topological space.

\begin{RMK}In \autoref{def: rep 1d}, $\arbR$ is $\arbR_{[0,\infty)}^1$ here. Then we have $\Lambda_{[0,\infty)}=\Lambda_{\geq}$.
\end{RMK}

\begin{Prop}\label{prop: Rees construction HD}
    The $\bR^n$-filtered Rees construction induces a monoidal equivalence
\[\Rep(\arbR^{n}_\gamma)\simeq \Mod_{\bR^n-gr}(\Lambda_{\gamma})\coloneqq \Mod_{\Lambda_{\gamma}}(\Mod_{\bR^n-gr}(\bfk)).\]
In particular, both categories are compactly generated and hence presentable.
\end{Prop}

Now, we set $\bR_\delta^n$ to be the discrete additive group $\bR^n$. We have $\bR_\delta^n$-actions on both sides of the equivalence in \autoref{prop: Rees construction HD}. On the representation side, the action is induced from the action on $\arbR^n_\gamma$. On the graded module side, the action is given by shifting grading, for $b\in \bR_\delta^n$, we have
\[\nT_b ( M ) (a)\coloneqq M(a+b).\]

It is clear that the equivalence in \autoref{prop: Rees construction HD} is $\bR_\delta^n$-equivariant. Moreover, we have
\begin{Coro}\label{coro: non-graded equivalence HD}The forgetting the grading functor  $\Mod_{\bR^n-gr}(\Lambda_{\gamma})\rightarrow \Mod({\bfk})$ induces a monoidal equivalence 
\[\Mod_{\bR^n-gr}(\Lambda_{\gamma})^{{\bR_\delta^n}}\xrightarrow{\simeq} \Mod(\Lambda_{\gamma})=\Mod_{\Lambda_{\gamma}}(\Mod({\bfk})).\]

In particular, we have an equivalence of compactly generated categories
\[\Rep(\arbR^n_\gamma)^{\bR_\delta^n}\simeq \Mod(\Lambda_{\gamma}).\]    
\end{Coro}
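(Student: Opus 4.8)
The plan is to reduce the higher-dimensional statement, \autoref{coro: non-graded equivalence HD}, to the one-dimensional assertion \autoref{coro: non-graded equivalence}, following the remark that the proof is ``identical with its $1$-dimensional version with suitable changes of notation.'' Concretely, set $\cC = \Fun(\bR^{n,ds}, \Mod(\bfk)) = \prod_{\bR^n}\Mod(\bfk) = \Mod_{\bR^n-gr}(\bfk)$. The forgetful functor $\arbR^n_\gamma \to \bR^{n,ds}$ induces the Rees restriction, and \autoref{prop: Rees construction HD} identifies $\Rep(\arbR^n_\gamma) \simeq \Mod_{\Lambda_\gamma}(\cC)$. The trivial $\bR^n_\delta$-action on $\Mod(\bfk)$ gives a monoidal functor $\cC \to \Fun^L_\bfk(\Mod(\bfk),\Mod(\bfk)) \simeq \Mod(\bfk)$, namely $(M_a)_{a\in\bR^n} \mapsto \bigoplus_a M_a$, so that $\Mod(\bfk)$ becomes a $\cC$-module. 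Since $\Rep(\arbR^n_\gamma)$ is presentable and stable, the $\bR^n_\delta$-action on it induces a $\cC$-module structure by the higher-dimensional analogue of \cite[Corollary 2.4.4]{Rotation-inv-Lurie} (replacing $\bZ$ by $\bR^n$, which is what the section's opening remark licenses).

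The key computation is then the chain of equivalences
\[
\Mod_{\Lambda_\gamma}(\cC) \otimes_{\cC} \Mod(\bfk) \simeq \Mod(\Lambda_\gamma),
\]
which holds by \cite[Theorem 4.8.4.6, Corollary 4.8.5.21]{HA} once we regard $\Lambda_\gamma$ as a commutative $\bfk$-algebra after forgetting the grading (here $\Lambda_\gamma \simeq \Sigma^\infty_+\gamma \wedge \bfk$ as in the preceding discussion). On the other hand, because $\bR^n_\delta$ is a discrete group, the same bar-resolution argument as in the proof of \autoref{coro: non-graded equivalence} computes the relative tensor product as the homotopy orbit:
\[
\Rep(\arbR^n_\gamma) \otimes_{\cC} \Mod(\bfk) = \varinjlim_m \Rep(\arbR^n_\gamma)\otimes \cC^{\otimes m} = \Rep(\arbR^n_\gamma)_{\bR^n_\delta} = \Rep(\arbR^n_\gamma)^{\bR^n_\delta},
\]
where the last equality uses that $\bR^n_\delta$ is a group so that homotopy orbits and homotopy fixed points agree in $\PrSt \simeq (\PrStR)^\op$. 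Combining these with \autoref{prop: Rees construction HD} yields $\Rep(\arbR^n_\gamma)^{\bR^n_\delta} \simeq \Mod_{\bR^n-gr}(\Lambda_\gamma)^{\bR^n_\delta} \simeq \Mod(\Lambda_\gamma)$. Monoidality is inherited at each step since all the functors involved (Rees equivalence, the relative tensor product, forgetting the grading) are monoidal.

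The only genuinely new point compared to dimension one is verifying that the relevant results of \cite{Rotation-inv-Lurie} and \cite{HA}, stated there for $\bZ$ (or $\bZ^{ds}$), carry over verbatim with $\bR^n$ (or $\bR^{n,ds}$) in place of $\bZ$; this is precisely the ``suitable changes of notation'' disclaimer, and no property of $\bZ$ beyond its being an abelian group is used. The remaining assertion --- that $\Rep(\arbR^n_\gamma)^{\bR^n_\delta} \simeq \Mod(\Lambda_\gamma)$ are compactly generated --- follows from \autoref{prop: Rees construction HD} together with the fact that $\Mod(\Lambda_\gamma)$, being a module category over a ring spectrum, is automatically compactly generated. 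I do not expect a serious obstacle here: the main body of work is bookkeeping with the relative tensor product, which is entirely parallel to the one-dimensional case; the substantive input --- that $\Lambda_\gamma = \bfk[\gamma]$ is a commutative algebra and the Rees equivalence --- has already been established in \autoref{prop: Rees construction HD}.
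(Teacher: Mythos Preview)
Your proposal is correct and takes essentially the same approach as the paper: the paper explicitly states that the proof of \autoref{coro: non-graded equivalence HD} is identical to that of \autoref{coro: non-graded equivalence} with suitable changes of notation, and you have carried out precisely those changes (replacing $\bR$ by $\bR^n$, $\Lambda_\geq$ by $\Lambda_\gamma$, and $\cC = \Fun(\bR^{ds},\Mod(\bfk))$ by $\cC = \Fun(\bR^{n,ds},\Mod(\bfk))$), invoking the same references from \cite{Rotation-inv-Lurie} and \cite{HA} at the same steps.
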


\subsection{Almost modules and sheaves: ``Affine" case}

In this section, we consider a closed convex cone $\sigma \subset  \bR^n $. We recall the following well-known properties of cones.
\begin{Prop}[{\cite[Exercise 2.31]{Boyd_2004}}]For a closed convex cone $\sigma$ and its dual cone
\[\sigma^\vee=\{v \in \bR^n: \langle v, w \rangle \geq 0, \forall w\in \sigma\},\]
we have
\begin{enumerate}
    \item $\sigma=( \sigma ^\vee)^\vee$;
    \item $\sigma$ is proper (i.e., $\sigma\cap (-\sigma)=0$) \footnote{It is also called pointed in the literature. Here, we prefer to reserve the terminology \textit{pointed} for cones containing $0$.} if and only if $\sigma^\vee$ has non-empty interior.
\end{enumerate}
\end{Prop}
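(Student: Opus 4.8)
The statement to prove is the standard duality for closed convex cones: (1) $\sigma = (\sigma^\vee)^\vee$ and (2) $\sigma$ is proper if and only if $\sigma^\vee$ has non-empty interior. Both parts are classical and the reference \cite[Proposition 1.2.12]{Cox_ToricBook} already contains a proof; the plan below is the route I would take to write a self-contained argument.

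For part (1), the plan is to prove the two inclusions separately. The inclusion $\sigma \subset (\sigma^\vee)^\vee$ is immediate from the definition: for $w \in \sigma$ and any $v \in \sigma^\vee$ one has $\langle v, w\rangle \geq 0$, which is exactly the condition $w \in (\sigma^\vee)^\vee$. For the reverse inclusion $(\sigma^\vee)^\vee \subset \sigma$, I would argue by contraposition using the separating hyperplane theorem: if $w_0 \notin \sigma$, then since $\sigma$ is closed and convex, there is a linear functional $v_0$ and a scalar $c$ with $\langle v_0, w\rangle \geq c > \langle v_0, w_0\rangle$ for all $w \in \sigma$; because $\sigma$ is a cone (closed under positive scaling and contains $0$), one forces $c \leq 0$ and in fact $\langle v_0, w\rangle \geq 0$ for all $w \in \sigma$, so $v_0 \in \sigma^\vee$, while $\langle v_0, w_0\rangle < 0$ shows $w_0 \notin (\sigma^\vee)^\vee$. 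This gives $(\sigma^\vee)^\vee \subset \sigma$ and hence equality.

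For part (2), I would use the general fact that for a closed convex cone $\tau$, the largest linear subspace contained in $\tau$ is $\tau \cap (-\tau)$, and that the interior of $\tau^\vee$ is non-empty precisely when $\tau \cap (-\tau) = \{0\}$; dually, $\tau$ has non-empty interior iff $\tau^\vee$ is proper. Applying this with $\tau = \sigma^\vee$ and using part (1) to identify $(\sigma^\vee)^\vee = \sigma$, the equivalence ``$\sigma$ proper $\iff$ $\sigma^\vee$ has non-empty interior'' follows. Concretely, I would show: if $\sigma^\vee$ has empty interior, it lies in a hyperplane $\{x : \langle x, w_0\rangle = 0\}$ for some $w_0 \neq 0$, and then both $w_0$ and $-w_0$ lie in $(\sigma^\vee)^\vee = \sigma$, so $\sigma$ is not proper; conversely, if $\sigma$ contains a line $\bR w_0$ with $w_0 \neq 0$, then every $v \in \sigma^\vee$ satisfies $\langle v, w_0\rangle = 0$, so $\sigma^\vee$ is contained in a hyperplane and has empty interior.

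The main obstacle, such as it is, is purely expository rather than mathematical: one must be careful to invoke the separating hyperplane theorem in the correct form for closed convex sets in finite-dimensional $\bR^n$ and to correctly exploit the cone structure to upgrade the separation constant to $0$. Since the result is entirely standard and a reference is cited, in the paper it is reasonable to state it with a one-line indication (``follows from the separating hyperplane theorem'') or simply to cite \cite[Proposition 1.2.12]{Cox_ToricBook} directly.
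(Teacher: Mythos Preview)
Your proof plan is correct and entirely standard. The paper itself does not provide a proof of this proposition at all: it is stated with the citation \cite[Proposition 1.2.12]{Cox_ToricBook} and immediately used, exactly as you anticipate in your final paragraph. So there is nothing to compare against, and your sketch (separating hyperplane for part (1), the linear-subspace/interior duality for part (2)) is precisely the argument one finds in the cited reference.
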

Now, we assume that $\sigma$ is proper, which implies that $\gamma =\sigma^\vee$ has non-empty interior. 

Recall that, in the previous subsection, we studied the polynomial Novikov ring $\Lambda_{\gamma}$ and its (graded) modules. Now, since $\gamma =\sigma^\vee$ has non-empty interior $\mathring{\gamma}$, we consider the $\bR^n$-graded ideal $I_\gamma=\bfk[{\mathring{\gamma}}]$ of $\Lambda_{\gamma}$: i.e., $ I_\gamma(x)=\bfk$ for $x \in  \mathring{\gamma} $ and $I_\gamma(x)=0$ otherwise. It is clear that $\mathring{\gamma}+\mathring{\gamma} = \mathring{\gamma}$. Therefore, we have $I_\gamma=I_\gamma \otimes_{\Lambda_{\gamma}} I_\gamma $, i.e., $I_\gamma$ is an idempotent ideal of $\Lambda_{\gamma}$. 

Now, we can consider $(\Lambda_{\gamma},I_\gamma)$ as an almost content and study almost modules over it. In this section, we would like to prove that
\begin{Thm}\label{prop: APT HD}For a proper closed convex cone $\sigma$ and $\gamma=\sigma^\vee$, we have an equivalence
\[\Sh_{\bR^n\times (-\sigma)} (\bR^n) \simeq \Moda_{{\bR^n-gr}} (\Lambda_{-\gamma},I_{-\gamma}),\, F\mapsto \{F(\mathring{{\gamma}}-a)\}_{a\in \bR^n}.\] 

We equip $\Sh_{\bR^n\times (-\sigma)} (\bR^n) $ with the $\star$-convolution, and the almost module category $\Moda_{{\bR^n-gr}} (\Lambda_{-\gamma},I_{{-\gamma}})$ with the tensor product of  modules over $\Lambda_{-\gamma}$. Then the equivalence is a monoidal equivalence.
\end{Thm}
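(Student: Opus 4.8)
The strategy is to reduce \autoref{prop: APT HD} to the one–dimensional statement \autoref{prop: APT} (together with its equivariant avatar and the combinatorial package of \autoref{section: module discussion 1D}), by exploiting the microlocal cut-off equivalence $\Sh_{\bR^n\times(-\sigma)}(\bR^n)\simeq\Sh(\bR^n_\gamma)$ of \autoref{lemma: microlocal cut-off} on the topological side, and the Rees construction $\Rep(\arbR^n_\gamma)\simeq\Mod_{\bR^n-gr}(\Lambda_{-\gamma})$ of \autoref{prop: Rees construction HD} on the algebraic side. Concretely, first I would set up the functor $\Sh(\bR^n_\gamma)\to\PSh(\bR^n_\gamma)\simeq\Rep(\arbR^n_{-\gamma})$, where the last identification comes from the poset map $a\mapsto \mathring{(-\gamma)}-a$ realizing $\bR^n$ with the $(-\gamma)$-order as (an equivalent of) the open sets of $\bR^n_\gamma$; this assigns to $F$ the $\bR^n$-filtered object $a\mapsto \Gamma(\mathring{(-\gamma)}-a;F)[-n]$ (or the appropriate shift so that constant sheaves on $(-\gamma)$-shifted open cones match the generators $\Lambda_{-\gamma,a}$). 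This functor is fully faithful and preserves limits but not colimits — exactly as in the proof of \autoref{prop: APT}.

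The main step, step two, is the identification of the essential image with the graded almost local modules. Let $U_a$ be the constant sheaf (up to the shift fixed above) on the open half-cone $\mathring{(-\gamma)}-a$. A direct computation, as in the $1$d case, gives $\Rees_{\bR^n}\widetilde{U_a}=\Lambda_{-\gamma,a}=\nT_{-a}\Lambda_{-\gamma}$, the degree $a$ free generator; and taking the colimit over $b$ with $b\in a+\mathring{(-\gamma)}$ (equivalently over $b$ with $\mathring{(-\gamma)}-b\subsetneq \mathring{(-\gamma)}-a$ being a strictly larger open cone in an appropriate sense), computed \emph{in presheaves}, one obtains $\Rees_{\bR^n}\widetilde{\varinjlim_b U_b}=\nT_{-a}\Lambda_{\mathring{(-\gamma)}}$, the degree $a$ generator of the idempotent ideal $\Lambda_{\mathring{(-\gamma)}}$. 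The sheaf condition for the $\gamma$-topology — more precisely, the fact that $\mathring{(-\gamma)}-a=\bigcup_{b}(\mathring{(-\gamma)}-b)$ is a $\gamma$-open cover and that $F$ is a (hyper)sheaf — translates into $\HOM(U_a,F)\simeq\HOM(\varinjlim_b U_b,F)$, which by the graded characterization of almost locality recalled at the end of \autoref{section: almost ring theory} is exactly the statement that $\Rees_{\bR^n}\widetilde{F}$ is graded almost local. Conversely one reverses the computation. One subtlety to check carefully: the cofinality of the relevant diagram of open sub-half-cones $\{\mathring{(-\gamma)}-b\}$ inside $\mathring{(-\gamma)}-a$, i.e.\ that this colimit computes $\varinjlim_b U_b$ and agrees with $\nT_{-a}\Lambda_{\mathring{(-\gamma)}}$ — in $1$d this was the passage from $(-\infty,-a)$ to $\varinjlim_{b>a}(-\infty,-b)$, and in higher dimensions the corresponding poset of translates is still filtered so the argument goes through, but it deserves an explicit sanity check.

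For step three, the monoidality: both sides are generated under colimits by the objects $U_a$ (resp.\ $\Lambda_{-\gamma,a}$), and both the $\star$-convolution $F\star G=s_!(F\boxtimes G)$ on $\Sh_{\bR^n\times(-\sigma)}(\bR^n)$ and the Day convolution / tensor product over $\Lambda_{-\gamma}$ commute with colimits in each variable and with the translations $\nT_a$. So it suffices to compute $U_0\star U_0$, which is $s_!$ of the constant sheaf on $\mathring{(-\gamma)}\times\mathring{(-\gamma)}$; since $\mathring{(-\gamma)}+\mathring{(-\gamma)}=\mathring{(-\gamma)}$ and the relevant fibers of $s$ are (contractible) translates of $\mathring{(-\gamma)}$, this yields $U_0$, matching the fact that $\Lambda_{-\gamma}$ is the monoidal unit; here one already knows the almostlization/sheafification functor is symmetric monoidal, so the convolution descends to the almost category. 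Finally, the equivariant statement $\Sh^{\bR^n_\delta}_{\bR^n\times(-\sigma)}(\bR^n)\simeq\Moda(\Lambda_{-\gamma},\Lambda_{\mathring{(-\gamma)}})$ (and hence the version in the theorem's statement with $\sigma$ in place of $-\sigma$, obtained by the obvious sign relabeling) follows by taking $\bR^n_\delta$-homotopy orbits of the non-equivariant equivalence, using that these are colimits in $\PrSt$ that commute with the Verdier quotient, exactly as in \autoref{prop: equivariant APT} via \autoref{coro: non-graded equivalence HD}.

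\textbf{Expected main obstacle.} The genuinely $n$-dimensional point — and the place where I expect to spend the most care — is the identification of the image with graded almost local modules via the $\gamma$-sheaf condition: in dimension one the colimit $\varinjlim_{b>a}U_b$ was indexed by the simple poset $(a,\infty)$, whereas here it is indexed by the filtered poset of translates of $\mathring{(-\gamma)}$ that are contained in the fixed $\mathring{(-\gamma)}-a$, and one must verify both that this colimit (in presheaves) really equals $\nT_{-a}\Lambda_{\mathring{(-\gamma)}}$ under Rees, and that $F$ being a $\gamma$-sheaf is equivalent to $\HOM(U_a,F)\simeq \HOM(\varinjlim_b U_b,F)$ for all $a$ — i.e.\ that this restricted collection of $\gamma$-opens detects the sheaf condition (a consequence of these cones forming a basis of $\bR^n_\gamma$ and of hypercompleteness of $\Sh(\bR^n_\gamma)$, as recorded in \autoref{Section: microsupport}). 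Everything else is a notation-faithful transcription of \autoref{prop: APT}.
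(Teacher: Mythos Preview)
Your overall strategy matches the paper's: microlocal cut-off to pass to $\Sh(\bR^n_\gamma)$, then identify sheaves with graded almost local $\Lambda_{-\gamma}$-modules via evaluation on the translated interior cones, then check monoidality on the generator. The key geometric input you flag — that the sets $\{\mathring{\gamma}-a\}_{a\in\bR^n}$ form a basis of the $\gamma$-topology — is exactly \autoref{lemma: gamma open basis}, and the monoidality step is the paper's $1_{\mathring{\gamma}}[n]\star 1_{\mathring{\gamma}}[n]\simeq 1_{\mathring{\gamma}}[n]$.

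There is one genuine slip to fix. You write $\PSh(\bR^n_\gamma)\simeq\Rep(\arbR^n_{-\gamma})$, transcribing \eqref{equation: PSH=Rep 1d}. That equivalence is special to dimension one, where the $\gamma$-opens are exactly the rays $(-\infty,c)$; for $n>1$ there are far more $\gamma$-open sets than the translated cones $\mathring{\gamma}-a$, so $\iota\colon\arbR^n_{-\gamma}\to\operatorname{Open}(\bR^n_\gamma)^{\op}$ is not an equivalence of posets and $\iota^*$ is not an equivalence on presheaves. The paper sidesteps this by never invoking a presheaf-level equivalence: it works with the adjunction $(\iota^*,\iota_*)$ where $\iota_*$ is the right Kan extension, and checks directly (\autoref{corollary: Kan extension restrict to equivalence HD}) that the unit and counit become equivalences once one restricts to almost local modules on one side and to sheaves on the other — the sheaf-side verification uses only that $\{\mathring{\gamma}-a\}$ is a basis and hypercompleteness, which you already identify. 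Your argument is salvageable along the same lines (replace the false equivalence by the composite $\Sh(\bR^n_\gamma)\hookrightarrow\PSh(\bR^n_\gamma)\xrightarrow{\iota^*}\Rep(\arbR^n_{-\gamma})$ and argue full faithfulness from the basis property), but the adjunction formulation is cleaner and avoids the cofinality bookkeeping you were worried about. Separately, watch the signs: the opens are $\mathring{\gamma}-a$ (these are $\gamma$-open), not $\mathring{(-\gamma)}-a$.
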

\begin{RMK}Many minus signs appear in the statements of \autoref{prop: APT HD} and \autoref{corollary: Kan extension restrict to equivalence HD}. The reason is that we want to reduce the number of minus signs in their proofs. In later results, we will try to avoid this inconvenience by putting a suitable sign there.    
\end{RMK}

The main technical result to prove this theorem is the microlocal cut-off lemma \autoref{lemma: microlocal cut-off}. Here, let us review the notion of ${\gamma}$-topology. An open set $U$ (with respect to the usual topology of $\bR^n$) is called ${\gamma}$-open if $U+{\gamma} =U$. The set of ${\gamma}$-open sets forms a topology on $\bR^n$, which is denoted by $\bR^n_{{\gamma}}$, and there is a natural continuous map $\varphi_{{\gamma}}: \bR^n \rightarrow \bR^n_{{\gamma}}$. \autoref{lemma: microlocal cut-off} shows that, for $\sigma=\gamma^\vee$, the pair 
$(\varphi_{{\gamma}}^*,\varphi_{{\gamma}*})$ defines an adjoint equivalence $\Sh_{\bR^n\times (-\sigma)} (\bR^n) \simeq  \Sh(\bR^n_{{\gamma}})$.

Therefore, we only need to relate $\Sh(\bR^n_{{\gamma}})$ and $\Moda_{{\bR^n-gr}} (\Lambda_{-\gamma},I_{{-\gamma}})$. 

We start by recalling that $\gamma$ has non-empty interior $\mathring{\gamma}$, which is a $\gamma$-open set. Then we can define a functor: 
\[\iota: \arbR^{n}_{-\gamma} \rightarrow \operatorname{Open} (\bR^n_{{\gamma}})^{\op},\quad \iota(a)=\mathring{{\gamma}}-a.\]

It is indeed a functor since if $a\rightarrow b$ i.e., $b-a\in -\gamma$, then $\mathring{\gamma}-a\subset \mathring{\gamma}-b$.

Pre-composing with $\iota$ and shifting homological degree by $n$, we obtain a natural ``restriction" functor $\iota^*$ that fits into the commutative diagram, where $\Gamma$ is defined by $\Gamma(F) (a) \coloneqq \Gamma(\mathring{{\gamma}}-a,F)[-n]$;
\[
\begin{tikzcd}
{\PSh(\bR^n_{{\gamma}})} \arrow[rr, "\iota^*"]                    &  & {\Mod_{{\bR^n-gr}} (\Lambda_{-\gamma})=\Rep(\arbR^{n}_{-\gamma})} \\
{\Sh(\bR^n_{{\gamma}}).} \arrow[u, hook] \arrow[rru, "\Gamma"] &  &                                \end{tikzcd}\]

\begin{Lemma}\label{lemma: gamma open basis}The set of ${\gamma}$-open sets $\{\mathring{{\gamma}}-a\}_{a\in \bR^n}$ forms a basis of the ${\gamma}$-topology.
\end{Lemma}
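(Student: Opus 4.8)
The plan is to verify directly the two defining properties of a basis: every set $\mathring{\gamma}-a$ is $\gamma$-open, and every $\gamma$-open set is a union of such translates.

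First I would check that each $\mathring{\gamma}-a$ is $\gamma$-open. It is open in the usual topology, being a translate of the open set $\mathring{\gamma}$; so it remains to see $(\mathring{\gamma}-a)+\gamma=\mathring{\gamma}-a$, for which it suffices to prove $\mathring{\gamma}+\gamma=\mathring{\gamma}$. The inclusion $\supseteq$ is trivial since $0\in\gamma$, and for $\subseteq$: given $u\in\mathring{\gamma}$ pick $r>0$ with $B(u,r)\subseteq\gamma$, so that for any $g\in\gamma$ we have $B(u+g,r)=B(u,r)+g\subseteq\gamma+\gamma=\gamma$, whence $u+g\in\mathring{\gamma}$. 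The same argument (rescaling the radius) also shows $\mathring{\gamma}$ is itself a cone, a fact I will use below.

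The heart of the argument is the covering property. Let $U$ be $\gamma$-open and $x\in U$; I would produce $a\in\bR^n$ with $x\in\mathring{\gamma}-a\subseteq U$. Since $U$ is open, pick $\varepsilon>0$ with $B(x,\varepsilon)\subseteq U$; since $\mathring{\gamma}$ is a nonempty cone it contains points of arbitrarily small norm, so pick $p\in\mathring{\gamma}$ with $|p|<\varepsilon$ and set $a=p-x$. Then $x+a=p\in\mathring{\gamma}$, so $x\in\mathring{\gamma}-a$; and for any $z\in\mathring{\gamma}\subseteq\gamma$ we have $z-a=(x-p)+z$ with $x-p\in B(x,\varepsilon)\subseteq U$, hence $z-a\in U+\gamma=U$ by $\gamma$-openness of $U$. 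Thus $\mathring{\gamma}-a\subseteq U$. Writing $a_x$ for such a choice at each point, we conclude $U=\bigcup_{x\in U}(\mathring{\gamma}-a_x)$, which is exactly the statement that $\{\mathring{\gamma}-a\}_{a\in\bR^n}$ is a basis (the whole space $\bR^n$ is itself $\gamma$-open, so it too is covered).

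I do not anticipate a serious obstacle: this is elementary point-set topology. The one idea that makes it work is to push the base point slightly into the interior of the cone — replacing $x$ by $x-p$ for a small $p\in\mathring{\gamma}$ — before invoking $\gamma$-closedness of $U$ to absorb the entire cone $\mathring{\gamma}$; without this shift the translate $\mathring{\gamma}-a$ would sit against the boundary of $U$ and need not be contained in it.
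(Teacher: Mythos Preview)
Your proof is correct and follows essentially the same approach as the paper: pick a small ball $B(x,r)\subset U$, choose $d\in\mathring{\gamma}$ with $|d|<r$, set $a=d-x$, and observe that $\mathring{\gamma}-a=(x-d)+\mathring{\gamma}\subset U+\gamma=U$. You simply spell out in more detail the verification that $\mathring{\gamma}+\gamma=\mathring{\gamma}$ and the final inclusion, both of which the paper leaves implicit.
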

\begin{proof}It is clear that $\{\mathring{{\gamma}}-a\}_{a\in \bR^n}$ are ${\gamma}$-open sets. It remains to prove that for any ${\gamma}$-open set $U$ and $x\in U$, there exists $a \in \bR^n$ such that $x\in \mathring{{\gamma}}-a \subset U$.

To prove it, we notice that $U$ is open in the usual topology by definition. So, we can pick an open ball neighborhood $B(x,r) \subset U$ of $x$, $d \in \mathring{{\gamma}}$ with $|d| <r $, and $a\coloneqq d-x$. Then we have $x=d-a \in \mathring{{\gamma}}-a$ and $\mathring{{\gamma}}-a \subset U$. 
\end{proof}
\begin{Coro}\label{corollary: Kan extension restrict to equivalence HD}The right Kan extension  $\iota_* :\Mod_{{\bR^n-gr}} (\Lambda_{{-\gamma}}) \rightarrow \PSh(\bR^n_{{\gamma}})$ of $\iota^*$ restricts to an adjoint equivalence $\iota_* :\Moda_{{\bR^n-gr}} (\Lambda_{{-\gamma}},I_{{{-\gamma}}}) \rightarrow \Sh(\bR^n_{{\gamma}})$. Moreover, the following diagram, where the right vertical hook arrow is the right adjoint of the almostification functor, is commutative;
\[
\begin{tikzcd}
{\PSh(\bR^n_{{\gamma}})} \arrow[rr, "\iota^*"]                    &  & {\Mod_{{\bR^n-gr}} (\Lambda_{{-\gamma}})} \\
{\Sh(\bR^n_{{\gamma}})} \arrow[u, hook] \arrow[rru, "\Gamma"]  \arrow[rru, "\Gamma"] \arrow[rr, "\iota^{*}","\simeq"']&  &    {\Moda_{{\bR^n-gr}} (\Lambda_{{-\gamma}},I_{{{-\gamma}}})}.\arrow[u, hook]                            \end{tikzcd}\]

\end{Coro}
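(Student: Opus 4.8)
The plan is to mimic, almost verbatim, the one‑dimensional argument of \autoref{prop: APT} and its corollary, the only real input being \autoref{lemma: gamma open basis}. First I would record the key computation identifying the ``representable'' objects on the sheaf side with the generators of the module side. For $a\in\bR^n$ write $V_a=1_{\mathring{\gamma}-a}[n]$, the shifted constant sheaf on the $\gamma$-open set $\mathring\gamma-a$; then a direct computation of $\Gamma(\mathring\gamma-b,V_a)$ (using that $\mathring\gamma-a\subset \mathring\gamma-b$ exactly when $b-a\in -\gamma$) shows that, after applying $\iota^*$ and the Rees construction, $V_a$ corresponds to $\nT_{a}\Lambda_{-\gamma}=t^{-a}\Lambda_{-\gamma}$, the degree‑$(-a)$ shift of the unit. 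Since $\iota^*$ is defined by precomposition with $\iota$ followed by a degree shift, it preserves limits; it does \emph{not} preserve colimits in $\PSh$, and, as in the 1d case, that discrepancy is exactly what produces the ``almost'' phenomenon.

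Next I would show that a presheaf $F\in\PSh(\bR^n_\gamma)$ lies in $\Sh(\bR^n_\gamma)$ if and only if $\iota^*F$, viewed as a graded $\Lambda_{-\gamma}$‑module, is graded almost local for the content $(\Lambda_{-\gamma},\Lambda_{\mathring{(-\gamma)}})$. By \autoref{lemma: gamma open basis} the sets $\{\mathring\gamma-a\}_{a\in\bR^n}$ form a basis, and because $\Sh(\bR^n_\gamma)$ is hypercomplete (the Corollary after \autoref{lemma: microlocal cut-off}), a presheaf is a sheaf precisely when it satisfies descent on this basis — concretely, when $F(W)\simeq \varprojlim_{a:\, \mathring\gamma-a\subset W} F(\mathring\gamma-a)$ for every $\gamma$‑open $W$, equivalently when $F(\mathring\gamma-a)\simeq \varprojlim_{b:\,\mathring\gamma-b\subsetneq \mathring\gamma-a} F(\mathring\gamma-b)$; one has to check that the latter reindexes to the interior‑cone cofinal family and matches the condition $\HOM_{gr}(\nT_a\Lambda_{-\gamma},\iota^*F)\simeq \HOM_{gr}(\nT_a\Lambda_{\mathring{(-\gamma)}},\iota^*F)$. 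Here one uses that, under the Rees equivalence, the colimit $\varinjlim_{b} V_b$ (taken in $\PSh$, over $\mathring\gamma-b\subsetneq\mathring\gamma-a$) corresponds to $\nT_a\Lambda_{\mathring{(-\gamma)}}$; this is the $n$‑dimensional analogue of the identity $\Rees_\bR\widetilde{\varinjlim_{b>a}U_b}=\nT_{-a}\Lambda_>$ used in \autoref{prop: APT}. Running the computation in both directions gives that $F\mapsto \iota^*F$ restricts to a fully faithful functor $\Sh(\bR^n_\gamma)\hookrightarrow \Moda_{\bR^n-gr}(\Lambda_{-\gamma},\Lambda_{\mathring{(-\gamma)}})$ with essential image the graded almost local modules; since $\Moda$ is identified with the right semiorthogonal complement of the graded almost zero modules inside $\Mod_{\bR^n-gr}(\Lambda_{-\gamma})$, this is exactly the claimed equivalence. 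That $\iota_*$ is the adjoint (quasi‑inverse) follows formally: $\iota_*$ is by definition the right Kan extension of $\iota^*$, and on the almost local / sheaf subcategories the unit and counit are equivalences by the fully faithfulness just proven, which also gives commutativity of the square (the right‑hand hooked arrow being the fully faithful right adjoint of almostlization from \autoref{coro: almost local objects}, graded version).

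Finally I would address monoidality, but in fact \autoref{corollary: Kan extension restrict to equivalence HD} as stated only asserts the equivalence and the commuting square, not monoidality — that is deferred to \autoref{prop: APT HD} — so here it suffices to note that the equivalence is $\bR^n_\delta$‑equivariant (both $\iota^*$ and the Rees construction intertwine the grading‑shift action with the translation action on $\arbR^n_{-\gamma}$) and that the diagram commutes by uniqueness of adjoints, exactly as in the Corollary following \autoref{prop: APT}. The main obstacle, to my mind, is not conceptual but bookkeeping: getting every sign and every direction of inclusion right (the $-\gamma$ versus $\gamma$, the $\mathring\gamma-a$ versus $-\mathring\gamma-a$, and the homological shift by $n$), and verifying carefully that the poset $\{a:\mathring\gamma-a\subsetneq W\}$ is cofinal in the diagram computing the sheaf value — equivalently, that the ``interior cone'' family is cofinal among all $\gamma$‑open subsets, which is where \autoref{lemma: gamma open basis} is doing its work and where the non‑Noetherian/almost subtlety (the ideal $\Lambda_{\mathring{(-\gamma)}}$ not being finitely generated) enters. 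Everything else is a transcription of the $1$‑dimensional proof with $\bR$ replaced by $\bR^n$ and $[0,\infty)$ replaced by $-\gamma$.
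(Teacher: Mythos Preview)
Your approach is correct and essentially matches the paper's: you follow the style of the one-dimensional argument (\autoref{prop: APT}), passing through the representables $V_a\leftrightarrow \nT_a\Lambda_{-\gamma}$, whereas the paper's proof of \autoref{corollary: Kan extension restrict to equivalence HD} is slightly more streamlined---it just writes down the Kan extension formula $\iota_* M(U)=\varprojlim_{(\mathring\gamma-a)\subset U}M(a)[n]$ and checks directly that the unit $\iota^*\iota_*M\simeq M$ (using the almost local condition) and the counit $\iota_*\iota^*F\simeq F$ (using \autoref{lemma: gamma open basis} and the sheaf property) are equivalences on the respective subcategories. The content is identical; the paper's packaging avoids having to argue that the single ``interior'' cover suffices to detect sheaves, which is the bookkeeping point you flag at the end.
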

\begin{proof}
The right Kan extension can be computed using the formula \[\iota_{*}M (U) =  \varprojlim_{({\mathring{{\gamma}}-a})\subset U } M(a)[n].\]

By the definition of Kan extension, the pair $(\iota^*,\iota_*)$ is an adjunction. We need to check that the unit and counit of the adjunction pair restrict to equivalences, which follows from the following two verifications:

If $M$ is almost local, we have
\[\iota^*(\iota_*M)(a)= (\iota_*M)(\mathring{{\gamma}}-a)[-n]= \varprojlim_{({\mathring{{\gamma}}-b})\subset ({\mathring{{\gamma}}-a})} M(b)=
 \varprojlim_{b\leq a }\HOM_{\Mod}(t^{b}\Lambda_{{{-\gamma}}},M)=M(a),\]
 where we use the almost local condition in the last equality; 
 
 If $F$ is an object in ${\Sh(\bR^n_{{\gamma}})}$, we have
\[F(U)=\varprojlim_{({\mathring{{\gamma}}-a})\subset U } F({\mathring{{\gamma}}-a}) = \varprojlim_{({\mathring{{\gamma}}-a})\subset U } \iota^*F(a)[n]=\iota_* \iota^*F(U),\]
where we use \autoref{lemma: gamma open basis} and the sheaf condition in the first equality.

The commutativity of the upper triangle is the definition of $\Gamma$. The commutativity of the lower triangle follows from the computation.
\end{proof}

\begin{proof}[Proof of \autoref{prop: APT HD}]The equivalence is a combination of \autoref{lemma: microlocal cut-off} and \autoref{corollary: Kan extension restrict to equivalence HD}.  

 Now, we discuss the monoidal structure. The idea is the same as in the 1-dimensional case \autoref{prop: APT}, it remains to show that 
 \[1_{\mathring{\gamma}}[n]\star 1_{\mathring{\gamma}}[n]\simeq 1_{\mathring{\gamma}}[n],\]
which is a straightforward computation.
\end{proof}

Lastly, we consider the ${\bR^n_\delta}$-action on both categories of \autoref{prop: APT HD} and take the ${\bR^n_\delta}$-homotopy fixed point.
\begin{Coro}\label{cor: Novikov-mirro-affine}
We have a monoidal equivalence
\[{\Sh^{\bR^n_\delta}_{\bR^n\times \sigma}(\bR^n)} \simeq \Moda(\Lambda_{\gamma}, I_{{{\gamma}}}). \]
\end{Coro}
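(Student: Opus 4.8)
The plan is to take homotopy $\bR^n_\delta$-fixed points (equivalently, as $\bR^n_\delta$ is discrete, homotopy orbits) on both sides of the monoidal equivalence of \autoref{prop: APT HD}, exactly mimicking the passage from \autoref{prop: APT} to \autoref{prop: equivariant APT} in dimension one. First I would note that, since $\bR^n_\delta$ is a discrete group, limits and colimits in $\PrSt$ indexed by $B\bR^n_\delta$ agree (by \cite[Example 4.3.11]{Hopkins-Lurie-Ambidexterity}), so for any category with $\bR^n_\delta$-action we have $\cC^{\bR^n_\delta}\simeq \cC_{\bR^n_\delta}$. Applying this to $\Sh_{\bR^n\times(-\sigma)}(\bR^n)\simeq\Sh(\bR^n_\gamma)$ on the sheaf side gives $\Sh^{\bR^n_\delta}_{\bR^n\times(-\sigma)}(\bR^n)=\Sh_{\bR^n\times(-\sigma)}(\bR^n)_{\bR^n_\delta}$, and the sign conventions of the excerpt mean that replacing $\sigma$ by $-\sigma$ (equivalently $\gamma$ by $-\gamma$) in the final statement is purely cosmetic; I would simply match it to the form in \autoref{prop: APT HD}.

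Next I would handle the module side. By \autoref{prop: APT HD}, $\Sh_{\bR^n\times(-\sigma)}(\bR^n)\simeq \Mod_{\bR^n-gr}(\Lambda_{-\gamma})/\Mod^\partial_{\bR^n-gr}(\Lambda_{-\gamma},\Lambda_{\mathring{(-\gamma)}})$ is a Verdier quotient in $\PrSt$, hence a cofiber, hence a colimit. Since homotopy orbits for $\bR^n_\delta$ are also a colimit, the two colimits commute, and I may pass $(-)_{\bR^n_\delta}$ inside the quotient:
\[
\Sh_{\bR^n\times(-\sigma)}(\bR^n)_{\bR^n_\delta}\simeq \Mod_{\bR^n-gr}(\Lambda_{-\gamma})_{\bR^n_\delta}\big/\Mod^\partial_{\bR^n-gr}(\Lambda_{-\gamma},\Lambda_{\mathring{(-\gamma)}})_{\bR^n_\delta}.
\]
By \autoref{coro: non-graded equivalence HD}, the forgetting-grading functor identifies $\Mod_{\bR^n-gr}(\Lambda_{-\gamma})_{\bR^n_\delta}\simeq\Mod_{\bR^n-gr}(\Lambda_{-\gamma})^{\bR^n_\delta}\simeq\Mod(\Lambda_{-\gamma})$, and I would check (as in the one-dimensional proof) that under this identification $\Mod^\partial_{\bR^n-gr}(\Lambda_{-\gamma},\Lambda_{\mathring{(-\gamma)}})_{\bR^n_\delta}$ corresponds to $\Mod^\partial(\Lambda_{-\gamma},\Lambda_{\mathring{(-\gamma)}})$; this is where the fact that $\Mod^\partial(R,I)=\Mod(\cofib(I\to R))$ and that taking orbits is compatible with the monoidal base change $\Mod_{\Lambda}(\cC)\otimes_\cC\Mod(\bfk)$ enters. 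The quotient of the non-graded module category by the non-graded almost-zero modules is by definition $\Moda(\Lambda_{-\gamma},\Lambda_{\mathring{(-\gamma)}})$, which yields the claimed equivalence (after the harmless $-\gamma\leftrightarrow\gamma$ renaming).

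For monoidality: all the functors above are symmetric monoidal — the Verdier quotient $\Mod\to\Moda$ is a smashing/monoidal localization, taking homotopy orbits of a module over a fixed symmetric monoidal category is monoidal, and the sheaf-side $\star$-convolution was already matched with the module-side tensor product over $\Lambda_{-\gamma}$ in \autoref{prop: APT HD}. So the equivalence upgrades to a monoidal one by transporting structure along \autoref{prop: APT HD} and \autoref{coro: non-graded equivalence HD}. The main obstacle I anticipate is not conceptual but bookkeeping: verifying cleanly that the two colimits (Verdier quotient and $B\bR^n_\delta$-orbit) commute in $\PrSt$ and that the subcategory of graded almost-zero modules has orbit category precisely the non-graded almost-zero modules — i.e.\ reproducing the relative-tensor-product computation $\Mod_{\Lambda}(\cC)\otimes_\cC\Mod(\bfk)\simeq\Mod(\Lambda)$ of \cite[Theorem 4.8.4.6, Corollary 4.8.5.21]{HA} in the higher-dimensional graded setting. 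Since the excerpt explicitly says the higher-dimensional module-theoretic results have proofs ``identical with its $1$-dimensional version with suitable changes of notation,'' I would simply invoke \autoref{prop: equivariant APT} and \autoref{coro: non-graded equivalence HD} and indicate that the argument is verbatim the same.
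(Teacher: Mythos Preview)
Your proposal is correct and is precisely the argument the paper intends: the corollary is stated without proof immediately after the sentence ``we consider the $\bR^n_\delta$-action on both categories of \autoref{prop: APT HD} and take $\bR^n_\delta$-homotopy fixed point,'' and the implicit proof is verbatim the one-dimensional argument of \autoref{prop: equivariant APT} with $\bR$ replaced by $\bR^n$ and \autoref{coro: non-graded equivalence} replaced by \autoref{coro: non-graded equivalence HD}. Your identification of the only nontrivial bookkeeping step (that $\Mod^\partial_{\bR^n-gr}$ passes to $\Mod^\partial$ under orbits) and your handling of the $\gamma\leftrightarrow -\gamma$ sign cosmetics are both on point.
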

\begin{RMK}In fact, \autoref{prop: APT HD} and \autoref{cor: Novikov-mirro-affine} slightly generalize statements of \cite{Vaintrob-logCCC} in the ``affine" case since he only states for proper closed \textit{polyhedral} cones $\sigma$, even though our proof has no essential difference between polyhedral and general proper closed cases. Moreover, our proof clearly shows that \autoref{prop: APT HD} and \autoref{cor: Novikov-mirro-affine} are actually monoidal equivalences, which is not very clear from Vaintrob's sketch of the proof. We also remark that an abelian-categorical and non-equivariant version of the results is proven in \cite{Berkouk_2021ephemeral} in terms of ephemeral persistence modules.
\end{RMK}

Here, we include the following application of \autoref{prop: APT HD} due to Efimov.

\begin{Prop}[Efimov]\label{prop: continuous K hd}For a non-zero proper closed convex cone $\sigma$, we have 
\[K^{\cont}(\Sh_{\bR^n\times \sigma}(\bR^n))=0,\quad K^{\cont}(\Mod_{\bR^n-gr}(\Lambda_{\sigma^\vee}))=\bigoplus_{\bR^n}K({\bfk}).\]

Under the second equivalence, the $a$($\in \bR^n$)-graded piece is generated by $\bfk[a+\sigma^\vee]$.
\end{Prop}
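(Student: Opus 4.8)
The plan is to transplant the proof of \autoref{prop: continuous K 1d} to the $n$-dimensional situation. Its backbone is a Verdier sequence of dualizable categories obtained from \autoref{prop: APT HD} (equivalently \autoref{cor: Novikov-mirro-affine}) together with the almost-ring recollement of \autoref{section: almost ring theory}:
\[
\Mod_{\bR^n-gr}\!\bigl(\cofib{\Lambda_{\mathring{\sigma^\vee}}\to\Lambda_{\sigma^\vee}}\bigr)\;\longrightarrow\;\Mod_{\bR^n-gr}(\Lambda_{\sigma^\vee})\;\longrightarrow\;\Moda_{\bR^n-gr}(\Lambda_{\sigma^\vee},\Lambda_{\mathring{\sigma^\vee}})\simeq\Sh_{\bR^n\times\sigma}(\bR^n).
\]
Applying the lax symmetric monoidal localizing invariant $K^{\cont}$ on $\Catdual$ turns this into a fiber sequence of spectra, so it suffices to (a) compute $K^{\cont}(\Mod_{\bR^n-gr}(\Lambda_{\sigma^\vee}))$ and (b) prove $K^{\cont}(\Sh_{\bR^n\times\sigma}(\bR^n))=0$.

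For (a) I would argue directly, as in the second half of the proof of \autoref{prop: continuous K 1d}. By \autoref{prop: Rees construction HD} the category $\Mod_{\bR^n-gr}(\Lambda_{\sigma^\vee})\simeq\Rep(\arbR^n_{\sigma^\vee})$ is compactly generated, with compact generators the graded shifts $\bfk[a+\sigma^\vee]$, $a\in\bR^n$, of the free rank-one module, so $K^{\cont}$ equals the $K$-theory of its compact objects. Since each $\bfk[a+\sigma^\vee]$ is graded-free, $\RHOM_{\Lambda_{\sigma^\vee},gr}(\bfk[a+\sigma^\vee],\bfk[b+\sigma^\vee])\simeq\bfk$ when $a-b\in\sigma^\vee$ and $\simeq 0$ otherwise, and the composition maps are the evident isomorphisms whenever they are defined (because $\sigma^\vee$ is closed under addition); so these objects generate a $(\bR^n,\le_{\sigma^\vee})$-filtered subcategory. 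Any compact object involves only finitely many degrees $a_1,\dots,a_k$; choosing a total order on $\{a_1,\dots,a_k\}$ refining $\le_{\sigma^\vee}$ produces a finite semiorthogonal decomposition with each slot equivalent to $\Perf(\bfk)$, and passing to the filtered colimit over finite subsets of $\bR^n$ together with additivity of $K$-theory gives $K^{\cont}(\Mod_{\bR^n-gr}(\Lambda_{\sigma^\vee}))=\bigoplus_{\bR^n}K(\bfk)$, the $a$-th summand being generated by $\bfk[a+\sigma^\vee]$. (If $\sigma$ is not full-dimensional, $\le_{\sigma^\vee}$ is only a preorder; one first restricts to degree $0$ along the lineality directions of $\sigma^\vee$, on which the relevant group algebra acts invertibly, reducing to the full-dimensional case.)

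For (b) I would exploit the convolution structure rather than rerun an Eilenberg swindle. Translation along a ray in $\sigma^\vee$ equips $\Sh_{\bR^n\times\sigma}(\bR^n)$ with a \emph{continuous} $\arbR$-action — translation does not enlarge the microsupport — so by \autoref{prop: stable R action implies Tamarkin module} it carries a $\Sh_{\geq 0}(\bR)$-module structure in $\Catdual$ (it is dualizable, being an almost-module category, by \cite[Proposition 2.9.12]{Fake_sheaves_on_mfd}). Applying $K^{\cont}$ then makes $K^{\cont}(\Sh_{\bR^n\times\sigma}(\bR^n))$ a unital module over the ring spectrum $K^{\cont}(\Sh_{\geq 0}(\bR))$; but the latter vanishes by \cite[Proposition 4.22]{Efimov-K-theory}, and a unital module over the zero ring is zero. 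Feeding this back into the fiber sequence additionally identifies $K^{\cont}\!\bigl(\Mod_{\bR^n-gr}(\cofib{\Lambda_{\mathring{\sigma^\vee}}\to\Lambda_{\sigma^\vee}})\bigr)$ with $\bigoplus_{\bR^n}K(\bfk)$.

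I expect the semiorthogonal-decomposition step in (a) to be the main point requiring care. In dimension one the boundary quotient $\cofib{\Lambda_{\mathring{\sigma^\vee}}\to\Lambda_{\sigma^\vee}}$ is just $\bfk$ concentrated in a single degree, so the computation can be transported through the recollement; in higher dimensions it is a genuinely nontrivial non-Noetherian ring, and one must instead produce the $\bR^n$-graded weight decomposition of the compact objects by hand. The delicate bookkeeping is that $(\bR^n,\le_{\sigma^\vee})$ is not well-ordered: one has to check that the linear-extension refinements chosen on finite supports are mutually compatible, so that additivity of $K$-theory and its commutation with filtered colimits apply and the summand generated by $\bfk[a+\sigma^\vee]$ is canonically defined (independently of the chosen linear extensions).
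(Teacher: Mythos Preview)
Your proposal is correct, and part (a) is precisely the argument the paper (following Efimov) has in mind: the $\bR^n$-indexed semiorthogonal decomposition of $\Mod_{\bR^n-gr}(\Lambda_{\sigma^\vee})^\omega$ by the shifts $\bfk[a+\sigma^\vee]$ is the direct transplant of the $1$-dimensional decomposition by $\Lambda_{\geq a}$ alluded to in the proof of \autoref{prop: continuous K 1d}.

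Your part (b), however, is a genuinely different route to the vanishing. The paper's proof of \autoref{prop: continuous K 1d} deduces $K^{\cont}(\Sh_{\geq 0}(\bR))=0$ from the fiber sequence by computing both $K^{\cont}(\Mod_{\bR-gr}(\Lambda_\geq))$ and $K^{\cont}(\Mod_{\bR-gr}(\bfk))$ separately and observing that the quotient functor is a $K$-equivalence on graded pieces; the higher-dimensional analogue would require handling $\Mod_{\bR^n-gr}(\Lambda_{\sigma^\vee}/\Lambda_{\mathring{\sigma^\vee}})$, which --- as you rightly flag --- is no longer just $\Mod_{\bR^n-gr}(\bfk)$. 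Your argument sidesteps this entirely: picking $v\in\mathring{\sigma^\vee}$ (this is where properness of $\sigma$ enters, and the interior is the correct choice so that $\Lambda_>$ lands in $\Lambda_{\mathring{\sigma^\vee}}$) gives a monoidal functor $\Sh_{\geq 0}(\bR)\to\Sh_{\bR^n\times\sigma}(\bR^n)$ in $\Catdual$, and lax monoidality of $K^{\cont}$ forces the target to be a unital algebra over the zero ring. This is cleaner than running the boundary computation and has the pleasant feature of reducing the $n$-dimensional vanishing to the already-cited $1$-dimensional one. Either approach proves the statement; yours avoids the bookkeeping you worried about in the last paragraph for the vanishing half, though you still need it for (a).
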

Its proof is the same as \autoref{prop: continuous K 1d} by replacing $[0,\infty)$ with $\gamma=\sigma^\vee$ and $\Lambda_{\geq a}$ by $\bfk[a+\gamma]=t^a\Lambda_{\gamma}$. So, we leave the details to the interested reader. 

\begin{RMK}\label{remark: HD k-theory}The higher-dimensional version is not explicitly written in \cite{Efimov-K-theory}, but it was shared with us by Alexander I. Efimov during the \textit{Masterclass: Continuous K-theory}, held at the University of Copenhagen in June 2024. We thank him for his generosity.
\end{RMK}

\begin{RMK}By \autoref{remark: HD k-theory} and \cite[Proposition 4.11]{Efimov-K-theory}, we can compute the continuous $K$-theory (in fact, all localizing invariants) of $\QCoha_{\bT^\Nov}(X_\Sigma^\Nov,\partial_\Sigma) \simeq \Sh_{M_\bR\times |\Sigma|}(M_\bR)$ in terms of $\Gamma_c(|\Sigma|,K(\bfk))$. Another proof can be found in \cite{Zhang_K_Remark}.  
\end{RMK}

\subsection{Cut-off for fans} \label{subsection: cut-off for fans}

We start from some notation. We fix a vector space $N_\bR$ of finite dimension and denote $M_\bR=N_\bR^\vee$. Here, for a closed convex cone $\sigma\subset N_\bR$, we treat its dual $\sigma^\vee$ as a closed convex cone inside $M_\bR$. We say a closed convex cone is polyhedral if it is a finite intersection of half-spaces. With this notation, we have a natural identification $T^*M_\bR=M_\bR\times N_\bR$.

\begin{Def}\label{def: fan}For a finite set of proper polyhedral closed convex cones in $N_\bR$, we say $\Sigma$ is a fan\footnote{In the usual literature of toric variety, the terminology ‘fan’ is used for what we call rational fan. Here, we do not require fan to be rational.} if: i) For $\sigma \in \Sigma$, and $\tau \prec \sigma$ is a face of $\sigma$, then we have $\tau \in \Sigma$. ii) If $\sigma_1,\sigma_2\in \Sigma$, then we have $\sigma_1\cap \sigma_2$ is a face of both $\sigma_1$ and $\sigma_2$. 

The support $|\Sigma|$ of $\Sigma$ is defined as $|\Sigma|=\cup_{\sigma \in \Sigma} \sigma$. We say $\Sigma$ is complete if $|\Sigma|=N_\bR$.
\end{Def}

Recall that, for a closed set $Z$, we set $1_Z$ to be the constant sheaf supported on the closed set $Z$ with stalk $\bfk$. If $Z\subset W$ is an inclusion of closed sets, then we have a morphism $1_{W}\rightarrow 1_Z$. We treat a fan as a direct set with respect to the face relation. We have the following technical lemma. 
\begin{Lemma}\label{lemma: star convolution limit}If $\Sigma$ is a complete fan, then we have an equivalence in $\Sh(N_\bR)$:
\[1_{N_\bR} \simeq \varprojlim_{\sigma \in \Sigma^\op} 1_{\sigma},\]
where morphisms are induced by closed inclusions induced by the face relation.    
\end{Lemma}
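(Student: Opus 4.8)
The plan is to prove the identity $1_N \simeq \varprojlim_{\sigma\in\Sigma^\op} 1_\sigma$ in $\Sh(N_\bR)$ by checking it stalkwise, using that $\Sh(N_\bR)$ is hypercomplete so a morphism is an equivalence iff it is so on every stalk. For a point $x\in N_\bR$, the stalk of $1_\sigma$ at $x$ is $\bfk$ if $x\in\sigma$ and $0$ otherwise, so the right-hand side computes $\varprojlim_{\sigma\in\Sigma^\op} \bfk_{x\in\sigma}$, and the claim reduces to showing this limit is $\bfk$ (which is the stalk of $1_N$ since $|\Sigma|=N_\bR$). Concretely, let $\Sigma_x = \{\sigma\in\Sigma : x\in\sigma\}$; then the stalk of the limit is $\varprojlim_{\sigma\in\Sigma_x^\op}\bfk$, a constant diagram indexed by the poset $\Sigma_x^\op$, so the limit is $\bfk$ provided the nerve of $\Sigma_x$ is weakly contractible.

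So the heart of the argument is combinatorial: for each $x\in N_\bR$, the subposet $\Sigma_x$ of cones containing $x$ has contractible nerve. First I would observe that $\Sigma_x$ has a unique minimal element, namely the smallest cone of $\Sigma$ containing $x$. Indeed, if $\sigma_1,\sigma_2\in\Sigma_x$, then $x\in\sigma_1\cap\sigma_2$, which by the fan axiom (ii) is a face of both, hence itself a cone in $\Sigma$ lying in $\Sigma_x$; iterating over the finitely many cones containing $x$ and using that any two have an intersection in $\Sigma_x$, one extracts a least element $\sigma_x = \bigcap_{\sigma\in\Sigma_x}\sigma \in \Sigma_x$. A poset with a least element has contractible nerve (it is conical: $\sigma_x$ is initial), so $|\mathrm{N}(\Sigma_x)|\simeq *$, and the constant diagram $\varprojlim_{\Sigma_x^\op}\bfk \simeq \bfk$ as desired. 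Running through the zero stalk case ($x\notin|\Sigma|$, so $\Sigma_x=\emptyset$ and the limit over the empty diagram is the terminal object, which for $\Mod(\bfk)$ is $0$ — but completeness rules this out anyway) completes the stalkwise comparison.

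It remains to exhibit the comparison morphism itself and to check it induces the above map on stalks. The natural map comes from the cone of closed inclusions: each $1_N \to 1_\sigma$ (restriction of the constant sheaf to the closed subset $\sigma\subset N_\bR$, using $|\Sigma|=N_\bR$ so that $\sigma$ is genuinely a closed subset of $N$) is compatible with the face maps $1_\sigma\to 1_\tau$ for $\tau\prec\sigma$, hence assembles into $1_N\to\varprojlim_{\sigma\in\Sigma^\op}1_\sigma$; passing to stalks at $x$ this is exactly the canonical map $\bfk \to \varprojlim_{\Sigma_x^\op}\bfk$, which we have just shown is an equivalence. Since this holds at every stalk and $\Sh(N_\bR)$ is hypercomplete, the morphism is an equivalence. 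The main obstacle I anticipate is purely the combinatorial lemma — pinning down that $\Sigma_x$ has a least element from the two fan axioms — and being slightly careful that all the $1_\sigma$ are understood as pushforwards of constant sheaves along closed embeddings so that the stalk computation is literally as stated; everything else is formal.
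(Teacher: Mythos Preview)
Your argument is correct and takes a more direct route than the paper's. Both proofs reduce to a stalkwise check, using that $\Sigma$ is finite (so stalks commute with the limit) and that $\Sh(N_\bR)$ is hypercomplete. The difference lies in how the stalk $\varprojlim_{\sigma\in\Sigma^\op}(1_\sigma)_x$ is computed. You observe that $\Sigma_x=\{\sigma\in\Sigma:x\in\sigma\}$ has a least element --- the intersection of all its members, which is again a cone of $\Sigma$ by the fan axioms and contains $x$ --- so its nerve is contractible and the limit of the constant diagram $\bfk$ over $\Sigma_x^\op$ is $\bfk$. The paper instead treats $x=0$ and $x\neq0$ separately and interprets the stalk through the regular CW decomposition of $S^{n-1}$ induced by $\Sigma\setminus\{\{0\}\}$, identifying it with the relative cochain complex of a star neighborhood and its boundary, and then arguing this pair is $(D^{n-1},S^{n-2})$. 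Your route is shorter and avoids the CW combinatorics entirely; the paper's approach makes the spherical geometry visible but is heavier than the statement requires. One point worth making explicit in your write-up: the passage from $\varprojlim_{\Sigma^\op}(1_\sigma)_x$ to $\varprojlim_{\Sigma_x^\op}\bfk$ holds because $\Sigma_x$ is upward closed in $\Sigma$, so the stalk diagram is the right Kan extension of constant $\bfk$ along the sieve inclusion $\Sigma_x^\op\hookrightarrow\Sigma^\op$, and the two limits agree.
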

\begin{proof}It is clear that we have a morphism $1_{N_\bR} \rightarrow \varprojlim_{\sigma} 1_{\sigma}$. We can show it is a stalkwise-equivalence by the hypercompleteness of $\Sh(N_\bR)$. By the finiteness of the limit, it means that we need to prove that 
\[1\simeq \varprojlim_{\sigma} (1_{\sigma})_n,\quad \forall n\in N_\bR.\]

The basic observation is that a complete fan $\Sigma$ induces a regular CW decomposition of $S^{n-1}\subset N_\bR$ that consists of unit vectors indexed by $\Sigma\setminus \{ \{0\}\}$. Then the computation below reduces to computations of certain relative homology, and we remark that $1$ in the result of the limit corresponds to the fundamental class of $S^{n-1}$. We learned this idea from \cite[Section 3]{bressler2003intersection}.

We discuss two cases: 

1) If $n=0$, since $0\in \sigma$ for all $\sigma \in \Sigma$, then we need to show that $1\simeq \varprojlim_{\sigma} 1$. But $\Sigma\setminus \{ \{0\}\}$ is the index set for the CW decomposition, then the limit $\varprojlim_{\sigma} 1$ computes the reduced $\bfk$-coefficient homology of $S^{n-1}$, which gives $1\simeq \varprojlim_{\sigma} 1$. 

2) For $n\neq 0$, there exists a unique non-zero cone, which we denote by $\sigma(n)$, such that $n\in \operatorname{RelInt}(\sigma(n))$. We set $\operatorname{Star}(n)=\{\tau \neq \{0\}:\sigma(n) \prec \tau  \}$ and  $\partial\operatorname{Star}(n)=\{\tau\in \operatorname{Star}(n), n\notin \tau\}$ that can also be treated as two sub CW complexes of $\Sigma\setminus \{ \{0\}\}$. Then we have $(1_{\tau})_n\simeq 1$ if and only if ${\tau \in \operatorname{Star}(n) \setminus \partial \operatorname{Star}(n)}$, and otherwise $(1_{\tau})_n\simeq 0$. Therefore, we have that $\varprojlim_{\tau} (1_{\tau})_n\simeq \varprojlim_{\tau \in \operatorname{Star}(n) \setminus \partial \operatorname{Star}(n)} 1$, which computes the relative $\bfk$-coefficient homology of the pair $(|\operatorname{Star}(n)|,|\partial\operatorname{Star}(n)|)$ (here $|\bullet|$ denotes the topological space of the corresponding CW complex). We conclude by showing that $(|\operatorname{Star}(n)|,|\partial\operatorname{Star}(n)|)\simeq (D^{n-1},S^{n-2})$. In fact, all cones are star-shaped, and if $\tau \prec \sigma(n)$, $\tau$ can be deformation retracted into $\sigma(n)$. Then $|\operatorname{Star}(n)|$ is star-shaped. In particular, we have $(|\operatorname{Star}(n)|,|\partial\operatorname{Star}(n)|)\simeq (D^{n-1},S^{n-2})$.     
\end{proof} 
Using the Fourier--Sato transformation of sheaves, where we refer to \cite[Section 3.7]{KS90} for more details, we have the following.
\begin{Coro}\label{corollary: kernel for convolution unit}If $\Sigma$ is a complete fan, then we have an equivalence in $\Sh(M_\bR)$:
\[1_{0} \simeq \varprojlim_{\sigma\in  \Sigma^\op} 1_{\sigma^\vee}[\dim N_\bR],\]
where morphisms are induced by open inclusions induced by the face relation.     
\end{Coro}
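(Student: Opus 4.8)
The plan is to deduce the statement from the preceding Lemma by applying the Fourier--Sato transform. Recall from \cite[Section 3.7]{KS90} that the Fourier--Sato transform $(-)^{\wedge}$ is an equivalence between the category of $\bR_{>0}$-conic sheaves on $N_\bR$ and the category of $\bR_{>0}$-conic sheaves on $M_\bR$; it is built from $p_1^{*}$, tensoring with the constant sheaf on the half-space $\{\langle x,y\rangle\leq 0\}\subset N_\bR\times M_\bR$, and $Rp_{2!}$, all of which make sense $\infty$-categorically (\cite{6functor-infinity}), and the arguments of \cite{KS90} go through using the non-characteristic deformation lemma where they do there. Being an equivalence, $(-)^{\wedge}$ commutes with all limits. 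The constant sheaf $1_{N_\bR}$ and each $1_\sigma$ (for $\sigma$ a cone) are conic, the transition maps of the Lemma's diagram are maps of conic sheaves, and conic sheaves are stable under limits in $\Sh(N_\bR)$; hence the equivalence $1_{N_\bR}\simeq\varprojlim_{\sigma\in\Sigma^{\op}}1_\sigma$ of the Lemma already holds in conic sheaves, and we may apply $(-)^{\wedge}$ to it.

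It then remains to identify the images of the terms, which are the classical Fourier--Sato computations of \cite[Section 3.7]{KS90}. On the one hand $(1_{N_\bR})^{\wedge}\simeq 1_{\{0\}}[-n]$ (the transform of the constant sheaf, up to the usual orientation shift). On the other hand, for a proper closed convex cone $\sigma$ one has $(1_\sigma)^{\wedge}\simeq 1_{\mathring{\sigma^{\vee}}}$, where $\mathring{\sigma^{\vee}}$ is the interior of the dual cone, nonempty by \cite[Proposition 1.2.12]{Cox_ToricBook}; and, by functoriality of $(-)^{\wedge}$, the restriction map $1_\sigma\to 1_\tau$ attached to a face $\tau\prec\sigma$ is carried to the extension-by-zero map $1_{\mathring{\sigma^{\vee}}}\to 1_{\mathring{\tau^{\vee}}}$ along the open inclusion $\mathring{\sigma^{\vee}}\subset\mathring{\tau^{\vee}}$ (which exists since $\tau\subset\sigma$ forces $\sigma^{\vee}\subset\tau^{\vee}$). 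Applying $(-)^{\wedge}$ to the Lemma and shifting by $[n]$ then yields
\[ 1_{\{0\}}\ \simeq\ \varprojlim_{\sigma\in\Sigma^{\op}} 1_{\mathring{\sigma^{\vee}}}[n], \]
the limit being over the open inclusions of the interiors of the dual cones; this is the assertion, with $1_{\sigma^{\vee}}$ in the statement read as the extension by zero of the constant sheaf on the open cone $\mathring{\sigma^{\vee}}$, as the phrase ``open inclusion'' indicates.

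The only nontrivial input is thus the pair of Fourier--Sato identities together with the matching of the (finitely many) transition maps, all of which are in \cite[Section 3.7]{KS90}; the remaining work is bookkeeping with conventions. I expect that bookkeeping to be the only real obstacle: one must track the orientation shift $[n]$, and note that if one's normalization of $(-)^{\wedge}$ instead produces $1_{\mathring{(-\sigma^{\vee})}}=a^{*}1_{\mathring{\sigma^{\vee}}}$ (with $a$ the antipode of $M_\bR$), this is harmless, since $a^{*}$ commutes with limits and $a^{*}1_{\{0\}}\simeq 1_{\{0\}}$, so the limit is still $1_{\{0\}}$. As an alternative not invoking Fourier--Sato, one could compute the stalk of $\varprojlim_{\sigma}1_{\mathring{\sigma^{\vee}}}[n]$ at each $\xi\in M_\bR$ by the same Morse-theoretic/relative-homology analysis as in the Lemma, the relevant finite subposet being $\{\sigma\in\Sigma:\xi\in\sigma^{\vee}\}$; but transporting the already-proven Lemma along the Fourier--Sato equivalence is cleaner.
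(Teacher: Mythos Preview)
Your proposal is correct and follows essentially the same approach as the paper: apply the Fourier--Sato transform to the preceding Lemma, identify the images of $1_{N_\bR}$ and each $1_\sigma$ via the standard computations of \cite[Section 3.7]{KS90}, and match the transition maps. The paper's proof is terser (it writes down the specific kernel $Z=\{\langle n,m\rangle\geq 0\}$ with the shift $[n]$ built in), while you are more explicit about the bookkeeping---the open versus closed reading of $1_{\sigma^\vee}$, the orientation shift, and the harmless antipode ambiguity---but the argument is the same.
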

\begin{proof}We apply the (inverse) Fourier--Sato transformation to $1_{N_\bR} \simeq \varprojlim_{\sigma } 1_{\sigma}$. Precisely, consider the subset $Z=\{(m,n)\in M_\bR \times N_\bR:\langle n,m\rangle \geq 0\}$ and consider the exact functor
\[F \mapsto p_{M!}(p_{N}^* F \otimes 1_{Z})[\dim N_\bR],\]
where $p_M,p_N$ are the projections from $M_\bR\times N_\bR$ to the corresponding factors. Then a computation similar to \cite[Lemma 3.7.10]{KS90} shows that $1_{N_\bR}$ is mapped to $1_{0}$, and $1_{\sigma}$ is mapped to $1_{\sigma^\vee}[\dim N_\bR]$ where closed inclusions are mapped to open inclusions.
\end{proof}

Now, we consider the assignment $\Sigma \rightarrow \PrSt$,
\[\sigma\mapsto \Sh_{M_\bR \times {(-\sigma)}}(M_\bR) ,\quad \tau\subset \sigma\mapsto \Sh_{M_\bR \times {(-\tau)}}(M_\bR) \xrightarrow{i_{\tau\sigma}} \Sh_{M_\bR \times {(-\sigma)}}(M_\bR)\]
defines a functor, where $i_{\tau\sigma}$ is the canonical inclusion. In fact, the diagram is in ${\operatorname{Pr_{st}^{L,R}}}$, i.e., $i_{\tau\sigma}$ admits both left and right adjoints $i_{\tau\sigma}^l$ and $i_{\tau\sigma}^r$: Since all microsupport conditions are closed, the inclusion commutes with both limits and colimits, we obtain the adjoints by the adjoint functor theorem. 

By the definition of microsupport, we have a fully faithful embedding 
\[i_\sigma: \Sh_{M_\bR \times {(-\sigma)}}(M_\bR) \rightarrow  \Sh_{M_\bR \times {(-|\Sigma|)}}(M_\bR),\]
which is compatible with the direct diagram over $\Sigma$: $i_\sigma i_{\tau\sigma}=i_\tau$. Moreover, this fully faithful functor admits both left and right adjoints $i_\sigma^l$ and $i_\sigma^r$ by the adjoint functor theorem since $\sigma$ is closed. Then we have a natural functor in ${\operatorname{Pr_{st}^{L,R}}}$:
\begin{equation}\label{equation: i functor for fan}
   i_\Sigma:\varinjlim_{\sigma\in \Sigma} \Sh_{M_\bR \times {(-\sigma)}}(M_\bR) \rightarrow \Sh_{M_\bR \times {(-|\Sigma|)}}(M_\bR). 
\end{equation}

Denote by $i_\Sigma^l$ and $i_\Sigma^r$ the adjoints of $i_\Sigma$, by \cite{Adjoint_descent}, we have
\[i_\Sigma^r i_\Sigma=\varprojlim_\sigma i_\sigma^r i_\sigma,\quad i_\Sigma^l i_\Sigma=\varinjlim_\sigma i_\sigma^l i_\sigma,\quad i_\Sigma i_\Sigma^l=\varprojlim_\sigma i_\sigma i_\sigma^l, \quad i_\Sigma i_\Sigma^r=\varinjlim_\sigma i_\sigma i_\sigma^r.\] 

\begin{Thm}\label{Thm: sheaves with fan support}For a fan $\Sigma$, the functor $i_\Sigma$ induces equivalences
\[ \Sh_{M_\bR\times (-|\Sigma|)}(M_\bR)=\varinjlim_{\sigma\in \Sigma } \Sh_{M_\bR\times (-\sigma)}(M_\bR),\quad \Sh^{\bR^n_\delta}_{M_\bR\times (-|\Sigma|)}(M_\bR)=\varinjlim_{\sigma\in \Sigma } \Sh^{\bR^n_\delta}_{M_\bR\times (-\sigma)}(M_\bR)\]  
in ${\operatorname{Pr_{st}^{L,R}}}$. Moreover, both equivalences are monoidal.
\end{Thm}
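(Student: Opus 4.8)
The strategy is to exhibit a quasi-inverse to $i_\Sigma$ using the adjoint $i_\Sigma^r$ (or $i_\Sigma^l$) and to check the unit and counit are equivalences by a stalk/microlocal argument built on the convolution-unit limit formula of \autoref{corollary: kernel for convolution unit}. First I would reduce to the non-equivariant statement: the $\bR^n_\delta$-equivariant version follows by taking homotopy fixed points, since $\bR^n_\delta$ is discrete so homotopy fixed points agree with homotopy orbits in $\PrSt$ (as in the proof of \autoref{prop: equivariant APT}), and taking $(-)^{\bR^n_\delta}$ commutes with the relevant colimit; I will indicate this at the end. For the non-equivariant statement, the first reduction is to the \emph{complete} case: if $\Sigma$ is an arbitrary fan, pick a completion $\widetilde\Sigma \supseteq \Sigma$ (a complete fan containing $\Sigma$ as a subfan — such completions exist for polyhedral, possibly non-rational, fans), and observe that $\Sh_{M_\bR \times(-|\Sigma|)}(M_\bR) \subset \Sh_{M_\bR\times(-|\widetilde\Sigma|)}(M_\bR)$ is the full subcategory of objects whose microsupport lies over $|\Sigma|$; matching this with the corresponding sub-colimit $\varinjlim_{\sigma\in\Sigma}$ inside $\varinjlim_{\sigma\in\widetilde\Sigma}$ reduces everything to proving $i_{\widetilde\Sigma}$ is an equivalence for complete $\widetilde\Sigma$. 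Alternatively one can argue directly; but the complete case is where the key formula lives.

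So assume $\Sigma$ is complete, hence $|\Sigma| = N_\bR$ and the target is all of $\Sh(M_\bR)$. The candidate inverse is $i_\Sigma^r$, with $i_\Sigma i_\Sigma^r = \varinjlim_\sigma i_\sigma i_\sigma^r$ by \cite{Adjoint_descent}. The functors $i_\sigma i_\sigma^r$ are the microlocal cut-off projectors onto $\Sh_{M_\bR\times(-\sigma)}(M_\bR)$; by the microlocal cut-off lemma \autoref{lemma: microlocal cut-off} (transported through $\varphi_{\sigma^\vee}$), for $F\in\Sh(M_\bR)$ one has $i_\sigma i_\sigma^r F \simeq F \star 1_{\sigma^\vee}[n]$ up to the appropriate shift/convolution normalization — this is the standard description of the $\gamma$-topology cut-off as convolution with the constant sheaf on the dual cone. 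Therefore $i_\Sigma i_\Sigma^r F \simeq \varinjlim_\sigma (F\star 1_{\sigma^\vee}[n]) \simeq F \star \bigl(\varinjlim_\sigma 1_{\sigma^\vee}[n]\bigr)$, using that $\star$-convolution commutes with colimits in each variable. Now I want $\varinjlim_\sigma 1_{\sigma^\vee}[n] \simeq 1_0$, the convolution unit; but \autoref{corollary: kernel for convolution unit} gives this as a \emph{limit}, $1_0 \simeq \varprojlim_{\sigma\in\Sigma^\op} 1_{\sigma^\vee}[n]$. The reconciliation is that the transition maps for $\sigma\mapsto 1_{\sigma^\vee}$ along faces go in the direction of \emph{open} inclusions (so $1_{\tau^\vee}\to 1_{\sigma^\vee}$ for $\tau\prec\sigma$, since $\tau\prec\sigma \Rightarrow \sigma^\vee\subset\tau^\vee$), which is a colimit diagram indexed by $\Sigma$, matching the colimit defining $\varinjlim_\sigma i_\sigma i_\sigma^r$; meanwhile \autoref{corollary: kernel for convolution unit} is phrased with the \emph{other} variance. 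I would therefore prove directly that $\varinjlim_{\sigma\in\Sigma} 1_{\sigma^\vee}[n] \simeq 1_0$ in $\Sh(M_\bR)$ by the dual of the argument of the lemma preceding \autoref{corollary: kernel for convolution unit}: check it stalkwise at each $m\in M_\bR$, where it computes a (co)homology of the CW structure on $S^{n-1}$ dual to $\Sigma$ restricted to the half-space $\{n : \langle n,m\rangle \ge 0\}$, which is contractible when $m\ne 0$ and gives $\bfk$ when $m=0$. This yields $i_\Sigma i_\Sigma^r \simeq \id$. For the other composite $i_\Sigma^r i_\Sigma = \varprojlim_\sigma i_\sigma^r i_\sigma$: each $i_\sigma^r i_\sigma = \id$ on $\Sh_{M_\bR\times(-\sigma)}(M_\bR)$ already (since $i_\sigma$ is fully faithful), so the transition maps in the limit diagram... here one must be a little careful, because $i_\sigma^r i_{\tau\sigma}$ for $\tau\prec\sigma$ is not the identity. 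The clean way is: since $i_\Sigma$ is essentially surjective (just shown, via $i_\Sigma i_\Sigma^r\simeq\id$ and $\Sh_{M_\bR\times(-|\Sigma|)}(M_\bR)=\Sh(M_\bR)$) and $i_\Sigma^r$ is conservative (its components $i_\sigma^r$ detect whether microsupport meets the complement of $-\sigma$, and the cones cover $N_\bR$, so jointly they are conservative), a colimit of fully faithful functors with a conservative right adjoint whose counit is an equivalence is automatically an equivalence. I will phrase this as: $i_\Sigma$ is fully faithful because each $i_\sigma$ is and the colimit is filtered... — more precisely, the cleanest route is to show $i_\Sigma^l$ (or $i_\Sigma^r$) is fully faithful directly, i.e. the \emph{unit} $\id \to i_\Sigma^r i_\Sigma$ is an equivalence, by computing $i_\Sigma^r i_\Sigma = \varprojlim_\sigma i_\sigma^r i_\sigma$ on an object of the colimit coming from a fixed $\tau$ and using cofinality of $\{\sigma : \tau\prec\sigma\}$ in $\Sigma$ together with $i_\sigma^r i_{\tau\sigma} = \id$ for such $\sigma$.

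For monoidality: the colimit $\varinjlim_{\sigma\in\Sigma}$ is a filtered colimit in $\CAlg(\PrSt)$ — each $\Sh_{M_\bR\times(-\sigma)}(M_\bR)$ carries the $\star$-convolution symmetric monoidal structure (the sub-tensor-structure on $\Sh_{M_\bR\times(-\sigma)}(M_\bR)\subset\Sh(M_\bR)$, closed under $\star$ since $(-\sigma)+(-\sigma)=(-\sigma)$), and the transition functors $i_{\tau\sigma}$ are monoidal, so the colimit is naturally an object of $\CAlg(\PrSt)$ and the comparison functor $i_\Sigma$ to $\Sh_{M_\bR\times(-|\Sigma|)}(M_\bR)$ (with its $\star$-structure) is a morphism there; an equivalence in $\PrSt$ that is a map of commutative algebras is a monoidal equivalence. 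The $\bR^n_\delta$-equivariant statement then follows by applying $(-)^{\bR^n_\delta} \simeq (-)_{\bR^n_\delta}$ and the fact that homotopy orbits, being a colimit, commute with the filtered colimit over $\Sigma$, together with $\Sh^{\bR^n_\delta}_{M_\bR\times(-\sigma)}(M_\bR) = \Sh_{M_\bR\times(-\sigma)}(M_\bR)_{\bR^n_\delta}$.

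\textbf{Main obstacle.} The crux is identifying $\varinjlim_{\sigma\in\Sigma} (F \star 1_{\sigma^\vee}[n])$ with $F$, i.e. proving $\varinjlim_{\sigma\in\Sigma} 1_{\sigma^\vee}[n]\simeq 1_0$ with the correct (open-inclusion, colimit) variance — reconciling it with the limit formula \autoref{corollary: kernel for convolution unit} and, above all, \emph{incorporating the microsupport condition}, which is exactly the point the introduction flags as missing from Vaintrob's argument: one must know that the cut-off $F\star 1_{\sigma^\vee}[n]$ is genuinely the projector $i_\sigma i_\sigma^r$ onto $\Sh_{M_\bR\times(-\sigma)}$, and that the colimit of these projectors reassembles $F$ \emph{without} collapsing microsupport, which requires a non-characteristic deformation argument (\autoref{lemma: Omega-lens}) to control $\SS$ in the colimit. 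I expect the non-characteristic deformation bookkeeping — showing the colimit map $\varinjlim_\sigma (F\star 1_{\sigma^\vee}[n]) \to F$ is an equivalence by testing against $\Omega$-lenses for the relevant open cones $\Omega$ — to be the technical heart of the proof.
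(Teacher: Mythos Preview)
Your overall shape is right, but two concrete errors would derail the argument. First, you have the adjoints swapped: the convolution projector $1_{\sigma^\vee}[n]\star(-)$ realises $i_\sigma i_\sigma^l$, not $i_\sigma i_\sigma^r$ (the right-adjoint projector is $\varphi_{\sigma^\vee}^*\varphi_{\sigma^\vee *}$, giving $F(U)\mapsto F(U+\sigma^\vee)$). With the correct identification, the descent formula reads $i_\Sigma i_\Sigma^l=\varprojlim_\sigma i_\sigma i_\sigma^l$, a \emph{limit}, and this matches \autoref{corollary: kernel for convolution unit} on the nose; there is no variance mismatch to repair, and you should not attempt a colimit version of that corollary. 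The paper proves essential surjectivity for a completion $\Theta\supset\Sigma$ exactly via $i_\Theta i_\Theta^l=\varprojlim_\theta(1_{\theta^\vee}[n]\star-)\simeq\id$, and \emph{then} passes to right adjoints to extract the section formula $F(U)=\varinjlim_{\theta\in\Theta}F(U+\theta^\vee)$, which is what feeds into the incomplete case. Second, $\Sigma$ is \emph{not} filtered (distinct maximal cones have no common upper bound), so your appeals to ``filtered colimit'' for full faithfulness and for monoidality are invalid. The paper gets full faithfulness directly from the descent identities $i_\Sigma^r i_\Sigma=\varprojlim_\sigma i_\sigma^r i_\sigma=\id$ and $i_\Sigma^l i_\Sigma=\varinjlim_\sigma i_\sigma^l i_\sigma=\id$, and monoidality by observing that $i_\Sigma i_\Sigma^l$ is a limit of idempotent $\star$-convolution functors.

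Your reduction from incomplete $\Sigma$ to a completion is also circular as stated: asserting that the sub-colimit over $\Sigma\subset\Theta$ matches the microsupport-$|\Sigma|$ subcategory \emph{is} the theorem. The paper's actual bridge is concrete and section-level: having $F(U)=\varinjlim_{\theta\in\Theta}F(U+\theta^\vee)$ in hand, one evaluates on the explicit polytopes $\Delta_\Theta(d)$, observes $\Delta_\Theta(d)+\theta^\vee=\Delta_\Theta(d(\theta))$ and that both $\Delta_\Theta(d(\sigma))$ and $\Delta_\Theta(d(\theta))$ are $\theta^\vee$-open, and then applies the non-characteristic deformation \cite[Proposition 5.2.1]{KS90} using $\SS(F)\cap (M_\bR\times(-\mathring{\theta}))=\varnothing$ for $\theta\in\Theta\setminus\Sigma$ to get $F(\Delta_\Theta(d(\sigma)))\xrightarrow{\sim}F(\Delta_\Theta(d(\theta)))$ whenever $\sigma\in\Sigma$, $\sigma\prec\theta$. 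Since the $\Delta_\Theta(d)$ form a basis, this gives $F(U)=\varinjlim_{\sigma\in\Sigma}F(U+\sigma^\vee)$, i.e.\ $i_\Sigma i_\Sigma^r\simeq\id$. This is the non-characteristic step you anticipated, but it is run on specific $\gamma$-invariant opens via \cite{KS90}, not as a test against $\Omega$-lenses.
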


\begin{proof}We only need to show that the non-equivariant functor $i_\Sigma$ is an equivalence. For the equivariant version, as the homotopy fixed point of $\bR^n_\delta$ can be computed as a colimit in $\PrSt$, the result follows from the non-equivariant version since colimits commute. 

The fully faithfulness of the functor $i_\Sigma$ is clear: In fact, we have
\[i_\Sigma^r i_\Sigma=\varprojlim_\sigma i_\sigma^r i_\sigma=\id,\quad i_\Sigma^l i_\Sigma=\varinjlim_\sigma i_\sigma^l i_\sigma=\id.\]
since all $i_\sigma$ are fully faithful. 

Next, we show the essential surjectivity. 

By \cite{CompletionFan}, there exists a complete fan $\Theta$ such that $\Sigma$ is a subfan of $\Theta$. We first prove the essential surjectivity for $\Theta$. A generic cone in $\Theta$ is denoted by $\theta$.

We notice that $i_\theta i_\theta^l  \simeq 1_{\theta^\vee}[n] \star (\bullet)$ by \cite[Equation (3.1.5)]{guillermou2019sheaves} and a higher-dimension version of \cite[Proposition 5.11]{Hochschild-Kuo-Shende-Zhang} that switches $\circ$ and $\star$-convolutions. Therefore, by \autoref{corollary: kernel for convolution unit} and the limit formula for $i_\Theta i_\Theta^l$, we have
\[i_\Theta i_\Theta^l=\varprojlim_\theta i_\theta i_\theta^l=\id_{\Sh(M_\bR)},\]
which implies that $i_\Theta$ is essentially surjective. 

Then we use the essential surjectivity of $i_\Theta$ to show the essential surjectivity of $i_\Sigma$. 

The essential surjectivity of $i_\Theta$ means that $i_\Theta i_\Theta^l=\id_{\Sh(M_\bR)}$, we pass to right adjoints of the equivalence: 
\[\id_{\Sh(M_\bR)}=\id_{\Sh(M_\bR)}^r\simeq i_\Theta i_\Theta^r=\varinjlim_\theta i_\theta i_\theta^r.\]

Notice that $i_\theta i_\theta^r=\varphi_{\theta^\vee}^*\varphi_{\theta^\vee*}$ in the notation of \autoref{lemma: microlocal cut-off}. As the colimit over $\Theta$ is finite, we can evaluate at an open set $U$ for $F\in \Sh(M_\bR)$ to obtain 
\[F(U)=\varinjlim_{\theta \in \Theta} \varphi_{\theta^\vee}^*\varphi_{\theta^\vee*}F(U)=\varinjlim_{\theta \in \Theta} F(U+\theta^\vee).\]

Now, we consider the following constructions: For each ray $\rho \in \Theta(1)$, we fix a cone generator $u_\rho$ such that $\rho=\bR_{\geq 0}u_\rho$. For $d\in (\bR\cup \{\infty\})^{\Theta(1)}$, we define an open polyhedron
\begin{equation}\label{equation: fan open sets}
   \Delta_{\Theta}(d)= \cap_{\rho\in \Theta(1)} H_>(d_\rho) \subset M_\bR, 
\end{equation}
where $ H_>(d_\rho) = \{m\in M_\bR: \langle m, u_\rho\rangle > -d_\rho\}$.

For any $\theta\in \Theta$, we set $d(\theta) \in (\bR\cup \{\infty\})^{\Theta(1)}$ such that $d(\theta)_\rho=d_\rho$ if $\rho$ is a face of $\theta$, and $d(\theta)_\rho=\infty$ otherwise.

Now take $U=\Delta_{\Theta}(d)$, we notice that
\[\Delta_\Theta(d(\theta))=\Delta_{\Theta}(d)+\mathring{\theta^\vee},\]
and we have for $F\in \Sh(M_\bR)$ that
\begin{equation}\label{eq: surjectivity for Theta}
  F(\Delta_{\Theta}(d))=\varinjlim_{\theta \in \Theta} F(\Delta_{\Theta}(d)+\theta^\vee)= \varinjlim_{\theta \in \Theta} F(\Delta_\Theta(d(\theta))).  
\end{equation}

Now, we assume in addition that $F\in \Sh_{M_\bR\times (-|\Sigma|)}(M_\bR)$, i.e. $\SS(F)\subset M_\bR\times (-|\Sigma|)$, we claim that $\Sigma$ is cofinal in the colimit diagram \eqref{eq: surjectivity for Theta}. 

We prove the claim by proving the following: Under the microsupport condition $\SS(F)\subset M_\bR\times (-|\Sigma|)$, we have the natural restriction map
\[F(\Delta_\Theta(d( \sigma))) \rightarrow F(\Delta_\Theta(d( \theta)))\]
is an equivalence for all $\sigma \prec \theta$ with $\sigma \in \Sigma$ and $\theta\in \Theta\setminus \Sigma$. In fact, we only need to prove it for the case when $\theta$ is a maximal cone (maximal with respect to the face relation). In this case, as $\Theta$ is complete, maximal cones are all top-dimensional and, in particular, have non-trivial interiors. 

As $\SS(F)\subset M_\bR\times (-|\Sigma|)$, we have $\SS(F) \cap M_\bR\times (-\mathring{\theta})=\varnothing$. Moreover, we observe that 
\[\Delta_\Theta(d( \sigma))+\theta^\vee = \Delta_\Theta(d( \sigma)), \quad\Delta_\Theta(d( \theta))+\theta^\vee = \Delta_\Theta(d( \theta)).\]
That is, both $\Delta_\Theta(d( \sigma))$ and $\Delta_\Theta(d( \theta))$ are $\theta^\vee$-open sets.

Then we use \cite[Proposition 5.2.1]{KS90} to conclude that for sufficiently large bounded open sets $X\subset M_\bR$ (to guarantee the required compactness therein), we have an equivalence
\[F(\Delta_\Theta(d( \sigma)) \cap X)  \xrightarrow{\simeq } F(\Delta_\Theta(d( \theta)) \cap X).\]
It is clear that the equivalence is functorial with respect to $X$. Then we can take an increasing open exhaustion $X_n$ of $M_\bR$, and we have
\[F(\Delta_\Theta(d( \sigma)))=\varprojlim_n F(\Delta_\Theta(d( \sigma)) \cap X_n) \xrightarrow{\simeq } \varprojlim_n F(\Delta_\Theta(d( \theta)) \cap X_n)=  F(\Delta_\Theta(d( \theta)) ).\]

Therefore, we have that
\begin{equation}\label{eq: surjectivity for Sigma}
    F(\Delta_{\Theta}(d))= \varinjlim_{\sigma \in \Sigma} F(\Delta_\Theta(d(\sigma))) =\varinjlim_{\sigma \in \Sigma} F(\Delta_{\Theta}(d)+\sigma^\vee),
\end{equation}
i.e. $\Sigma$ is cofinal in the colimit diagram \eqref{eq: surjectivity for Theta}.

Moreover, because $\left\{ {\Delta_{\Theta}(d)}: d\in \bR^{\Theta(1)}\right\}$ is a set of bounded polytopes that form a basis of $M_\bR$, we can replace $\Delta_{\Theta}(d)$ by an arbitrary open set $U\subset M_\bR$ in \eqref{eq: surjectivity for Sigma}:
\[F(U)=\varinjlim_{\sigma \in \Sigma} F(U+\sigma^\vee),\]
for $F\in \Sh_{M_\bR\times (-|\Sigma|)}(M_\bR)$.

This formula is equivalent to saying that 
\[\id_{\Sh_{M_\bR\times (-|\Sigma|)}(M_\bR)}\simeq \varinjlim_\sigma i_\sigma i_\sigma^r=i_\Sigma i_\Sigma^r ,\]
and $i_\Sigma$ is essentially surjective.

To see the monoidality, we pass to left adjoints $\id\simeq i_\Sigma i_\Sigma^l=\varprojlim_\sigma i_\sigma i_\sigma^l$ where $i_\sigma i_\sigma^l$ is presented by an idempotent $\star$-convolution functor, which is monoidal. Then $i_\Sigma$ is monoidal by adjoint equivalences.
\end{proof}

It seems that the theorem can be extended to certain non-polyhedral fan-type structures. Let us see one example:
\begin{Coro}\label{coro: cut-off mult cones}Suppose $\{\tau_1,\cdots,\tau_n\}$ is a finite set of proper closed convex cones that are not necessarily polyhedral. Suppose they pairwise intersect at the origin, and assume that there exists a fan $\Sigma$ such that for each $\tau_k$ there exists $\sigma_k \in \Sigma$ such that $\tau_k\subset \sigma_k$, and the $\sigma_k$ also intersect pairwise at the origin. Let $\tau_0=\sigma_0=\{0\}$ and let $K$ be the index category such that $j\rightarrow k$ if and only if $j=0$ and $k=1,\dots,n$. Then we have
\[\varinjlim_{K} \Sh_{M_\bR \times {\tau_k}}(M_\bR) \simeq \Sh_{M_\bR \times { \cup_{k}\tau_k}}(M_\bR).\]
\end{Coro}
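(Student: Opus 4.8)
The plan is to reduce Corollary~\ref{coro: cut-off mult cones} to Theorem~\ref{Thm: sheaves with fan support} by exploiting the auxiliary fan $\Sigma$ and a non-characteristic deformation argument. First I would set up the relevant diagrams: we have the finite poset $K$ with initial object $0$ and with $\tau_k \subset \sigma_k$ for $k=1,\dots,n$, so there is a map of diagrams $k \mapsto [\Sh_{M_\bR\times\tau_k}(M_\bR) \hookrightarrow \Sh_{M_\bR\times\sigma_k}(M_\bR)]$. Since all microsupport conditions are closed, each inclusion commutes with limits and colimits and hence admits both adjoints, so the colimit $\varinjlim_K \Sh_{M_\bR\times\tau_k}(M_\bR)$ can be analyzed in $\operatorname{Pr_{st}^{L,R}}$ exactly as in the proof of the theorem, using the Adjoint descent formulas $j_K j_K^r = \varinjlim_K j_k j_k^r$ etc. for the fully-faithful functor $j_K \colon \varinjlim_K \Sh_{M_\bR\times\tau_k}(M_\bR) \to \Sh_{M_\bR\times \cup_k\tau_k}(M_\bR)$. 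Fully-faithfulness of $j_K$ is immediate from fully-faithfulness of each $j_k$ together with $j_K^r j_K = \varprojlim_K j_k^r j_k = \mathrm{id}$, so the only real content is essential surjectivity.

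For essential surjectivity I would argue as follows. Take $F \in \Sh_{M_\bR \times \cup_k\tau_k}(M_\bR)$; we must show $F \simeq \varinjlim_K j_k j_k^r F$, i.e.\ that $F(U) \simeq \varinjlim_K F(U + \tau_k^\vee)$ for a basis of open sets $U$, where the colimit is over the poset $K$ (so it is the coequalizer-type colimit $F(U) = \mathrm{fib}$-free gluing of $F(U+\tau_1^\vee),\dots,F(U+\tau_n^\vee)$ along $F(U)= F(U+\tau_0^\vee)$). Since each $\tau_k \subset \sigma_k$ and the $\sigma_k$ pairwise intersect only at $0$, one can choose $\{0,\sigma_1,\dots,\sigma_n\}$ to sit inside the fan $\Sigma$; enlarging $\Sigma$ to a complete fan $\Theta$ as in the theorem, Corollary~\ref{corollary: kernel for convolution unit} and the identity $i_\Theta i_\Theta^r = \mathrm{id}$ give, for the polyhedral basis $\Delta_\Theta(d)$, the formula $F(\Delta_\Theta(d)) = \varinjlim_{\theta\in\Theta} F(\Delta_\Theta(d)+\theta^\vee)$. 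The microsupport condition $\SS(F) \subset M_\bR\times(-\cup_k\tau_k) \subset M_\bR\times(-|\Sigma|)$ lets me run the same non-characteristic deformation step (using \cite[Proposition 5.2.1]{KS90} and an open exhaustion) to cut the $\Theta$-colimit down: the restriction $F(\Delta_\Theta(d(\sigma))) \to F(\Delta_\Theta(d(\theta)))$ is an equivalence for $\sigma \prec \theta$ with $\theta \notin \Sigma$, hence $F(\Delta_\Theta(d)) = \varinjlim_{\sigma\in\Sigma} F(\Delta_\Theta(d)+\sigma^\vee)$.

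The remaining point is to pass from the colimit over the whole fan $\Sigma$ to the colimit over the finite poset $K$ with objects $\{0,\sigma_1,\dots,\sigma_n\}$ and then to $\{0,\tau_1,\dots,\tau_n\}$. For the first reduction I would show $K \to \Sigma$ is cofinal — or rather that the functor $\sigma \mapsto F(\Delta_\Theta(d)+\sigma^\vee)$ on $\Sigma$ is left Kan extended from $K$: every $\sigma\in\Sigma$ is a face of some maximal cone, and by completeness of $\Theta$ and the pairwise-trivial-intersection hypothesis on the $\sigma_k$, the cones not dominated by some $\sigma_k$ contribute trivially to the colimit after applying the microsupport-based deformation again (their interior is disjoint from $-\cup_k\tau_k$), so the colimit over $\Sigma$ agrees with the colimit over the subposet generated by $0$ and the $\sigma_k$. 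For the second reduction, from $\tau_k \subset \sigma_k$ one has $\sigma_k^\vee \subset \tau_k^\vee$, giving a natural transformation of $K$-diagrams $F(U+\sigma_k^\vee) \to F(U+\tau_k^\vee)$; I would check this is an equivalence using once more \cite[Proposition 5.2.1]{KS90}, now with the cone $\tau_k$: since $\SS(F)$ meets $M_\bR\times(-\mathring{\sigma_k})$ only inside $M_\bR\times(-\tau_k)$ and both $U+\sigma_k^\vee$ and $U+\tau_k^\vee$ are $\tau_k^\vee$-open, the sections agree. Combining, $F(U) = \varinjlim_K F(U+\tau_k^\vee)$ for the basis $\{\Delta_\Theta(d)\}$, hence for all opens, which is exactly $F \simeq j_K j_K^r F$; essential surjectivity follows. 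The main obstacle I expect is the second reduction — carefully verifying that the microsupport condition on the \emph{non-polyhedral} cone $\tau_k$ is enough to apply the non-characteristic propagation \cite[Proposition 5.2.1]{KS90} for the comparison $F(U+\sigma_k^\vee) \simeq F(U+\tau_k^\vee)$, and bookkeeping the cofinality so that the $\Theta$- and $\Sigma$-colimits genuinely collapse onto the finite poset $K$.
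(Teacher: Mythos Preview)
Your approach is correct and matches the paper's, though you unpack more steps than necessary. The paper's proof applies Theorem~\ref{Thm: sheaves with fan support} directly to the subfan $\Sigma'$ generated by the $\sigma_k$ (so $|\Sigma'|=\cup_k\sigma_k$), which immediately yields $F(U)=\varinjlim_{\sigma\in\Sigma'} F(U+\sigma^\vee)$ for $F$ with the given microsupport; since the $\sigma_k$ pairwise meet only at $0$, the inclusion $K\hookrightarrow\Sigma'$, $k\mapsto\sigma_k$, is cofinal, and one is reduced to your ``second reduction'' $F(U+\tau_k^\vee)\to F(U+\sigma_k^\vee)$. By contrast you re-derive the $\Sigma'$-formula by passing through a completion $\Theta$ and then cutting back down --- this works but duplicates the proof of Theorem~\ref{Thm: sheaves with fan support} rather than citing it.

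On the point you flag as the main obstacle: note that both $U+\sigma_k^\vee$ and $U+\tau_k^\vee$ are $\sigma_k^\vee$-open (not $\tau_k^\vee$-open as you wrote; since $\sigma_k^\vee\subset\tau_k^\vee$ the relevant invariance is under the smaller cone), and the hypothesis $\sigma_j\cap\sigma_k=\{0\}$ for $j\neq k$ forces $\SS(F)\cap(M_\bR\times\sigma_k)\subset M_\bR\times\tau_k$, so the conormals encountered in deforming $U+\sigma_k^\vee$ out to $U+\tau_k^\vee$ lie in $\sigma_k\setminus\tau_k$ and are non-characteristic. This is exactly the ``similar non-characteristic deformation argument'' the paper invokes; the non-polyhedrality of $\tau_k$ causes no trouble because the deformation and the microsupport estimate use only convexity.
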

\begin{proof}The argument for the fully faithfulness is the same as the fan case. For the essential surjectivity, we apply \autoref{Thm: sheaves with fan support} to the subfan $\Sigma'$ generated by $\sigma_k$ (then switch the sign). As the index category is sufficiently simple, we can apply a similar non-characteristic deformation argument to show that
\[\id_{\Sh_{M_\bR\times { \cup_{k}\tau_k}}(M_\bR)}\simeq \varinjlim_K i_{\tau_k} i_{\tau_k}^r.\qedhere\]    
\end{proof}

\begin{RMK}If $n=2$, the category $K$ is of the form $1 \leftarrow 0 \rightarrow 2$, so the colimit diagram is a pushout diagram. In this sense, we obtain a splitting result similar to \cite[Proposition 3.1.10]{guillermou2019sheaves}.

However, we do not know how to generalize the corollary if intersections of cones are more complicated because we do not know any combinatorial structure like a fan to organize the data.
\end{RMK}

\begin{RMK}It is explained in \cite[Corollary 3.4.3]{guillermou2019sheaves} that for a closed convex cone $\sigma$, and $F\in \Sh(\bR^n)$, we have $\SS(F)\subset \bR^n\times \sigma $ implies that $\SS(H^iF) \subset \bR^n\times \sigma$ for all $i\in \bZ$. (The converse is also true by \cite[Exercise V.6]{KS90}.)

However, \cite[Corollary 3.4.3]{guillermou2019sheaves} is not true for more general microsupport conditions, and \autoref{Thm: sheaves with fan support} gives a natural explanation of this issue (for polyhedral microsupport conditions): For $F$ with $\SS(F)\subset \bR^n\times |\Sigma|$ for a fan $\Sigma$, we can functorially decompose $F$ into $\{F_\sigma\}_{\sigma\in \Sigma}$ such that $F=\varinjlim_{\sigma} F_\sigma$ and $\SS(F_\sigma)\subset \bR^n\times \sigma $. Notice that, since $\Sigma$ is not filtered, $H^i$ does not commute with $\varinjlim_{\sigma}$, and then we cannot assert that $\SS(H^iF)\subset \bR^n\times |\Sigma|$.
\end{RMK}

\section{Global APT}
We assume that ${\bfk}$ is a discrete commutative ring in this section. The only reason for this restriction is that we do not know enough spectral algebraic geometry to cover some algebro-geometric constructions (especially the root stack). We expect these geometric constructions could be worked out in light of \cite{Geometry_of_filtration} and \cite[Section 3]{bai2025toricmirrorsymmetryhomotopy}; if so, all results here hold over commutative ring spectra.

Recall the notion of fan we defined in \autoref{def: fan}: We fix a finite-dimensional vector space $N_\bR$ and set $M_\bR=N_\bR^\vee$. A fan $\Sigma$ is a finite set of proper closed convex polyhedral cones in $N_\bR$ that is closed under taking faces and intersections.
 
In the toric literature, rational fans are also discussed. We say $N_\bR$ is $N$-rational if for a (fixed) lattice $N\subset N_\bR$, we have $N_\bR=N\otimes \bR$. In this case, we denote $M=\HOM_{\bZ}(N,\bZ)$ . 

\begin{Def}\label{def: polyhedral fan}For a fan $\Sigma$ in $N_\bR$, we say $\Sigma$ is a $N$-rational fan if $N_\bR$ is $N$-rational and for all $\sigma\in \Sigma$, $\sigma$ is generated by finitely many vectors in $N\hookrightarrow N_\bR$. 
\end{Def}

Recall that, if $\Sigma$ is rational, one can associate a toric variety $X_\Sigma$, we refer to \cite{Cox_ToricBook} for a general account of toric varieties. 

Our goals of this section are the following:
\begin{itemize}[fullwidth]
    \item We construct the Novikov toric scheme $X_\Sigma^\Nov$ for a fan $\Sigma$, which admits a Novikov torus action $\bT^\Nov$ on $X_\Sigma^\Nov$, and a closed subscheme $\partial_\Sigma$ of $X_\Sigma^\Nov$ defined by an idempotent ideal sheaf. In the case where $\Sigma$ is rational, we explain the relation between $X_\Sigma^\Nov$ and the usual toric variety $X_\Sigma$;
    \item We define almost quasi-coherent sheaves $\QCoha(X_\Sigma^\Nov)$ over a Novikov toric scheme and prove the global version of APT (which is called log-perfectoid mirror symmetry in \cite{Vaintrob-logCCC}): 
    \[\QCoha_{\bT^\Nov}(X_\Sigma^\Nov)\simeq \Sh_{M_\bR\times |\Sigma|}(M_\bR),\quad \QCoha(X_\Sigma^\Nov)\simeq \Sh^{\bR_\delta^n}_{M_\bR\times |\Sigma|}(M_\bR)\]
    that recovers \autoref{prop: APT HD} if $\Sigma=\Sigma_\sigma$ is the face fan of a proper closed convex cone $\sigma \subset N_\bR$.
\end{itemize}

\subsection{\texorpdfstring{Warm-up: the Novikov projective line $\bP^\Nov$.}{}}\label{subsection: Novikov P1}
We start with a warm-up example: the Novikov projective line $\bP^\Nov$. In this example, we will see how the whole story runs. To simplify the notation, we will omit superscripts indicating dimensions of relevant geometry. 

We take $M_\bR=N_\bR=\bR$ and the fan $\Sigma=\{\{0\}, \bR_{\leq }, \bR_{\geq }\}$. These data are rational. It is known that the toric variety associated with $\Sigma$ is the projective line. Here, we construct $X_\Sigma^\Nov=\bP^\Nov$.

For each cone $\sigma\in \Sigma$, we can define three rings associated with $\sigma^\vee$:
\[\Lambda_{\bR}={\bfk}[\{0\}^\vee]={\bfk}[\bR],\quad \Lambda_{\leq }={\bfk}[\bR_{\leq }],\quad \Lambda_{\geq }={\bfk}[\bR_{\geq }] .\]
Then we can define their prime spectra as the (1-dimensional) Novikov torus and the Novikov affine lines (we distinguish two copies of affine lines by a sign):
\[ {\bT^\Nov} \coloneqq \Spec(\Lambda_{\bR}), \quad{\bA^\Nov_{\leq }}\coloneqq \Spec (\Lambda_{\leq }), \quad{\bA^\Nov_{\geq }}\coloneqq\Spec(\Lambda_{\geq }).\]

We have the following observation that we made in the study of $1$-dimensional APT:
\[\Lambda_{\bR}=\Lambda_{\leq }[\frac{1}{t^{-1}}]=\Lambda_{\geq }[\frac{1}{t}].\]

So, we could regard the natural morphisms of schemes induced by inclusion of rings $\Lambda_{\leq }, \Lambda_{\geq }\subset \Lambda_{\bR }$ as open inclusions:
\[  {\bA^\Nov_{\leq }} \supset  \bT^\Nov  \subset  {\bA^\Nov_{\geq }}.  \]

Then we glue ${\bA^\Nov_{\leq }}$ and ${\bA^\Nov_{\geq }}$ 
along $\bT^\Nov$ to obtain
\[{\bP^\Nov}\coloneqq {\bA^\Nov_{\leq }} \cup_{ \bT^\Nov} {\bA^\Nov_{\geq }}. \]

For each $\sigma\in \Sigma$, we have the associated idempotent ideal that defines an almost content. In the case of $\bP^\Nov$, they are
\[I_\bR={\bfk}[\mathring{\bR}],\quad I_{\leq}={\bfk}[\mathring{\bR_{\leq }}],\quad I_{\geq }={\bfk}[\mathring{\bR_{\geq }}] .\]

They define closed subschemes of the corresponding affine schemes for each $\sigma$, and are compatible with the gluing data. So, they can be glued to a closed subscheme $\partial_\Sigma$ of ${\bP^\Nov}$ such that 
\[\partial_\Sigma\cap \bT^\Nov=V(I_\bR)=\varnothing, \quad \partial_\Sigma\cap {\bA^\Nov_{\leq }} = V(I_{  \leq }),\quad \partial_\Sigma\cap {\bA^\Nov_{\geq }} = V(I_{  \geq }),\]
where $V(I)\subset \Spec(R)$ means the closed set associated with an ideal $I\subset R$.

We define the category of almost quasi-coherent sheaves over affine covers as the corresponding almost module categories. Precisely, for $S=\bR, {\leq }, {\geq }$, we set
\[\QCoha(\Spec(\Lambda_{S})) \coloneqq \Moda(\Lambda_{S}, I_{S}). \]

Quasi-coherent sheaves that are $\bT^\Nov$-equivariant correspond to $\bR$-graded modules, i.e., we have
\[\QCoha_{\bT^\Nov}(\Spec(\Lambda_{S})) \coloneqq \Moda_{\bR-gr}(\Lambda_{S}, I_{{S}}) \]
for $S=\bR, {\leq }, {\geq }$.

Then we define the category of $\bT^\Nov$-equivariant almost quasi-coherent sheaves over ${\bP^\Nov}$  via the pullback diagram in $\PrStR \simeq (\PrSt)^{\op}$:
\[\begin{tikzcd}
\QCoha_{\bT^\Nov}({\bP^\Nov},\partial_\Sigma) \arrow[d] \arrow[r] & \Moda_{\bR-gr}(\Lambda_{{\leq }}, I_{\leq}) \arrow[d] \\
\Moda_{\bR-gr}(\Lambda_{{\geq }}, I_{\geq}) \arrow[r]           & \Moda_{\bR-gr}(\Lambda_{\bR}, I_{{\bR}}) .      
\end{tikzcd}\]

Similarly, we can define the non-equivariant version via non-graded almost modules.

So, one $\bT^\Nov$-equivariant almost quasi-coherent sheaf over $({\bP^\Nov},\partial_\Sigma)$ consists of three $\bR$-graded almost modules over three polynomial Novikov rings together with compatible gluing data.

However, by the $1$-dimensional APT, \autoref{prop: APT}, the diagram is equivalent to
\begin{equation}\label{equation: qcoh diagram of novikov P1}
\begin{tikzcd}
\QCoha_{\bT^\Nov}({\bP^\Nov},\partial_\Sigma) \arrow[d] \arrow[r] & \Sh_{\leq }(\bR) \arrow[d] \\
\Sh_{\geq }(\bR) \arrow[r]           & \Sh_{=0 }(\bR)=\Loc(\bR) .      
\end{tikzcd}    
\end{equation}

Therefore, by \autoref{Thm: sheaves with fan support} (passing to right adjoints), we have
\begin{Prop}\label{Prop: APT for P1}We have the monoidal equivalences
\[\QCoha_{\bT^\Nov}({\bP^\Nov},\partial_\Sigma) \simeq \Sh(\bR),\quad \QCoha({\bP^\Nov},\partial_\Sigma) \simeq \Sh^{\bR_\delta}(\bR).\] 
\end{Prop}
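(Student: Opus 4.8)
The plan is to read the statement off \autoref{Thm: sheaves with fan support} by recognizing the defining pullback square \eqref{equation: qcoh diagram of novikov P1} as the image, under $\PrStR\simeq(\PrSt)^{\op}$, of the fan colimit that appears there. Here $\bP^\Nov$ is glued from the charts $\Spec\Lambda_{\sigma^\vee}$ indexed by $\Sigma=\{\{0\},\bR_{\leq},\bR_{\geq}\}$, with $\{0\}$ the unique minimal cone; so, as an $\bR$-graded almost-module statement, $\QCoha_{\bT^\Nov}(\bP^\Nov,\partial_\Sigma)$ is the limit over the span $\bR_{\leq}\leftarrow\{0\}\to\bR_{\geq}$ (i.e.\ over $\Sigma$ with the face relation reversed) of $\sigma\mapsto\Moda_{\bR-gr}(\Lambda_{\sigma^\vee},\Lambda_{\mathring{\sigma^\vee}})$, with transition maps the restrictions to the open torus. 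Using \autoref{prop: APT} to rewrite the three corners as $\Sh_{\geq}(\bR)$, $\Sh_{\leq}(\bR)$ and $\Sh_{=0}(\bR)=\Loc(\bR)$, these are, up to the harmless relabeling $\sigma\leftrightarrow-\sigma$ (harmless because the $\bP^\Nov$-fan is symmetric), exactly the values of $\sigma\mapsto\Sh_{M_\bR\times(-\sigma)}(M_\bR)$ for $M_\bR=\bR$, $\sigma\in\Sigma$. The point I would verify is that the restriction-to-torus functor $M\mapsto M\otimes_{\Lambda_{\sigma^\vee}}\Lambda_{\bR}$ corresponds under \autoref{prop: APT} to a functor whose right adjoint is the fully-faithful inclusion $i_{\{0\}\sigma}\colon\Loc(\bR)\hookrightarrow\Sh_{M_\bR\times(-\sigma)}(\bR)$ that serves as a transition map of the fan colimit — equivalently, that pushforward along the open inclusion $\bT^\Nov\hookrightarrow\bA^\Nov_{\geq}$ (and likewise $\bT^\Nov\hookrightarrow\bA^\Nov_{\leq}$) realizes this inclusion, its essential image being the sheaves locally constant on $M_\bR$. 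Granting this, passing to right adjoints in \eqref{equation: qcoh diagram of novikov P1} produces precisely the diagram whose colimit is $\varinjlim_{\sigma\in\Sigma}\Sh_{M_\bR\times(-\sigma)}(M_\bR)$.

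Next I would invoke \autoref{Thm: sheaves with fan support} for this fan: since $|\Sigma|=\bR=N_\bR$ (so also $-|\Sigma|=\bR$), the colimit equals $\Sh_{M_\bR\times\bR}(\bR)=\Sh(\bR)$, yielding the first equivalence $\QCoha_{\bT^\Nov}(\bP^\Nov,\partial_\Sigma)\simeq\Sh(\bR)$. For the non-equivariant category, the non-graded gluing square is, via the equivariant form of APT (\autoref{prop: equivariant APT}), the $\bR_\delta$-homotopy-fixed-point version of \eqref{equation: qcoh diagram of novikov P1}, so the equivariant clause of \autoref{Thm: sheaves with fan support} gives $\QCoha(\bP^\Nov,\partial_\Sigma)\simeq\Sh^{\bR_\delta}_{M_\bR\times\bR}(\bR)=\Sh^{\bR_\delta}(\bR)$; alternatively, $(-)^{\bR_\delta}$ is simultaneously a limit and a colimit over $B\bR_\delta$ in $\PrSt$, hence commutes with the pullback defining $\QCoha$. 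Monoidality is inherited from the construction: each corner carries the tensor product over the relevant polynomial Novikov ring, which matches the $\star$-convolution under \autoref{prop: APT}, and \autoref{Thm: sheaves with fan support} delivers the colimit identification as a monoidal equivalence.

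The main obstacle is not the colimit computation — that is exactly the content of \autoref{Thm: sheaves with fan support} — but the adjoint- and sign-bookkeeping of the first step: identifying the gluing (restriction-to-torus) functors of $\bP^\Nov$ with the right adjoints of the transition maps $i_{\{0\}\sigma}$ of the fan colimit, and keeping the $\sigma$-versus-$(-\sigma)$ conventions consistent. This matching is where \autoref{prop: APT} and its almostlization/sheafification dictionary do the real work; once it is in place, the proposition follows formally.
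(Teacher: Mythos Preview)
Your proposal is correct and follows essentially the same route as the paper: the text immediately preceding the proposition already rewrites the defining pullback square via \autoref{prop: APT} as the diagram \eqref{equation: qcoh diagram of novikov P1}, and the paper's one-line proof is then ``by \autoref{Thm: sheaves with fan support} (pass to right adjoints).'' Your write-up simply unpacks this, making explicit the adjoint/sign bookkeeping and the equivariant passage that the paper leaves implicit.
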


\subsection{Novikov toric scheme}\label{subsection: def Novikov toric scheme}
We present the construction of the Novikov toric scheme $X_\Sigma^\Nov$ associated to a fixed fan $\Sigma$.

For a given $\sigma\in \Sigma$, we first define the affine Novikov toric scheme $U_\sigma$: Recall that the semigroup ring $\Lambda_{\sigma^\vee}=\bfk[\sigma^\vee]
$, we define
\[U_\sigma^\Nov\coloneqq \Spec(\Lambda_{\sigma^\vee})=\Spec({\bfk}[\sigma^\vee]).\]
It is straightforward to check that $U_\sigma^\Nov$ is a non-Noetherian, normal, reduced, and irreducible affine scheme.

One example to which we should pay attention is the Novikov torus (of dimension $n$):
\[\bT^\Nov\coloneqq U_{\{0\}}^\Nov= \Spec[\Lambda_{M_\bR}].\]
It is an affine group scheme (still non-Noetherian), and it acts effectively on $U_\sigma$ for every $\sigma \in \Sigma$ since $\Lambda_{\sigma^\vee}=\bfk[\sigma^\vee]
$ is $M_\bR$-graded. 

Next, we consider their gluing. We recall the following separating lemma, whose proof can be found in \cite{Cox_ToricBook}.
\begin{Lemma}[{{\cite[Proposition 1.3.13]{Cox_ToricBook}}}]\label{lemma: separting lemma}For two cones  $\sigma_1,\sigma_2\in\Sigma$, and $\tau=\sigma_1\cap \sigma_2 \in\Sigma$, we have that, for $m$ in the relative interior of $ \sigma_1^\vee \cap (-\sigma_2^\vee) = (\sigma_1-\sigma_2)^\vee $, we have $\sigma_1 \cap H_m =  \tau =   \sigma_2 \cap H_{-m}$ where $H_m=H_{-m}=\{n\in N_\bR:\langle m,n\rangle =0\}$ is a hyperplane.
\end{Lemma}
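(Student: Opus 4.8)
The plan is to reduce the statement to a single standard fact about closed convex cones: for any closed convex cone $C$ and any $m$ in the relative interior of $C^\vee$, the hyperplane $m^\perp$ meets $C$ exactly in its lineality space $C\cap(-C)$. First I would record the easy preliminaries. The identity $\sigma_1^\vee\cap(-\sigma_2^\vee)=(\sigma_1-\sigma_2)^\vee$ is immediate from the definition of the dual cone: a functional $m$ satisfies $\langle m, v-w\rangle\geq 0$ for all $v\in\sigma_1$, $w\in\sigma_2$ if and only if $\langle m,v\rangle\geq 0$ on $\sigma_1$ and $\langle m,w\rangle\leq 0$ on $\sigma_2$ (take $w=0$ and $v=0$ respectively, using that both cones contain the origin). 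Moreover $C:=\sigma_1+(-\sigma_2)$ is again a polyhedral cone, in particular closed, so $C^{\vee\vee}=C$. The only input from the fan structure (\autoref{def: fan}) is that $\tau=\sigma_1\cap\sigma_2$ is a face of $\sigma_1$ and of $\sigma_2$.

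Next I would prove the general lemma. The inclusion $C\cap(-C)\subseteq C\cap m^\perp$ is trivial ($w,-w\in C$ and $m\in C^\vee$ force $\langle m,w\rangle=0$). For the reverse inclusion, suppose $v\in C\cap m^\perp$ but $-v\notin C$; separating $-v$ from the closed convex cone $C=C^{\vee\vee}$ produces $m'\in C^\vee$ with $\langle m',v\rangle>0$. Since $m$ lies in the relative interior of $C^\vee$ and $m'$ lies in its linear span, $m-\varepsilon m'\in C^\vee$ for all small $\varepsilon>0$; then $\langle m-\varepsilon m',v\rangle=-\varepsilon\langle m',v\rangle<0$, contradicting $v\in C$ and $m-\varepsilon m'\in C^\vee$. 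Hence $-v\in C$, i.e. $v\in C\cap(-C)$.

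With this in hand, I would identify the lineality space $W:=C\cap(-C)$ of $C=\sigma_1-\sigma_2$ with the linear span $\operatorname{span}(\tau)$. One inclusion is clear since $\tau\subseteq\sigma_1$ and $\tau\subseteq\sigma_2$ give $\operatorname{span}(\tau)=\tau-\tau\subseteq(\sigma_1-\sigma_2)\cap(\sigma_2-\sigma_1)=W$. For the other, if $x=v-w=w'-v'\in W$ with $v,v'\in\sigma_1$ and $w,w'\in\sigma_2$, then $y:=v+v'=w+w'$ lies in $\sigma_1\cap\sigma_2=\tau$; since $\tau$ is a face of $\sigma_1$ and $y=v+v'$ with $v,v'\in\sigma_1$, both $v,v'\in\tau$, and symmetrically $w,w'\in\tau$, so $x\in\tau-\tau=\operatorname{span}(\tau)$.

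Finally I would assemble the pieces. For $m$ in the relative interior of $(\sigma_1-\sigma_2)^\vee$, the general lemma gives $(\sigma_1-\sigma_2)\cap m^\perp=W=\operatorname{span}(\tau)$. Since $0\in\sigma_2$ we have $\sigma_1\subseteq\sigma_1-\sigma_2$, so $\sigma_1\cap m^\perp\subseteq\sigma_1\cap\operatorname{span}(\tau)=\tau$, the last equality being the standard fact that a face of a polyhedral cone is the intersection of the cone with the linear span of that face. Conversely $\tau\subseteq W\subseteq m^\perp$ together with $\tau\subseteq\sigma_1$ gives $\tau\subseteq\sigma_1\cap m^\perp$, so $\sigma_1\cap H_m=\tau$; applying the same argument with the roles of $\sigma_1,\sigma_2$ exchanged (noting $-m$ then lies in the relative interior of $(\sigma_2-\sigma_1)^\vee$ and $H_{-m}=H_m$) yields $\sigma_2\cap H_{-m}=\tau$. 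I expect the only genuinely delicate point to be the reverse inclusion in the general lemma — specifically ensuring that the perturbation $m-\varepsilon m'$ stays inside $C^\vee$, which is exactly where the hypothesis that $m$ lie in the \emph{relative} interior of the dual (rather than merely in the dual) is essential; the remaining steps are routine manipulations with faces of polyhedral cones.
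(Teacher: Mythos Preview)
The paper does not supply its own proof of this lemma; it simply recalls the statement and points to \cite{Cox_ToricBook} for the argument. Your proof is correct and self-contained.

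Your organization---reducing everything to the general convex-geometry fact that for any closed convex cone $C$ and any $m$ in the relative interior of $C^\vee$ one has $C\cap m^\perp=C\cap(-C)$, and then identifying the lineality space of $\sigma_1-\sigma_2$ as $\operatorname{span}(\tau)$ via the face property---is a clean way to structure the proof. The only place the fan axiom enters (namely that $\tau$ is a \emph{face} of each $\sigma_i$, not merely their set-theoretic intersection) is precisely where you invoke it: in the lineality-space computation and in the identity $\sigma_1\cap\operatorname{span}(\tau)=\tau$. The perturbation step $m-\varepsilon m'\in C^\vee$ is also handled correctly, since $m'\in C^\vee$ automatically lies in the linear span of $C^\vee$, which is the ambient space for the relative interior.
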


With the notation above, we have $\tau^\vee=\sigma_1^\vee+\bR(-m)$ and $\tau^\vee=\sigma_1^\vee +\sigma_2^\vee$. Then we can construct the gluing data of the Novikov toric scheme.

By $\tau^\vee=\sigma_1^\vee+\bR(-m)$, we have for $t^m=\prod_{i=1}^n t_i^{m_i}\in \Lambda_{M_\bR}={\bfk}[M_\bR]$ that 
\[\Lambda_{\tau^\vee}={\bfk}[\sigma_1^\vee+\bR(-m)]={\bfk}[\sigma_1^\vee][\frac{1}{t^m}].\]
Therefore, we have
\begin{equation*}
    \begin{split}
        U_\tau^\Nov  
        \simeq  \Spec ({\bfk}[\sigma_1^\vee][\frac{1}{t^m}])
        \simeq U_{\sigma_1}^\Nov \cap \{t^m \neq 0\} \overset{open}{\subset} U_{\sigma_1}^\Nov.
    \end{split}
\end{equation*}
So, we define an isomorphism of schemes, by $\tau^\vee=\sigma_1^\vee +\sigma_2^\vee$,
\[g_{21}: U_{\sigma_1}^\Nov \cap \{t^m \neq 0\} \simeq U_\tau^\Nov \simeq U_{\sigma_2}^\Nov \cap \{t^{-m} \neq 0\}.\]

As for usual toric varieties, for $\sigma_{i} \in \Sigma$ where $i=1,2,3$, we have the cocycle condition, whenever the following maps are defined:
\[g_{21}=g_{12}^{-1},\quad g_{31}=g_{32}g_{21}.\]
\begin{Def}\label{definition: Novikov toric scheme}We define the Novikov toric scheme associated to a fan $\Sigma$:
\[X_\Sigma^\Nov\coloneqq  \varinjlim_\sigma U_\sigma^\Nov =\bigsqcup_{\sigma} U_\sigma^\Nov \left. \Big/ \right. (\sigma_1,x_1)\simeq (\sigma_2,g_{21}(x_1)) .\]    
\end{Def}
\begin{Thm}We have that $X_\Sigma^\Nov$ is a non-Noetherian irreducible, integral, separated, normal scheme admitting an effective $\bT^\Nov$-action.
\end{Thm}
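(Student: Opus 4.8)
The plan is to mirror the classical toric proof (as in \cite{Cox_ToricBook}) almost verbatim, checking that each geometric property is local or glues properly, since the new feature is only that the affine charts $U_\sigma^\Nov = \Spec(\Lambda_{\sigma^\vee})$ are non-Noetherian. First I would establish the properties of a single chart $U_\sigma^\Nov$. Normality: the semigroup $\sigma^\vee \cap M_\bR$ — here all of $\sigma^\vee$ since we are over $\bR$ — is saturated in $M_\bR$ (if $k\cdot m \in \sigma^\vee$ for some $k>0$ then $m\in\sigma^\vee$ since $\sigma^\vee$ is a cone), so $\Lambda_{\sigma^\vee}={\bfk}[\sigma^\vee]$ is an integrally closed domain in its fraction field; integrality, irreducibility, reducedness all follow from $\Lambda_{\sigma^\vee}$ being a domain, which holds because $\sigma^\vee$ is a cancellative torsion-free semigroup so its semigroup algebra has no zero divisors. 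The $\bT^\Nov$-action on $U_\sigma^\Nov$ is the one dual to the $M_\bR$-grading of $\Lambda_{\sigma^\vee}$ (equivalently the coaction $\Lambda_{\sigma^\vee}\to \Lambda_{\sigma^\vee}\otimes_{\bfk}\Lambda_{M_\bR}$, $t^m\mapsto t^m\otimes t^m$), and it is effective because the subtorus fixing $U_\sigma^\Nov$ corresponds to the sublattice/subgroup of $M_\bR$ generated by $\sigma^\vee$, which is all of $M_\bR$ precisely when $\sigma$ is proper — which is part of \autoref{def: fan}.

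Next I would handle the gluing. The gluing maps $g_{21}$ are isomorphisms of schemes by the discussion preceding \autoref{definition: Novikov toric variety}, and they satisfy the cocycle condition, so $X_\Sigma^\Nov$ is a well-defined scheme. Since each $U_\sigma^\Nov$ is integral and any two charts share the dense open $\bT^\Nov = U_{\{0\}}^\Nov$ (using that $\{0\}$ is a face of every cone in the fan, hence $\{0\}\in\Sigma$ and $\bT^\Nov\hookrightarrow U_\sigma^\Nov$ is the open dense locus $\{t^m\ne 0 : m\}$), the scheme $X_\Sigma^\Nov$ is irreducible; combined with reducedness on each chart it is integral. Normality is local, hence inherited from the charts. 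The $\bT^\Nov$-actions on the charts are compatible with the $g_{21}$ (they are $M_\bR$-graded localizations), so they glue to a global $\bT^\Nov$-action, which is effective since it already is on one chart.

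The one genuinely non-formal point is separatedness, which is exactly where the fan axioms (ii) of \autoref{def: fan} enter. The criterion is that for $\sigma_1,\sigma_2\in\Sigma$ with $\tau=\sigma_1\cap\sigma_2$, the multiplication map $\Lambda_{\sigma_1^\vee}\otimes_{\bfk}\Lambda_{\sigma_2^\vee}\to\Lambda_{\tau^\vee}$ is surjective, i.e. $\tau^\vee = \sigma_1^\vee+\sigma_2^\vee$ as subsets of $M_\bR$. This is precisely the content of the separating \autoref{lemma: separting lemma} together with the remark following it ($\tau^\vee=\sigma_1^\vee+\sigma_2^\vee$), which uses that $\tau$ is a common face cut out by a supporting hyperplane $H_m$ with $m\in\sigma_1^\vee\cap(-\sigma_2^\vee)$. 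I would spell this out: surjectivity of the multiplication map says the diagonal $U_\tau^\Nov\to U_{\sigma_1}^\Nov\times_{\bfk} U_{\sigma_2}^\Nov$ is a closed immersion, which over all pairs of charts gives that the diagonal $X_\Sigma^\Nov\to X_\Sigma^\Nov\times_{\bfk}X_\Sigma^\Nov$ is closed, i.e. $X_\Sigma^\Nov$ is separated.

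The main obstacle — really the only place to be careful — is that the standard proofs of these facts in \cite{Cox_ToricBook} are written for \emph{rational} cones over a field, where $\Lambda_{\sigma^\vee}$ is a finitely generated ${\bfk}$-algebra (hence Noetherian) by Gordan's lemma. Here $\sigma^\vee\cap M_\bR$ is not finitely generated and $\Lambda_{\sigma^\vee}$ is not Noetherian, so I must check that none of the cited arguments secretly uses finite generation: normality via saturation, domain property via cancellativity, and the separatedness computation $\tau^\vee=\sigma_1^\vee+\sigma_2^\vee$ are all purely about the combinatorics of the cone and the (possibly infinitely generated) semigroup, so they go through unchanged. I would simply remark that Gordan's lemma is not needed and that every step is insensitive to Noetherianness, which is why the theorem's statement emphasizes "non-Noetherian".
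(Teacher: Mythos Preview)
Your proposal is correct and takes essentially the same approach as the paper: the paper's proof is a one-line referral to \cite[Theorem 3.1.5]{Cox_ToricBook} together with the observation that the affine charts $U_\sigma^\Nov$ are non-Noetherian, and you have simply unpacked what that referral entails (saturation for normality, $\tau^\vee=\sigma_1^\vee+\sigma_2^\vee$ for separatedness via \autoref{lemma: separting lemma}, etc.), while correctly noting that Gordan's lemma is never invoked.
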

\begin{proof}The proof is the same as that of the construction of the usual toric varieties, we refer to \cite[Theorem 3.1.5]{Cox_ToricBook}. We only need to notice that it is non-Noetherian since it has an affine cover $U_\sigma^\Nov$ such that all opens are non-Noetherian.
\end{proof}

Now, we explain the almost data associated with a fan. 

For $\sigma\in \Sigma$, we consider the ideal
\[ I_{\sigma^\vee}\coloneqq {\bfk}[\mathring{\sigma^\vee}] \subset \Lambda_{\sigma^\vee}={\bfk}[{\sigma^\vee}] 
 .\]

Then $I_{\sigma^\vee}$ is an idempotent ideal $I_{\sigma^\vee} \otimes_{\Lambda_{\sigma^\vee}} I_{\sigma^\vee}=I_{\sigma^\vee}$ since $\mathring{\sigma^\vee}+\mathring{\sigma^\vee}=\mathring{\sigma^\vee}$ (one can check this using Day convolution by \autoref{prop: Rees construction HD}).

Therefore, we can define 
\[\partial_\sigma \coloneqq V(I_{\sigma^\vee})\subset U_\sigma^\Nov,\]
the closed subscheme determined by the closed immersion $\Spec(\Lambda_{\sigma^\vee}/I_{\sigma^\vee}) \rightarrow \Spec(\Lambda_{\sigma^\vee})$. We will not distinguish them afterward.

For two cones $\sigma_1,\sigma_2\in \Sigma$ with $\tau=\sigma_1\cap \sigma_2$, as noted in \autoref{lemma: separting lemma}, we can pick $m$ such that $m\in \mathring{\sigma_1^\vee} \cap \mathring{(-\sigma_2^\vee)}$. This is because the hyperplane $H_m$ satisfies $H_{rm}=H_m$ for $r>0$. Therefore, we can glue $\partial_\sigma$ using the gluing data for $X_\Sigma^\Nov$ (cf. \autoref{definition: Novikov toric scheme}) to define
\begin{equation}\label{equation: boundary ideal}
\partial_\Sigma\coloneqq \varinjlim_\sigma   \partial_\sigma=\bigsqcup_{\sigma} \partial_\sigma /\simeq.    
\end{equation}

By definition, we have:
\begin{Prop}The scheme $\partial_\Sigma$ is a closed subscheme of $X_\Sigma^\Nov$ that is locally given by $\partial_\sigma$ in $U_\sigma^\Nov$. Moreover, the ideal sheaf associated with $\partial_\Sigma$ is idempotent.   
\end{Prop}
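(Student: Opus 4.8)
The plan is to verify that the locally defined closed subschemes $\partial_\sigma=V(I_{\sigma^\vee})\subseteq U_\sigma^\Nov$ glue along the very cocycle $\{g_{\sigma_1\sigma_2}\}$ used to build $X_\Sigma^\Nov$ in \autoref{definition: Novikov toric variety}, so that $\partial_\Sigma=\bigsqcup_\sigma\partial_\sigma/\simeq$ is a well-defined closed subscheme restricting to $\partial_\sigma$ on each chart, and then to note that idempotence of its ideal sheaf is a local statement already checked affine-locally. Since the cocycle for $\{\partial_\sigma\}$ is a restriction of the one for $X_\Sigma^\Nov$, the cocycle identities $g_{21}=g_{12}^{-1}$, $g_{31}=g_{32}g_{21}$ are inherited for free, so the only thing to check is that on each double overlap the two ideal sheaves restrict to the same subsheaf of the structure sheaf. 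For $\sigma_1,\sigma_2\in\Sigma$ with $\tau=\sigma_1\cap\sigma_2$, the construction identifies the overlap with $U_\tau^\Nov\cong U_{\sigma_1}^\Nov\cap\{t^m\neq 0\}$ where $m\in\operatorname{RelInt}((\sigma_1-\sigma_2)^\vee)$, and by \autoref{lemma: separting lemma} one has $\tau^\vee=\sigma_1^\vee+\bR m=\sigma_2^\vee+\bR m$. The open immersion $U_\tau^\Nov\hookrightarrow U_{\sigma_1}^\Nov$ corresponds to the localization $\Lambda_{\sigma_1^\vee}\to\Lambda_{\sigma_1^\vee}[\tfrac1{t^m}]=\Lambda_{\tau^\vee}$, so the ideal sheaf $\widetilde{I_{\sigma_1^\vee}}$ pulls back to $\widetilde{I_{\sigma_1^\vee}[\tfrac1{t^m}]}$; and since $t^m$ acts injectively on the domain $I_{\sigma_1^\vee}={\bfk}[\mathring{\sigma_1^\vee}]$, this localization equals $\bigcup_{k\geq 0}t^{-km}{\bfk}[\mathring{\sigma_1^\vee}]={\bfk}\big[\bigcup_{k\geq 0}(\mathring{\sigma_1^\vee}-km)\big]$ inside $\Lambda_{M_\bR}$.

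Everything then reduces to the cone identity
\[\textstyle\bigcup_{k\geq 0}\big(\mathring{\sigma_1^\vee}-km\big)=\mathring{\tau^\vee}.\]
Write $\sigma_1^\vee=\bigcap_i\{\ell_i\geq 0\}$ in terms of its finitely many facet functionals (here we use that $\sigma_1$ is polyhedral); since $m\in\sigma_1^\vee$ we have $\ell_i(m)\geq 0$ for all $i$, and a direct check gives $\tau^\vee=\sigma_1^\vee+\bR m=\bigcap_{i:\,\ell_i(m)=0}\{\ell_i\geq 0\}$, hence $\mathring{\tau^\vee}=\bigcap_{i:\,\ell_i(m)=0}\{\ell_i>0\}$ (a full-dimensional cone, as $\tau$ is proper). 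Now $x\in\mathring{\tau^\vee}$ iff $\ell_i(x)>0$ for every $i$ with $\ell_i(m)=0$, which is precisely the condition that $\ell_i(x+km)=\ell_i(x)+k\ell_i(m)>0$ for all $i$ once $k$ is large, i.e.\ that $x+km\in\mathring{\sigma_1^\vee}$ for $k\gg 0$; since adding $m\in\sigma_1^\vee$ preserves membership in $\mathring{\sigma_1^\vee}$, this is the same as $x\in\bigcup_{k\geq 0}(\mathring{\sigma_1^\vee}-km)$, giving both inclusions. Therefore $\widetilde{I_{\sigma_1^\vee}}|_{U_\tau^\Nov}=\widetilde{I_{\tau^\vee}}$, and the symmetric argument (with $\sigma_2$ and $-m$, using $\tau^\vee=\sigma_2^\vee+\bR m$) gives $\widetilde{I_{\sigma_2^\vee}}|_{U_\tau^\Nov}=\widetilde{I_{\tau^\vee}}$ as well; the two coincide as subsheaves of $\mathcal{O}_{U_\tau^\Nov}=\Lambda_{\tau^\vee}$, which is the compatibility we needed. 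Hence $\partial_\Sigma$ is a closed subscheme of $X_\Sigma^\Nov$ with $\partial_\Sigma\cap U_\sigma^\Nov=\partial_\sigma$ for each $\sigma$.

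Finally, idempotence: the condition that $\mathcal{I}\otimes_{\mathcal{O}_{X_\Sigma^\Nov}}\mathcal{I}\to\mathcal{I}$ be an equivalence, where $\mathcal{I}$ is the ideal sheaf of $\partial_\Sigma$, is local on $X_\Sigma^\Nov$, and on the affine chart $U_\sigma^\Nov=\Spec\Lambda_{\sigma^\vee}$ it becomes $I_{\sigma^\vee}\otimes_{\Lambda_{\sigma^\vee}}I_{\sigma^\vee}\to I_{\sigma^\vee}$, already recorded to be an equivalence (it follows from $\mathring{\sigma^\vee}+\mathring{\sigma^\vee}=\mathring{\sigma^\vee}$ via the Rees/Day-convolution description of \autoref{prop: Rees construction HD}; equivalently, $I_{\sigma^\vee}=\varinjlim_{a\in\mathring{\sigma^\vee}}t^a\Lambda_{\sigma^\vee}$ is a filtered colimit of free modules, hence flat, so the derived tensor product agrees with the ordinary one). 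Since the $U_\sigma^\Nov$ cover $X_\Sigma^\Nov$, $\mathcal{I}$ is idempotent. I expect the one genuinely non-formal point to be the cone identity together with the verification that both restrictions land on $I_{\tau^\vee}$ compatibly with the gluing cocycle; once phrased through facet functionals it is elementary, and the rest is bookkeeping inherited from the construction of $X_\Sigma^\Nov$.
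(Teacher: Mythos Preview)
Your proof is correct and follows the same overall strategy the paper has in mind---show that the ideals $I_{\sigma^\vee}$ glue on overlaps, then invoke locality for idempotence---but the paper itself records the proposition as holding ``by definition'' and offers no details beyond the preceding paragraph. You supply precisely the verification that paragraph leaves implicit: the localized ideal $I_{\sigma_1^\vee}[t^{-m}]$ equals $I_{\tau^\vee}$ because of the cone identity $\bigcup_{k\geq 0}(\mathring{\sigma_1^\vee}-km)=\mathring{\tau^\vee}$, which you prove cleanly via facet functionals.

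One point worth noting: the paper's remark that one may choose $m\in\mathring{\sigma_1^\vee}\cap\mathring{(-\sigma_2^\vee)}$ (full interiors) is actually only possible when $\tau=\sigma_1\cap\sigma_2=\{0\}$, since $m\in\mathring{\sigma_1^\vee}$ forces $\sigma_1\cap H_m=\{0\}$. Your argument sidesteps this by working with any separating $m$ in the relative interior of $\sigma_1^\vee\cap(-\sigma_2^\vee)$ and using only $m\in\sigma_1^\vee$ together with $\tau^\vee=\sigma_1^\vee+\bR m$; this is both more careful and more general than what the paper sketches. Your flatness remark for $I_{\sigma^\vee}$ (as a filtered colimit of rank-one free modules) is also a useful addition, since it confirms the derived and underived tensor products agree in the idempotence check.
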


For a subgroup $A\subset M_\bR$, we can define, in a similar manner, an $A$-Novikov scheme $X^{A-\Nov}_\Sigma$ by gluing spectra $U^{A-\Nov}_\sigma\coloneqq\Spec(\Lambda_{\sigma^\vee}^A)$ of $\Lambda_{\sigma^\vee}^A=\bfk[\sigma^\vee\cap A]$ for all $\sigma\in \Sigma$
. When $A\subset M_\bR$ is dense subgroup, we define the closed subscheme $\partial_\Sigma^A$ via the idempotent ideal $I_{\sigma^\vee}^A={\bfk}[\mathring{\sigma^\vee}\cap A]$ and $\partial_\sigma^A=V(I_{\sigma^\vee}^A)$. Notice that even if $A$ is not dense, $\partial_\Sigma^A$ can still be defined, but its ideal sheaf will not be idempotent.

If $A\subset B \subset M_\bR$ are two subgroups of $M_\bR$, then the inclusion induces a morphism of schemes $X^{B-\Nov}_\Sigma\rightarrow X^{A-\Nov}_\Sigma$ that restricts on every affine chart associated with $\sigma$ to the morphism induced by the evident ${\bfk}$-algebra morphism
\[{\bfk}[\sigma^\vee\cap A] \rightarrow {\bfk}[\sigma^\vee\cap B].\]

Now, we assume that $\Sigma$ is $N$-rational for a lattice $N \subset N_\bR$. Then $M \subset M_\bQ=M\otimes \bQ\subset M_\bR=M\otimes \bR$ are two subgroups, and we have $X^{M-\Nov}_\Sigma=X_\Sigma$ is the usual toric variety and there is a scheme morphism
\[X^{M_\bQ-\Nov}_\Sigma\rightarrow X^{M-\Nov}_\Sigma=X_\Sigma.\]
We thank Alexander I. Efimov for explaining to us the following observations.
\begin{Prop}\label{prop: Q-Nov=root scheme}For a rational fan $\Sigma$, we have 
\[X^{M_\bQ-\Nov}_\Sigma \simeq \varprojlim_k X^{\frac{M}{k}-\Nov}_\Sigma \]
as schemes over $X_\Sigma$ via the inclusions ${M}\subset\frac{M}{k} \subset M_\bQ$ for all $k\in \bN$.
\end{Prop}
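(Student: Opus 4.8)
The plan is to reduce the statement to an affine-local check, since both sides are defined by gluing over the cones $\sigma\in\Sigma$ and the gluing data on $X^{M_\bQ-\Nov}_\Sigma$ and on each $X^{\frac{M}{k}-\Nov}_\Sigma$ are compatible with the maps induced by the lattice inclusions $\frac{M}{k}\subset\frac{M}{k'}$ (for $k\mid k'$) and $\frac{M}{k}\subset M_\bQ$. Concretely, the cofiltered system $\{X^{\frac{M}{k}-\Nov}_\Sigma\}_k$, indexed by positive integers ordered by divisibility, has transition maps which are affine over $X_\Sigma$ (on the affine chart $U_\sigma$ they are $\Spec$ of the inclusion ${\bfk}[\sigma^\vee\cap\tfrac{M}{k}]\hookrightarrow {\bfk}[\sigma^\vee\cap\tfrac{M}{k'}]$). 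Hence the limit $\varprojlim_k X^{\frac{M}{k}-\Nov}_\Sigma$ exists as a scheme over $X_\Sigma$ and is computed chart-by-chart: on $U_\sigma$ it is $\Spec$ of the filtered colimit of rings $\varinjlim_k {\bfk}[\sigma^\vee\cap\tfrac{M}{k}]$. So it suffices to identify this colimit with ${\bfk}[\sigma^\vee\cap M_\bQ]$.

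First I would record that for every cone $\sigma$ we have $\sigma^\vee\cap M_\bQ=\bigcup_k\bigl(\sigma^\vee\cap\tfrac{M}{k}\bigr)$ as subsets of $M_\bR$: indeed any $m\in\sigma^\vee\cap M_\bQ$ has some integer $k$ with $km\in M$, so $m\in\tfrac{M}{k}$, and $m\in\sigma^\vee$ by hypothesis; conversely each $\tfrac{M}{k}\subset M_\bQ$. This is an increasing union of submonoids of $M_\bR$ (directed along divisibility), so passing to monoid algebras over ${\bfk}$ and using that ${\bfk}[-]$ commutes with filtered colimits of monoids gives
\[
\varinjlim_k {\bfk}\bigl[\sigma^\vee\cap\tfrac{M}{k}\bigr]\;\simeq\;{\bfk}\Bigl[\varinjlim_k\bigl(\sigma^\vee\cap\tfrac{M}{k}\bigr)\Bigr]\;=\;{\bfk}[\sigma^\vee\cap M_\bQ].
\]
Taking $\Spec$ turns this filtered colimit of rings into the cofiltered limit of affine schemes, so $U^{M_\bQ-\Nov}_\sigma\simeq\varprojlim_k U^{\frac{M}{k}-\Nov}_\sigma$ as schemes over $U_\sigma$. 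Then I would check that these affine identifications are compatible with the gluing isomorphisms $g_{21}$ of \autoref{definition: Novikov toric variety}: on overlaps $U_\tau$ with $\tau=\sigma_1\cap\sigma_2$, the localization element $t^{m}$ (with $m$ chosen in $\mathring{\sigma_1^\vee}\cap\mathring{(-\sigma_2^\vee)}\cap M$, which is possible by rationality and density) lies in every ${\bfk}[\sigma_i^\vee\cap\tfrac{M}{k}]$, and the gluing map is localization at $t^m$, which commutes with the filtered colimit in $k$. Hence the local identifications glue to an isomorphism $X^{M_\bQ-\Nov}_\Sigma\simeq\varprojlim_k X^{\frac{M}{k}-\Nov}_\Sigma$ over $X_\Sigma$.

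The main obstacle, and the only genuinely non-formal point, is justifying that the cofiltered limit of schemes is computed affine-locally and that it agrees with the naive chart-wise construction — i.e. that $\varprojlim_k X^{\frac{M}{k}-\Nov}_\Sigma$ really is a scheme (not just an ind/pro-object) and that its structure sheaf on the chart $U_\sigma$ is $\varinjlim_k{\bfk}[\sigma^\vee\cap\tfrac{M}{k}]$. This follows from the standard fact that cofiltered limits of schemes along affine transition morphisms exist and are computed by gluing the relative-$\Spec$ of the colimit of quasi-coherent algebras (see e.g.\ \cite[Tag 01YX, 01Z6]{StacksProject}); here each transition map is affine because it is $\Spec$ of a monoid-algebra inclusion on each chart, and the charts $U_\sigma$ form a finite affine cover stable under the transition maps. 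Once this is in place, everything else is the bookkeeping above. I would also remark that the same argument shows the analogous statement for the boundary: $\partial^{M_\bQ}_\Sigma\simeq\varprojlim_k\partial^{\frac{M}{k}}_\Sigma$, since on charts it is $\Spec$ of $\varinjlim_k{\bfk}[\mathring{\sigma^\vee}\cap\tfrac{M}{k}]\simeq{\bfk}[\mathring{\sigma^\vee}\cap M_\bQ]$ by the identical union-of-monoids computation applied to the open cone $\mathring{\sigma^\vee}$.
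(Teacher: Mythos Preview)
Your proposal is correct and follows essentially the same approach as the paper: reduce to the affine charts, identify $\varprojlim_k \Spec({\bfk}[\sigma^\vee\cap\tfrac{M}{k}])$ with $\Spec({\bfk}[\sigma^\vee\cap M_\bQ])$ via the filtered colimit of monoid algebras, and observe compatibility with the gluing data. Your write-up is in fact more careful than the paper's, which simply asserts the chain of equalities and says the gluing compatibility is ``direct''; your explicit invocation of the affine-transition-map criterion for existence of the limit, and the remark on $\partial_\Sigma$, are welcome additions.
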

 
\begin{proof}Here, we set ${X_\Sigma^{[\infty]}}=\varprojlim_k X^{\frac{M}{k}-\Nov}_\Sigma$ and ${U_\sigma^{[\infty]}}=\varprojlim_k U^{\frac{M}{k}-\Nov}_\sigma$ for the affine version.

On each affine piece associated with $\sigma$, ${U_\sigma^{[\infty]}}$ is the pro-scheme 
\[\varprojlim_{k}\Spec({\bfk}[\sigma^\vee \cap (\frac{1}{k}M)])=\Spec(\varinjlim_{k}{\bfk}[\sigma^\vee \cap (\frac{1}{k}M)])= \Spec({\bfk}[\sigma^\vee \cap (\varinjlim_{k}\frac{1}{k}M)])= \Spec({\bfk}[\sigma^\vee \cap M_\bQ]).\]

So, we have ${U_\sigma^{[\infty]}}\simeq U^{M_\bQ-\Nov}_\sigma$. On the other hand, it is straightforward to see the equivalences are compatible with the gluing morphisms. So, we glue ${U_\sigma^{[\infty]}}\simeq U^{M_\bQ-\Nov}_\sigma$ together to obtain ${X_\Sigma^{[\infty]}}\simeq X_\Sigma^{M_\bQ-\Nov}$, and it is compatible with morphisms to $X_\Sigma$. 
\end{proof}

\begin{RMK}\label{remark: infinite root stack}The scheme  ${X_\Sigma^{M_\bQ-\Nov}}$ is deeply related to the infinite root stack defined in \cite{root_stack}. In fact, let 
\[\mu_\infty(M)=\underline{\HOM}_{\bZ}(M_\bQ /M,\mathbb{G}_{m,\bfk})=\varprojlim_{k}\underline{\HOM}_{\bZ}((\frac{1}{k} M)/M,\mathbb{G}_{m,\bfk})\]
be the Cartier dual of the ind-finite group scheme $M_\bQ /M \simeq (\bQ/\bZ)^{\operatorname{rk} M}$, which is a pro-finite group scheme that acts on $X_\Sigma^{M_\bQ-\Nov}$.

Then the infinite root stack is the fpqc quotient stack
\[ \sqrt[\infty]{X_\Sigma}\simeq [{X_\Sigma^{M_\bQ-\Nov}}//\mu_\infty(M)].\]

The equivalence was proven explicitly for the affine toric case in \cite[Proposition 3.10]{root_stack}, and can be glued together via 
\cite[Definition 3.14]{root_stack}.

\end{RMK}

\subsection{Almost quasi-coherent sheaves and global APT}\label{subsection: def almost qcoh}
In this subsection, we discuss the category of almost quasi-coherent sheaves.

\begin{Def}\label{definition: almost quasi-coherent sheaves for a fan}For a fan $\Sigma$, we define 
\begin{equation*}
    \begin{split}
       \QCoha_{\bT^\Nov}(X_\Sigma^\Nov,\partial_\Sigma) &\coloneqq \varprojlim_\sigma\Moda_{\bR^n-gr} (\Lambda_{\sigma^\vee},I_{{\sigma^\vee}}),\\ \QCoha(X_\Sigma^\Nov,\partial_\Sigma) &\coloneqq \varprojlim_\sigma\Moda (\Lambda_{\sigma^\vee},I_{{\sigma^\vee}} )  
    \end{split}
\end{equation*}
as limits in $\PrStR$.
\end{Def}

Passing to left adjoints, we have a colimit diagram in $\CAlg(\PrSt)$. Then the category of almost quasi-coherent sheaves admits a symmetric monoidal structure.

On the other hand, for $(X_\Sigma^\Nov,\partial_\Sigma)$, we can ``define" $\QCoh$ directly via the limits in $\PrStR$:
\[\QCoh_{\bT^\Nov}(X_\Sigma^\Nov) \coloneqq \varprojlim_\sigma\Mod_{\bR^n-gr} (\Lambda_{\sigma^\vee}),\quad \QCoh(X_\Sigma^\Nov) \coloneqq \varprojlim_\sigma\Mod (\Lambda_{\sigma^\vee} ), \]
and then define almost zero sheaves $\QCoh^{\partial_\Sigma}_{\bT^\Nov}(X_\Sigma^\Nov,\partial_\Sigma)$ and ${\QCoh^{\partial_\Sigma}}(X_\Sigma^\Nov)$ locally, namely, $F \in \QCoh^{\partial_\Sigma}(X_\Sigma^\Nov)$ (i.e. $F$ is almost zero) if it is almost zero relative to $(\Lambda_{\sigma^\vee},I_{{\sigma^\vee}} )$ on each affine chart $U_\sigma^\Nov$. 

\begin{RMK}Since $\{U_\sigma^\Nov\}_\sigma$ is a Zariski covering of $X_\Sigma^\Nov$, the above ``definition" is compatible with the usual definition of $\QCoh$. So, we make the above ad hoc definition.    
\end{RMK}

The following proposition can be understood as an equivalent definition of $\QCoha$, or a result about commutativity of the almostification and Zariski descent.
\begin{Prop}For a fan $\Sigma$, we have monoidal equivalences
\begin{equation*}
    \begin{split}
       \QCoha_{\bT^\Nov}(X_\Sigma^\Nov,\partial_\Sigma) &\simeq \QCoh_{\bT^\Nov}(X_\Sigma^\Nov)/{\QCoh_{\bT^\Nov}^{\partial_\Sigma}}(X_\Sigma^\Nov)\\    
        \QCoha(X_\Sigma^\Nov,\partial_\Sigma) &\simeq \QCoh(X_\Sigma^\Nov)/{\QCoh^{\partial_\Sigma}}(X_\Sigma^\Nov).
    \end{split}
\end{equation*}
\end{Prop}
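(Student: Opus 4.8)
The plan is to reduce the statement to a purely categorical fact about limits of Verdier quotients in $\PrStR$, applied chart-by-chart. First I would recall the setup: on each affine chart $U_\sigma^\Nov$ we have, by the definition of $\Moda$ in \autoref{section: almost ring theory}, a Verdier sequence in $\PrStR$
\[\Mod^\partial_{\bR^n-gr}(\Lambda_{\sigma^\vee},\Lambda_{\mathring{\sigma^\vee}}) \hookrightarrow \Mod_{\bR^n-gr}(\Lambda_{\sigma^\vee}) \twoheadrightarrow \Moda_{\bR^n-gr}(\Lambda_{\sigma^\vee},\Lambda_{\mathring{\sigma^\vee}}),\]
and likewise in the non-graded case. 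These sequences are natural in $\sigma$ with respect to the restriction functors $g_{\tau\sigma}^*$ coming from the face relations $\tau\prec\sigma$ (here one must check that restriction sends almost-zero modules to almost-zero modules, which is immediate since $\mathring{\sigma^\vee}\subset\mathring{\tau^\vee}$ after localizing). So I get a diagram of Verdier sequences indexed by $\Sigma^\op$.

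The key step is then to take the limit of this diagram of Verdier sequences in $\PrStR$ and argue that the limit is again a Verdier sequence, whose quotient is by definition $\QCoha_{\bT^\Nov}(X_\Sigma^\Nov,\partial_\Sigma)$ and whose kernel is precisely $\QCoh^{\partial_\Sigma}_{\bT^\Nov}(X_\Sigma^\Nov)$ (the latter because the local-to-global definition of almost-zero sheaves is exactly the statement that the kernel is computed pointwise, i.e. is the limit of the local kernels). Concretely: limits in $\PrStR$ are computed as in $\Cat_\infty$; a limit of left-exact localization functors (right adjoints of Bousfield localizations) remains such; and the Verdier quotient in $\PrStR$ is characterized as the cofiber, which dualizes to a fiber in $\PrSt$ — both limits and finite colimits commute with the limit over $\Sigma^\op$ in the relevant sense because $\PrStR\simeq(\PrSt)^\op$ and we are taking a limit of fiber/cofiber sequences. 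I would cite \cite{Adjoint_descent} for the compatibility of adjoints with limits, exactly as in the proof of \autoref{Thm: sheaves with fan support}, to pass freely between the left-adjoint (colimit in $\PrSt$) and right-adjoint (limit in $\PrStR$) pictures. The monoidality then follows by passing to left adjoints: the colimit diagram lives in $\CAlg(\PrSt)$, the local quotient functors are symmetric monoidal (they are the almostlization functors, which are monoidal localizations since $\Lambda_{\mathring{\sigma^\vee}}$ is an idempotent ideal), and a colimit of symmetric monoidal functors in $\CAlg(\PrSt)$ is symmetric monoidal; so the induced functor $\QCoh_{\bT^\Nov}(X_\Sigma^\Nov)/\QCoh^{\partial_\Sigma}_{\bT^\Nov}(X_\Sigma^\Nov)\to\QCoha_{\bT^\Nov}(X_\Sigma^\Nov,\partial_\Sigma)$ is a monoidal equivalence, and identically in the non-graded case.

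The main obstacle I expect is the interchange of the Verdier-quotient operation with the limit over $\Sigma^\op$: one needs that $\varprojlim_\sigma\bigl(\Mod/\Mod^\partial\bigr)\simeq \bigl(\varprojlim_\sigma\Mod\bigr)/\bigl(\varprojlim_\sigma\Mod^\partial\bigr)$, and while this is formal once phrased correctly (it is the statement that a limit of fiber sequences of stable presentable categories along right-adjoint functors is a fiber sequence, using that $\PrStR$ has all limits and they are preserved by the forgetful functor to $\Cat_\infty$, together with the fact that the localization functors are jointly compatible), getting the bookkeeping right — in particular verifying that the limit of the local almostlizations is the global almostlization with respect to the globally-defined almost-zero subcategory, rather than some a priori larger localization — is the delicate point. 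I would handle this by checking the universal property directly: a functor out of $\varprojlim_\sigma\Mod_{\bR^n-gr}(\Lambda_{\sigma^\vee})$ that kills $\QCoh^{\partial_\Sigma}_{\bT^\Nov}(X_\Sigma^\Nov)$ is the same datum, chart by chart, as a compatible family of functors out of $\Mod_{\bR^n-gr}(\Lambda_{\sigma^\vee})$ killing $\Mod^\partial_{\bR^n-gr}(\Lambda_{\sigma^\vee},\Lambda_{\mathring{\sigma^\vee}})$, i.e. a compatible family of functors out of $\Moda_{\bR^n-gr}(\Lambda_{\sigma^\vee},\Lambda_{\mathring{\sigma^\vee}})$, i.e. a functor out of $\varprojlim_\sigma\Moda_{\bR^n-gr}(\Lambda_{\sigma^\vee},\Lambda_{\mathring{\sigma^\vee}})$ — the middle equivalence being exactly the local statement already established in \autoref{section: almost ring theory}. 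This identifies the two categories by Yoneda, and the same argument applied to the non-equivariant charts gives the second equivalence.
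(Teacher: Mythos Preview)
Your proof is correct and follows the same underlying idea as the paper, but you make it harder than necessary. The paper's proof is two lines: pass to left adjoints via the anti-equivalence $\PrStR\simeq(\PrSt)^\op$, so that the limits $\varprojlim_\sigma$ defining $\QCoh$, $\QCoh^{\partial_\Sigma}$, and $\QCoha$ all become colimits $\varinjlim_\sigma$ in $\PrSt$; the Verdier quotient is a cofiber in $\PrSt$, hence also a colimit; and colimits commute with colimits. Monoidality follows because everything lives in $\CAlg(\PrSt)$.

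Your ``main obstacle'' --- commuting the Verdier quotient with the limit over $\Sigma^\op$ --- disappears once you work entirely on the $\PrSt$ side, and your Yoneda argument for the universal property is essentially a proof by hand of the tautology that colimits commute. You do mention the passage to $\PrSt$ in the middle of your argument, but then retreat to checking things in $\PrStR$ where the interchange is genuinely not formal (limits need not commute with cofibers). The cleaner move is to stay in $\PrSt$ throughout. Your discussion of the compatibility of restriction with almost-zero modules and the citation of \cite{Adjoint_descent} are correct but superfluous here: none of that is needed once the statement is phrased as commuting two colimits.
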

\begin{proof}The limits $\varprojlim_\sigma$ here are all defined in $\PrStR$. Passing to left adjoints via anti-equivalence between $\PrStR$ and $\PrSt$, we have that limits $\varprojlim_\sigma$ are colimits $\varinjlim_\sigma$ in $\PrSt$. Then the proposition follows, because the Verdier quotient is defined as a cofiber in $\PrSt$ and colimits commute. The equivalences are monoidal since all colimits are actually taken in $\CAlg(\PrSt)$.
\end{proof}

Now the global APT correspondence follows formally by definition.
\begin{Thm}\label{theorem: Novikov mirror symmetry}For a fan $\Sigma$, there exist monoidal equivalences
\[\QCoha_{\bT^\Nov}(X_\Sigma^\Nov,\partial_\Sigma) \simeq \Sh_{M_\bR\times |\Sigma|}(M_\bR),\quad \QCoha(X_\Sigma^\Nov,\partial_\Sigma) \simeq \Sh^{\bR^n_\delta}_{M_\bR\times |\Sigma|}(M_\bR).\]    
\end{Thm}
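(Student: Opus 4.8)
The strategy is to reduce the global statement to the ``affine'' case \autoref{prop: APT HD} (equivalently \autoref{cor: Novikov-mirro-affine}) together with the cut-off theorem \autoref{Thm: sheaves with fan support}. By \autoref{definition: almost quasi-coherent sheaves for a fan} we have, by definition,
\[
\QCoha_{\bT^\Nov}(X_\Sigma^\Nov,\partial_\Sigma)=\varprojlim_{\sigma\in\Sigma}\Moda_{\bR^n-gr}(\Lambda_{\sigma^\vee},\Lambda_{\mathring{\sigma^\vee}}),
\qquad
\QCoha(X_\Sigma^\Nov,\partial_\Sigma)=\varprojlim_{\sigma\in\Sigma}\Moda(\Lambda_{\sigma^\vee},\Lambda_{\mathring{\sigma^\vee}}),
\]
the limits being taken in $\PrStR$; passing to left adjoints these become colimits in $\CAlg(\PrSt)$. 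On the other side, \autoref{Thm: sheaves with fan support} (applied to $\Sigma$ and with the sign convention adjusted, i.e.\ using $-\Sigma$ or equivalently replacing each $\sigma$ by $\sigma$ and being careful about the placement of the minus sign as in \autoref{prop: APT HD}) gives monoidal equivalences
\[
\Sh_{M_\bR\times|\Sigma|}(M_\bR)=\varinjlim_{\sigma\in\Sigma}\Sh_{M_\bR\times\sigma}(M_\bR),
\qquad
\Sh^{\bR^n_\delta}_{M_\bR\times|\Sigma|}(M_\bR)=\varinjlim_{\sigma\in\Sigma}\Sh^{\bR^n_\delta}_{M_\bR\times\sigma}(M_\bR)
\]
in $\PrSt$, and these colimits are again colimits of symmetric monoidal categories.

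\textbf{Key steps.} First, I would invoke \autoref{prop: APT HD} and \autoref{cor: Novikov-mirro-affine} to produce, for each $\sigma\in\Sigma$, a monoidal equivalence
\[
\Moda_{\bR^n-gr}(\Lambda_{\sigma^\vee},\Lambda_{\mathring{\sigma^\vee}})\simeq \Sh_{M_\bR\times\sigma}(M_\bR),
\qquad
\Moda(\Lambda_{\sigma^\vee},\Lambda_{\mathring{\sigma^\vee}})\simeq \Sh^{\bR^n_\delta}_{M_\bR\times\sigma}(M_\bR)
\]
(after a harmless $\sigma\mapsto-\sigma$ relabelling so the signs match; note that replacing the cone by its negative is an automorphism of the combinatorics of the fan). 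Second, and this is the point requiring care, I must check that these pointwise equivalences are \emph{natural} in $\sigma$, i.e.\ that for a face relation $\tau\prec\sigma$ the square relating the module-side transition functor (induced by the localization $\Lambda_{\sigma^\vee}\to\Lambda_{\tau^\vee}$, cf.\ the gluing data $\Lambda_{\tau^\vee}=\Lambda_{\sigma^\vee}[1/t^m]$ from \autoref{lemma: separting lemma}) and the sheaf-side transition functor $i_{\tau\sigma}$ (the inclusion $\Sh_{M_\bR\times\tau}\hookrightarrow\Sh_{M_\bR\times\sigma}$, or rather its adjoint in $\PrStR$) commutes coherently. Concretely, one traces through the construction in \autoref{corollary: Kan extension restrict to equivalence HD}: under $F\mapsto\{F(\mathring{\gamma}-a)\}$ the restriction-of-opens along the inclusion of $\gamma$-topologies corresponds exactly to the localization map on Novikov rings, so the naturality is built into the formula. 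Having assembled a natural equivalence of diagrams $\Sigma\to\PrStR$ (or $\Sigma^{\op}\to\PrSt$ after passing to left adjoints, and likewise in $\CAlg(\PrSt)$ for the monoidal refinement), I take $\varprojlim$ (resp.\ $\varinjlim$) on both sides and conclude. Third, the equivariant statement follows from the non-equivariant one because, as recorded in the proof of \autoref{Thm: sheaves with fan support}, $\bR^n_\delta$-homotopy fixed points are computed as a colimit in $\PrSt$ over $B\bR^n_\delta$, and that colimit commutes with the colimit over $\Sigma$ (and on the module side the $\bR^n$-graded-to-nongraded passage is \autoref{coro: non-graded equivalence HD}).

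\textbf{Main obstacle.} The genuinely delicate part is not the existence of the pointwise equivalences — those are \autoref{prop: APT HD} — but upgrading them to a coherent equivalence of $\Sigma$-shaped diagrams in $\CAlg(\PrStR)$, so that one may pass to the limit. In an $\infty$-categorical setting this requires exhibiting a map of functors $\Sigma\to\CAlg(\PrStR)$, not merely objectwise equivalences; the cleanest route is to observe that \emph{both} sides are obtained by right Kan extension / $\PrStR$-limit from the same small diagram (the affine diagram of Novikov almost-contents indexed by $\Sigma$), with the sheaf side identified with it termwise via the adjoint-equivalence $\iota_*$ of \autoref{corollary: Kan extension restrict to equivalence HD} whose compatibility with localization of opens $\leftrightarrow$ localization of rings is manifest from the explicit Kan-extension formula. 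Once the diagrams are literally identified, the limit statement is formal. A secondary bookkeeping nuisance is the sign: \autoref{prop: APT HD} is stated with $\Sh_{M_\bR\times(-\sigma)}$ and $\Lambda_{-\gamma}$, so one should either carry the minus signs through consistently or, as the authors indicate in the remark after \autoref{prop: APT HD}, absorb them into the definition of the fan-indexed diagram from the outset.
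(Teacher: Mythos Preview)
Your proposal is correct and follows essentially the same approach as the paper: reduce to the affine case via \autoref{Thm: sheaves with fan support} and then invoke \autoref{prop: APT HD} and \autoref{cor: Novikov-mirro-affine}. You are in fact more explicit than the paper's own proof, which dispatches the theorem in a single sentence; your discussion of the coherence of the diagram of affine equivalences and of the sign bookkeeping is exactly the content the paper leaves implicit.
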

\begin{proof}We apply \autoref{Thm: sheaves with fan support}. Then it reduces to the affine results \autoref{prop: APT HD} and \autoref{cor: Novikov-mirro-affine}.    
\end{proof}

\subsection{Variant of the results} 
\subsubsection{\texorpdfstring{From $\bR$ to $\bQ$}{}}
Recall that we defined $X_\Sigma^{A-\Nov}$ and $\partial_\Sigma^A$ for a dense subgroup $A\subset M_\bR$ in \autoref{subsection: def Novikov toric scheme}, and we explained in \autoref{prop: Q-Nov=root scheme} that, when $\Sigma$ is rational, we have $X_\Sigma^{M_\bQ-\Nov}\simeq {X_\Sigma^{[\infty]}}$. 

We can also define $\QCoha_{\bT^\Nov}(X_\Sigma^{A-\Nov},\partial_\Sigma^A)$ and $\QCoha(X_\Sigma^{A-\Nov},\partial_\Sigma^A)$ as in \autoref{definition: almost quasi-coherent sheaves for a fan}. 

Here, we have the following results.
\begin{Thm}\label{theorem: Q-Novikov mirror symmetry}For a fan $\Sigma$ and a dense subgroup $A\subset M_\bR$, there exist monoidal equivalences
\[\QCoha_{\bT^{A-\Nov}}(X_\Sigma^{A-\Nov},\partial_\Sigma^A) \simeq \Sh_{M_\bR\times |\Sigma|}(M_\bR),\quad \QCoha(X_\Sigma^{A-\Nov},\partial_\Sigma^A) \simeq \Sh^{A\cap \bR^n_\delta}_{M_\bR\times |\Sigma|}(M_\bR).\]  \end{Thm}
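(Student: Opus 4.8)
The strategy is to run exactly the same argument as in the proof of \autoref{theorem: Novikov mirror symmetry}, with the lattice $M$ replaced everywhere by the dense subgroup $A$. First I would observe that all of \autoref{subsection: def Novikov toric variety} goes through verbatim for a dense subgroup $A\subset M_\bR$: the affine pieces $U^{A-\Nov}_\sigma=\Spec(\bfk[\sigma^\vee\cap A])$ glue via the same separating-lemma data (\autoref{lemma: separting lemma}), because for $\sigma_1\cap\sigma_2=\tau$ one may still pick $m$ in the relative interior of $\sigma_1^\vee\cap(-\sigma_2^\vee)$, and density of $A$ guarantees such an $m$ can be chosen in $A$; moreover $\mathring{\sigma^\vee}+\mathring{\sigma^\vee}=\mathring{\sigma^\vee}$ still forces $\bfk[\mathring{\sigma^\vee}\cap A]$ to be an idempotent ideal of $\bfk[\sigma^\vee\cap A]$ (again by the Day-convolution check, now over the graded group $A$ rather than $\bR^n$), so $\partial_\Sigma^A$ is well defined with idempotent ideal sheaf, and $(X_\Sigma^{A-\Nov},\partial_\Sigma^A)$ is a legitimate almost-ringed space with a Zariski cover by affine almost contents $(\Lambda_{\sigma^\vee}^A,\Lambda_{\mathring{\sigma^\vee}}^A)\coloneqq(\bfk[\sigma^\vee\cap A],\bfk[\mathring{\sigma^\vee}\cap A])$.

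Next I would set up the affine building block: for a single proper closed convex cone $\sigma$ with $\gamma=\sigma^\vee$ and dense $A\subset M_\bR$, one wants
\[
\Sh_{M_\bR\times(-\sigma)}(M_\bR)\simeq \Moda_{A-gr}(\Lambda^A_{-\gamma},\Lambda^A_{\mathring{(-\gamma)}}),
\qquad
\Sh^{A\cap\bR^n_\delta}_{M_\bR\times(-\sigma)}(M_\bR)\simeq \Moda(\Lambda^A_{-\gamma},\Lambda^A_{\mathring{(-\gamma)}}),
\]
as monoidal equivalences. This is the $A$-analogue of \autoref{prop: APT HD} and \autoref{cor: Novikov-mirro-affine}. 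The proof is identical: the microlocal cut-off lemma \autoref{lemma: microlocal cut-off} still identifies $\Sh_{M_\bR\times(-\sigma)}(M_\bR)\simeq\Sh(\bR^n_\gamma)$, and one reuses \autoref{lemma: gamma open basis}, i.e.\ $\{\mathring\gamma-a\}$ is a basis of the $\gamma$-topology, noting that one may restrict $a$ to range over the dense subgroup $A$ since a small translate of any basis set by an element of $\mathring\gamma\cap A$ (which exists by density) still sits inside any given $\gamma$-open set. Then the $A$-filtered Rees construction (\autoref{prop: Rees construction HD} with grading group $A$ in place of $\bR^n$) and the right-Kan-extension argument of \autoref{corollary: Kan extension restrict to equivalence HD} show that sheaves correspond precisely to $A$-graded almost-local modules over $\Lambda^A_{-\gamma}$; monoidality comes from $1_{\mathring\gamma}[n]\star 1_{\mathring\gamma}[n]\simeq 1_{\mathring\gamma}[n]$ exactly as before. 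The equivariant statement follows by taking $A\cap\bR^n_\delta$-homotopy fixed points, which is a colimit in $\PrSt$ and so commutes with the Verdier quotient, via the non-graded analogue of \autoref{coro: non-graded equivalence HD}.

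Finally I would glue. By the definition of $\QCoha$ for the pair $(X_\Sigma^{A-\Nov},\partial_\Sigma^A)$ (the $A$-version of \autoref{definition: almost quasi-coherent sheaves for a fan}), we have limits in $\PrStR$
\[
\QCoha_{\bT^{A-\Nov}}(X_\Sigma^{A-\Nov},\partial_\Sigma^A)=\varprojlim_{\sigma\in\Sigma}\Moda_{A-gr}(\Lambda^A_{\sigma^\vee},\Lambda^A_{\mathring{\sigma^\vee}}),
\]
and similarly for the non-equivariant version with non-graded almost modules. Passing to left adjoints turns this into a colimit in $\CAlg(\PrSt)$, and by the affine building block above this colimit is identified, monoidally, with $\varinjlim_{\sigma\in\Sigma}\Sh_{M_\bR\times\sigma}(M_\bR)$ (resp.\ $\varinjlim_\sigma\Sh^{A\cap\bR^n_\delta}_{M_\bR\times\sigma}(M_\bR)$). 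Now \autoref{Thm: sheaves with fan support} — whose proof only uses the fan combinatorics and the microlocal non-characteristic deformation argument on $M_\bR$, and in particular does not see which subgroup of $M_\bR$ we used to build the scheme — gives $\varinjlim_\sigma\Sh_{M_\bR\times\sigma}(M_\bR)\simeq\Sh_{M_\bR\times|\Sigma|}(M_\bR)$ and its equivariant analogue. Combining yields the claimed equivalences.

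\textbf{Main obstacle.} The only genuinely new point, as opposed to bookkeeping, is checking that nothing in the affine correspondence breaks when $A$ is a proper dense subgroup rather than all of $M_\bR$: concretely, that $\{\mathring\gamma-a\}_{a\in A}$ still forms a basis of the $\gamma$-topology (density is what makes this work) and that the Rees/Kan-extension identification of sheaves with $A$-graded almost-local modules is unaffected by passing from $\bR^n$-grading to $A$-grading. Once that is in hand, the rest is a formal transport of \autoref{theorem: Novikov mirror symmetry} through \autoref{Thm: sheaves with fan support}. I would therefore spend most of the write-up on the affine $A$-graded statement and be brief about the gluing.
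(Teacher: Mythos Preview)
Your proposal is correct and matches the paper's own proof essentially line for line: reduce to the affine statement via \autoref{Thm: sheaves with fan support} exactly as in \autoref{theorem: Novikov mirror symmetry}, and then observe that the only nontrivial input in the affine proof of \autoref{prop: APT HD} is that $\{\mathring{\sigma^\vee}-a\}_{a\in A}$ is still a basis of the $\sigma^\vee$-topology, which holds by density of $A$. The paper's write-up is in fact shorter than yours, since it skips the verification that the $A$-Novikov construction and gluing go through (these having been set up earlier), but the mathematical content is identical.
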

\begin{proof}Reviewing the proof of \autoref{theorem: Novikov mirror symmetry}, we only need to prove the following affine result (compare to \autoref{prop: APT HD} and \autoref{cor: Novikov-mirro-affine}): For a proper convex cone $\sigma$, and $I_{\sigma^\vee}^A=\bfk[\mathring{\sigma^\vee}\cap A]$. Then we have
\[\Sh_{M_\bR\times \sigma} (M_\bR) \simeq \Moda_{{A-gr}} (\Lambda_{\sigma^\vee}^A,I_{\sigma^\vee}^A), \quad \Sh^{A\cap \bR^n_\delta}_{M_\bR\times \sigma} (M_\bR) \simeq \Moda (\Lambda_{\sigma^\vee}^A,I_{\sigma^\vee}^A).\]  

Recall the proof of \autoref{prop: APT HD}. This is true, because the family of $\sigma^\vee$-open sets $\{\mathring{\sigma^\vee}-a\}_{a\in A}$ also forms a basis of the $\sigma^\vee$-topology of $M_\bR$ by the density of $A$.
\end{proof}

In particular, we take $A=M_\bQ=M\otimes\bQ$.
\begin{Coro}\label{theorem: Q-Novikov mirror symmetry-rational}For a rational fan $\Sigma$, there exist monoidal equivalences
\[\QCoha_{\bT^{M_\bQ-\Nov}}(X_\Sigma^{M_\bR-\Nov},\partial_\Sigma^{M_\bQ}) \simeq \Sh_{M_\bR\times |\Sigma|}(M_\bR),\quad \QCoha(X_\Sigma^{M_\bR-\Nov},\partial_\Sigma^{M_\bQ}) \simeq \Sh^{ \bQ^n_\delta}_{M_\bR\times |\Sigma|}(M_\bR).\]   \end{Coro}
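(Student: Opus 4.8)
The statement of \autoref{theorem: Q-Novikov mirror symmetry-rational} is the special case $A = M_\bQ$ of \autoref{theorem: Q-Novikov mirror symmetry}, combined with the identification $X_\Sigma^{M_\bQ-\Nov} = X_\Sigma^{[\infty]}$ from \autoref{prop: Q-Nov=root scheme} and the matching $\partial_\Sigma^{M_\bQ} = \partial_\Sigma^{M_\bQ}$ (same notation on both sides). So the plan is essentially to observe that nothing new needs to be done: apply \autoref{theorem: Q-Novikov mirror symmetry} with the dense subgroup $A = M_\bQ \subset M_\bR$ (density of $M_\bQ$ in $M_\bR$ is automatic since $\Sigma$, hence $N_\bR$, is rational, so $M_\bQ \otimes \bR = M_\bR$), and then rewrite $X_\Sigma^{M_\bQ-\Nov}$ as $X_\Sigma^{[\infty]}$ using \autoref{prop: Q-Nov=root scheme}. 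The equivariance group $A \cap \bR^n_\delta$ in \autoref{theorem: Q-Novikov mirror symmetry} becomes $\bQ^n_\delta$ under the identification $M_\bR \cong \bR^n$ coming from a choice of basis adapted to the lattice $M$ (i.e.\ $M \cong \bZ^n$, $M_\bQ \cong \bQ^n$).

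\textbf{Steps, in order.} First I would recall that $\Sigma$ rational means $N_\bR = N \otimes \bR$ for a lattice $N$, hence $M_\bQ = M \otimes \bQ$ is dense in $M_\bR$; this is the hypothesis needed to invoke \autoref{theorem: Q-Novikov mirror symmetry}. Second, apply that theorem verbatim with $A = M_\bQ$ to get the two monoidal equivalences
\[
\QCoha_{\bT^{M_\bQ-\Nov}}(X_\Sigma^{M_\bQ-\Nov},\partial_\Sigma^{M_\bQ}) \simeq \Sh_{M_\bR\times |\Sigma|}(M_\bR),\qquad
\QCoha(X_\Sigma^{M_\bQ-\Nov},\partial_\Sigma^{M_\bQ}) \simeq \Sh^{M_\bQ\cap \bR^n_\delta}_{M_\bR\times |\Sigma|}(M_\bR).
\]
Third, substitute $X_\Sigma^{M_\bQ-\Nov} = X_\Sigma^{[\infty]}$ via \autoref{prop: Q-Nov=root scheme}, noting that the boundary subscheme $\partial_\Sigma^{M_\bQ}$ and the torus action transport along this identification (the equivalence there is an equivalence of schemes over $X_\Sigma$, compatible with the gluing and with the affine-local idempotent ideals $\bfk[\mathring{\sigma^\vee}\cap M_\bQ]$, so it carries $\partial_\Sigma^{M_\bQ}$ and the $\bT^{M_\bQ-\Nov}$-action to the corresponding structures on $X_\Sigma^{[\infty]}$). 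Fourth, fix a basis identifying $M \cong \bZ^n$, so $M_\bQ \cap \bR^n_\delta = \bQ^n_\delta$, giving the stated equivariance group. Since all the equivalences involved are monoidal, the composite is monoidal.

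\textbf{Main obstacle.} There is essentially no new mathematical content; the only thing requiring a word of care is the bookkeeping in Step~3 — namely that the pro-scheme identification $X_\Sigma^{M_\bQ-\Nov} \simeq X_\Sigma^{[\infty]}$ of \autoref{prop: Q-Nov=root scheme} is compatible with the boundary data, i.e.\ that $\partial_\Sigma^{M_\bQ}$ (defined affine-locally by the idempotent ideal $\bfk[\mathring{\sigma^\vee}\cap M_\bQ] \subset \bfk[\sigma^\vee \cap M_\bQ]$) matches the limit over $k$ of the $\tfrac{M}{k}$-level boundary data under the colimit $\varinjlim_k \bfk[\sigma^\vee \cap \tfrac{1}{k}M] = \bfk[\sigma^\vee \cap M_\bQ]$ used in the proof of that proposition. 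This is immediate once one notes $\varinjlim_k \bfk[\mathring{\sigma^\vee}\cap \tfrac1k M] = \bfk[\mathring{\sigma^\vee}\cap M_\bQ]$ and that colimits of idempotent ideals along the relevant maps remain idempotent, so the definitions of $\QCoha(X_\Sigma^{[\infty]},\partial_\Sigma^{M_\bQ})$ (in the sense of \autoref{definition: almost quasi-coherent sheaves for a fan}, applied to the $A$-Novikov variety) and $\QCoha(X_\Sigma^{M_\bQ-\Nov},\partial_\Sigma^{M_\bQ})$ agree on the nose. Hence the corollary follows.
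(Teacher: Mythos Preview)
Your proposal is correct and matches the paper's approach exactly: the corollary is stated without proof in the paper, since it is simply the special case $A = M_\bQ$ of the immediately preceding \autoref{theorem: Q-Novikov mirror symmetry}, combined with the identification $X_\Sigma^{M_\bQ-\Nov} = X_\Sigma^{[\infty]}$ from \autoref{prop: Q-Nov=root scheme}. Your additional bookkeeping in Step~3 (compatibility of the boundary data under the pro-scheme identification) is more than the paper spells out but is correct and harmless.
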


\subsubsection{\texorpdfstring{Lattice quotient and root stack}{}}
The results in this section mostly make sense for rational fans $\Sigma$, on which we focus. In this case, we have $M_\bR=M\otimes \bR$ for a lattice $M\subset M_\bR$. 

Recall that $\sqrt[\infty]{X_{\Sigma}}$ is the infinite root stack of the toric variety $X_\Sigma$, which is equivalent to 
\[ \sqrt[\infty]{X_\Sigma}\simeq [{X_\Sigma^{M_\bQ-\Nov}}//\mu_\infty(M)]\]
by \autoref{remark: infinite root stack}. In this case, we also define $\sqrt[\infty]{\partial_\Sigma}$ as the image of $\partial^{M_\bQ}_\Sigma$ under the quotient.

We have the following lemma.
\begin{Lemma}We have a monoidal equivalence
 \[\QCoh_{\bT^{M_\bQ-\Nov}}(X_{\Sigma}^{M_\bQ-\Nov})^M\simeq\QCoh (\sqrt[\infty]{X_{\Sigma}} )
.\]   
\end{Lemma}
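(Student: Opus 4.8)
The plan is to reduce the statement to a purely categorical fact: equivariant quasi-coherent sheaves on a quotient stack $[Y/G]$ are $G$-equivariant quasi-coherent sheaves on $Y$. Here $Y = X_\Sigma^{M_\bQ-\Nov}$ with its residual $\bT^{M_\bQ-\Nov}$-action (equivalently, the $M_\bQ$-grading, via \autoref{coro: non-graded equivalence HD} and its dense-subgroup variant), $G = \mu_\infty(M)$ is the Cartier dual of $M_\bQ/M$, and $[Y/G] \simeq \sqrt[\infty]{X_\Sigma}$ by \autoref{remark: infinite root stack}. The first step is to identify the $\mu_\infty(M)$-action on $\QCoh_{\bT^{M_\bQ-\Nov}}(X_\Sigma^{M_\bQ-\Nov})$ with an $M$-action coming from the grading. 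This is Cartier duality at the level of module categories: a coaction of the Hopf algebra $\bfk[M_\bQ/M]$ (equivalently, an action of $\mu_\infty(M) = \Spec \bfk[M_\bQ/M]$, since $\mu_\infty(M)$ is the limit of the $\mu_k$ and $M_\bQ/M = \varinjlim \frac1k M/M$) on a $\bfk[M_\bQ]$-comodule category is the same as refining the $M_\bQ$-grading through the quotient $M_\bQ \to M_\bQ/M$, i.e. the homotopy-fixed-point construction for the subgroup $M \subset M_\bQ$ acting by grading shifts. Concretely, this is exactly the mechanism of \autoref{coro: non-graded equivalence HD}: passing to $M$-homotopy-fixed points of an $M_\bQ$-graded category partially degrades the grading to an $M_\bQ/M$-grading, which is the comodule structure defining $\mu_\infty(M)$-equivariance.

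The second step is descent: since $\{U_\sigma^{M_\bQ-\Nov}\}_{\sigma \in \Sigma}$ is a Zariski (hence fpqc) cover of $X_\Sigma^{M_\bQ-\Nov}$ compatible with the $\mu_\infty(M)$-action, and $\QCoh$ of a quotient stack satisfies fpqc descent, we have
\[
\QCoh\big(\sqrt[\infty]{X_\Sigma}\big) = \QCoh\big([X_\Sigma^{M_\bQ-\Nov}//\mu_\infty(M)]\big) \simeq \varprojlim_{\sigma \in \Sigma} \QCoh\big([U_\sigma^{M_\bQ-\Nov}//\mu_\infty(M)]\big)
\]
as a limit in $\PrStR$, and on each affine chart $[U_\sigma^{M_\bQ-\Nov}//\mu_\infty(M)]$ one has $\QCoh = \Mod_{\bfk[\sigma^\vee \cap M_\bQ]}$ with the extra $\bfk[M_\bQ/M]$-comodule structure, i.e. the $M$-homotopy-fixed points of $\Mod_{M_\bQ-gr}(\Lambda_{\sigma^\vee \cap M_\bQ})$. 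Since $M$ is discrete, taking $M$-fixed points commutes with the limit over $\Sigma$ (fixed points are a limit over $BM$, and limits commute), which identifies the right-hand side with $\varprojlim_\sigma \big(\Mod_{M_\bQ-gr}(\Lambda_{\sigma^\vee \cap M_\bQ})\big)^M = \QCoh_{\bT^{M_\bQ-\Nov}}(X_\Sigma^{M_\bQ-\Nov})^M$. The monoidality is automatic: all the limits are taken in $\CAlg(\PrSt)$ after passing to left adjoints, and the homotopy-fixed-point functor is lax monoidal (in fact the identification $\QCoh([Y/G]) \simeq \QCoh(Y)^G$ is symmetric monoidal).

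The main obstacle I expect is Step 1 — making the Cartier-duality identification of the $\mu_\infty(M)$-action with the $M$-homotopy-fixed-point construction genuinely precise at the $\infty$-categorical level, including the compatibility of the pro-finite limit $\mu_\infty(M) = \varprojlim_k \mu_k$ with the ind-finite colimit $M_\bQ/M = \varinjlim_k \frac1k M/M$ and with the (co)limits defining $\QCoh$. The cleanest route is probably to cite \cite[Proposition 3.10, Definition 3.14]{root_stack} for the geometric identification of $\sqrt[\infty]{X_\Sigma}$ with the quotient stack on affine charts and their gluing, then invoke the general equivalence $\QCoh([Y/G]) \simeq \QCoh(Y)^{hG}$ for an affine group scheme $G$ acting on a scheme $Y$ (descent along $Y \to [Y/G]$), and finally match $\QCoh(Y)^{h\mu_\infty(M)}$ with the $M$-graded-refinement via the evident comparison between $\mu_\infty(M)$-coaction and $M_\bQ/M$-grading. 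Everything else — the compatibility of gluing, the monoidal structure, the reduction to affines — is a routine unwinding of definitions already in place in \autoref{subsection: def almost qcoh}.
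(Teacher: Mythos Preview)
Your proposal is correct and follows essentially the same approach as the paper: reduce to affine charts, identify $M$-homotopy-fixed points of the $M_\bQ$-graded module category with the $M_\bQ/M$-graded category (which is then $\mu_\infty(M)$-equivariant modules, hence $\QCoh$ of the root-stack chart), and glue using that both sides are limits over $\Sigma$ in $\PrStR$ while $(\bullet)^M$ is itself a limit. The paper's proof is terser—it simply asserts that ``putting $M_\bQ/M$-grading is equivalent to requiring $\Spec(\bfk[M_\bQ/M])$-equivariance'' and computes $\Spec(\bfk[M_\bQ/M])=\mu_\infty(M)$—so the Cartier-duality step you flag as the main obstacle is handled at exactly the same level of precision there.
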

\begin{proof}
We first prove the affine case. In this case, we have that 
\[\QCoh_{\bT^{M_\bQ-\Nov}}(U_{\sigma}^{M_\bQ-\Nov})^M =\Mod_{M_\bQ-gr}(\Lambda_{\sigma^\vee})^M \simeq \Mod_{M_\bQ/M-gr}(\Lambda_{\sigma^\vee})  ,\]
where the first equality is the definition, and the second equality follows from a proof similar to \autoref{coro: non-graded equivalence}.

On the other hand, being equipped with a $M_\bQ/M$-grading is equivalent to requiring $\Spec({\bfk}[M_\bQ/M])$-equivariance.

Now, notice that 
\[\Spec({\bfk}[M_\bQ/M])=\varprojlim_k \Spec({\bfk}[\frac{M}{k}/M])=\varprojlim_k \mu_k(M)=\mu_\infty(M).\]

Then we have 
\[\QCoh_{\bT^{M_\bQ-\Nov}}(U_{\sigma}^{M_\bQ-\Nov})^M \simeq \Mod_{M_\bQ/M-gr}(\Lambda_{\sigma^\vee}) \simeq \QCoh_{\mu_\infty(M)}(U_{\sigma}^{M_\bQ-\Nov} ) = \QCoh(\sqrt[\infty]{U_{\sigma}}  ). \]

For the global case, both sides are glued over $\varprojlim_\sigma$ in $\PrStR$, and the fixed point $(\bullet)^M$ can also be computed as a limit in $\PrStR$. Then we conclude by noticing that the affine equivalences are compatible with the gluing data.
\end{proof}

Consequently, we have: 
\begin{Coro}\label{theorem: root stack-Novikov mirror symmetry-rational}We have a monoidal equivalence
\[\QCoha(\sqrt[\infty]{X_\Sigma},\sqrt[\infty]{\partial_\Sigma}) \simeq \Sh_{M_\bR/M\times |\Sigma|}(M_\bR/M).\]
\end{Coro}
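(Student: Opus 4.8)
\textbf{Proof plan for \autoref{theorem: root stack-Novikov mirror symmetry-rational}.}

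The strategy is to chain together the three inputs already assembled: the $M_\bQ$-version of global APT, the identification of $M$-graded $M_\bQ$-Novikov quasi-coherent sheaves with quasi-coherent sheaves on the infinite root stack, and the fact that taking homotopy fixed points of a discrete group action corresponds to a colimit in $\PrSt$. First I would start from \autoref{theorem: Q-Novikov mirror symmetry-rational}, which gives a monoidal equivalence $\QCoha_{\bT^{M_\bQ-\Nov}}(X_\Sigma^{[\infty]},\partial_\Sigma^{M_\bQ}) \simeq \Sh_{M_\bR\times |\Sigma|}(M_\bR)$. The lattice $M\subset M_\bR$ acts on both sides: on the sheaf side through the translation action of $M\subset \bR^n_\delta$ on $M_\bR$ (which is well-defined on the microsupport-restricted category since translation is a symplectomorphism preserving the conic support condition $M_\bR\times|\Sigma|$), and on the algebraic side through the $M$-grading-shift action coming from $M\subset M_\bQ$. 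Since the equivalence of \autoref{theorem: Q-Novikov mirror symmetry-rational} is built affine-locally from $\iota^*$ as in \autoref{prop: APT HD}, and that construction is manifestly $M_\bR$-equivariant (hence $M$-equivariant), the equivalence intertwines these two $M$-actions.

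Next I would take homotopy fixed points (equivalently, since $M$ is discrete and we work in $\PrSt$, homotopy orbits — using \cite[Example 4.3.11]{Hopkins-Lurie-Ambidexterity} and the anti-equivalence $\PrStR\simeq(\PrSt)^\op$ as in the proof of \autoref{prop: equivariant APT}) of both sides. On the sheaf side, $\Sh_{M_\bR\times|\Sigma|}(M_\bR)^M = \Sh_{M_\bR/M\times|\Sigma|}(M_\bR/M)$: this is the descent statement that $M$-equivariant sheaves on $M_\bR$ with the given conic microsupport are the same as sheaves on the quotient torus $M_\bR/M$ with the induced microsupport condition, which follows since $M$ acts freely and properly discontinuously on $M_\bR$ and the microsupport condition descends (here one uses that $|\Sigma|$ is a cone in $N_\bR$, unaffected by the base translation, and that the cotangent bundle of $M_\bR/M$ is $M_\bR/M\times N_\bR$). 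On the algebraic side, I would apply the Lemma preceding this corollary: $\QCoh_{\bT^{M_\bQ-\Nov}}(X_\Sigma^{M_\bQ-\Nov})^M \simeq \QCoh(\sqrt[\infty]{X_\Sigma})$, together with its evident almost/torsion-quotient refinement — namely that the $M$-fixed points of $\QCoha_{\bT^{M_\bQ-\Nov}}(X_\Sigma^{[\infty]},\partial_\Sigma^{M_\bQ})$ compute $\QCoha(\sqrt[\infty]{X_\Sigma},\sqrt[\infty]{\partial_\Sigma})$, because the subcategory of almost-zero objects is preserved by the $M$-action and the almostlization is a Verdier quotient (a colimit in $\PrSt$), so it commutes with the $M$-colimit exactly as in the proof of \autoref{prop: equivariant APT}. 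All the equivalences involved are monoidal (the colimits are taken in $\CAlg(\PrSt)$), so the resulting equivalence is monoidal.

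The main obstacle I expect is the careful bookkeeping of the almost structure under passage to fixed points on the stack side: one must check that the image of $\partial_\Sigma^{M_\bQ}$ in the quotient stack $[X_\Sigma^{M_\bQ-\Nov}//\mu_\infty(M)]$ really does define the correct almost content, i.e. that $\QCoha$ of the root stack (defined via gluing of local almost module categories) agrees with the $M$-homotopy-fixed-points of $\QCoha$ of $X_\Sigma^{M_\bQ-\Nov}$ — this requires knowing that the $\mu_\infty(M)$-equivariance and the $M$-graded structure match up compatibly with the idempotent ideal sheaves, which is the content extracted from the preceding Lemma and \autoref{remark: infinite root stack}, but needs to be stated at the level of the pair $(X,\partial)$ rather than just $X$. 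The geometric descent statement $\Sh_{M_\bR\times|\Sigma|}(M_\bR)^M \simeq \Sh_{M_\bR/M\times|\Sigma|}(M_\bR/M)$ is comparatively routine (free quotient by a discrete group, microsupport is local on the base and the fibers are unchanged), but I would spell out at least the microsupport compatibility. Everything else is formal manipulation of limits/colimits in $\PrSt$ and $\PrStR$.
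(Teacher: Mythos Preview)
Your proposal is correct and follows essentially the same route as the paper: take $M$-homotopy fixed points of the $M_\bQ$-version of global APT (\autoref{theorem: Q-Novikov mirror symmetry-rational}), identify the algebraic side with $\QCoha(\sqrt[\infty]{X_\Sigma},\sqrt[\infty]{\partial_\Sigma})$ via the preceding Lemma (extended to the almost setting exactly as you describe), and identify the sheaf side via free-quotient descent plus the microsupport estimate for $p^*$ (the paper cites \cite[Proposition 5.4.5]{KS90} for this last step). Your anticipated obstacle about tracking the almost content through the fixed-point construction is real but handled just as you outline, by commuting the Verdier-quotient colimit with the $M$-colimit in $\PrSt$.
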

\begin{proof}By the above lemma and \autoref{theorem: Q-Novikov mirror symmetry}, it remains to prove that $\Sh_{M_\bR/M\times |\Sigma|}(M_\bR/M)\simeq \Sh_{M_\bR\times |\Sigma|}(M_\bR)^M$.

Notice that $M$ is a lattice, and $M$ acts on $M_\bR$ freely. Then we have that the projection $p:M_\bR \rightarrow M_\bR/M$ induces the equivalence:
\[p^*:\Sh(M_\bR/M) \rightarrow \Sh(M_\bR) \rightarrow \Sh(M_\bR)_M\simeq \Sh(M_\bR)^M.\]    
To incorporate the microsupport condition, we use the standard microsupport estimates of $p^*$ \cite[Proposition 5.4.5]{KS90}. \end{proof}

In general, a subgroup $H$ of $\bR^n$ is a direct product $D\times L$ such that $D$ is dense in a proper linear subspace $E\subset \bR^n$, and $L$ is a lattice. Note that $H$ does not necessarily span the whole $\bR^n$. We refer to \cite[Theorem 21]{Siegel_1989_geometry_of_numbers} for related discussions. By our discussion so far, it is not hard to prove variants of these theorems for general $H$. Here, we emphasize one case that we need later; we leave the details for general case to the reader.

\begin{Prop}\label{prop: split group}For a complete fan $\Sigma$, we have an equivalence 
\[\Sh_{>}^{\bR_\delta}(M_{\bR}/M\times\bR_t)
    \cong \QCoha_{comp_{\bA^1}}(\sqrt[\infty]{X_{\Sigma}}\times (\bA^1)^\Nov, \sqrt[\infty]{\partial_\Sigma}\times 0).\]
\end{Prop}
\begin{proof}We apply \autoref{theorem: Novikov mirror symmetry} to the product fan $\Sigma \times \{[0,\infty)\}$ to obtain the equivalence 
\[\Sh_{>}(M_{\bR}\times\bR_t)
    \cong \QCoha_{\bT^{M_\bR\times \bR-\Nov},comp_{\bA^1}}(X_{\Sigma}^\Nov\times (\bA^1)^\Nov,  {\partial_\Sigma}\times 0).\]

Then we apply the homotopy orbit functor $\varinjlim_{B(M\times \bR^\delta)}$ to it. Notice that we may take the homotopy orbit separately for either $M$ or $\bR^\delta$, then both \autoref{theorem: root stack-Novikov mirror symmetry-rational} and \autoref{prop: equivariant APT} imply that
\[\Sh_{>}^{M\times \bR_\delta}(M_{\bR}\times\bR_t)
    \cong \QCoha_{comp_{\bA^1}}(\sqrt[\infty]{X_{\Sigma}}\times (\bA^1)^\Nov, \sqrt[\infty]{\partial_\Sigma}\times 0).\]

And we conclude by noticing that
\[\Sh_{>}^{M\times \bR_\delta}(M_{\bR}\times\bR_t)\simeq \Sh_{>}^{\bR_\delta}(M_{\bR}/M\times\bR_t)\]
since $M\subset M_\bR$ is a lattice.\end{proof}

\subsection{A Fukaya-Sheaf comparison}
As an application of \autoref{theorem: root stack-Novikov mirror symmetry-rational}, we give the following Fukaya-Sheaf comparison  result  based on \cite{GPS3}.
\begin{Prop}\label{Stupid sheaf-fukaya correspondence}Let $X_\Sigma$ be a smooth toric variety  and let $\sqrt[k]{\Lambda_\Sigma}$ be the FLTZ skeleton of $k^{th}$ root $\sqrt[k]{X_\Sigma}$ (i.e., $X_\Sigma^{M/k-\Nov}$) of $X_\Sigma$, we have a fully faithful embedding
\[\Sh(M_\bR/M)\hookrightarrow \varinjlim_{k}\mathscr{W}\Fuk(T^*(M_\bR/M);\sqrt[k]{\Lambda_\Sigma} ) ,  \]
which is right adjoint to a Bousfield localization.

Here, $\mathscr{W}\Fuk$ means the category of all modules over the usual wrapped Fukaya category.
\end{Prop}

\begin{proof}
By the usual toric mirror symmetry (for example \cite{KuwagakiCCC,ToricMSrevisited-Shende}), we have
\[\QCoh (\sqrt[k]{X_{\Sigma}} ) \simeq \Sh_{\sqrt[k]{\Lambda_\Sigma}}(M_\bR/M).\] 

On the other hand, by \autoref{lemma: root stack qcoh limit} below, we have
\[\QCoh (\sqrt[\infty]{X_{\Sigma}} )\simeq \varinjlim_{k}\QCoh (\sqrt[k]{X_{\Sigma}} ) \simeq \varinjlim_{k}\Sh_{\sqrt[k]{\Lambda_\Sigma}}(M_\bR/M).\] 

Therefore, using \cite{GPS3}, we have
\[\QCoh (\sqrt[\infty]{X_{\Sigma}} )\simeq\varinjlim_{k}\mathscr{W}\Fuk(T^*(M_\bR/M),\sqrt[k]{\Lambda_\Sigma}).\]

Lastly, we have
\[\Sh(M_\bR/M)\simeq \QCoha (\sqrt[\infty]{X_{\Sigma}} )\hookrightarrow \QCoh (\sqrt[\infty]{X_{\Sigma}} )\simeq\varinjlim_{k}\mathscr{W}\Fuk(T^*(M_\bR/M),\sqrt[k]{\Lambda_\Sigma}).\]

The statement about localization follows from that the almostification fucntor $\QCoh$ to $\QCoha$ is a Bousfield localization.
\end{proof}
\begin{RMK}On the sheaf side, one can think of $\varinjlim_{k}\Sh_{\sqrt[k]{\Lambda_\Sigma}}(M_\bR/M)$ as the category of constructible sheaves with microsupport in $\cup_k\sqrt[k]{\Lambda_\Sigma} $ (but this is not equivalent to $\Sh(M_\bR/M)$ as we have seen in the proof.) Therefore, one can think of the colimit of wrapped Fukaya categories as a version of ``wrapped Fukaya category with the stop $\cup_k\sqrt[k]{\Lambda_\Sigma} $".

On the coherent side, it is clear that the essential image is characterized by almost local quasi-coherent sheaves. However, it is not clear to us how to characterize those ``almost local" Lagrangians geometrically.     
\end{RMK}

\begin{Lemma}\label{lemma: root stack qcoh limit}For a toric variety $X_\Sigma$, we have the colimit in $\PrSt$:
\[\QCoh (\sqrt[\infty]{X_{\Sigma}} ) \simeq \varinjlim_k\QCoh (\sqrt[k]{X_{\Sigma}} ).\] 
\end{Lemma}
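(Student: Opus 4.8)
The plan is to reduce the claim to the affine case and then assemble over the fan, using that a quasi-coherent sheaf category on a scheme (or on the infinite root stack, which is an fpqc quotient of a scheme by a pro-finite group scheme) is glued as a limit in $\PrStR$ over the affine charts, hence dually as a colimit in $\PrSt$. First I would recall from \autoref{prop: Q-Nov=root scheme} and \autoref{remark: infinite root stack} that $\sqrt[\infty]{X_\Sigma} \simeq [X_\Sigma^{M_\bQ-\Nov}//\mu_\infty(M)]$ and $\sqrt[k]{X_\Sigma}\simeq [X_\Sigma^{\frac{M}{k}-\Nov}//\mu_k(M)]$, so that on each affine chart $U_\sigma$ one has $\QCoh(\sqrt[k]{U_\sigma})\simeq \Mod_{\frac1k M/M-gr}(\Lambda_{\sigma^\vee\cap \frac1k M})$ and $\QCoh(\sqrt[\infty]{U_\sigma})\simeq \Mod_{M_\bQ/M-gr}(\Lambda_{\sigma^\vee\cap M_\bQ})$, exactly as in the lemma preceding \autoref{theorem: root stack-Novikov mirror symmetry-rational}.

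The key step is therefore the affine statement: $\Mod_{M_\bQ/M-gr}(\Lambda_{\sigma^\vee\cap M_\bQ})\simeq \varinjlim_k \Mod_{\frac1k M/M-gr}(\Lambda_{\sigma^\vee\cap \frac1k M})$ in $\PrSt$, where the transition functors are base change along $\Lambda_{\sigma^\vee\cap \frac1k M}\to \Lambda_{\sigma^\vee\cap \frac{1}{k\ell} M}$ together with the grading refinement $\frac1k M/M \hookrightarrow \frac{1}{k\ell}M/M$. Dually this is a limit in $\PrStR$ along the restriction-of-scalars functors. I would prove it by observing that $\Lambda_{\sigma^\vee\cap M_\bQ}=\varinjlim_k \Lambda_{\sigma^\vee\cap \frac1k M}$ as a filtered colimit of commutative rings, so that $\Mod(\Lambda_{\sigma^\vee\cap M_\bQ})\simeq \varinjlim_k \Mod(\Lambda_{\sigma^\vee\cap \frac1k M})$ in $\PrSt$ by \cite[Corollary 4.8.5.21]{HA} (the colimit of module categories along a filtered diagram of base-change functors computes modules over the colimit ring), and then checking that this equivalence is compatible with the gradings; equivalently, passing to right adjoints, $\Mod(\Lambda_{\sigma^\vee\cap M_\bQ})\simeq \varprojlim_k \Mod(\Lambda_{\sigma^\vee\cap \frac1k M})$ in $\PrStR$, which one sees directly because a $\Lambda_{\sigma^\vee\cap M_\bQ}$-module is the same as a compatible system of $\Lambda_{\sigma^\vee\cap \frac1k M}$-module structures on a fixed underlying $\bfk$-module. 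The graded refinement is handled exactly as in \autoref{coro: non-graded equivalence}/\autoref{coro: non-graded equivalence HD}: $M_\bQ/M=\varinjlim_k \frac1k M/M$ and the relevant homotopy-(co)limit over the discrete groups $\frac1k M/M$ commutes with the filtered colimit of ring module-categories.

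Finally I would glue: since both $\QCoh(\sqrt[\infty]{X_\Sigma})$ and each $\QCoh(\sqrt[k]{X_\Sigma})$ are defined as $\varprojlim_{\sigma\in\Sigma}$ of the corresponding affine categories in $\PrStR$ — equivalently $\varinjlim_{\sigma\in\Sigma}$ in $\PrSt$ after passing to left adjoints, as in \autoref{definition: almost quasi-coherent sheaves for a fan} — and since the affine equivalences above are functorial in $\sigma$ (they are compatible with the localization/gluing morphisms $g_{21}$, because base change along a localization commutes with the base change $\Lambda_{\sigma^\vee\cap \frac1k M}\to\Lambda_{\sigma^\vee\cap M_\bQ}$), the two filtered colimits commute and one obtains
\[
\QCoh(\sqrt[\infty]{X_\Sigma})\simeq \varinjlim_{\sigma}\varinjlim_k \QCoh(\sqrt[k]{U_\sigma})\simeq \varinjlim_k\varinjlim_\sigma \QCoh(\sqrt[k]{U_\sigma})\simeq \varinjlim_k \QCoh(\sqrt[k]{X_\Sigma})
\]
in $\PrSt$, as desired. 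The main obstacle I anticipate is the bookkeeping in the affine step: making precise that the colimit of graded module categories along base change functors (with simultaneously refining grading groups) computes graded modules over the colimit ring, and in particular that the transition functors in $\varinjlim_k \QCoh(\sqrt[k]{X_\Sigma})$ are the correct left adjoints (pullback along the pro-covers $\sqrt[k\ell]{X_\Sigma}\to\sqrt[k]{X_\Sigma}$) rather than some other functor; once the diagram is set up correctly, the commutation of the two filtered colimits and the reduction to \cite[Corollary 4.8.5.21]{HA} are formal.
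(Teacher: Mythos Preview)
Your approach is correct but takes a genuinely different route from the paper. The paper's proof is a two-line argument: it cites \cite[Proposition~2.7]{Scherotzke_Sibilla_Talpo_2020} for the statement $\Perf(\sqrt[\infty]{X_\Sigma})\simeq\varinjlim_k\Perf(\sqrt[k]{X_\Sigma})$ in $\CatPerf$, then applies $\Ind$ (using that $\Ind:\CatPerf\simeq\PrStCG$ and that $\PrStCG\hookrightarrow\PrSt$ preserves colimits) and the identification $\Ind\Perf\simeq\QCoh$ for qcqs stacks. So the paper delegates the entire content to an external reference about perfect complexes on root stacks.

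You instead work directly with $\QCoh$, reducing to affine charts via Zariski descent in $\PrStR$ and proving the affine statement by hand as a filtered colimit of graded module categories over the tower of rings $\Lambda_{\sigma^\vee\cap\frac1kM}$. This is self-contained within the paper's own framework (it reuses the module-theoretic descriptions already established around \autoref{remark: infinite root stack} and the lemma before \autoref{theorem: root stack-Novikov mirror symmetry-rational}) and avoids importing the Scherotzke--Sibilla--Talpo result. The trade-off is exactly the bookkeeping you flag: one must check that the simultaneous base change and grading refinement assemble into the correct diagram of left adjoints, and that the ungraded fact $\Mod(\varinjlim_k R_k)\simeq\varinjlim_k\Mod(R_k)$ in $\PrSt$ upgrades to the graded setting. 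Both are routine, but the paper's argument sidesteps them entirely. Your citation of \cite[Corollary 4.8.5.21]{HA} is not quite the right pointer for the filtered-colimit-of-rings statement; the relevant fact is closer to the colimit description of $\Perf$ over a filtered colimit of rings, or equivalently that $R\mapsto\Mod(R)$, viewed as a functor $\CAlg\to\PrSt$ via base change, preserves filtered colimits.
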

\begin{proof}
By \cite[Proposition 2.7]{Scherotzke_Sibilla_Talpo_2020}, we have 
\[\Perf (\sqrt[\infty]{X_{\Sigma}} ) \simeq\varinjlim_k\Perf (\sqrt[k]{X_{\Sigma}} )\] 
is a colimit in $\CatPerf$ of small idempotent complete stable categories and exact functors. However, we know that the ind completion functor induces an equivalence $\Ind:\CatPerf\simeq \PrStCG$, and the inclusion $\PrStCG \rightarrow \PrSt$ preserves colimits. So, we have
\[\Ind\Perf (\sqrt[\infty]{X_{\Sigma}} ) \simeq\varinjlim_k \Ind\Perf (\sqrt[k]{X_{\Sigma}} ),\]
and we conclude by using the equivalence $\Ind\Perf\simeq \QCoh $ since all stacks here are quasi-compact and quasi-separated.
\end{proof}

\section{Homological mirror symmetry over the Novikov ring for toric varieties}\label{section: Novikov HMS}
Usually, homological mirror symmetry (HMS, for short) is discussed over the base field or over the Novikov field $\widehat{\Lambda}$, the fraction field of $\widehat{\Lambda}_\geq$. Since Fukaya categories are in general defined over the Novikov ring, it is natural to seek a version over the Novikov ring (see some discussion in \cite{fukaya_distance}). Here, we discuss a possible formulation of it for toric varieties. To avoid technical difficulties, we assume that $\Sigma$ is a smooth complete fan. In the formulation of HMS due to Fang--Liu--Treumann--Zaslow~\cite{FLTZ11}, we have
\begin{equation}\label{equation: CCC over Lambda}
    \QCoh(X_\Sigma, \widehat{\Lambda}_{\geq })\cong \Sh_{\Lambda_\Sigma}(M_\bR/M,\widehat{\Lambda}_{\geq })
\end{equation}
by \cite{KuwagakiCCC} when $\widehat{\Lambda}_{\geq }$ is defined over a regular local ring $\bfk$. We proceed in the following way: We start from the toric HMS over $\bfk$ (although \cite{KuwagakiCCC} was written over $\bC$, its proof works over a regular local ring. See also~\cite{ToricMSrevisited-Shende}.) Then we tensor both sides of the toric HMS with $\Mod(\widehat{\Lambda}_\geq)$ over $\bfk$. In this case, we notice that $X_\Sigma/\widehat{\Lambda}_{\geq }$ is the base change of $X_\Sigma/\bfk$ by treating $\widehat{\Lambda}_{\geq }$ as a $\bfk$-algebra, then we conclude by \cite[Corollary 9.4.3.8-(a)]{HA}.
\begin{RMK}Here, the main reason why we assume $\bfk$ to be a regular local ring is that we are not sure whether the Zariski descent of \cite{Gaitsgory-Rozenblyum} is true over $\widehat{\Lambda}_{\geq }$ since it is not Noetherian. But we believe this is true since $\widehat{\Lambda}_{\geq }$ is a regular coherent ring in the sense of Bertin. If \cite{Gaitsgory-Rozenblyum} works for general valuation rings, we believe that the argument in \cite{KuwagakiCCC} works directly for the proof of \eqref{equation: CCC over Lambda}.
\end{RMK}

The right-hand side of \eqref{equation: CCC over Lambda} is equivalent to the Fukaya--Seidel category of the Givental--Hori--Vafa mirror $W_\Sigma$ of $X_\Sigma$~\cite{GPS3} defined over the Novikov ring. \emph{But this is not a correct formulation of the Fukaya category over the Novikov ring.}

To have a genuine sheaf analogue of the Fukaya category over the Novikov ring, we again use Tamarkin’s idea. What we should have is the following: The subcategory of $\Sh_{>}^{\bR_\delta}(M_{\bR}/M\times\bR_t)$ spanned by sheaf quantizations of the end-conic Lagrangians ending in $\Lambda_\Sigma$. We denote it by $\operatorname{SQ}(W_\Sigma, \widehat{\Lambda}_{\geq })$.
\begin{Conjecture}\label{Fukaya-SheafcomparsionLG}
$\operatorname{SQ}(W_\Sigma, \widehat{\Lambda}_{\geq })$ is equivalent to the derived category of the infinitesimally wrapped Fukaya category $\Fuk(W_\Sigma,\widehat{\Lambda}_{\geq })$ of $T^*(M_\bR/M)$ over the Novikov ring generated by Lagrangian submanifolds that are compactly Hamiltonian isotopic to Lefschetz thimbles of $W_\Sigma$.
\end{Conjecture}
We would like to explain the evidence for the conjecture: The Fukaya--Sheaf comparison over the Novikov ring conjectured in  \cite{IK-NovikovTamarkinCategory} states that the subcategory of sheaf quantizations in $\Sh_{>}^{\bR_\delta}(M_{\bR}/M\times\bR_t)$ is equivalent to the infinitesimally wrapped Fukaya category of $T^*M_\bR/M$. The conjecture is largely verified in \cite{Fukayacategories_prequantizationbundles_KPS}, and the full proof will be given in \cite{KPS2}. See also \cite{GVZ} for a relevant statement. Assuming these results, Conjecture~\ref{Fukaya-SheafcomparsionLG} is simply about the generation: Lagrangians ending at $\Lambda_\Sigma$ are almost/densely generated by the Lefschetz thimbles. This result is known in the situation of partially wrapped Fukaya categories~\cite{GPS2}. We expect a similar proof to work in this situation. In summary, Conjecture~\ref{Fukaya-SheafcomparsionLG} seems plausible.

Here we use the sheaf model $\operatorname{SQ}(W_\Sigma, \widehat{\Lambda}_{\geq })$ to discuss HMS. Now, we use \autoref{prop: split group} to obtain:
\begin{equation}
\operatorname{SQ}(W_\Sigma,\widehat{\Lambda}_{\geq })\hookrightarrow \Sh_{>}^{\bR_\delta}(M_{\bR}/M\times\bR_t)\cong  \QCoha_{comp_{\bA^1}}(\sqrt[\infty]{X_{\Sigma}}\times (\bA^1)^\Nov, \sqrt[\infty]{\partial_\Sigma}\times 0).
\end{equation}
This is already a version of HMS, but the right-hand side is not very sensitive to the choice of $\Sigma$. We refine this as follows.

Note that the base change
\begin{equation}
    \pi^A\colon \operatorname{SQ}(W_\Sigma, \widehat{\Lambda}_{\geq })\hookrightarrow \Sh_{>}^{\bR_\delta}(M_{\bR}/M\times\bR_t)\xrightarrow{\otimes_{\widehat{\Lambda}_{\geq}}\widehat{\Lambda}} \Sh(M_\bR/M,\widehat{\Lambda})
\end{equation}
lands in $\Sh_{\Lambda_\Sigma}(M_\bR/M,\widehat{\Lambda})$. The mirror is given by the composition of 
\begin{equation}
\begin{split}
    \pi^B\colon \QCoha_{comp_{\bA^1}}(\sqrt[\infty]{X_{\Sigma}}\times (\bA^1)^\Nov, \sqrt[\infty]{\partial_\Sigma}\times 0)&=\QCoha(\sqrt[\infty]{X_{\Sigma}}, \sqrt[\infty]{\partial_\Sigma})\otimes \Moda_{t-comp}(\widehat\Lambda_{\geq})\\
    &\rightarrow \QCoha(\sqrt[\infty]{X_{\Sigma}}, \sqrt[\infty]{\partial_\Sigma},\widehat\Lambda)
\end{split}
\end{equation}
where the base ring is changed to $\widehat{\Lambda}$.

Note also that we have an embedding $\QCoh(X_\Sigma,\widehat\Lambda)\hookrightarrow  \QCoha(\sqrt[\infty]{X_{\Sigma}}, \sqrt[\infty]{\partial_\Sigma},\widehat\Lambda)$, which is mirror to $\Sh_{\Lambda_\Sigma}(M_\bR/M)\hookrightarrow \Sh(M_\bR/M)$. So we set
\begin{equation}
    (\pi^B)^{-1}(\QCoh(X_\Sigma,\widehat\Lambda))\subset \QCoha_{comp_{\bA^1}}(\sqrt[\infty]{X_{\Sigma}}\times (\bA^1)^\Nov, \sqrt[\infty]{\partial_\Sigma}\times 0)
\end{equation}
Summarizing the above discussions, we obtain the following version of HMS:
\begin{Coro}\label{cor:HMS_sheaf}
There exists a $\widehat{\Lambda}_{\geq }$-linear embedding:
        \begin{equation}
        \operatorname{SQ}(W_\Sigma,\widehat{\Lambda}_{\geq })\hookrightarrow     (\pi^B)^{-1}(\QCoh(X_\Sigma,\widehat{\Lambda})).
    \end{equation}
\end{Coro}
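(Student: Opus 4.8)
The plan is to assemble the embedding from the chain of equivalences and embeddings already established, being careful to track the monoidal/linear structure and the completeness conditions on the $\bA^1$-direction. First I would recall the starting point: by the (conjectural, but for the purposes of this statement \emph{assumed}) sheaf model, $\mathrm{SQ}(W_\Sigma,\widehat{\Lambda}_{\geq})$ is by definition a full subcategory of $\Sh_{>0}^{\bR_\delta}(M_\bR/M\times\bR_t)$, namely the one spanned by sheaf quantizations of end-conic Lagrangians ending in $\Lambda_\Sigma$. The first substantive step is to identify $\Sh_{>0}^{\bR_\delta}(M_\bR/M\times\bR_t)$ with a category of almost quasi-coherent sheaves. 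Here one applies \autoref{corollary: adic-completion 1d} (or its graded analogue) in the $\bR_t$-variable, together with \autoref{theorem: Novikov mirror symmetry} and \autoref{theorem: root stack-Novikov mirror symmetry-rational} in the $M_\bR/M$-variable, and the identification $\Sh(M;\Sh_{\geq 0}(\bR))\simeq \Sh_{\geq 0}(M\times\bR)$ recalled in the example after \autoref{proposition: R action iff Novikov action}. The key point is that the fan for $X_\Sigma\times\bP^1$-style construction for the $\bA^1$-factor is $\Sigma\times\sigma_{\bA^1}$ where $\sigma_{\bA^1}=[0,\infty)$, and $\Sh_{>0}$ in the extra $\bR_t$-coordinate corresponds to the derived $t$-adic completion along that $\bA^1$-direction, i.e.\ the subscript $comp_{\bA^1}$. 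This gives the two displayed isomorphisms $\Sh_{>0}^{\bR_\delta}(M_\bR/M\times\bR_t)\cong \QCoha_{\bT^\Nov_\Sigma,comp_{\bA^1}}(X^\Nov_{\Sigma\times\sigma_{\bA^1}},\partial_{\Sigma\times\sigma_{\bA^1}})^M\cong \QCoha_{comp_{\bA^1}}(\sqrt[\infty]{X_\Sigma}\times(\bA^1)^\Nov,\sqrt[\infty]{\partial_\Sigma}\times 0)$ in the excerpt; the first is \autoref{coro: non-graded equivalence}-type $M$-fixed-point bookkeeping, the second is the lattice-quotient lemma preceding \autoref{theorem: root stack-Novikov mirror symmetry-rational}.

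Next I would introduce the two base-change functors $\pi^A$ and $\pi^B$ that tensor up from $\widehat{\Lambda}_{\geq}$ to the Novikov field $\widehat{\Lambda}$. On the sheaf side, $\pi^A=(-)\otimes_{\widehat{\Lambda}_{\geq}}\widehat{\Lambda}$ applied to an object of $\mathrm{SQ}(W_\Sigma,\widehat{\Lambda}_{\geq})$ lands in $\Sh_{\Lambda_\Sigma}(M_\bR/M,\widehat{\Lambda})$: this is precisely the statement that sheaf quantizations of end-conic Lagrangians ending in $\Lambda_\Sigma$ become, after inverting the Novikov parameter, honest constructible sheaves with microsupport in $\Lambda_\Sigma$, which is the classical Tamarkin/Guillermou picture. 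On the $B$-side, $\pi^B$ is the matching base change, and by toric HMS \eqref{equation: CCC over Lambda} (valid over a regular local $\bfk$ by \cite{KuwagakiCCC,ToricMSrevisited-Shende} plus the base-change argument in \cite[Corollary 9.4.3.8-(a)]{HA}) together with the colimit description $\QCoh(\sqrt[\infty]{X_\Sigma})\simeq\varinjlim_k\QCoh(\sqrt[k]{X_\Sigma})$ from \autoref{lemma: root stack qcoh limit}, the square relating $\pi^A$ and $\pi^B$ commutes. The essential point is that $\mathrm{SQ}(W_\Sigma,\widehat{\Lambda}_{\geq})$ maps, under the already-constructed embedding into $\QCoha_{comp_{\bA^1}}(\sqrt[\infty]{X_\Sigma}\times(\bA^1)^\Nov,\sqrt[\infty]{\partial_\Sigma}\times 0)$, into the full subcategory $(\pi^B)^{-1}(\QCoh(X_\Sigma,\widehat{\Lambda}))$, because $\pi^B$ composed with that embedding equals (up to the identifications above) $\pi^A$, and $\pi^A$ lands in $\Sh_{\Lambda_\Sigma}(M_\bR/M,\widehat{\Lambda})\cong\QCoh(X_\Sigma,\widehat{\Lambda})$.

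Putting these together, the embedding asserted in the corollary is just the restriction of the composite embedding $\mathrm{SQ}(W_\Sigma,\widehat{\Lambda}_{\geq})\hookrightarrow\QCoha_{comp_{\bA^1}}(\sqrt[\infty]{X_\Sigma}\times(\bA^1)^\Nov,\sqrt[\infty]{\partial_\Sigma}\times 0)$ to its image, which by the commuting-square argument lies in $(\pi^B)^{-1}(\QCoh(X_\Sigma,\widehat{\Lambda}))$; $\widehat{\Lambda}_{\geq}$-linearity is inherited since every functor in the chain is $\widehat{\Lambda}_{\geq}$-linear (the completions, the root-stack equivalences, and the microlocal cut-off equivalences are all monoidal over $\Sh_{\geq 0}(\bR)$, hence over $\widehat{\Lambda}_{\geq}$ by \autoref{prop: stable R action implies Tamarkin module} and the remark after \autoref{proposition: R action iff Novikov action}). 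The main obstacle, and the one genuinely beyond formal bookkeeping, is verifying that $\pi^A$ actually lands in $\Sh_{\Lambda_\Sigma}(M_\bR/M,\widehat{\Lambda})$ — that is, controlling the microsupport of the Novikov-field reduction of a sheaf quantization of an end-conic Lagrangian ending in $\Lambda_\Sigma$. This rests on the conjectural sheaf-quantization input and on a microsupport-estimate argument (of the type in \cite[Proposition 5.4.5]{KS90} for the quotient $M_\bR\to M_\bR/M$, combined with the behavior of $\SS$ under inverting $t$); everything else is a diagram chase through the equivalences already proven in the earlier sections. I would flag that we only claim an embedding, not an equivalence, precisely because the essential image on the $B$-side is cut out by an \emph{almost local} condition whose geometric meaning on the Lagrangian side is not understood, as noted in the remark after \autoref{Stupid sheaf-fukaya correspondence}.
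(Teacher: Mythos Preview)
Your proposal is correct and follows essentially the same approach as the paper: the Corollary is stated there as a summary of the preceding discussion, and you have accurately reconstructed that discussion---the chain of embeddings into $\QCoha_{comp_{\bA^1}}(\sqrt[\infty]{X_\Sigma}\times(\bA^1)^\Nov,\sqrt[\infty]{\partial_\Sigma}\times 0)$, the introduction of $\pi^A$ and $\pi^B$, and the observation that the image of $\mathrm{SQ}$ lands in $(\pi^B)^{-1}(\QCoh(X_\Sigma,\widehat{\Lambda}))$ because $\pi^A$ lands in $\Sh_{\Lambda_\Sigma}(M_\bR/M,\widehat{\Lambda})$. One small remark: you flag the landing of $\pi^A$ in $\Sh_{\Lambda_\Sigma}$ as the ``main obstacle'' resting on conjectural input, but in the paper this is treated as a direct consequence of the \emph{definition} of $\mathrm{SQ}(W_\Sigma,\widehat{\Lambda}_\geq)$ (sheaf quantizations of Lagrangians ending in $\Lambda_\Sigma$), not as something requiring a separate microsupport estimate; the conjectural part is the comparison with the genuine Fukaya category, not the internal sheaf-theoretic statement.
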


Now we can formulate a conjecture for the log Calabi--Yau setup. Let $(X,D)$ be a log CY-manifold, namely, $D$ is an anticanonical divisor of $X$. Let $W_X\colon \check X \to \bC$ be a mirror of $(X,D)$. Let $\Fuk(W_X,\widehat{\Lambda}_{\geq })$ be the derived Fukaya category over $\widehat{\Lambda}_{\geq}$ generated by Lagrangians compactly Hamiltonian isotopic to Lefschetz thimbles of $W_X$. 

On the other hand, we consider the infinite root stack $\sqrt[\infty]{X}$ associated to $(X,D)$. We can similarly consider
\begin{equation}
\begin{split}
    \pi^B\colon \QCoha_{comp_{\bA^1}}(\sqrt[\infty]{X}\times (\bA^1)^\Nov, \sqrt[\infty]{D}\times 0)&=\QCoha(\sqrt[\infty]{X}, \sqrt[\infty]{D})\otimes \Moda_{t-comp}(\widehat\Lambda_{\geq})\\
    &\rightarrow \QCoha(\sqrt[\infty]{X}, \sqrt[\infty]{D},\widehat\Lambda).
\end{split}
\end{equation}
Now we can state the following conjecture:
\begin{Conjecture}[HMS of log CY over the Novikov ring]\label{conj:HMS_LogCY}
There exists a $\widehat{\Lambda}_{\geq }$-linear embedding:
        \begin{equation}
        \Fuk(W_X,\widehat{\Lambda}_{\geq })\hookrightarrow     (\pi^B)^{-1}(\QCoh(X,\widehat{\Lambda})).
    \end{equation}
\end{Conjecture}
Note that Corollary~\ref{cor:HMS_sheaf} is a special case of Conjecture~\ref{conj:HMS_LogCY} under Conjecture~\ref{Fukaya-SheafcomparsionLG}.

\begin{RMK}
    Usually, mirror symmetry predicts a duality between two degenerating families around the large complex-structure limit and the large-volume limit. In terms of homological mirror symmetry, one can expect an equivalence between Fukaya category over the Novikov ring and the derived category of coherent sheaves on a degenerating family over $\Spec \widehat{\Lambda}_{\geq}$ of varieties. For example, one can see that Fukaya's discussion~\cite{fukaya_distance} is made along this type of HMS.

Our formulation here is slightly different from this expectation. On the symplectic side, we have a model of the Fukaya category over the Novikov ring, but we have a smooth (non-degenerating) family $\sqrt[\infty]{X}\times (\bA^1)^{\Nov}$. One possibly correct explanation is that our HMS is ``almost HMS": Working with almost modules, we are neglecting the central fiber, and this makes our family almost equivalent to a smooth family. Clarifying this point should also be related to the characterization of the essential image of our mirror functor. 
\end{RMK}

\newpage
\appendix

\section{APT correspondence: diagrammatic view}\label{Appendix: diagram of modules}
\addtocontents{toc}{\protect\setcounter{tocdepth}{1}}

We can package the APT correspondence for $\sigma=[0,\infty)$ into the following diagrams.

\begin{Thm}
    The diagrams (\ref{diagram:module}) and (\ref{diagram:sheaf}) (resp. (\ref{diagram:Rmodule}) and (\ref{diagram:Rsheaf})) are equivalent.
\end{Thm}
As one can see, the diagram (\ref{diagram:module}) and (\ref{diagram:sheaf}) (resp. (\ref{diagram:Rmodule}) and (\ref{diagram:Rsheaf})) do not have exactly the same shape. The statement of the theorem says that they are equivalent to each other on the part where they are comparable. The missing arrows are absent because they do not have standard interpretations in their worlds.

Our convention is that the upper arrow is the left adjoint and the lower arrow denotes the right adjoint of the upper arrow.

\subsection*{Module world diagram}
\begin{equation}\label{diagram:module}
    \xymatrix{
    & \Moda_{\bR-gr} (\Lambda_{\geq})  \ar@/^25pt/[ld]_{(-)_*} \ar@/_10pt/[ld]_{(-)_{!}}  \ar@/_/[rd]_{completion} &
    \\
    \Rep(\overset{\rightarrow}{\bR})=\Mod_{\bR -gr}(\Lambda_{\geq})  \ar@/^/[ru]_{almostification} \ar@/^/[rd]^{completion} & \operatorname{Pers}(\overset{\rightarrow}{\bR})\ar@{=}[u]&\ar@/_/[lu]_{inclusion} \Moda_{\bR-gr, comp} (\widehat{\Lambda}_{\geq}) \ar@/^25pt/[ld]_{(-)_*} \ar@/_10pt/[ld]_{(-)_{!}} \\
    & \Mod_{\bR-gr, comp} (\Lambda_{\geq}) \ar@/^/[lu]^{inclusion}  \ar[ru]_{almostification} &
    }
\end{equation}

\subsection*{Sheaf world diagram}
\begin{equation}\label{diagram:sheaf}
    \xymatrix{
    & 
    \Sh(\bR_{(-\infty,0]})=\Sh_{\geq }(\bR)\ar@/_10pt/[ld]_{inclusion} \ar[rd]_{quotient} &
    \\
    \Rep(\overset{\rightarrow}{\bR}) \ar@/^/[ru]_{sheafification} \ar[rd]_{quotient} & \operatorname{Pers}(\overset{\rightarrow}{\bR})\ar@{=}[u]&\ar@/_/[lu]_{(-)\star \bfk_{[0,\infty)}}\ar@/^18pt/[lu]^{\mathcal{H} om^\star (\bfk_{[0,\infty),-)}} \Sh_{>}(\bR) \ar[ldd]_{inclusion} \\
    \operatorname{PSh(\bR_{(-\infty,0]})} \ar[rd]_{quotient}\ar[u]_{restriction}& \Rep(\arbR)/\{ \text{constants} \}&\\
    &\operatorname{PSh}_{>}(\bR_{(-\infty,0]}):=\operatorname{PSh}(\bR_{(-\infty,0]})/\{ \text{constants} \}
    \ar@/_/[lu]_{(-)\star \bfk_{[0,\infty)}}\ar@/^18pt/[lu]^(.7){\mathcal{H} om^\star (\bfk_{[0,\infty),-)}} \ar@/_/[ruu]_{sheafification}\ar[u]_{restriction} &&
    }
\end{equation}

\subsection*{Module without grading world diagram}
\begin{equation}\label{diagram:Rmodule}
    \xymatrix{
    & \Moda (\Lambda_{\geq})  \ar@/^25pt/[ld]_{(-)_*} \ar@/_10pt/[ld]_{(-)_{!}}  \ar@/_/[rd]_{completion} &
    \\
    \Mod(\Lambda_{\geq})  \ar@/^/[ru]_{almostification} \ar@/^/[rd]^{completion} & &\ar@/_/[lu]_{inclusion} \Moda_{t-comp} (\widehat{\Lambda}_{\geq}) \ar@/^25pt/[ld]_{(-)_*} \ar@/_10pt/[ld]_{(-)_{!}} \\
    & \Mod_{t-comp} (\widehat{\Lambda}_{\geq}) \ar@/^/[lu]^{inclusion}  \ar[ru]_{almostification} &
    }
\end{equation}

\subsection*{$\bR$-equivariant sheaf world diagram}
\begin{equation}\label{diagram:Rsheaf}
    \xymatrix{
    & 
    \Sh^{\bR_\delta}(\bR_{(-\infty,0]})=\Sh^{\bR_\delta}_{\geq }(\bR)\ar@/_10pt/[ld]_{inclusion} \ar[rd]_{quotient} &
    \\
    \Rep(\arbR)^{\bR_\delta} \ar@/^/[ru]_{sheafification} \ar[rd]_{quotient} & &\ar@/_/[lu]_{(-)\star_\bR \bigoplus_{c\in\bR}\bfk_{[c,\infty)}} \ar@/^18pt/[lu]^(.7){\mathcal{H} om^{\star \bR} (\bigoplus_{c\in \bR}\bfk_{[c,\infty)},-)} \Sh^{\bR_\delta}_{>}(\bR) \ar[ldd]_{inclusion} \\
    \operatorname{PSh}^{\bR_\delta}(\bR_{(-\infty,0]}) \ar[dr]_{quotient}\ar[u]_{restriction}&\Rep(\arbR)^{\bR_\delta}/\{ \text{constants}\}  &\\
    & \operatorname{PSh}^{\bR_\delta}_{>}(\bR_{(-\infty,0]})\ar@/_/[lu]_{(-)\star_\bR \bigoplus_{c\in\bR}\bfk_{[c,\infty)}}\ar@/^18pt/[lu]^(.7){\mathcal{H} om^{\star \bR} (\bigoplus_{c\in \bR}\bfk_{[c,\infty)},-)} \ar@/_/[ruu]_{sheafification}\ar[u]_{restriction}&&
    }
\end{equation}

Here $(-)_*$ is the right adjoint of almostification explained in Corollary~\ref{coro: almost local objects}, $(-)_!$ is the left adjoint of almostification given by $\otimes_{\Lambda_{\geq}}I_\geq$, $\star$ and $\star_\bR$ are the convolution and the equivariant convolution functors, respectively, and $\mathcal{H} om^\star$ denotes their right adjoints.

{\protect\setcounter{tocdepth}{2}}

\section{Dualizability}\label{appendix: dualizability}
Recall \autoref{definition: compactly assembled/dualizble category} for dualizable categories. It is proven by Lurie and Clausen (cf.  \cite[Theorem 2.2.15]{Fake_sheaves_on_mfd}) that dualizable categories are $\omega_1$-presentable and can be represented by the following fiber sequence:
\[\cC \xrightarrow{\hat{\cY}} \Ind(\cC^{\omega_1})\rightarrow \Ind(\cC^{\omega_1})/\cC \]
such that $\Ind(\cC^{\omega_1})/\cC\simeq \Ind(\cD)$ is compactly generated for a small category $\cD$ and $\Ind(\cC^{\omega_1})\rightarrow \Ind(\cC^{\omega_1})/\cC$ is a left Bousfield localization. Such functors are called (categorical) homological epimorphisms. The notion of homological epimorphism is a categorical version of $R\rightarrow \cofib{I\rightarrow R}$ for an almost content $(R,I)$. In fact, one can also prove that $\Ind(\cC^{\omega_1})\rightarrow \Ind(\cC^{\omega_1})/\cC$ can be presented by $\Mod_A\rightarrow \Mod_B$ for a homological epimorphism $A\to B$ of ring spectra. 

In this sense, all dualizable categories are categories of almost modules (the converse is also true). We refer to \cite[Theorem 2.10.16]{Fake_sheaves_on_mfd} for a detailed proof of this characterization. However, the proof therein is existential.

From this point of view, \autoref{prop: APT HD} and \autoref{theorem: Novikov mirror symmetry} give explicit constructions for a class of dualizable categories to be presented as categories of almost modules: $\Sh_{\bR^n\times X}(\bR^n)$ and ${\Sh^{\bR^n_\delta}_{\bR^n\times X}(\bR^n)} $ for $X=\sigma$ a closed proper convex cone or $X=|\Sigma|$ the support of a fan $\Sigma$. Here, we give a direct proof for their dualizability.

In general, for a locally compact Hausdorff space $X$, it is proven that $\Sh(X)$ is a dualizable category \cite[Def. 21.1.2.1, Thm. 21.1.6.12, Prop. 21.1.7.1]{SAG}. One can also write down the unit and counit using  $6$-operations (One reference for these formulas is \cite[Proposition 3.1]{Hochschild-Kuo-Shende-Zhang}). 

\begin{Lemma}For a closed cone $X\subset \bR^n$, the categories $\Sh_{\bR^n\times X}$ and $\Sh^{\bR^n_\delta}_{\bR^n\times X}(\bR^n) $ are dualizable.
\end{Lemma}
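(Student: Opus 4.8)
The plan is to deduce dualizability of $\Sh_{\bR^n\times X}(\bR^n)$ and $\Sh^{\bR^n_\delta}_{\bR^n\times X}(\bR^n)$ from two facts established earlier: (i) $\Sh(\bR^n)$ is dualizable (by the cited results of Lurie--Clausen, since $\bR^n$ is locally compact Hausdorff), and (ii) a presentable stable category is dualizable iff it is compactly assembled (\autoref{prop: dual iff stable+assemb}), equivalently iff the inclusion into $\PrSt$ preserves colimits in the appropriate sense, or more practically: a localizing subcategory $\cC\subset\cD$ of a dualizable category that is itself a retract of $\cD$ in $\PrSt$ (e.g. the essential image of a colimit-and-limit-preserving idempotent) is again dualizable. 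The key observation, already recorded in the excerpt right after \autoref{lemma: Omega-lens}, is that $\Sh_Z(\bR^n)\hookrightarrow \Sh(\bR^n)$ commutes with both limits and colimits; hence the inclusion admits both a left adjoint and a right adjoint, and $\Sh_Z(\bR^n)$ sits inside $\Sh(\bR^n)$ as the image of a colimit-preserving idempotent. Since $\Catdual$ is closed under retracts in $\PrSt$ (dualizable objects in a symmetric monoidal category form a retract-closed class), this immediately gives dualizability of $\Sh_{\bR^n\times X}(\bR^n)$ for any closed conic $Z=\bR^n\times X$.

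First I would spell out the retract argument: writing $i\colon\Sh_Z(\bR^n)\hookrightarrow\Sh(\bR^n)$ with left adjoint $i^l$, the composite $i^l\circ i\simeq\id$ exhibits $\Sh_Z(\bR^n)$ as a retract of $\Sh(\bR^n)$ in $\PrSt$ via colimit-preserving functors. Dualizability is preserved under such retracts (if $C$ is a retract of a dualizable $D$ in a symmetric monoidal $\infty$-category, then $C$ is dualizable — its dual is the corresponding retract of $D^\vee$, and evaluation/coevaluation are obtained by conjugating those of $D$ with the retraction data). This covers the non-equivariant statement for $X=\sigma$ or $X=|\Sigma|$, and in fact for any closed conic $Z$.

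Second, for the equivariant version $\Sh^{\bR^n_\delta}_{\bR^n\times X}(\bR^n)$, I would use that $\bR^n_\delta$ is a discrete group, so the homotopy fixed points coincide with homotopy orbits (ambidexterity, as invoked in the proof of \autoref{prop: equivariant APT}), i.e.
\[
\Sh^{\bR^n_\delta}_{\bR^n\times X}(\bR^n)\;\simeq\;\Sh_{\bR^n\times X}(\bR^n)_{\bR^n_\delta}\;=\;\varinjlim_{B\bR^n_\delta}\Sh_{\bR^n\times X}(\bR^n),
\]
a colimit in $\PrSt$. Now $\Catdual\subset\PrSt$ is closed under colimits (the inclusion $\Catdual\to\PrSt$ is a left adjoint, or: filtered colimits and finite colimits of dualizable categories along colimit-preserving functors are dualizable — see e.g. the discussion around \cite[Proposition 1.35]{Efimov-K-theory} and \autoref{prop: continuous_stab}); since each term is dualizable by the previous paragraph and the transition functors are colimit-preserving, the colimit is dualizable. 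Alternatively, and perhaps cleaner, one can just observe directly that $\Sh^{\bR^n_\delta}(\bR^n)$ is a retract of $\Sh^{\bR^n_\delta}_{\bR^n\times X}(\bR^n)$'s ambient category in the same way the non-equivariant statement worked, after noting $\Sh^{\bR^n_\delta}(\bR^n)$ is dualizable because $\Sh(\bR^n)$ is and dualizability passes to $(\bR^n_\delta)$-orbits.

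\textbf{Main obstacle.} The only genuinely delicate point is the permanence of dualizability under the two operations invoked — retracts in $\PrSt$ and colimits in $\PrSt$ along colimit-preserving functors. Both are standard but deserve a precise citation: retract-closedness of dualizable objects is formal in any symmetric monoidal $\infty$-category, while closure of $\Catdual$ under colimits in $\PrSt$ is part of Efimov's theory (the inclusion $\Catdual\to\PrSt$, or rather its interaction with $\Compass$, is discussed via \autoref{prop: continuous_stab}); one should make sure the colimit defining homotopy orbits is computed in $\PrSt$ and that the transition maps are morphisms in $\Catdual$, i.e. preserve compact morphisms/filtered colimits, which holds since the $\bR^n_\delta$-action is by equivalences. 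Once these citations are in place the proof is a two-line assembly; I do not anticipate any computational difficulty.
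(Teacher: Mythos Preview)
Your proposal is correct and follows essentially the same approach as the paper: the non-equivariant case is handled by exhibiting $\Sh_{\bR^n\times X}(\bR^n)$ as a retract (equivalently, a reflective subcategory with colimit-preserving inclusion) of the dualizable $\Sh(\bR^n)$, and the equivariant case by identifying fixed points with orbits and invoking closure of $\Catdual$ under colimits in $\PrSt$. The only cosmetic differences are that the paper phrases the first step as ``reflective subcategory'' (citing \cite[Remark 3.7]{Hochschild-Kuo-Shende-Zhang}) rather than ``retract'', and for the second step passes from $\varprojlim$ to $\varinjlim$ via $\PrSt\simeq(\PrStR)^{\op}$ rather than ambidexterity, citing \cite[Proposition 1.66]{Efimov-K-theory} for the closure under colimits.
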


\begin{proof}
The dualizability of $\Sh_{\bR^n\times X}(\bR^n)$ follows from the fact that it is a reflective subcategory of the dualizable category $\Sh(\bR^n)$. See also \cite[Remark 3.7]{Hochschild-Kuo-Shende-Zhang}. 

For equivariant categories, we argue as follows. The ${\bR^n_\delta}$-action on $\Sh(\bR^n)$ is given by the functor $B{\bR^n_\delta} \rightarrow \PrSt,\quad a\mapsto \nT_{a*}\simeq \nT_{-a}^*$. By microsupport estimation of push-forward, ${\bR^n_\delta}$-acts on $\Sh_{\bR^n\times X}(\bR^n)$ as well. Moreover, since $\Sh_{\bR^n\times X}(\bR^n)$ is dualizable and $ \nT_{a*}$ are strongly continuous (they are actually equivalences), the action functor factors through $B{\bR^n_\delta} \rightarrow \Catdual \rightarrow \PrSt$. 

Therefore, by the equivalence $\PrSt\simeq ({\operatorname{Pr_{st}^R})^{\op}}$, we have that \[{\Sh^{\bR^n_\delta}_{\bR^n\times X}(\bR^n)}  \coloneqq \varprojlim_{B{\bR^n_\delta}} \Sh_{\bR^n\times X}(\bR^n) \simeq \varinjlim_{B{\bR^n_\delta}} \Sh_{\bR^n\times X}(\bR^n) ,\] and ${\Sh^{\bR^n_\delta}_{\bR^n\times X}(\bR^n)}$ is dualizable by \cite[Proposition 1.66]{Efimov-K-theory}. 
\end{proof}

\section{Covariant Verdier duality of microsupport}\label{appendix: cosheaf microsupport}
Here, we explain a generalization of microsupport and discuss a version of comicrosupport of cosheaves.

Recall that for an $\infty$-category $\cC$ that admits small limits and a topological space $X$, we can define the category of $\cC$-valued sheaves $\Sh(X;\cC)$: Presheaves $F:\operatorname{Open}(X)^\op\rightarrow \cC$ such that $F(U)=\varprojlim_i F(U_i)$ for a cover $\{U_i\}$ of $U$, and morphisms are defined as morphisms between underlying presheaves. On the other hand, when $\cC$ admits small colimits, one can define the category of cosheaves: $\coSh(X;\cC)\coloneqq \Sh(X;\cC^\op)^\op$.

When $\cC$ admits both small limits and colimits, we can define both sheaf and cosheaf categories. If in addition $\cC$ is pointed, i.e., admits a zero object and $X$ is locally compact Hausdorff, we can define compactly supported sections \cite[Definition 5.5.5.9.]{HA}: For $K\subset X$ compact, we define $\Gamma_K(X,F)\coloneqq \fib(F(X)\rightarrow F(X\setminus K))$, and $\Gamma_c(U,F)\coloneqq \varinjlim_{K\subset U} \Gamma_K(X,F)$ where $K$ ranges over all compact subsets of $U$. Then the construction $ F_c: U\mapsto F_c(U)\coloneqq \Gamma_c(U,F)$ is a cosheaf, i.e., $F_c\in \coSh(X;\cC)$ \cite[Corollary 5.5.5.12]{HA}. The construction is functorial with respect to $F$ and we have the following powerful theorem from Lurie.
\begin{Thm}[{Covariant Verdier duality \cite[Theorem 5.5.5.1]{HA}}]\label{Thm:Covariant Verdier duality}For a stable category $\cC$ that admits both small limits and colimits, the functor
\[ \bD_{\cC}:\Sh(X;\cC) \rightarrow \coSh(X;\cC), F\mapsto F_c  \]
is an equivalence. The inverse of $\bD_{\cC}$ can be given by $\bD_{\cC^\op}^\op$.
\end{Thm}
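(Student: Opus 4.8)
The plan is to follow Lurie's original argument in \cite[Section 5.5.5]{HA}, which I would organize in three stages. First, I would set up the two candidate functors and check they are well-defined. The functor $\bD_\cC \colon \Sh(X;\cC) \to \coSh(X;\cC)$, $F \mapsto F_c$, where $F_c(U) = \Gamma_c(U,F)$, is a cosheaf by \cite[Corollary 5.5.5.12]{HA}; functoriality in $F$ is clear since $\Gamma_c(U,-)$ is a colimit of fibers, hence exact and functorial. Dually, applying the same construction to the stable category $\cC^\op$ (which again admits small limits and colimits, and is pointed) gives $\bD_{\cC^\op} \colon \Sh(X;\cC^\op) \to \coSh(X;\cC^\op)$; taking opposite categories on both sides yields $\bD_{\cC^\op}^\op \colon \coSh(X;\cC) \to \Sh(X;\cC)$. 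So the goal reduces to showing $\bD_{\cC^\op}^\op \circ \bD_\cC \simeq \id$ and $\bD_\cC \circ \bD_{\cC^\op}^\op \simeq \id$; by the symmetry of the construction under $\cC \leftrightarrow \cC^\op$ it suffices to establish one of these, say that $\bD_\cC$ is fully faithful and essentially surjective, and that its inverse is computed by $\bD_{\cC^\op}^\op$.

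Second — the technical heart — I would reduce to a local statement. Since $\cC$ is stable and admits all small limits and colimits, both $\Sh(X;\cC)$ and $\coSh(X;\cC)$ are stable, and $\bD_\cC$ is exact, so it suffices to check that the unit/counit transformations are equivalences after evaluation on a basis of opens, or better, to verify the claim for $X$ ranging over a class of spaces generating everything by open covers and standard gluing. Concretely, following Lurie, I would first treat the case where $X$ is compact (so $\Gamma_c = \Gamma$ on $X$ itself), then bootstrap to locally compact Hausdorff $X$ by writing $X$ as a filtered union of compact subsets and using that sheaves and cosheaves on $X$ are determined by their restrictions together with the appropriate (co)descent. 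The key computational input is the interaction of $\Gamma_c$ with the six operations: for a locally closed decomposition $X = U \sqcup Z$ with $U$ open and $Z$ closed, one has the recollement fiber sequences $\Gamma_c(U,-) \to \Gamma_c(X,-) \to \Gamma_c(Z,-|_Z)$ on the sheaf side, matching the dual cofiber sequences on the cosheaf side; iterating these over a basis and invoking that a sheaf is recovered from its costalks (resp. a cosheaf from its stalks) closes the argument. I would also use that for a point, $\Sh(\pt;\cC) \simeq \cC \simeq \coSh(\pt;\cC)$ and $\bD_\cC$ is the identity there, which anchors the induction.

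Third, I would assemble: once $\bD_\cC$ is checked to be an equivalence on a generating class and compatible with the restriction/gluing functors (here one uses that $\Gamma_c$ commutes with pushforward along open immersions and the excision sequences above), the general case follows since both $\Sh(X;-)$ and $\coSh(X;-)$ satisfy descent and $\bD_\cC$ is a natural transformation of such descent data. The identification of the inverse as $\bD_{\cC^\op}^\op$ then comes for free from the involutivity built into the construction. \textbf{The main obstacle} I anticipate is the bookkeeping in the second stage: making the reduction from general locally compact Hausdorff $X$ to the compact (and ultimately pointwise) case fully rigorous requires care with the interplay between the filtered-colimit defining $\Gamma_c$ and the limits defining sheaves — one must check that the relevant colimits and limits commute in the stable setting, which is exactly where stability of $\cC$ (so that finite limits commute with filtered colimits, and more) is essential. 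Since this is precisely the content of \cite[Theorem 5.5.5.1]{HA}, in the write-up I would present the reduction steps and then cite \emph{loc.~cit.} for the core verification rather than reproduce all of Lurie's estimates.
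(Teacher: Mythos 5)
The paper does not give a proof of this statement at all: it is quoted verbatim as \cite[Theorem 5.5.5.1]{HA} and used as a black box. Since you ultimately also defer to \emph{loc.~cit.} for ``the core verification,'' your plan and the paper's are consistent at the level of what is actually established.

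That said, your sketch of how Lurie's argument goes contains a genuine error and a misdescription that you should be aware of. The error: stability of $\cC$ does \emph{not} imply that finite limits commute with filtered colimits. That property (left exactness of filtered colimits) holds in, say, compactly generated or presentable stable categories, but not in an arbitrary stable $\cC$ with small limits and colimits, and in particular it cannot hold simultaneously in $\cC$ and in $\cC^\op$. Since Theorem 5.5.5.1 is stated for any stable $\cC$ with small limits and colimits and is manifestly symmetric under $\cC \leftrightarrow \cC^\op$ (the inverse is $\bD_{\cC^\op}^\op$), the proof cannot hinge on such an asymmetric commutation hypothesis — this is exactly what makes the theorem nontrivial. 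The misdescription: Lurie's argument is not ``prove it for $X$ compact, then bootstrap along a compact exhaustion using recollement and (co)stalk detection.'' Rather, he identifies $\Sh(X;\cC)$ with a category of $\cK$-sheaves (functors on the poset of compact subsets of $X$ with appropriate descent, cf.\ HA 5.5.5.4 ff.\ and HTT 7.3.4), dually identifies $\coSh(X;\cC)$ with $\cK$-cosheaves, and then checks that $\bD_\cC$ matches these two descriptions; the compact subsets enter as the indexing poset, not as ambient compact spaces, and there is no ``sheaf recovered from its costalks'' step (which is false for sheaves in general). If you intend to write out ``the reduction steps'' rather than simply citing Lurie as the paper does, those steps should follow the $\cK$-sheaf route and avoid any appeal to exactness of filtered colimits in $\cC$.
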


Here, we discuss a generalization of microsupport\footnote{The idea is originally due to Beilinson \cite{Topological_epsilon}, and we learned this idea from Marco Volpe.} and discuss its interaction with the covariant Verdier duality.

Now, let $M$  be a smooth manifold. The classical story of microsupport from Kashiwara and Schapira corresponds to the case $\cC=\Mod(\bfk)$ for a discrete commutative ring. This story was extended to any compactly generated stable category by \cite{Amicrolocallemma_infinitycat}. On the other hand, it is noticed that the definition does not behave very well beyond the compactly generated case \cite[Remark 4.24]{Efimov-K-theory}.

A good replacement for the definition needs the lens characterization  \autoref{lemma: Omega-lens} (which is still true for compactly generated stable categories $\cC$) of the non-zero part of microsupport $\dot{\SS}(F )\coloneqq \SS(F)\setminus 0_M$, where the condition (2) therein can be formulated for general categories $\cC$. On the other hand, in the classical story of microsupport, we know that $\SS(F) \cap 0_M=\supp(F)$ where $M\setminus\supp(F)$ is defined as the maximal open set such that $\Gamma(U,F)=\pt$ (here $\pt$ means the terminal object in $\cC$) for all $U\subset M\setminus\supp(F)$. So, we have that $\SS(F)=\dot{\SS}(F ) \cup \supp(F) \subset T^*M$ (we identify $M$ with the zero section of $T^*M$). 

Therefore, we make the following definition. We set $\dot{T}^*M=T^*M\setminus 0_M$.
\begin{Def}For a category $\cC$ that admits small limits and a manifold $M$, and for $F\in \Sh(M;\cC)$, we define $\dot{T}^*M\setminus\dot{\SS}(F )$ as the maximal open set $\Omega \subset \dot{T}^*M $ such that, for any $\Omega$-lens given by a smooth function $g$ as in \autoref{Def: Omega-lens}, we have the restriction morphism
\[\Gamma( \{g_1<0\},F) \rightarrow \Gamma( \{g_0<0\},F)\]
is an equivalence.

Then we define $\SS(F)=\dot{\SS}(F )\cup \supp(F)$. If necessary, we will denote $\SS_\cC(F)$ to emphasize the coefficient category. 
\end{Def}

When $\cC$ admits small colimits, we consider a dual version.
\begin{Def}For a category $\cC$ that admits small colimits and a manifold $M$, and for $G\in \coSh(M;\cC)$, we define $\coSS(G)=\coSS_{\cC}(G)=\SS_{\cC^\op}(G)$, which is a conic closed subset of $T^*M$. We also set $\operatorname{co\dot{\SS}}(G)=\coSS(G)\setminus 0_M$.
\end{Def}
\begin{RMK}In the definition of $\SS_\cC(F)$, only the data of $F$ itself are used, and no morphisms from/to $F$ are needed. So, the definition of $\coSS_{\cC}(G)$ makes sense.

\end{RMK}
\begin{RMK}The main point of these definitions is that sometimes we want to run the microlocal sheaf theory for unstable coefficients. We only discuss the definition here; see \cite{Zhang_K_Remark} for more discussion as well as the existence of microsupport.
\end{RMK}

The main result of this appendix is the following, which gives us an equivalence of $\SS$ and $\coSS$ in the stable situation.
\begin{Prop}Let $M$ be a manifold, and $\cC$ be a \textbf{stable} category that admits both limits and colimits, then we have for $F\in \Sh(M)$ that
\[\dot{\SS}_{\cC}(F)=-\operatorname{co\dot{\SS}}_{\cC}(\bD_{\cC}(F)).\]
\end{Prop}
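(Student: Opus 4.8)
The plan is to reduce the statement to the lens characterization \autoref{lemma: Omega-lens} together with Lurie's covariant Verdier duality \autoref{Thm:Covariant Verdier duality}, exploiting the fact that both $\dot{\SS}$ and $\operatorname{co\dot{\SS}}$ are \emph{defined} via lens conditions. First I would unwind the definitions: for an open conic $\Omega \subset \dot{T}^*M$ we have $\Omega \cap \dot{\SS}_\cC(F) = \emptyset$ iff for every $\Omega$-lens $C$, given by a function $g$ as in \autoref{Def: Omega-lens}, the restriction $\Gamma(\{g_1<0\},F) \to \Gamma(\{g_0<0\},F)$ is an equivalence. Since $\cC$ is stable, this restriction map is an equivalence iff its fiber vanishes, and that fiber is precisely $\Gamma_{\{g_1<0\}\setminus\{g_0<0\}}(\{g_1<0\},F)$; using condition (3) in \autoref{Def: Omega-lens} (the hypersurfaces agree outside $\overline{C}$) and that $\overline C$ is compact, one identifies this with the compactly supported sections $\Gamma_c(M; 1_C \otimes F)$-type object — more precisely, a direct computation with the fiber sequences defining $\Gamma_K$ shows the fiber is $\Gamma_c(M; (1_C)\otimes F)$ up to the closed/open bookkeeping, i.e. exactly the object appearing in item (3) of \autoref{lemma: Omega-lens}. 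So $\Omega\cap\dot{\SS}_\cC(F)=\emptyset$ iff $\Gamma_c(M;1_C\otimes F)\simeq 0$ for all $(-\Omega)$-lenses $C$ (the sign flip coming from the fact that the lens $C$ carved out by $g_t<0$ sees the $\Omega$-condition on $dg_t$, whereas the lens in condition (3) of \autoref{lemma: Omega-lens} is a $(-\Omega)$-lens).

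Next I would dualize. By \autoref{Thm:Covariant Verdier duality} we have $\bD_\cC(F) = F_c$, the cosheaf $U\mapsto \Gamma_c(U,F)$, and $\coSS_\cC(G)$ is by definition $\SS_{\cC^\op}(G)$, i.e. computed by the \emph{same} lens recipe but in $\cC^\op$, where limits become colimits and the "sections" functor $\Gamma$ on $\coSh(M;\cC)$ computes (via the identification $\coSh(M;\cC)=\Sh(M;\cC^\op)^\op$) the compactly supported sections $\Gamma_c(-, F)$ of the original sheaf $F$. Concretely: for $G = F_c$, $\Omega\cap\operatorname{co\dot{\SS}}_\cC(G)=\emptyset$ iff for every $\Omega$-lens $C$ the map $\Gamma_c(\{g_0<0\},F)\to\Gamma_c(\{g_1<0\},F)$ (now a map in $\cC$, going the "colimit" direction because we are in $\cC^\op$ when applying the sheaf definition) is an equivalence; stability again says this holds iff the cofiber vanishes, and the cofiber is $\Gamma_c(M; 1_C\otimes F)$ by the excision triangle for compactly supported sections along the open sets $\{g_0<0\}\subset\{g_1<0\}$ agreeing outside $\overline C$. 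Thus $\Omega\cap\operatorname{co\dot{\SS}}_\cC(\bD_\cC F)=\emptyset$ iff $\Gamma_c(M;1_C\otimes F)\simeq 0$ for all $\Omega$-lenses $C$.

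Comparing the two: $\Omega\cap\dot{\SS}_\cC(F)=\emptyset$ iff $\Gamma_c(M;1_C\otimes F)\simeq 0$ for all $(-\Omega)$-lenses, while $(-\Omega)\cap\operatorname{co\dot{\SS}}_\cC(\bD_\cC F)=\emptyset$ iff $\Gamma_c(M;1_C\otimes F)\simeq 0$ for all $(-\Omega)$-lenses. Hence $\Omega\cap\dot{\SS}_\cC(F)=\emptyset \iff (-\Omega)\cap\operatorname{co\dot{\SS}}_\cC(\bD_\cC F)=\emptyset$; since this holds for all open conic $\Omega$ and both sides are closed conic subsets of $\dot T^*M$, we conclude $\dot{\SS}_\cC(F) = -\operatorname{co\dot{\SS}}_\cC(\bD_\cC(F))$. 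Finally I would note that the closed symmetric monoidal hypothesis on $\cC$ is what legitimizes writing $1_C\otimes F$ and running the excision triangles internally — this is used only to make sense of the objects $\Gamma_c(M;1_C\otimes F)$ and to transport the stability argument through tensoring.

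The main obstacle I expect is the careful bookkeeping in the two fiber/cofiber identifications: showing that the fiber of $\Gamma(\{g_1<0\},F)\to\Gamma(\{g_0<0\},F)$ and the cofiber of $\Gamma_c(\{g_0<0\},F)\to\Gamma_c(\{g_1<0\},F)$ are \emph{the same} object $\Gamma_c(M;1_C\otimes F)$, so that the lens condition for $\dot{\SS}$ and the lens condition for $\operatorname{co\dot{\SS}}\circ\bD_\cC$ literally coincide up to the sign $\Omega\leftrightarrow-\Omega$. This requires using properties (2), (3), (4) of \autoref{Def: Omega-lens} — compactness of $\overline C$, the sub-level-set nesting, and the coincidence of the hypersurfaces away from $\overline C$ — together with the excision/recollement triangles relating $\Gamma$, $\Gamma_K$, and $\Gamma_c$, exactly as in the stable-coefficient proof of \autoref{lemma: Omega-lens} in \cite{guillermouviterbo_gammasupport}; one should check those arguments only use stability of $\cC$ and the tensoring $1_C\otimes(-)$, not compact generation.
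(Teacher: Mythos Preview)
Your cofiber computation on the cosheaf side is correct: the cofiber of $\Gamma_c(\{g_0<0\},F)\to\Gamma_c(\{g_1<0\},F)$ is indeed $\Gamma_c(M;1_C\otimes F)$ by the excision triangle for $!$-pushforwards. But the fiber identification on the sheaf side is wrong, and this is more than the ``bookkeeping'' you flag at the end. The fiber of $\Gamma(\{g_1<0\},F)\to\Gamma(\{g_0<0\},F)$ is $\Gamma_C(\{g_1<0\},F)\simeq\HOM(1_C,F)$ --- sections with support in $C$, i.e.\ condition (2) of \autoref{lemma: Omega-lens} --- and this is \emph{not} the same object as $\Gamma_c(M;1_C\otimes F)$. (Take $F$ constant and $C$ an open ball: one computes $H^*$, the other $H_c^*$.) So the two lens conditions you write down are not ``literally'' the same up to $\Omega\leftrightarrow -\Omega$; what you actually need is the full equivalence (2)$\Leftrightarrow$(3) of \autoref{lemma: Omega-lens} for general stable $\cC$, and you have not proved it.

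The paper avoids proving both directions. It establishes only one inclusion, $\operatorname{co\dot{\SS}}_\cC(F_c)\subset -\dot{\SS}_\cC(F)$, by running one direction of \cite[Lemma 3.3]{guillermouviterbo_gammasupport} (the one already in the arXiv version) with the six-functor input from \cite[Corollary 6.13]{6functor-infinity}. The reverse inclusion then comes for free from the fact that $\bD_\cC$ is an \emph{equivalence} with inverse $\bD_{\cC^{\op}}^{\op}$: applying the first inclusion with $\cC^{\op}$ in place of $\cC$ and with $F_c$ (viewed as a $\cC^{\op}$-valued sheaf) in place of $F$ yields
\[\dot{\SS}_\cC(F)=\operatorname{co\dot{\SS}}_{\cC^{\op}}(\bD_{\cC^{\op}}^{\op} F_c)\subset -\dot{\SS}_{\cC^{\op}}(F_c)=-\operatorname{co\dot{\SS}}_\cC(F_c).\]
This duality bootstrap is the idea you are missing; as the paper remarks, it incidentally yields a new proof of the harder direction of \cite[Lemma 3.3]{guillermouviterbo_gammasupport} in the classical setting, rather than requiring it as input.
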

\begin{proof}Recall that $\bD_{\cC}(F)=F_c$. We first notice that under our assumption, we can use \cite[Corollary 6.13]{6functor-infinity} that enables us to run the argument of \cite[Lemma 3.3]{guillermouviterbo_gammasupport}, which shows that 
\[\operatorname{co\dot{\SS}}_{\cC}(F_c) \subset -\dot{\SS}_{\cC}(F).\]
Notice that we only use one direction of \cite[Lemma 3.3]{guillermouviterbo_gammasupport} shown in the arXiv version. 

On the other hand, by \autoref{Thm:Covariant Verdier duality}, we have 
\[\dot{\SS}_{\cC}(F)= \operatorname{co\dot{\SS}}_{\cC^\op}(\bD_{\cC^\op}^\op F_c) \subset -\dot{\SS}_{\cC^\op}(F_c) =-\operatorname{co\dot{\SS}}_{\cC}(F_c). \qedhere\]    
\end{proof}
\begin{RMK}If we take $\cC=\Mod(\bfk)$, this also gives a slightly different proof of the other direction of \cite[Lemma 3.3]{guillermouviterbo_gammasupport} (which appears only in the published version). 
\end{RMK}

For a conic closed subset $Z$, we define $\Sh_Z(M;\cC)$ and $\coSh_Z(M;\cC)$ as the full subcategories of $\Sh(M;\cC)$ and $\coSh(M;\cC)$ spanned by objects with the microsupport bound $Z$. Then the above result implies the following covariant Verdier duality with microsupport.
\begin{Thm}\label{covariant Verdier duality with microsupport}Let $M$ be a manifold and $Z\subset T^*M$ be a conic closed subset containing the zero section. Let $\cC$ be a \textbf{stable} category that admits both limits and colimits. Then the covariant Verdier duality covariant Verdier duality $\bD_{\cC}$ restricts to an equivalence
\[\Sh_Z(M;\cC) \simeq \coSh_{-Z}(M;\cC).\]    
\end{Thm}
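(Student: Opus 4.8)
The plan is to deduce the statement almost immediately from the preceding proposition together with Lurie's covariant Verdier duality \autoref{Thm:Covariant Verdier duality}. First I would recall that \autoref{Thm:Covariant Verdier duality} already gives an equivalence $\bD_\cC \colon \Sh(M;\cC) \xrightarrow{\;\simeq\;} \coSh(M;\cC)$ for any stable $\cC$ admitting small limits and colimits, with inverse $\bD_{\cC^\op}^\op$. So the only content is that this equivalence matches up the microsupport conditions: $F$ has $\SS_\cC(F)\subset Z$ if and only if $\bD_\cC(F)=F_c$ has $\coSS_\cC(F_c)\subset -Z$. Since $Z$ contains the zero section, the condition $\SS_\cC(F)\subset Z$ splits as $\dot\SS_\cC(F)\subset \dot Z$ together with $\supp(F)\subset Z\cap 0_M$ (and similarly for $\coSS$), so I would handle the conic part and the support part separately.

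For the conic part, the preceding proposition states exactly $\dot\SS_\cC(F) = -\operatorname{co\dot{\SS}}_\cC(\bD_\cC(F))$, so $\dot\SS_\cC(F)\subset \dot Z$ if and only if $\operatorname{co\dot{\SS}}_\cC(\bD_\cC(F))\subset -\dot Z = \dot{(-Z)}$. For the support part, I would check the elementary fact that under the equivalence $F\mapsto F_c$ one has $\supp(F_c)=\supp(F)$: indeed $\Gamma_c(U;F)$ vanishes for all $U$ contained in an open $V$ precisely when $F|_V \simeq 0$, which by sheafiness is exactly the condition defining the complement of $\supp(F)$; more directly, $\Gamma_c(U;F)=\varinjlim_{K\subset U}\Gamma_K(X;F)$ is built functorially from $F$ and vanishes identically on an open set iff $F$ does there, so the open set on which $F_c$ "vanishes" coincides with $M\setminus\supp(F)$. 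Hence the zero-section parts of $\SS_\cC(F)$ and $\coSS_\cC(F_c)$ agree (and agree with their negatives, the zero section being fixed by $(x,\xi)\mapsto(x,-\xi)$). Combining, $\SS_\cC(F)\subset Z \iff \coSS_\cC(F_c)\subset -Z$, so $\bD_\cC$ restricts to the claimed equivalence $\Sh_Z(M;\cC)\simeq \coSh_{-Z}(M;\cC)$; the quasi-inverse is the restriction of $\bD_{\cC^\op}^\op$, for which the same argument applied with $\cC$ replaced by $\cC^\op$ gives the reverse inclusion.

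The only genuinely non-routine input is the preceding proposition (the identification $\dot\SS_\cC(F) = -\operatorname{co\dot{\SS}}_\cC(\bD_\cC(F))$), which in turn rests on running the $\Omega$-lens argument of \cite[Lemma 3.3]{guillermouviterbo_gammasupport} in the $\infty$-categorical, general-coefficient setting via the proper-base-change/projection-formula machinery of \cite[Corollary 6.13]{6functor-infinity}; since that proposition is already established in the excerpt, I may invoke it directly, and the present theorem is then a formal consequence. The main (minor) obstacle to spell out carefully is the bookkeeping of signs and of the zero-section contribution to $\SS$ versus $\coSS$ — making sure that "$\supp$" as defined via terminal objects on the sheaf side corresponds correctly to the cosheaf-theoretic support after applying $\bD_\cC$ — but this is a direct unwinding of definitions rather than a substantial difficulty.
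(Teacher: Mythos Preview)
Your proposal is correct and matches the paper's approach: the paper gives no explicit proof, merely stating that ``the above result implies'' the theorem, and you are correctly spelling out why. One small simplification: since $Z$ is assumed to contain the zero section, so does $-Z$, hence the support conditions $\supp(F)\subset Z\cap 0_M=M$ and $\supp(F_c)\subset(-Z)\cap 0_M=M$ are both automatic; you therefore do not need to verify $\supp(F_c)=\supp(F)$ at all, and the theorem reduces entirely to the conic identity $\dot\SS_\cC(F)=-\operatorname{co\dot{\SS}}_\cC(\bD_\cC(F))$ from the preceding proposition.
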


In general, when the corresponding categories can be defined, $\Sh_Z(M;\cC)$ is closed under limits and $\coSh_Z(M;\cC)$ is closed under colimits by definition. \autoref{covariant Verdier duality with microsupport} implies that both of them are closed under limits and colimits if $\cC$ is stable.

 \clearpage
\bibliographystyle{bingyu}

\phantomsection
\bibliography{bibtex}

@InProceedings{tamarkin2013,
author="Tamarkin, Dmitry",
editor="Hitrik, Michael
and Tamarkin, Dmitry
and Tsygan, Boris
and Zelditch, Steve",
title="Microlocal condition for non-displaceability",
booktitle="Algebraic and Analytic Microlocal Analysis",
year="2018",
publisher="Springer",
pages="99--223",
doi={10.1007/978-3-030-01588-6\_3}
}

@incollection{GS2014,
	doi = {10.1007/978-3-319-06514-4\_3},
	year = {2014},
	publisher = {Springer},
	pages = {43--85},
    author={{Guillermou}, St{\'e}phane and {Schapira}, Pierre},
	title = {Microlocal theory of sheaves and Tamarkin's non Displaceability theorem},
	booktitle = {Homological mirror symmetry and tropical geometry}
}

@book{guillermou2019sheaves,
    author={ St{\'e}phane {Guillermou}},
    title={Sheaves and symplectic geometry of cotangent bundles},
    series = {Ast\'erisque},
    collection={Ast\'erisque},
    publisher = {Soci\'et\'e math\'ematique de France},
    doi = {10.24033/ast.1199},
    year = {2023},
    volume = {440},
}

@book{KS90,
  title={Sheaves on manifolds: With a short history.   Les d{\'e}buts de la th{\'e}orie des faisceaux . By Christian Houzel},
  author={Masaki {Kashiwara} and Pierre {Schapira}},
  volume={292},
  year={1990},
  publisher={Springer},
  doi={10.1007/978-3-662-02661-8}
}

@article{Ks2018,
	doi = {10.1007/s41468-018-0019-z},
	year = 2018,
	month = {sep},
	publisher = {Springer},
	volume = {2},
	number = {1-2},
	pages = {83--113},
	author = {Masaki Kashiwara and Pierre Schapira},
	title = {Persistent homology and microlocal sheaf theory},
	journal = {Journal of Applied and Computational Topology}
}

@article{Hermitian-K-theory,
       title={Hermitian K-theory for stable $infty$-categories II: Cobordism categories and additivity}, volume={235}, ISSN={1871-2509},  DOI={10.4310/acta.2025.n235.n2.a1}, number={2}, journal={Acta Mathematica}, publisher={International Press of Boston}, author={Calmès, Baptiste and Harpaz, Yonatan and Land, Markus and Nardin, Denis and Steimle, Wolfgang and Dotto, Emanuele and Hebestreit, Fabian and Moi, Kristian and Nikolaus, Thomas}, year={2025}, pages={149–400} ,
}

@article {KuwagakiWKB,
    AUTHOR = {Kuwagaki, Tatsuki},
     TITLE = {Sheaf quantization from exact {WKB} analysis},
   JOURNAL = {J. Reine Angew. Math.},
  FJOURNAL = {Journal f\"{u}r die Reine und Angewandte Mathematik. [Crelle's
              Journal]},
    VOLUME = {814},
      YEAR = {2024},
     PAGES = {165--203},
      ISSN = {0075-4102},
   MRCLASS = {53D50},
  MRNUMBER = {4793343},
       DOI = {10.1515/crelle-2024-0041},
}

@unpublished{mc-Tamarkin,
  title={Microlocal Category},
  author={{Tamarkin}, Dmitry},
  note={\href{https://arxiv.org/abs/1511.08961}{arXiv:1511.08961}},
  year={2015}
}

@book {FOOO,
    AUTHOR = {Fukaya, Kenji and Oh, Yong-Geun and Ohta, Hiroshi and Ono,
              Kaoru},
     TITLE = {Lagrangian intersection {F}loer theory: anomaly and
              obstruction. {P}art {I}},
    SERIES = {AMS/IP Studies in Advanced Mathematics},
    VOLUME = {46},
 PUBLISHER = {American Mathematical Society, Providence, RI; International
              Press, Somerville, MA},
      YEAR = {2009},
     PAGES = {xii+396},
      ISBN = {978-0-8218-4836-4},
   MRCLASS = {53D40 (53D12 53D37)},
  MRNUMBER = {2553465},
MRREVIEWER = {Michael J. Usher},
       DOI = {10.1090/amsip/046.1},
}

@article {FLTZ11,
    AUTHOR = {Fang, Bohan and Liu, Chiu-Chu Melissa and Treumann, David and
              Zaslow, Eric},
     TITLE = {A categorification of {M}orelli's theorem},
   JOURNAL = {Invent. Math.},
  FJOURNAL = {Inventiones Mathematicae},
    VOLUME = {186},
      YEAR = {2011},
    NUMBER = {1},
     PAGES = {79--114},
      ISSN = {0020-9910},
   MRCLASS = {14F05 (14M25)},
  MRNUMBER = {2836052},
MRREVIEWER = {Pawel Sosna},
       DOI = {10.1007/s00222-011-0315-x},
}

@article{FLTZ12, title={T-duality and homological mirror symmetry for toric varieties}, volume={229}, ISSN={0001-8708}, DOI={10.1016/j.aim.2011.10.022}, number={3}, journal={Advances in Mathematics}, publisher={Elsevier BV}, author={Fang, Bohan and Liu, Chiu-Chu Melissa and Treumann, David and Zaslow, Eric}, year={2012}, month=feb, pages={1873–1911} }

@article{FLTZ14, title={The Coherent–Constructible Correspondence for Toric Deligne–Mumford Stacks}, volume={2014}, ISSN={1073-7928}, DOI={10.1093/imrn/rns235}, number={4}, journal={International Mathematics Research Notices}, publisher={Oxford University Press (OUP)}, author={Fang, Bohan and Liu, Chiu-Chu Melissa and Treumann, David and Zaslow, Eric}, year={2014}, month=nov, pages={914–954} }

@article {KuwagakiCCC,
    AUTHOR = {Kuwagaki, Tatsuki},
     TITLE = {The nonequivariant coherent-constructible correspondence for
              toric stacks},
   JOURNAL = {Duke Math. J.},
  FJOURNAL = {Duke Mathematical Journal},
    VOLUME = {169},
      YEAR = {2020},
    NUMBER = {11},
     PAGES = {2125--2197},
      ISSN = {0012-7094},
   MRCLASS = {53D37 (14A20 14J33 14M25 35A27)},
  MRNUMBER = {4132582},
MRREVIEWER = {Amin Gholampour},
       DOI = {10.1215/00127094-2020-0011},
}

@article{6functor-infinity,
author = {Volpe, Marco},
title = {The six operations in topology},
journal = {Journal of Topology},
volume = {18},
number = {4},
pages = {e70050},
doi = {https://doi.org/10.1112/topo.70050},

year = {2025}
}

@article{Cutoff-Zhang, title={Non-linear microlocal cut-off functors}, ISSN={2240-2926}, DOI={10.4171/rsmup/174}, journal={Rendiconti del Seminario Matematico della Università di Padova}, publisher={European Mathematical Society - EMS - Publishing House GmbH}, author={Zhang, Bingyu}, year={2025}, month=jan }

@unpublished{Kuwagaki_almost_equivalence,
      title={Almost equivalences between Tamarkin category and Novikov sheaves}, 
      author={Tatsuki Kuwagaki},
      year={2024},
 note={\href{https://arxiv.org/abs/2406.08245}{arXiv:2406.08245}},
}

@unpublished{Hochschild-Kuo-Shende-Zhang,
  title={On the Hochschild cohomology of Tamarkin categories},
  author={Christopher {Kuo} and Vivek {Shende} and  Bingyu {Zhang}},
    year={2023},
 note={\href{https://arxiv.org/abs/2312.11447}{arXiv:2312.11447}},
}

@article{Haine-porta-Teyssier-homotopy-inv-constr, 
title={The homotopy-invariance of constructible sheaves}, 
volume={25}, 
ISSN={1532-0081}, 
DOI={10.4310/hha.2023.v25.n2.a6}, 
number={2}, 
journal={Homology, Homotopy and Applications}, publisher={International Press of Boston}, 
author={Haine, Peter J. and Porta, Mauro and Teyssier, Jean-Baptiste}, 
year={2023}, 
pages={97–128} }

@unpublished{IK-NovikovTamarkinCategory,
    title={Microlocal categories over Novikov rings},
    author={Yuichi Ike and Tatsuki Kuwagaki},
    year={2023},
    note={\href{https://arxiv.org/abs/2307.01561}{arXiv:2307.01561}},
}

@article{Thickeningkernel,
	doi = {10.1007/s00029-023-00875-6},
	year = 2023,
	month = {sep},
	publisher = {Springer Science and Business Media {LLC}},
	volume = {29},
	number = {5},
	author = {Fran{\c{c}}ois Petit and Pierre Schapira},
	title = {Thickening of the diagonal and interleaving distance},
	journal = {Selecta Mathematica}
}

@unpublished{TPC,
    title={Triangulation and persistence: Algebra 101},
    author = {Biran, Paul and Cornea, Octav and Zhang, Jun},
    note={\href{https://arxiv.org/abs/2104.12258}{arXiv:2104.12258}},
    year={2021}
}

@unpublished{TPF,
    title={Triangulation, Persistence, and Fukaya categories},
    author = {Biran, Paul and Cornea, Octav and Zhang, Jun},
    note={\href{https://arxiv.org/abs/2304.01785}{arXiv:2304.01785}},
    year={2023}
}

@article{TPK,
  title={Persistence $ K $-theory},
  author={Biran, Paul and Cornea, Octav and Zhang, Jun},
  journal={Quantum Topology},
  year={2024},
doi={10.4171/QT/222}
}

@article{guillermouviterbo_gammasupport,
author={St{\'e}phane {Guillermou} and Claude {Viterbo}},
title={The singular support of sheaves is {$\gamma$}-coisotropic},
  journal={Geometric and Functional Analysis},
  volume={34},
  number={4},
  pages={1052--1113},
  year={2024},
  publisher={Springer},
  doi={10.1007/s00039-024-00682-x}
}

@article{asano2017persistence,
   title={Persistence-like distance on Tamarkin's category and symplectic displacement energy},
   volume={18},
   DOI={10.4310/jsg.2020.v18.n3.a1},
   number={3},
   journal={Journal of Symplectic Geometry},
   publisher={International Press of Boston},
   author={Asano, Tomohiro and Ike, Yuichi},
   year={2020},
   pages={613–649}
}

@article{asanoike2022complete, title={Completeness of derived interleaving distances and sheaf quantization of non-smooth objects}, volume={390}, ISSN={1432-1807}, DOI={10.1007/s00208-024-02815-x}, number={2}, journal={Mathematische Annalen}, publisher={Springer}, author={Asano, Tomohiro and Ike, Yuichi}, year={2024}, month=mar, pages={2991–3037} }

@book{zhang2020quantitative,
  title={Quantitative Tamarkin theory},
  author={ Jun {Zhang}},
  year={2020},
  publisher={Springer},
  doi={10.1007/978-3-030-37888-2}
}

@article{GPS2,
	author = "Sheel {Ganatra} and John {Pardon} and Vivek {Shende}",
	title = {Sectorial descent for wrapped {F}ukaya categories},	
	journal={Journal of the American Mathematical Society},
	publisher={American Mathematical Society (AMS)},
year={2023},
ISSN={1088-6834},
doi={10.1090/jams/1035}
	}

@article{GPS3,
author = "Sheel {Ganatra} and John {Pardon} and Vivek {Shende}",
title = {Microlocal {M}orse theory of wrapped {F}ukaya categories},	
volume={199}, 
DOI={10.4007/annals.2024.199.3.1}, 
number={3}, 
journal={Annals of Mathematics}, 
year={2024}, 
month=may }

@book{2020topological_persistence,
  title={Topological persistence in geometry and analysis},
  author={Polterovich, Leonid and Rosen, Daniel and Samvelyan, Karina and Zhang, Jun},
  volume={74},
  year={2020},
  publisher={American Mathematical Soc.},
  doi={10.1090/ulect/074}
}

@unpublished{Efimov-K-theory,
      title={K-theory and localizing invariants of large categories}, 
      author={Alexander I. Efimov},
note={\href{https://arxiv.org/abs/2405.12169}{arXiv:2405.12169}},
      year={2024},
}

@book{HTT,
	doi = {10.1515/9781400830558},
	year = 2009,
	month = {dec},
	publisher = {Princeton University Press},
	author = {Jacob Lurie},
	title = {Higher Topos Theory ({AM}-170)}
}

@unpublished{SAG,
        note={Preprint, available at \href{https://www.math.ias.edu/~lurie/papers/SAG-rootfile.pdf}{https://www.math.ias.edu/~lurie/papers/SAG-rootfile.pdf}},
	year = 2018,
	author = {Jacob Lurie},
	title = {Spectral Algebraic Geometry}
}

@unpublished{HA,
        note={Preprint, available at \href{https://www.math.ias.edu/~lurie/papers/HA.pdf}{https://www.math.ias.edu/~lurie/papers/HA.pdf}},
	year = 2017,
	author = {Jacob Lurie},
	title = {Higher Algebra}
}

@unpublished{Rotation-inv-Lurie,
        note={Preprint, available at \href{https://www.math.ias.edu/~lurie/papers/Waldhaus.pdf}{https://www.math.ias.edu/~lurie/papers/Waldhaus.pdf}},
	year = 2014,
	author = {Jacob Lurie},
	title = {Rotation invariance in K-theory}
}

@unpublished{Hopkins-Lurie-Ambidexterity,
        note={Preprint, available at \href{https://people.math.harvard.edu/~lurie/papers/Ambidexterity.pdf}{https://people.math.harvard.edu/~lurie/papers/Ambidexterity.pdf}},
	year = 2013,
	author = {Michael Hopkins and Jacob Lurie},
	title = {Ambidexterity in $K(n)$-Local Stable Homotopy Theory}
}

@article{Amicrolocallemma_infinitycat,
	doi = {10.4171/prims/54-2-5},
	year = 2018,
	volume = {54},
	number = {2},
	pages = {379--391},
	author = {Marco Robalo and Pierre Schapira},
	title = {A Lemma for microlocal sheaf theory in the {$\infty$}-Categorical Setting},
	journal = {Publications of the Research Institute for Mathematical Sciences}
}

@article{Nikolaus-Scholze-Ontopologicalcyclichomology,
	doi = {10.4310/acta.2018.v221.n2.a1},
	year = 2018,
	publisher = {International Press of Boston},
	volume = {221},
	number = {2},
	pages = {203--409},
	author = {Thomas Nikolaus and Peter Scholze},
	title = {On topological cyclic homology},
	journal = {Acta Mathematica}
}

@book{Gaitsgory-Rozenblyum,
  title={A study in derived algebraic geometry: Volume I: correspondences and duality},
  author={Gaitsgory, Dennis and Rozenblyum, Nick},
  volume={221},
  year={2019},
  publisher={American Mathematical Society}
}

@article{ToricMSrevisited-Shende, 
title={Toric mirror symmetry revisited}, 
volume={360}, 
ISSN={1778-3569}, 
DOI={10.5802/crmath.304}, 
number={G7}, 
journal={Comptes Rendus. Mathématique}, 
publisher={Cellule MathDoc/Centre Mersenne}, 
author={Shende, Vivek}, 
year={2022}, 
month=jun, 
pages={751–759} }

@unpublished{StacksProject,
    AUTHOR = " {Stacks Project}, Authors of",
    TITLE = "Stacks Project",
    note={\href{https://stacks.math.columbia.edu/}{https://stacks.math.columbia.edu/}},
}

@article{gepner2015enriched,
    author={David {Gepner} and Rune {Haugseng}},
    title={Enriched $\infty$-categories via non-symmetric $\infty$-operads},
    journal={Advances in Mathematics},
    year={2015},
    volume={279},
    pages={575--716},
    doi={10.1016/j.aim.2015.02.007},
}

@article{Heine_enriched_category2023, 
title={An equivalence between enriched $\infty$-categories and $\infty$-categories with weak action}, 
volume={417}, 
ISSN={0001-8708}, 
DOI={10.1016/j.aim.2023.108941}, 
journal={Advances in Mathematics}, 
publisher={Elsevier BV}, 
author={Heine, Hadrian}, 
year={2023}, 
month=mar, 
pages={108941} }

@unpublished{Vaintrob-logCCC,
        note={Preprint, available at \href{https://math.berkeley.edu/~vaintrob/toric.pdf}{https://math.berkeley.edu/~vaintrob/toric.pdf}},
	year = 2017,
	author = {Dmitry Vaintrob},
	title = {Coherent-constructible correspondences and
log-perfectoid mirror symmetry for the torus}
}

@unpublished{Fake_sheaves_on_mfd,
        note={Lecture Notes, available at \href{https://www.uni-muenster.de/IVV5WS/WebHop/user/nikolaus/Papers/sheaves-on-manifolds.pdf}{https://www.uni-muenster.de/IVV5WS/WebHop/user/nikolaus/Papers/sheaves-on-manifolds.pdf}},
	year = 2024,
	author = {Achim Krause and Thomas Nikolaus and Phil Pützstück},
	title = {Sheaves on Manifolds}
}

@book{Boyd_2004, 
title={Convex Optimization}, 
ISBN={9780511804441}, 
DOI={10.1017/cbo9780511804441}, 
publisher={Cambridge University Press}, 
author={Boyd, Stephen and Vandenberghe, Lieven}, 
year={2004}, 
month=mar }

@book{Cox_ToricBook, 
ISSN={1065-7339},
DOI={10.1090/gsm/124}, 
title={Toric Varieties}, 
publisher={American Mathematical Society}, 
author={Cox, David and Little, John and Schenck, Henry}, year={2011}
}

@unpublished{Gabber_Ramero,
     title={Foundations for almost ring theory - sixth release}, 
      author={Ofer Gabber and Lorenzo Ramero},
      year={2002},
    note={\href{https://arxiv.org/abs/math/0201175}{arXiv:math/0201175}},
}

@unpublished{fukaya_distance,
      title={Gromov-Hausdorff distance between filtered $A_{\infty}$ categories 1: Lagrangian Floer theory}, 
      author={Kenji Fukaya},
      year={2021},
      eprint={2106.06378},
 note={\href{https://arxiv.org/abs/2106.06378}{arXiv:2106.06378}},
}

@unpublished{Fukayacategories_prequantizationbundles_KPS,
      title={On Fukaya categories and prequantization bundles}, 
      author={Tatsuki Kuwagaki and Adrian Petr and Vivek Shende},
      year={2024},
 note={\href{https://arxiv.org/abs/2406.08852}{arXiv:2406.08852}},
}

@unpublished{GVZ,
  title={Density of fibers for the filtered Fukaya category of $T^*N$},
  author={ St{\'e}phane {Guillermou} and Claude {Viterbo} and Bingyu {Zhang}},
    year={2026},
 note={\href{https://arxiv.org/abs/2602.21759}{arXiv:2602.21759}},
}

@unpublished{KPS2,
	author = {Tatsuki {Kuwagaki} and Adrian {Petr} and Vivek {Shende}},
	note = {In preparation},
	title = {Complete Fukaya category},
	year = {2026}}

@article{Adjoint_descent,
title = {On conjugates and adjoint descent},
journal = {Topology and its Applications},
volume = {232},
pages = {140-154},
year = {2017},
issn = {0166-8641},
doi = {10.1016/j.topol.2017.10.007},
author = {Asaf Horev and Lior Yanovski},
}

@article{root_stack,
author = {Talpo, Mattia and Vistoli, Angelo},
title = {Infinite root stacks and quasi-coherent sheaves on logarithmic schemes},
journal = {Proceedings of the London Mathematical Society},
volume = {116},
number = {5},
pages = {1187-1243},
keywords = {14A99, 14F05, 14A20 (primary)},
doi = {10.1112/plms.12109},
year = {2018}
}

@article{CompletionFan,
author = {G{\"u}nter Ewald and Masa-Nori Ishida},
title = {{Completion of real fans and {Zariski-Riemann} spaces}},
volume = {58},
journal = {Tohoku Mathematical Journal},
number = {2},
publisher = {Tohoku University, Mathematical Institute},
pages = {189 -- 218},
year = {2006},
doi = {10.2748/tmj/1156256400},
}

@article{Scherotzke_Sibilla_Talpo_2020, title={Parabolic Semi-Orthogonal Decompositions and Kummer Flat Invariants of Log Schemes}, volume={25}, ISSN={1431-0643},  DOI={10.4171/dm/768}, journal={Documenta Mathematica}, publisher={European Mathematical Society - EMS - Publishing House GmbH}, author={Scherotzke, Sarah and Sibilla, Nicolò and Talpo, Mattia}, year={2020}, pages={955–1009} }

@unpublished{bai2025toricmirrorsymmetryhomotopy,
      title={Toric Mirror Symmetry for Homotopy Theorists}, 
      author={Qingyuan Bai and Yuxuan Hu},
      year={2025},
note={\href{https://arxiv.org/abs/2501.06649}{arXiv:2501.06649}}
}

@book{Siegel_1989_geometry_of_numbers, title={Lectures on the Geometry of Numbers}, ISBN={9783662082874},  DOI={10.1007/978-3-662-08287-4}, publisher={Springer Berlin Heidelberg}, author={Siegel, Carl Ludwig and Chandrasekharan, Komaravolu}, year={1989} }

@article{Zhou_2019, title={Twisted polytope sheaves and coherent–constructible correspondence for toric varieties}, volume={25}, ISSN={1420-9020}, DOI={10.1007/s00029-019-0459-x}, number={1}, journal={Selecta Mathematica}, publisher={Springer Science and Business Media LLC}, author={Zhou, Peng}, year={2019}}

@inproceedings{bondal2006derived,
  title={Derived categories of toric varieties},
  author={Bondal, Alexey},
  booktitle={Convex and Algebraic geometry, Oberwolfach conference reports, EMS Publishing House},
  volume={3},
  pages={284--286},
  year={2006}
}

@article{HAforPERS, title={Homological Algebra for Persistence Modules}, volume={21}, ISSN={1615-3383},  DOI={10.1007/s10208-020-09482-9}, number={5}, journal={Foundations of Computational Mathematics}, publisher={Springer Science and Business Media LLC}, author={Bubenik, Peter and Milićević, Nikola}, year={2021}, month=jan, pages={1233–1278} }

@unpublished{milicevic2020convolutionpersistencemodules,
      title={Convolution of Persistence Modules}, 
      author={Nikola Milicevic},
      year={2020},
note={\href{https://arxiv.org/abs/2010.02020}{arXiv:2010.02020}},
}

@unpublished{hebestreit2024notehigherringtheory,
      title={A note on higher almost ring theory}, 
      author={Fabian Hebestreit and Peter Scholze},
      year={2024},
note={\href{https://arxiv.org/abs/2409.01940}{arXiv:2409.01940}},
}

@article{Berkouk_2021ephemeral, title={Ephemeral persistence modules and distance comparison}, volume={21}, ISSN={1472-2747},  DOI={10.2140/agt.2021.21.247}, number={1}, journal={Algebraic \& Geometric Topology}, publisher={Mathematical Sciences Publishers}, author={Berkouk, Nicolas and Petit, François}, year={2021}, month=feb, pages={247–277} }

@article{zhou2024ephemeral,
  title={Ephemeral persistence features and the stability of filtered chain complexes},
  author={Zhou, Ling and M{\'e}moli, Facundo},
  journal={Journal of Computational Geometry},
  volume={15},
  number={2},
  pages={258--328},
  year={2024},
DOI={10.20382/jocg.v15.i2.a8},
}

@article{p_adic_Hodge_Faltings,
 ISSN = {08940347, 10886834},
 author = {Gerd Faltings},
 journal = {Journal of the American Mathematical Society},
 number = {1},
 pages = {255--299},
 publisher = {American Mathematical Society},
 title = {p-Adic Hodge Theory},
 urldate = {2025-03-10},
 volume = {1},
 year = {1988}
}

@article{Geometry_of_filtration,
author = {Moulinos, Tasos},
title = {The geometry of filtrations},
journal = {Bulletin of the London Mathematical Society},
volume = {53},
number = {5},
pages = {1486-1499},
keywords = {14F05, 55P42 (primary)},
doi = {10.1112/blms.12512},
year = {2021}
}

@article{de2018theory,
  title={Theory of interleavings on categories with a flow},
  author={Vin de Silva and Elizabeth Munch and Anastasios Stefanou},
  journal={Theory and Applications of Categories},
  volume={33},
  number={21},
  pages={583--607},
  year={2018}
}

@article{bressler2003intersection,
  title={Intersection cohomology on nonrational polytopes},
  author={Bressler, Paul and Lunts, Valery A},
  journal={Compositio Mathematica},
  volume={135},
  number={3},
  pages={245--278},
  year={2003},
  publisher={London Mathematical Society},
doi={10.1023/A:1022232232018}
}

@article{Topological_epsilon, title={Topological {$\epsilon$}-Factors}, volume={3}, ISSN={1558-8602}, 
DOI={10.4310/pamq.2007.v3.n1.a13}, number={1}, journal={Pure and Applied Mathematics Quarterly}, publisher={International Press of Boston}, author={Beilinson, Alexander}, year={2007}, pages={357–391} }

@article{Zhang_K_Remark, 
 title={A remark on continuous K-theory and Fourier-Sato transformation}, 
 volume={154}, 
 ISSN={0002-9939}, 
DOI={10.1090/proc/17589}, 
number={6}, 
journal={Proceedings of the American Mathematical Society}, publisher={American Mathematical Society (AMS)}, 
author={Zhang, Bingyu}, 
year={2026}, 
month=Mar, 
pages={2357–2368} }

\noindent
\parbox[t]{28em}
{\scriptsize{
\noindent
Tatsuki Kuwagaki\\
Department of Mathematics, Kyoto University \\
Kitashirakawa Oiwakecho, Sakyo-ku, Kyoto 606-8502, Japan\\
Email: {tatsuki.kuwagaki.a.gmail.com}\\

Bingyu Zhang\\
Department of Mathematics, Kyiv School of Economics\\
3 Mykoly Shpaka St, Kyiv, 02000, Ukraine\\
Email: {bzhang@kse.org.ua}
}}

\end{document}